\newtheorem{theorem}{Theorem}[section]
\newtheorem{lm}[theorem]{Lemma}
\newtheorem{cor}[theorem]{Corollary}
\newtheorem{rem}[theorem]{Remark}
\newtheorem{pr}[theorem]{Proposition}
\newtheorem{example}[theorem]{Example}
\begin{document}

\title{Group superschemes}
\author{A.Masuoka}
\address{Institute of Mathematics, University of Tsukuba, Ibaraki 305-8571, Japan}
\email{akira@math.tsukuba.ac.jp}
\author{A. N. Zubkov}
\address{Department of Mathematical Science, UAEU, Al Ain, United Arabic Emirates; Sobolev Institute of Mathematics, Omsk Branch, Pevtzova 13, 644043 Omsk, Russian Federation}
\email{a.zubkov@yahoo.com}

\begin{abstract}
We develop a general theory of algebraic group superschemes, which are
not necessarily affine.
Our key result is a category equivalence between those group superschemes
and Harish-Chandra pairs, which generalizes the result known for affine
algebraic group superschemes. Then we present
the applications,
including the Barsotti-Chevalley Theorem in the super context, and
an explicit construction of the quotient superscheme
$\mathbb{G}/\mathbb{H}$ of an algebraic group superscheme $\mathbb{G}$ by
a group super-subscheme $\mathbb{H}$.
\end{abstract}
\maketitle

\section*{Introduction}

The purpose of this paper is to generalize a description of algebraic supergroups, which uses  Harish-Chandra pairs, to the category of locally algebraic group superschemes. The first results of this kind were developed in  \cite{mas}, where it has been shown, using Hopf superalgebra technique, that any algebraic supergroup is the product of its largest even super-subgroup and a purely-odd super-subscheme. An explicit construction of such a decomposition was first proposed in the article \cite{gav}, and then significantly revised and generalized in the papers \cite{masshib, masshib2}.
The main result of these papers can be formulated as a fundamental equivalence between the category of (affine) algebraic supergroups and specific Harish-Chandra pairs. 

In the broadest sense, a Harish-Chandra pair is the couple $(\mathsf{G}, \mathsf{V})$, where $\mathsf{G}$ is a group scheme, and $\mathsf{V}$ is a finite-dimensional $\mathsf{G}$-module, equipped with a bilinear symmetric $\mathsf{G}$-equivariant map from $\mathsf{V}\times\mathsf{V}$ to the Lie algebra $\mathsf{g}$ of $\mathsf{G}$ (see Section 12.1 for more details). We call a Harish-Chandra pair affine, algebraic, or locally algebraic, provided $\mathsf{G}$ is an affine, algebraic, or locally algebraic group scheme, respectively. 

Throughout this article we freely use the equivalence between the category of geometric superschemes $\mathcal{SV}$ and the category of superschemes $\mathcal{SF}$, regarded as $\Bbbk$-functors. The objects and morphisms of $\mathcal{SF}$ are denoted by $\mathbb{X}, \mathbb{Y}, \ldots$, and
${\bf f}, {\bf g}, \ldots$ respectively. The corresponding, by virtue of this equivalence, objects and morphisms of $\mathcal{SV}$ are denoted by $X, Y, \ldots$, and $f, g, \ldots$, and vice versa.
 
To every affine algebraic group superscheme $\mathbb{G}$, we can associate the affine algebraic Harish-Chandra pair
$(\mathbb{G}_{ev}, \mathfrak{g}_1)$, where $\mathbb{G}_{ev}$ is the largest purely-even group super-subscheme of $\mathbb{G}$ and $\mathfrak{g}$ is the Lie superalgebra of $\mathbb{G}$. The action of $\mathbb{G}_{ev}$ on $\mathfrak{g}_1$ is induced by the adjoint action of $\mathbb{G}$ on $\mathfrak{g}$. The corresponding bilinear map is the restriction of Lie super-bracket to $\mathfrak{g}_1$.
The functor $\mathbb{G}\mapsto (\mathbb{G}_{ev}, \mathfrak{g}_1)$ is called the \emph{Harish-Chandra functor}.

There is a functor that, given an affine algebraic Harish-Chandra pair $(\mathsf{G}, \mathsf{V})$, constructs an algebraic supergroup as a superscheme product $\mathsf{G}\times {\bf E}$, where ${\bf E}$ is a purely-odd superscheme isomorphic to $\mathrm{SSp}(\Lambda(\mathsf{V}^*))$. Moreover, this functor is a quasi-inverse of the Harish-Chandra functor.

The Harish-Chandra functor can be extended to the larger category of locally algebraic group superschemes.
We prove that this functor has a quasi-inverse as well. It means that every locally algebraic group superscheme $\mathbb{G}$ is isomorphic to a superscheme product  $\mathbb{G}_{ev}\times {\bf E}$ as above, where $\mathbb{G}_{ev}$ is regarded as a group scheme. Although the proof mostly follows the papers \cite{masshib, masshib2}, we introduce a new object that plays a crucial role for non-affine group superschemes. We show that every group superscheme $\mathbb{G}$ contains a normal group subfunctor $\mathbb{N}(\mathbb{G})$, which can be regarded as a formal supergroup (see \cite[Remark 3.6]{homastak}). We call $\mathbb{N}(\mathbb{G})$ a \emph{formal neighborhood of the identity}. The superscheme 
$\bf E$ is contained in $\mathbb{N}(\mathbb{G})$. In particular, $\mathbb{G}=\mathbb{G}_{ev}\mathbb{N}(\mathbb{G})$, i.e.,
$\mathbb{N}(\mathbb{G})$ is sufficiently large as a group subfunctor.

Moreover, 
$\mathbb{N}(\mathbb{G})$ is "quasi-affine" in the sense that a complete Hopf superalgebra
$\widehat{\mathcal{O}_e}$ can represent it (see Section 9). Note that $\mathbb{N}(\mathbb{G})$ is "negligible" from the topological point of view. That is, for every open super-subscheme $\mathbb{U}$ of $\mathbb{G}$, we have $\mathbb{U}\mathbb{N}(\mathbb{G})\subseteq\mathbb{U}$. The reason for introducing $\mathbb{N}(\mathbb{G})$ is that $\bf E$ is not a group subfunctor, but every product $xy$ of elements of $\bf E$ can be uniquely expressed as $f z$, where $f\in\mathbb{N}(\mathbb{G})_{ev}$ and $z\in{\bf E}$.  

In our proof, we also exploit a Hopf superalgebra pairing between the
Hopf superalgebra $\widehat{\mathcal{O}_e}$ and the hyperalgebra $\mathrm{hyp}(\mathbb{G})$ of $\mathbb{G}$.
For example, each $\mathbb{N}(\mathbb{G})(R)$ can be identified with the subgroup of $(\mathrm{hyp}(\mathbb{G})\otimes R)^{\times}$ consisting of all group-like elements. 

Finally, to show that $\mathbb{G}$ coincides with $\mathbb{G}_{ev}{\bf E}$, we use the endofunctor $X\mapsto\mathsf{gr}(X)=\mathsf{gr}_{\mathcal{I}_X}(X)$ of the category of geometric superschemes, more precisely, its functor-of-points counterpart. 

The functor $\mathsf{gr}$ is interesting on its own. Indeed, it preserves
immersions and induces an endofunctor of the category of group superschemes.  Moreover, a morphism $f : X\to Y$ of superschemes (locally of finite type) is an isomorphism if and only if $\mathsf{gr}(f)$ is. 
The group superscheme $\mathsf{gr}(\mathbb{G})$ is a semi-direct product of $\mathsf{gr}(\mathbb{G})_{ev}\simeq\mathbb{G}_{ev}$ and a normal purely-odd group super-subscheme $\mathbb{G}_{odd}$. In this case,  $\mathbb{G}_{odd}=\bf E$. Then Proposition \ref{structure of odd complement} implies that the graded companion of the natural embedding  $\mathbb{G}_{ev}{\bf E}\to\mathbb{G}$ is an isomorphism, hence
$\mathbb{G}_{ev}{\bf E}=\mathbb{G}$.

The remainder of the paper is devoted to applications of the fundamental equivalence. First, we prove that a slightly modified (super)version of the well-known Barsotti-Chevalley theorem occurs in the category of algebraic group superschemes. More precisely, a connected algebraic group superscheme $\mathbb{G}$ has
normal group super-subschemes $\mathbb{G}_1\leq\mathbb{G}_2$ such that $\mathbb{G}_1$ is affine, $\mathbb{G}_2/\mathbb{G}_1$ is an abelian group variety, and $\mathbb{G}/\mathbb{G}_2$ is again affine.  

Further, we describe abelian supervarieties and anti-affine algebraic group superschemes. Contrary to the purely-even case, we show that the class of pseudoabelian group superschemes is extensive. Regardless of whether the ground field is perfect or not, we construct pseudoabelian group superschemes, which are neither abelian supervarieties nor even solvable group functors (compare with \cite{milne}, chapter 8).  

In the last section, we prove that for any algebraic group superscheme $\mathbb{G}$ and its group super-subscheme $\mathbb{H}$, the sheaf quotient $\mathbb{G}/\mathbb{H}$ is a superscheme of finite type.
Using \cite[Remark 9.11]{maszub1}, and Theorem 1.1 from \cite{brion}, we reduce the general case to the case when $\mathbb{G}_{ev}=\mathsf{M}\times\mathsf{A}(\mathsf{G})$, where $\mathsf{M}$ is an affine group subscheme, $\mathsf{A}(\mathsf{G})$ is an abelian variety, and $\mathbb{H}_{ev}\leq\mathsf{M}$. This immediately implies that $\mathbb{H}$ is affine.
Note also that $\mathsf{A}(\mathsf{G})$ is a central group subscheme of $\mathbb{G}_{ev}$.

Under these conditions, $\mathbb{G}$ has an open covering by (finitely many) affine $\mathbb{H}$-saturated super-subschemes $\mathsf{U}{\bf E}$, where $\mathsf{U}=\mathsf{U}_{aff}\times\mathsf{U}_{ab}$,
$\mathsf{U}_{aff}$ and $\mathsf{U}_{ab}$ form open affine coverings of $\mathsf{M}$ and $\mathsf{A}(\mathsf{G})$, respectively, and $\mathsf{U}/\mathsf{H}\simeq \mathsf{U}_{aff}/\mathsf{H}\times\mathsf{U}_{ab}$ is an affine superscheme. Modifying the proof of the main theorem from \cite{mastak}, we show that $\mathbb{U}$
is isomorphic to a \emph{homogeneous fiber quotient} $\mathbb{X}\times^{\mathsf{H}}\mathbb{H}$, where $\mathbb{X}$ is an affine superscheme on which $\mathsf{H}$ acts on the right, and such that $(\mathbb{X}\times^{\mathsf{H}}\mathbb{H})/\mathbb{H}$ is affine. The morphism $\mathbb{X}\times^{\mathsf{H}}\mathbb{H}\to\mathbb{U}$ is constructed in a more straightforward way than in \cite{mastak}. To prove that it is an isomorphism, we use the above functor $\mathsf{gr}$. Each $\mathbb{U}/\mathbb{H}$ is affine, and the standard arguments imply that they form an open covering of $\mathbb{G}/\mathbb{H}$.  
 
The article is organized as follows. In the first section, we have collected all necessary notations, definitions, and results on superschemes. Throughout the article, we distinguish between superschemes as functors and geometric superschemes, although the categories of superschemes and geometric superschemes are equivalent to each other by the Comparison Theorem. For example, to introduce the notions of separated and proper morphisms in the second section, the language of geometric superschemes seems to be more natural and convenient. However, the group superschemes are introduced in two incarnations, as group functors and as group objects in the category of geometric superschemes (see Section 4). In the third section we recall the notions of flat and faithfully flat morphisms of superschemes. Lemma \ref{finite type and flatness} plays crucial role in the proof of Theorem \ref{sheaf quotient}, the main result of the final section.

In the fifth section, we prove that every group superscheme has the largest affine quotient, again using the language of geometric superschemes. In the sixth section, for every $\Bbbk$-functor $\mathbb{X}$, we define  its tangent functor at a $\Bbbk$-point and investigate its properties in the case when $\mathbb{X}$ is a superscheme of locally finite type. In the seventh section, the Lie superalgebra of a group superscheme is introduced as functor and a superalgebra. In the eighth section, we develop a fragment of the differential calculus on locally algebraic group superschemes. In the ninth section we introduce the normal group subfunctor $\mathbb{N}(\mathbb{G})$ and discuss how it relates to the Hopf superalgebra $\mathrm{hyp}(\mathbb{G})$. 

The tenth section is devoted to studying the functor $\mathsf{gr}$. We prove that it commutes with direct products, thus it induces an endofunctor of the category of geometric group superschemes. Moreover, a superscheme
$\mathsf{gr}(X)$ is always "split" in the sense that there are two superscheme morphisms $i_X : X_{ev}\to  
\mathsf{gr}(X)$ and $q_X : \mathsf{gr}(X)\to X_{ev}$ such that $q_X i_X=\mathrm{id}_{X_{ev}}$, and $i_X$ induces an isomorphism onto $\mathsf{gr}(X)_{ev}$. 
In particular, if $\mathbb{G}$ is a locally algebraic group superscheme, then $\mathsf{gr}(\mathbb{G})\simeq \mathbb{G}_{ev}\ltimes\mathbb{G}_{odd}$, where $\mathbb{G}_{odd}=\ker {\bf q}_G$ is a purely-odd group superscheme. In the eleventh section, we show that
$\mathbb{G}_{odd}\simeq\mathrm{SSp}(\Lambda(\mathfrak{g}_1^*))$ and for any morphism $\mathbb{G}\to\mathbb{H}$ of group superschemes, the induced morphism $\mathbb{G}_{odd}\to\mathbb{H}_{odd}$ is uniquely defined by the corresponding linear map  $\mathfrak{g}_1\to\mathfrak{h}_1$. 

In the twelfth section, using the techniques developed previously, we prove that the category of locally algebraic group superschemes is equivalent to locally algebraic Harish-Chandra pairs.
The content of the last two sections has already been discussed in detail above.

\section{Superschemes}

For the content of this section, we refer to \cite{ccf, jan, manin, maszub1, zub2}. Throughout this article, $\Bbbk$ is the ground field of odd or zero characteristic. 

\subsection{Superschemes as $\Bbbk$-functors}

Let $\mathsf{SAlg}_{\Bbbk}$ denote the category of super-commutative $\Bbbk$-superalgebras. If $A\in\mathsf{SAlg}_{\Bbbk}$, then let $I_A$ and $\overline{A}$ denote the super-ideal $AA_1$ and the factor-algebra 
$A/I_A$, respectively.

A \emph{$\Bbbk$-functor} $\mathbb{X}$ is a functor from the category $\mathsf{SAlg}_{\Bbbk}$ to the category of sets $\mathsf{Sets}$. The category of $\Bbbk$-functors is denoted by $\mathcal{F}$. The morphisms in $\mathcal{F}$ are denoted by bold letters ${\bf f}, {\bf g}, ..$.

A $\Bbbk$-functor $\mathbb{X}$ is called an \emph{affine superscheme} if it is representable by a superalgebra $A$, that is,
\[\mathbb{X}(B)=\mathrm{Hom}_{\mathsf{SAlg}_{\Bbbk}}(A, B) \text{ for } B\in \mathsf{SAlg}_{\Bbbk}.\]
In this case, $\mathbb{X}$ is denoted by $\mathrm{SSp}(A)$. A \emph{closed affine super-subscheme} of $\mathbb{X}=\mathrm{SSp}(A)$ is defined as
\[\mathbb{V}(I)(B)=\{\phi\in\mathbb{X}(B)\mid \phi(I)=0\} \text{ for } B\in\mathsf{SAlg}_{\Bbbk},\]
where $I$ is a super-ideal of $A$. It is easy to see that $\mathbb{V}(I)\simeq\mathrm{SSp}(A/I)$.

Similarly, a super-ideal $I$ of $A$ defines an \emph{open $\Bbbk$-subfunctor} of $\mathbb{X}=\mathrm{SSp}(A)$ as
\[\mathbb{D}(I)(B)=\{\phi\in\mathbb{X}(B)\mid \phi(I)B=B\} \text{ for } B\in\mathsf{SAlg}_{\Bbbk} .\]

In general, a $\Bbbk$-subfunctor $\mathbb{Y}$ of a $\Bbbk$-functor $\mathbb{X}$ is called \emph{closed} (respectively, \emph{open}) if for any morphism of functors ${\bf f} : \mathrm{SSp}(A)\to\mathbb{X}$ the inverse image ${\bf f}^{-1}(\mathbb{Y})$ is closed (respectively, open) in  $\mathrm{SSp}(A)$.

A collection of open $\Bbbk$-subfunctors $\{\mathbb{X}_i\}_{i\in I}$ is said to be an \emph{open covering} of a $\Bbbk$-functor $\mathbb{X}$, provided $\mathbb{X}(F)=\cup_{i\in I}\mathbb{X}_i(F)$ for any field extension $\Bbbk\subseteq F$. A $\Bbbk$-functor $\mathbb{X}$ is called \emph{local} if for any $\Bbbk$-functor $ \mathbb{Y}$ and any open covering $\{\mathbb{Y}_i\}_{i\in I}$ of $\mathbb{Y}$ the diagram 
\[\mathrm{Mor}_{\mathcal{F}}(\mathbb{Y} , \mathbb{X})\to\prod_{i\in I}\mathrm{Mor}_{\mathcal{F}}(\mathbb{Y}_i , \mathbb{X})\rightrightarrows
\prod_{i, j\in I}\mathrm{Mor}_{\mathcal{F}}(\mathbb{Y}_i\cap\mathbb{Y}_j , \mathbb{X}),\]
\[{\bf f}\mapsto \prod_{i\in I} {\bf f}|_{\mathbb{Y}_i}, \ \prod_{i\in I} {\bf f}_i\mapsto \prod_{i, j\in I} {\bf f}_i|_{\mathbb{Y}_i\cap\mathbb{Y}_j}, \  \prod_{i\in I} {\bf f}_i\mapsto \prod_{i, j\in I} {\bf f}_i|_{\mathbb{Y}_j\cap\mathbb{Y}_i},\]
is exact. 

A local $\Bbbk$-functor $\mathbb{X}$ is called a \emph{superscheme} whenever $\mathbb{X}$ has an open covering by affine super-subschemes. Superschemes form a full subcategory of $\mathcal{F}$, denoted by $\mathcal{SF}$. 

Let $\mathbb{X}$ be a $\Bbbk$-functor. If $\mathbb{Y}$ is a subfunctor of $\mathbb{X}$, such that for any $A\in\mathsf{SAlg}_{\Bbbk}$, the set
$\mathbb{Y}(A)$ contains exactly one element, then $\mathbb{Y}$ is called a \emph{one-point subfunctor} of $\mathbb{X}$. 
\begin{lm}\label{one point subfunctor}
Assume that $\mathbb{X}$ is a superscheme. Then every one-point subfunctor of $\mathbb{X}$ is closed.
\end{lm}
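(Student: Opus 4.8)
The plan is to reduce everything to the affine case, where one-point subfunctors are easily seen to be closed, and then use the local structure of a superscheme to patch. Let $\mathbb{Y}$ be a one-point subfunctor of $\mathbb{X}$. By definition of a closed subfunctor, I must show that for every morphism ${\bf f}\colon \mathrm{SSp}(A)\to\mathbb{X}$, the inverse image ${\bf f}^{-1}(\mathbb{Y})$ is a closed affine super-subscheme of $\mathrm{SSp}(A)$, i.e.\ equals $\mathbb{V}(I)$ for some super-ideal $I$ of $A$. Note first that ${\bf f}^{-1}(\mathbb{Y})$ is itself a one-point subfunctor of $\mathrm{SSp}(A)$ \emph{or} the empty subfunctor: for each $B$, the set ${\bf f}^{-1}(\mathbb{Y})(B) = \{\phi\in\mathrm{SSp}(A)(B) : {\bf f}_B(\phi)\in\mathbb{Y}(B)\}$ has at most one element because $\mathbb{Y}(B)$ is a singleton and, more importantly, the one element of $\mathbb{Y}(B)$ forces a unique lift; I should check that if ${\bf f}^{-1}(\mathbb{Y})(B)$ is nonempty for one $B$ then it is a genuine one-point subfunctor (nonempty on all $B$), which follows from functoriality applied to $K\to B$ together with $B\to K$ if such maps exist, or more carefully from the fact that $\mathbb{Y}$, being one-point, is the image of a (necessarily unique) point. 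So it suffices to treat the case $\mathbb{X}=\mathrm{SSp}(A)$ affine.

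For the affine case: a one-point subfunctor $\mathbb{Y}$ of $\mathrm{SSp}(A)$ assigns to each $B$ a single element $\phi_B\colon A\to B$, and these are compatible with all superalgebra maps $B\to B'$. Taking $B=A$, I get a distinguished element $\phi_A\colon A\to A$; I claim $\phi_A$ is idempotent-like and its image is the relevant quotient. Concretely, set $I=\ker\phi_A$ together with... — rather, the cleaner route: the functor $\mathbb{Y}$ is a subfunctor of $\mathrm{Hom}(A,-)$ that is itself representable, namely by $A/I$ where $I$ is determined as follows. Consider $\phi_A\in\mathbb{Y}(A)\subseteq\mathrm{Hom}(A,A)$ and let $C=\mathrm{im}(\phi_A)$, $I=\ker(\phi_A)$. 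Naturality of $\mathbb{Y}$ forces every $\phi_B$ to factor as $A\xrightarrow{\phi_A} A \to B$ composed appropriately; in particular $\phi_B(I)=0$ for all $B$, so $\mathbb{Y}\subseteq\mathbb{V}(I)$. Conversely any $\psi\in\mathbb{V}(I)(B)$ factors through $A/I$, and since $\mathbb{Y}(A/I)$ is the singleton containing the projection $A\to A/I$, naturality along $A/I\to B$ gives $\psi\in\mathbb{Y}(B)$. Hence $\mathbb{Y}=\mathbb{V}(I)$ is closed. (One small point to verify: that $\phi_A$ induces a well-defined factorization, i.e.\ that $A/I\cong C$ as the representing object and the projection really is the element of $\mathbb{Y}(A/I)$; this is the standard Yoneda-type bookkeeping.)

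Finally, patch: let $\{\mathbb{X}_i\}$ be an affine open covering of the superscheme $\mathbb{X}$. For each $i$, the subfunctor $\mathbb{Y}\cap\mathbb{X}_i$ is a one-point or empty subfunctor of the affine superscheme $\mathbb{X}_i$, hence closed in $\mathbb{X}_i$ by the previous paragraph. Since closedness of a subfunctor is a local condition — for any ${\bf f}\colon\mathrm{SSp}(A)\to\mathbb{X}$, the $\{{\bf f}^{-1}(\mathbb{X}_i)\}$ form an open covering of $\mathrm{SSp}(A)$, and ${\bf f}^{-1}(\mathbb{Y})\cap{\bf f}^{-1}(\mathbb{X}_i)$ is closed in ${\bf f}^{-1}(\mathbb{X}_i)$, so ${\bf f}^{-1}(\mathbb{Y})$ is closed in $\mathrm{SSp}(A)$ — we conclude $\mathbb{Y}$ is closed in $\mathbb{X}$.

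The main obstacle I anticipate is the affine-case argument: showing cleanly that the "diagonal" element $\phi_A\in\mathbb{Y}(A)$ genuinely presents $\mathbb{Y}$ as $\mathbb{V}(\ker\phi_A)$, i.e.\ that naturality of the one-point subfunctor is strong enough to pin down the ideal uniformly across all test algebras $B$. Everything else (the reduction to affines, the patching via locality) is formal.
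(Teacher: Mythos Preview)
Your first paragraph contains a genuine error: the claim that ${\bf f}^{-1}(\mathbb{Y})(B)$ has at most one element is false for arbitrary ${\bf f}$. Take $\mathbb{X}=\mathrm{SSp}(K)$, $\mathbb{Y}=\mathbb{X}$, and ${\bf f}\colon\mathrm{SSp}(K[x])\to\mathrm{SSp}(K)$ the structure morphism; then ${\bf f}^{-1}(\mathbb{Y})=\mathrm{SSp}(K[x])$, whose $K$-points are all of $K$. The fibre of a map over a single point is not a single point in general, so this route to ``reduce to $\mathbb{X}$ affine'' does not work.

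Fortunately your proof does not actually rest on that paragraph: the reduction in your third paragraph via an affine open cover, combined with your second paragraph, is correct and is exactly the paper's strategy. Two comments. First, the dichotomy you invoke in paragraph~3 (that $\mathbb{Y}\cap\mathbb{X}_i$ is either all of $\mathbb{Y}$ or empty) deserves a word of justification; this is precisely what the paper extracts from the superized \cite{jan}, I.1.7(6): since each $y_B$ is the image of $y_K$ under $K\to B$, one has $y_B\in\mathbb{X}_i(B)$ for all $B$ if and only if $y_K\in\mathbb{X}_i(K)$. Second, your affine-case argument works but is more elaborate than necessary. The paper simply takes the $K$-point $\phi\colon A_i\to K$ corresponding to $y_K$ and sets $I=\ker\phi$, so that $A_i/I\cong K$ and $\mathbb{V}(I)(B)$ is visibly the singleton $\{\iota_K^B\circ\phi\}=\{y_B\}$. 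Your element $\phi_A\in\mathbb{Y}(A)$ is, by functoriality along $K\to A$, nothing other than $\iota_K^A\circ\phi$; hence $\ker\phi_A=\ker\phi$ and your idempotent bookkeeping recovers the same ideal after extra effort.
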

\begin{proof}
Let $\mathbb{Y}$ be a one-point subfunctor of $\mathbb{X}$. For any superalgebra $A$, we have $\mathbb{Y}(A)=\{y_A\}$. Choose a covering of $\mathbb{X}$ by open affine super-subschemes $\mathbb{X}_i\simeq\mathrm{SSp}(A_i)$ for $i\in I$. A verbatim superization of \cite[I.1.7(6)]{jan}, implies that $\mathbb{X}_i$ meets $\mathbb{Y}$ if and only if $y_{\Bbbk}$ belongs to $\mathbb{X}_i(\Bbbk)$. In the latter case,
$y_{\Bbbk}$ corresponds to a superalgebra morphism $\phi : A_i\to \Bbbk$ and $\mathbb{Y}\cap\mathbb{X}_i=\mathbb{V}(\ker\phi)$. Then \cite[Lemma 9.1]{maszub1} concludes the proof. 
\end{proof}

\subsection{Geometric superschemes}

Recall that a \emph{geometric superspace} $X$ consists of a topological space $X^e$ and a sheaf of super-commutative superalgebras $\mathcal{O}_X$ such that all stalks $\mathcal{O}_{X, x}$ for $x\in X^e$ are local superalgebras. A morphism of
superspaces $f : X\to  Y$ is a pair $( f^e, f^*
)$, where $f^e : X^e\to Y^e$ is a morphism of topological spaces
and $f^* : \mathcal{O}_Y\to f^e_{
*}\mathcal{O}_X$ is a morphism of sheaves such that $f^*_
x : \mathcal{O}_{Y , f^e (x)}\to\mathcal{O}_{X,x}$ is a local morphism for
any $x\in X^e$. Let $\mathcal{V}$ denote the category of geometric superspaces.

To simplify the notations, we use $\mathcal{O}(X)$ instead of $\mathcal{O}_X(X^e)$. For any open subsets $V\subseteq U\subseteq X^e$ the image of $f\in\mathcal{O}_X(U)$ in $\mathcal{O}_X(V)$ is denoted by $f|_V$.

Let $X$ be a geometric superspace. If $U$ is an open subset of $X^e$, then $(U, \mathcal{O}_X|_U)$ is again a geometric superspace, called an \emph{open super-subspace} of $X$. In what follows 
$(U, \mathcal{O}_X|_U)$ is denoted by $U$.

Let $R$ be a superalgebra. An \emph{affine superscheme} $\mathrm{SSpec}(R)$ can be defined as follows.
The underlying topological space of $\mathrm{SSpec}(R)$  coincides
with the prime spectrum of $R$, endowed with the Zariski topology. For any open subset $U\subseteq (\mathrm{SSpec}(R))^e$, the super-ring $\mathcal{O}_{\mathrm{SSpec}(R)}(U)$ consists of all locally constant functions $h : U\to\sqcup_{\mathfrak{p}\in U} R_{\mathfrak{p}}$ such that $h(\mathfrak{p})\in R_{\mathfrak{p}} , \mathfrak{p}\in U$. 

A superspace $X$ is called a \emph{(geometric) superscheme} if there is an open covering $X^e =
\cup_{i\in I} U_i$, such that each open super-subspace $U_i$ is isomorphic to an affine superscheme $\mathrm{SSpec}(R_i)$. Superschemes form a full subcategory of $\mathcal{V}$, denoted by $\mathcal{SV}$. 

If $X$ is a superscheme, then every open super-subspaces of $X$ is a superscheme, called an \emph{open super-subscheme}. A superscheme $Z$ is a \emph{closed super-subscheme} of $X$ if there is a
closed embedding $\iota : Z^e\to X^e$ such that the sheaf $\iota_*\mathcal{O}_Z$ is an epimorphic image of the sheaf $\mathcal{O}_X$. 
\begin{example}\label{the largest purely-even geometric super-subscheme}
The sheafification of the presheaf $U\mapsto I_{\mathcal{O}_X(U)}=\mathcal{O}_X(U) \mathcal{O}_X (U)_1$ is a sheaf of $\mathcal{O}_X$-super-ideals, which is denoted by $\mathcal{I}_X$. Then $(X^e, \mathcal{O}_X/\mathcal{I}_X)$ is the largest purely-even closed super-subscheme of $X$, denoted by $X_{ev}$.  Regarded as a geometric scheme, $X_{ev}$ is denoted by $X_{res}$. Moreover, every morphism $f : X\to Y$ induces the morphism $f_{ev} : X_{ev}\to Y_{ev}$ so that $X\mapsto X_{ev}$ is an endofunctor of $\mathcal{SV}$.
\end{example}

\subsection{Comparison Theorem}(see \cite[Theorem 10.3.7]{ccf}, or \cite[Theorem 5.14]{maszub1}, and \cite[I, \S 1, 4.4]{DG})
\begin{theorem}\label{comparison}
The functor $X\mapsto\mathbb{X}$, where $\mathbb{X}(A)=\mathrm{Mor}_{\mathcal{SV}}(\mathrm{SSpec}(A), X)$ for  $A\in\mathsf{SAlg}_{\Bbbk}$, defines an equivalence $\mathcal{SV}\simeq\mathcal{SF}$.
\end{theorem}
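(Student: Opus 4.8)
The plan is to construct mutually quasi-inverse functors between $\mathcal{SV}$ and $\mathcal{SF}$ and check that the displayed functor $X \mapsto \mathbb{X}$ is one of them. First I would verify that for $X \in \mathcal{SV}$ the assignment $\mathbb{X}(A) = \mathrm{Mor}_{\mathcal{SV}}(\mathrm{SSpec}(A), X)$ actually lands in $\mathcal{SF}$, i.e. that $\mathbb{X}$ is a local $K$-functor admitting an open affine covering. Locality follows from the fact that $\mathrm{SSpec}$ turns a Zariski-open cover of $\mathrm{SSpec}(A)$ into an open covering in the sense defined above, together with the sheaf property of morphisms of superspaces (gluing of locally defined superspace morphisms); this is where one invokes that $\mathrm{SSpec}(A)$ represents the correct functor on the affine level. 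The existence of an affine open covering of $\mathbb{X}$ is inherited from a chosen affine open covering $X^e = \cup_i U_i$ with $U_i \cong \mathrm{SSpec}(R_i)$: the subfunctors $\mathbb{U}_i$ given by those morphisms $\mathrm{SSpec}(A) \to X$ that factor through $U_i$ are open (because preimages of the open sets $U_i$ under a superspace morphism are open), they cover $\mathbb{X}$, and $\mathbb{U}_i \cong \mathrm{SSp}(R_i)$ by the affine adjunction $\mathrm{Mor}_{\mathcal{SV}}(\mathrm{SSpec}(A), \mathrm{SSpec}(R_i)) = \mathrm{Hom}_{\mathsf{SAlg}_K}(R_i, A)$.

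Next I would build the functor in the other direction, $\mathbb{X} \mapsto X_{\mathbb{X}}$, sending a superscheme-functor to a geometric superscheme. For an affine superscheme $\mathbb{X} = \mathrm{SSp}(A)$ one sets $X_{\mathbb{X}} = \mathrm{SSpec}(A)$, where $A$ is recovered as $\mathcal{O}(\mathbb{X})$, the super-ring of global functions, equivalently $\mathrm{Mor}_{\mathcal{F}}(\mathbb{X}, \mathbb{A}^{1|1})$. For a general superscheme $\mathbb{X}$ one chooses an open affine covering $\{\mathbb{X}_i\}$, forms the affine geometric superschemes $\mathrm{SSpec}(\mathcal{O}(\mathbb{X}_i))$, and glues them along the (affine) intersections $\mathbb{X}_i \cap \mathbb{X}_j$ using the locality axiom to see the gluing data is consistent; the result is independent of the covering up to canonical isomorphism. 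One then checks functoriality: a morphism $\mathbb{X} \to \mathbb{Y}$ restricts on affine pieces to superalgebra maps of global sections, hence induces a superspace morphism, and these glue.

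Finally I would exhibit natural isomorphisms $X \cong X_{\mathbb{X}}$ and $\mathbb{Y} \cong \mathbb{Y}_{Y}$ (in suitable directions) witnessing the equivalence. The affine case is the defining adjunction $\mathrm{Mor}_{\mathcal{SV}}(\mathrm{SSpec}(A), \mathrm{SSpec}(R)) \cong \mathrm{Hom}_{\mathsf{SAlg}_K}(R, A)$, i.e. $\mathrm{SSpec}$ and global sections are mutually quasi-inverse between affine geometric superschemes and affine superscheme-functors; this is the super-analogue of the classical Spec–global-sections anti-/equivalence and reduces to the purely commutative statement plus bookkeeping of the odd part. The general case is obtained by covering and gluing: both composites are constructed locally as the affine equivalence and the locality axiom forces the local isomorphisms to be compatible on overlaps, hence to glue to global natural isomorphisms. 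I expect the main obstacle to be the careful verification that the constructions are genuinely independent of the chosen affine coverings and that all the gluing isomorphisms are natural in $X$ (respectively $\mathbb{X}$) — in other words, the diagram-chasing that upgrades a pointwise/local equivalence to an equivalence of categories — rather than any single conceptual difficulty; since the excerpt cites \cite{maszub1}, Theorem 5.14 and \cite{DG}, I, \S 1, 4.4, I would follow those proofs, transcribing the non-super argument of Demazure–Gabriel and inserting the odd-variable modifications where the stalk-locality of $\mathcal{O}_X$ and the super-ideal $I_A$ enter.
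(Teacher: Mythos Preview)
The paper does not actually supply a proof of this theorem: it is stated with a parenthetical citation to \cite{maszub1}, Theorem~5.14, and \cite{DG}, I, \S 1, 4.4, and then used freely throughout. Your sketch is the standard Demazure--Gabriel argument (affine anti-equivalence plus gluing via locality), which is precisely what those references carry out in the ordinary and super settings respectively; so your approach coincides with the intended one, and you have already recognized this in your final sentence.
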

If $f : X\to Y$ is a morphism in the category $\mathcal{SV}$, then the corresponding morphism $\mathbb{X}\to \mathbb{Y}$ in the category $\mathcal{SF}$ is denoted by 
$\bf f$ and vice versa. 

\subsection{Immersions}

A morphism $f : X\to Y$ in the category $\mathcal{SV}$ is called an \emph{immersion} if there is an open subset $U$ in $Y^e$ such that $f^e$ induces a homeomorphism from $X^e$ onto the closed subset $f^e(X^e)\subseteq U$ and the induced morphism of sheaves $f^* : \mathcal{O}|_U\to f_* \mathcal{O}_X$ is surjective. The latter is equivalent to the condition that for every $x\in X^e$, the induced superalgebra morphism $\mathcal{O}_{Y, f^e(x)}\to\mathcal{O}_{X, x}$ is surjective.
If $f^e(X^e)=U$, then $f$ is called {\it an open immersion}. Finally, if $U=Y^e$, then $f$ is called {\it a closed immersion}.

The proofs of the following lemmas are standard, and we leave them for the reader.
\begin{lm}\label{locality of immersion}
A morphism $f : X\to Y$ in the category $\mathcal{SV}$ is closed, open or immersion if and only if for any open covering $\{V_i\}_{i\in I}$ of $Y$, each morphism $f|_{f^{-1}(V_i)} : f^{-1}(V_i)\to V_i$ is closed, open or immersion, correspondingly.
\end{lm}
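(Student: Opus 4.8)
\textit{Proof proposal.}

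The plan is to prove both implications by localizing the data that define an immersion, namely the open set $U\subseteq Y^e$, the homeomorphism onto a closed subset of $U$, and the surjectivity of the sheaf morphism. For the forward direction, suppose $f$ is an immersion witnessed by an open $U\subseteq Y^e$. Given an open covering $\{V_i\}_{i\in I}$ of $Y$, I would take $U_i:=U\cap V_i^e$ as the candidate open subset of $V_i^e$ for $f|_{f^{-1}(V_i)}$. Using the identity $f^e\bigl((f^e)^{-1}(V_i^e)\bigr)=f^e(X^e)\cap V_i^e$ one checks that this set equals $f^e(X^e)\cap U_i$ and is therefore closed in $U_i$ (since $f^e(X^e)$ is closed in $U$); that $f^e$ restricts to a homeomorphism onto it; and that the relevant stalk maps, being a subfamily of the original surjective ones, stay surjective. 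Hence each $f|_{f^{-1}(V_i)}$ is an immersion. In the closed case $U=Y^e$, so $U_i=V_i^e$ and the restrictions are closed immersions; in the open case $f^e(X^e)=U$, so $f^e(X^e)\cap V_i^e=U_i$ and the restrictions are open immersions.

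For the converse it suffices, in view of the forward direction, to show that if there is \emph{one} open covering $\{V_i\}_{i\in I}$ of $Y$ for which every $f|_{f^{-1}(V_i)}$ is an immersion, then $f$ is an immersion. Fix for each $i$ an open $U_i\subseteq V_i^e$ witnessing the immersion property of $f|_{f^{-1}(V_i)}$, and set $W:=\bigcup_{i\in I}U_i$, an open subset of $Y^e$. Surjectivity of $f^*\colon\mathcal O_Y|_W\to f_*\mathcal O_X$ is immediate, since surjectivity of sheaf morphisms is checked on stalks and every $x\in X^e$ lies in some $(f^e)^{-1}(V_i^e)$, where the stalk map is surjective by hypothesis. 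For the topological part I would argue in turn: $f^e$ is injective (two points with the same image lie in a common $(f^e)^{-1}(V_i^e)$, on which $f^e$ is injective); $f^e(X^e)\subseteq W$ (each $f^e(x)$ lies in the local image inside $U_i$); $f^e(X^e)\cap U_i=f^e\bigl((f^e)^{-1}(V_i^e)\bigr)$, which is closed in $U_i$, so $f^e(X^e)$ is closed in $W$ because closedness is local on $W$; and finally the continuous bijection $f^e\colon X^e\to f^e(X^e)$ restricts to a homeomorphism on each member of the open cover $\{(f^e)^{-1}(V_i^e)\}$ of $X^e$ onto the open piece $f^e(X^e)\cap U_i$ of $f^e(X^e)$, so these glue to a homeomorphism. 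Thus $f$ is an immersion with witnessing open set $W$. If each restriction is a closed immersion, then $U_i=V_i^e$, so $W=Y^e$ and $f$ is closed; if each is an open immersion, then $f^e(X^e)=\bigcup_i f^e\bigl((f^e)^{-1}(V_i^e)\bigr)=\bigcup_i U_i=W$ is open in $Y^e$, so $f$ is an open immersion.

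The only delicate point is the bookkeeping in the converse: one must verify that the locally given images $f^e\bigl((f^e)^{-1}(V_i^e)\bigr)$ are exactly $f^e(X^e)\cap U_i$ and are open in the subspace $f^e(X^e)$, so that the hypothesis-provided local homeomorphisms patch to a global one and closedness transfers from the pieces $U_i$ to their union $W$. Everything else is a routine translation of the definition of immersion through restriction to an open covering, using that $f^{-1}(V_i)=\bigl((f^e)^{-1}(V_i^e),\ \mathcal O_X|_{(f^e)^{-1}(V_i^e)}\bigr)$ and that the sheaf morphism of $f|_{f^{-1}(V_i)}$ is the restriction of $f^*$.
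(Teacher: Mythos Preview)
Your proposal is correct and is precisely the kind of direct verification the paper has in mind: the authors declare the proofs of this lemma and the next ``standard'' and omit them entirely, so there is no alternative approach to compare against. Your bookkeeping in the converse---in particular the identity $f^e\bigl((f^e)^{-1}(V_i^e)\bigr)=f^e(X^e)\cap U_i$ and the observation that the $U_i$ cover $W$ so that closedness can be checked locally---is exactly what is needed, and nothing is missing.
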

\begin{lm}\label{locality of immersion 2}
Let $f : X\to Y$ be a morphism in the category $\mathcal{SV}$ such that $f^e$ is injective. Then $f$ is closed, or an immersion, respectively if and only if for any finite open covering $\{U_j\}_{j\in J}$ of $X$, each morphism $f|_{U_j} : U_j\to Y$ is closed, or an immersion, respectively. Finally, $f$ is an open immersion if and only if for any open covering $\{U_j\}_{j\in J}$ of $X$, each morphism $f|_{U_j} : U_j\to Y$ is an open immersion.
\end{lm}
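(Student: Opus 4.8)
The plan is to reduce each of the three properties to two ingredients that can be checked separately: a purely topological condition on $f^e$ (that it be a homeomorphism onto a subset of $Y^e$ that is closed, resp.\ open, resp.\ closed in some open subset), and surjectivity of the sheaf map $f^*$ over the relevant open part of $Y^e$. The second ingredient is local on $X^e$, hence automatically compatible with both restriction and gluing, so all the real work is topological.

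For the ``only if'' implications I would argue by restriction. If $f$ is an immersion, resp.\ an open immersion, and $U\subseteq X^e$ is open, then $f^e(U)$ is open in $f^e(X^e)$, hence locally closed, resp.\ open, in $Y^e$; $f^e|_U$ is a homeomorphism onto $f^e(U)$; and $f^*$ restricts to a surjection over a suitable open subset of $Y^e$. So $f|_U$ has the same property. For the ``closed'' assertion the substance lies in the converse --- only a finite open cover is needed there --- which I treat in the next step.

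For the ``if'' implications I would assume $f^e$ injective and each $f|_{U_j}$ to have the relevant property, with $J$ finite in the closed and immersion cases. Fix open $V_j\subseteq Y^e$ with $f^e(U_j)$ closed in $V_j$; one may take $V_j=Y^e$ in the closed case, and $f^e(U_j)$ is already open in $Y^e$ in the open case. Then: (1) identify $f^e(X^e)=\bigcup_j f^e(U_j)$ as the right sort of subset of $Y^e$ --- closed (a finite union of closed sets), resp.\ closed in $V:=\bigcup_j V_j$, resp.\ open (a union of open sets); in the immersion case this is checked locally on each $V_k$, using injectivity of $f^e$, which gives $f^e(U_i)\cap f^e(U_j)=f^e(U_i\cap U_j)$ and $f^e(X^e)\setminus f^e(U_k)=f^e(X^e\setminus U_k)$, to make the finitely many locally closed pieces fit together. (2) Show $f^e$ is a homeomorphism onto $f^e(X^e)$: it is a continuous bijection onto its image, injectivity makes $\{f^e(U_j)\}$ an open cover of $f^e(X^e)$, and on each member $f^e$ is the homeomorphism $f^e|_{U_j}$, so $f^e$ is open onto its image. (3) Show $f^*\colon\mathcal{O}_Y\to f^e_*\mathcal{O}_X$ is surjective: this is a stalkwise statement, trivial away from the closure of $f^e(X^e)$, and at a point $f^e(x)$ with $x\in U_j$ it is the corresponding stalk map for $f|_{U_j}$, hence surjective. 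Together, (1)--(3) say that $f$ is an immersion, resp.\ a closed immersion, resp.\ an open immersion, with $V$, resp.\ $Y^e$, resp.\ $f^e(X^e)$, as ambient open subset.

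The hard part will be step (1) in the immersion case: a finite union of locally closed sets need not be locally closed, so both hypotheses --- finiteness of the cover and injectivity of $f^e$ --- must genuinely be used. I would reduce inductively to a cover by two opens, invoke that closedness is local on the target to pass from the $V_k$ to $V$, and use injectivity to force each $f^e(U_j)$ to be open in $f^e(X^e)$ and to meet $f^e(U_i)$ precisely in $f^e(U_i\cap U_j)$ --- which is exactly what propagates the local-closedness. Both hypotheses are sharp: e.g.\ countably many distinct closed points of $\mathbb{A}^1_K$ give a closed immersion on each one-point member of the evident open cover, yet their union is dense and not locally closed. Once (1) is in place, (2) and (3) are the routine gluing and stalk computations, which carry over verbatim from the non-super setting.
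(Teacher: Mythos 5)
Since the paper leaves this lemma to the reader (``the proofs \dots are standard''), there is no printed proof to compare with; judged on its own, your treatment of the closed and open immersion clauses is the standard argument and is sound: finiteness makes the image a finite union of closed subsets, injectivity gives $f^e(X^e)\setminus f^e(U_j)=f^e(X^e\setminus U_j)=\bigcup_k f^e\bigl(U_k\cap(X^e\setminus U_j)\bigr)$, again a finite union of closed sets, so each $f^e(U_j)$ is open in the image and $f^e$ is a homeomorphism onto a closed subset; and surjectivity of $f^*$ is a stalkwise condition inherited from the $f|_{U_j}$, exactly as the paper's stalkwise criterion records. The open immersion clause needs none of this and is fine for arbitrary coverings.

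The genuine gap is in the immersion clause of your gluing direction, and it cannot be closed along the lines you propose: the step ``use injectivity to force each $f^e(U_j)$ to be open in $f^e(X^e)$'' is false, and so is the statement you are trying to prove if one fixes a single finite open cover. Counterexample: $Y=\mathrm{SSpec}(K[t])$ and $X=\mathrm{SSpec}(K)\sqcup\mathrm{SSpec}(K[t,t^{-1}])$, where the first summand maps to $Y$ as the closed point $t=0$ and the second by the standard open immersion onto the locus $t\neq 0$. Then $f^e$ is injective, the two summands form a finite open covering of $X$ on which $f$ restricts to a closed and an open immersion, yet $f$ is not an immersion: the image is all of $Y^e$ (so your step (1) is harmless here), but $f^e$ is not a homeomorphism onto it, because the point $(t)$ is open in $X^e$ and not open in $Y^e$; equivalently, $f^e(U_1)$ is not open in the image, which is precisely the assertion your steps (1)--(2) rest on. Your example with infinitely many closed points shows that finiteness is necessary, but finiteness together with injectivity is not sufficient, so this is not a matter of filling in a routine induction. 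Consequently, for the immersion clause you must either read ``for any finite open covering'' universally --- in which case the ``if'' direction is vacuous (apply the hypothesis to the covering $\{X\}$) and the only content is the easy restriction statement you already proved --- or strengthen the hypothesis, e.g.\ assume that $f^e$ is a homeomorphism onto its image (equivalently, that each $f^e(U_j)$ is open in $f^e(X^e)$), after which your steps (2)--(3) do go through and the locally closed pieces glue as you intend. Note also that under the universal reading the ``only if'' half of the closed clause, which you quietly set aside, fails as literally stated: the restriction of a closed immersion to a proper open super-subscheme of $X$ (take $f=\mathrm{id}_Y$ and a basic open subset) is an immersion but not closed; so some charitable re-reading of the statement is unavoidable, and your write-up should say explicitly which reading it proves.
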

\begin{pr}\label{closed (super)subschemes} (cf. \cite[I.l]{milne})
Let $f : Z\to X$ be a morphism of geometric superschemes. Then $f$ is a closed immersion if and only if $\bf f$ is an isomorphism of $\mathbb{Z}$ onto a closed super-subscheme of $\mathbb{X}$.
\end{pr}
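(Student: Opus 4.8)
The plan is to pass freely between geometric superschemes and $K$-functors via the Comparison Theorem (Theorem~\ref{comparison}) and to reduce the statement to the affine case, where a closed immersion is nothing but a surjection of superalgebras and, on the functor side, a closed affine super-subscheme $\mathbb{V}(I)$. I will use the evident super-analogues of the classical facts (as in \cite{milne, jan}) that a closed immersion of geometric superschemes is a monomorphism and is stable under base change, that a morphism $\mathrm{SSpec}(B)\to\mathrm{SSpec}(A)$ of affine superschemes is a closed immersion precisely when it is induced by a surjection $A\twoheadrightarrow B$ (equivalently $B\simeq A/I$ for a super-ideal $I$), and that $\mathcal{SV}$ admits fibre products; I will also use that the equivalence of Theorem~\ref{comparison} carries open, respectively closed, super-subschemes to open, respectively closed, super-subfunctors and commutes with inverse images, so that arguments local on an open affine covering transport between the two pictures.

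For the implication that $f$ being a closed immersion forces $\mathbf{f}$ to be an isomorphism onto a closed super-subscheme: since $f$ is a monomorphism, so is $\mathbf{f}$, hence $\mathbf{f}_A$ is injective for every $A$ and $\mathbf{f}$ restricts to an isomorphism of $\mathbb{Z}$ onto the image subfunctor $\mathbb{Z}':=\mathbf{f}(\mathbb{Z})\subseteq\mathbb{X}$, which is therefore itself a superscheme. It remains to check that $\mathbb{Z}'$ is closed in $\mathbb{X}$. Letting $j\colon Z'\to X$ be the morphism corresponding to the inclusion $\mathbb{Z}'\hookrightarrow\mathbb{X}$, we have $f=j\circ(\text{iso})$, so $j$ is again a closed immersion. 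For any $\mathbf{g}\colon\mathrm{SSp}(A)\to\mathbb{X}$ the subfunctor $\mathbf{g}^{-1}(\mathbb{Z}')=\mathrm{SSp}(A)\times_{\mathbb{X}}\mathbb{Z}'$ corresponds to $\mathrm{SSpec}(A)\times_{X}Z'$, whose projection to $\mathrm{SSpec}(A)$ is the base change of $j$ and hence a closed immersion into an affine superscheme; thus $\mathbf{g}^{-1}(\mathbb{Z}')\simeq\mathbb{V}(J)$ for some super-ideal $J\subseteq A$, which is exactly the condition for $\mathbb{Z}'$ to be a closed super-subscheme of $\mathbb{X}$.

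For the converse, assume $\mathbf{f}$ is an isomorphism of $\mathbb{Z}$ onto a closed super-subscheme $\mathbb{Z}'\subseteq\mathbb{X}$, and factor $f=j\circ g$ with $g\colon Z\xrightarrow{\sim}Z'$ an isomorphism and $j\colon Z'\to X$ the morphism corresponding to $\mathbb{Z}'\hookrightarrow\mathbb{X}$; it suffices to show $j$ is a closed immersion. Choosing an open affine covering $X=\bigcup_i X_i$ with $X_i=\mathrm{SSpec}(A_i)$, by Lemma~\ref{locality of immersion} it is enough to treat each $j|_{j^{-1}(X_i)}\colon j^{-1}(X_i)\to X_i$. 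Under Theorem~\ref{comparison}, $j^{-1}(X_i)$ corresponds to $\mathbb{Z}'\cap\mathbb{X}_i$, which is closed in $\mathbb{X}_i=\mathrm{SSp}(A_i)$ because the restriction of a closed subfunctor to an open one is again closed; applying the definition of ``closed'' to $\mathrm{id}_{\mathrm{SSp}(A_i)}$ shows $\mathbb{Z}'\cap\mathbb{X}_i=\mathbb{V}(I_i)\simeq\mathrm{SSp}(A_i/I_i)$ for a super-ideal $I_i\subseteq A_i$. Hence $j|_{j^{-1}(X_i)}$ corresponds to $\mathrm{SSpec}(A_i/I_i)\to\mathrm{SSpec}(A_i)$ induced by the surjection $A_i\twoheadrightarrow A_i/I_i$, which is a closed immersion, so $j$, and therefore $f$, is a closed immersion.

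The bookkeeping with Theorem~\ref{comparison} and Lemma~\ref{locality of immersion} is routine; the one genuinely substantive ingredient, used in the first direction, is the affine dictionary --- that a closed immersion into an affine superscheme has affine source realised as a quotient $\mathrm{SSpec}(A/J)$. This is the super-analogue of the (non-Noetherian) statement for ordinary schemes, and it goes through because the odd part of any $A\in\mathsf{SAlg}_K$ is locally nilpotent, so that $\mathrm{SSpec}(A)$ has underlying space $\mathrm{Spec}(\overline{A})$ and the super-ideal sheaf of a closed super-subscheme is quasi-coherent, hence generated by its global sections. I expect establishing this (or pinning down a precise reference, e.g. in \cite{maszub1, zub2}) to be the main obstacle; everything else is formal.
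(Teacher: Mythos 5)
Your proposal is correct, and its second half (the "if" direction) is essentially the paper's argument: both localize over an open affine covering of $X$, use that the trace of the closed subfunctor on an affine open $\mathbb{X}_i\simeq\mathrm{SSp}(A_i)$ is some $\mathbb{V}(I_i)\simeq\mathrm{SSp}(A_i/I_i)$, and transfer back through Theorem~\ref{comparison}; the paper invokes Lemma 9.1(1) and Lemma 4.1 of \cite{maszub1} where you invoke Lemma~\ref{locality of immersion} and the remark $\mathbb{V}(I)\simeq\mathrm{SSp}(A/I)$. Where you genuinely diverge is the "only if" direction. The paper again reduces to an affine target via the local-to-global criterion for closed subfunctors (Lemma 9.1(1), \cite{maszub1}), using the observation that $\mathbf{f}^{-1}(\mathbb{U}_j)=\mathbb{Z}_j$, and then applies the affine dictionary there; you instead verify the definition of a closed subfunctor head-on against an arbitrary test morphism $\mathbf{g}:\mathrm{SSp}(A)\to\mathbb{X}$, using that a closed immersion is a monomorphism (so $\mathbf{f}$ maps $\mathbb{Z}$ isomorphically onto its image) and that closed immersions are stable under base change, so that $\mathbf{g}^{-1}(\mathbb{Z}')$ is the image of a closed immersion into $\mathrm{SSpec}(A)$ and hence equals some $\mathbb{V}(J)$. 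Your route buys you independence from the local-to-global lemma for closed subfunctors, but it pays for it with base-change stability (and compatibility of the equivalence with fibre products), and that stability itself reduces, affine-locally, to the same "affine dictionary" --- a closed immersion into $\mathrm{SSpec}(A)$ is induced by a surjection $A\twoheadrightarrow A/J$ --- so it is somewhat heavier than the paper's purely affine-local reduction. You correctly isolate that dictionary as the one substantive ingredient; note that the paper treats it exactly the same way, asserting it without proof and citing \cite{maszub1} only for the functor-side identification of $\mathrm{SSp}(A/I)$ with $\mathbb{V}(I)$, so your sketch (locally nilpotent odd part, quasi-coherence of the ideal sheaf) is at the same level of rigor as the published argument.
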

\begin{proof}
Let $\{U_j\}_{j\in J}$ be an open covering of $X$. Then $\{Z_j=i^{-1}(U_j)\}_{j\in J}$ is an open covering of $Z$. By \cite[Lemma 5.2(2,3)]{maszub1}, $\{\mathbb{U}_j\}_{j\in J}$ is an open covering of $\mathbb{X}$. Moreover, each $\mathbb{U}_j$ is affine whenever $U_j$ is.

Observe that, for every superalgebra $A$, and every open subset $U\subseteq X^e$, the subset $\mathbb{U}(A)\subseteq \mathbb{X}(A)$ consists of all superscheme morphism $h : \mathrm{SSpec}(A)\to X$ such that $h^e((\mathrm{SSpec}(A))^e)\subseteq U$. Thus ${\bf f}^{-1}(\mathbb{U}_j)=\mathbb{Z}_j$ for any $j\in J$.

Assume now that $f$ is a closed immersion. Then $\bf f$ is an embedding of $\Bbbk$-functors.
 The above remarks, combined with \cite[Lemma 9.1 (1)]{maszub1}, reduce the general case to $X=\mathrm{SSpec}(A)$ for $A\in\mathsf{SAlg}_{\Bbbk}$. There is a super-ideal $I$ of $A$ such that $f$ can be identified with the closed immersion $\mathrm{SSpec}(A/I)\to\mathrm{SSpec}(A)$. Then \cite[Lemma 4.1]{maszub1} implies that $\bf f$ is the canonical isomorphism of $\mathrm{SSp}(A/I)$ onto the closed super-subscheme $\mathbb{V}(I)$ of $\mathrm{SSp}(A)$.

For the reverse statement, we use  \cite[Lemma 9.1 (1)]{maszub1} again, and the general case can be reduced to $X=\mathrm{SSpec}(A)$. There are a super-ideal $I$ of $A$ and an isomorphism ${\bf h} : \mathbb{Z}\to\mathrm{SSp}(A/I)$ such that ${\bf f}=\mathrm{SSp}(\pi){\bf h}$, where $\pi$ is the canonical epimorphism $A\to A/I$. By Comparison Theorem, there is an isomorphism $h : Z\to\mathrm{SSpec}(A/I)$ such that $f=\mathrm{SSpec}(\pi)h$. In other words, for every open affine covering $\{U_j\}_{j\in J}$ of $X$, each morphism $f|_{Z_j} : Z_j\to U_j$ is a closed immersion. Therefore, $f$ is a closed immersion, proving the proposition.
\end{proof}
\begin{example}
The closed immersion $X_{ev}\to X$ corresponds to the embedding $\mathbb{X}_{ev}\to\mathbb{X}$, where
the subfunctor $\mathbb{X}_{ev}$ is defined as 
\[\mathbb{X}_{ev}(A)=\mathbb{X}(\iota)(\mathbb{X}(A_0))\simeq\mathbb{X}(A_0) \text{ for } A\in\mathsf{SAlg}_K ,\]
and $\iota : A_0\to A$ is the natural embedding. In particular, $\mathbb{X}_{ev}$ is a closed super-subscheme of $\mathbb{X}$ (see also \cite[Proposition 9.2]{maszub1}). As above, $\mathbb{X}\to\mathbb{X}_{ev}$ is an endofunctor of the category $\mathcal{SF}$. Note that if $\mathbb{X}$ is not a superscheme, then the above map $\mathbb{X}(\iota)$ is no longer injective. For example, consider the functor $A\mapsto\overline{A}$ for $A\in\mathsf{SAlg}_{\Bbbk}$.   
\end{example}
Let $f : X\to Y$ be an immersion. Assume that $f$ factors through an open super-subscheme $U$ of $Y$.
Let $\mathcal{J}$ denote $\ker (f_*\mathcal{O}_U\to\mathcal{O}_X)$. For any non-negative integer $n$, one can define the \emph{$n$-th neighborhood} of $f$ as a closed super-subscheme of $U$, given by the super-ideal sheaf $\mathcal{J}^{n+1}$, denoted by $Y^n_f$.
\begin{lm}\label{compatibility in general} The definition of $Y^n_f$ does not depend on $U$. 
\end{lm}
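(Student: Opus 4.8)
The plan is to reduce the statement to a purely local computation, using the two locality lemmas (Lemma \ref{locality of immersion} and Lemma \ref{locality of immersion 2}) to shrink $U$ and $X$ until everything is affine, and then to observe that the relevant kernel sheaf $\mathcal{J}$ and its powers $\mathcal{J}^{n+1}$ are intrinsically attached to the immersion $f$, independently of the chosen open neighborhood. Concretely: suppose $f$ factors through two open super-subschemes $U$ and $U'$ of $Y$, giving ideal sheaves $\mathcal{J}=\ker(f_*\mathcal{O}_U\to\mathcal{O}_X)$ on $U^e$ and $\mathcal{J}'=\ker(f_*\mathcal{O}_{U'}\to\mathcal{O}_X)$ on $(U')^e$, and hence closed super-subschemes $Y^n_{f,U}\subseteq U$ and $Y^n_{f,U'}\subseteq U'$. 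First I would pass to the intersection $U''=U\cap U'$: since $f^e(X^e)$ is closed in $U^e$ and in $(U')^e$ and is contained in $U^e\cap(U')^e$, the morphism $f$ also factors through $U''$. It therefore suffices to show that $Y^n_{f,U}$ and $Y^n_{f,U''}$ agree, i.e. to treat the case $U'\subseteq U$ of an open inclusion.

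In that case the key point is that the formation of $\mathcal{J}$ commutes with restriction to the open subset $(U')^e\subseteq U^e$: since $\mathcal{O}_{U'}=\mathcal{O}_U|_{(U')^e}$ and $f_*$ on the subspace is compatible with restriction along open immersions, one has $\mathcal{J}|_{(U')^e}=\mathcal{J}'$, and consequently $(\mathcal{J}^{n+1})|_{(U')^e}=(\mathcal{J}')^{n+1}$ (taking powers of an ideal sheaf is a local, hence restriction-compatible, operation). Thus as closed super-subschemes, $Y^n_{f,U}\cap U'=Y^n_{f,U'}$. It remains to see that $Y^n_{f,U}$ is already contained in $U'$, equivalently that its underlying topological space lies in $(U')^e$. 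But the underlying space of $Y^n_{f,U}$ is the support of $\mathcal{O}_U/\mathcal{J}^{n+1}$, which is contained in the support of $\mathcal{O}_U/\mathcal{J}=f_*\mathcal{O}_X$, namely $f^e(X^e)$; and $f^e(X^e)\subseteq(U')^e$ by hypothesis. Therefore $Y^n_{f,U}=Y^n_{f,U}\cap U'=Y^n_{f,U'}$, and chasing back through the intersection step gives $Y^n_{f,U}=Y^n_{f,U'}$ in general.

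To make the restriction-compatibility of $\mathcal{J}$ fully rigorous one can either argue stalkwise — for $x\in X^e$ the stalk $\mathcal{J}_{f^e(x)}$ is the kernel of the surjection $\mathcal{O}_{U,f^e(x)}\to\mathcal{O}_{X,x}$, which depends only on $f$ near $x$ and not on the ambient open set — or reduce to the affine case via Lemma \ref{locality of immersion 2}: cover $X$ by finitely many affine opens $X_j$ whose images lie in affine opens of $U'$, write $f|_{X_j}$ as $\mathrm{SSpec}(A_j/I_j)\to\mathrm{SSpec}(A_j)$ (using Proposition \ref{closed (super)subschemes} together with the immersion hypothesis), and note that $\mathcal{J}^{n+1}$ restricts on $\mathrm{SSpec}(A_j)$ to the ideal sheaf associated with $I_j^{n+1}$, which is manifestly independent of whether we view $\mathrm{SSpec}(A_j)$ inside $U$ or inside $U'$. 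I expect the main (though mild) obstacle to be the bookkeeping in the first reduction: one must check that $f$ genuinely factors through $U\cap U'$ as a morphism of superschemes — not merely that $f^e(X^e)\subseteq U^e\cap(U')^e$ set-theoretically — which follows because an immersion factoring through an open subspace is determined by its corestriction and the two corestrictions to $U$ and $U'$ restrict to the same morphism into $U\cap U'$; with that in hand the remaining steps are the routine sheaf-theoretic verifications indicated above.
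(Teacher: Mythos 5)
Your proof is correct and follows essentially the same route as the paper's: the paper likewise reduces to the case of nested open super-subschemes $V\subseteq U$ and then compares the two constructions locally, via the natural isomorphisms $\mathcal{O}_{Y^n_f,\,x}\simeq\mathcal{O}_{U,x}/\mathcal{J}^{n+1}_x\simeq\mathcal{O}_{V,x}/(\mathcal{J}|_V)^{n+1}_x$ at points $x\in X^e$ — exactly the stalkwise variant you sketch at the end. Your extra observation that the support of $\mathcal{O}_U/\mathcal{J}^{n+1}$ is already contained in $f^e(X^e)$, so that the neighborhood formed in $U$ lies inside the smaller open, merely makes explicit a point the paper leaves implicit.
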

\begin{proof}
If $f$ factors through another open super-subscheme $V$ of $Y$, then $f$ factors through $U\cap V$.
Without loss a generality, one can assume that $V\subseteq U$. Then for every $x\in X^e\subseteq V^e$
there is a natural isomorphism $\mathcal{O}_{Y^n_f, x}\simeq\mathcal{O}_{U, x}/\mathcal{J}^{n+1}_x\simeq
\mathcal{O}_{V, x}/(\mathcal{J}|_V)_x^{n+1}$. 
\end{proof}
For example, let $B$ be a local superalgebra with nilpotent maximal super-ideal $\mathfrak{m}$ such that $B/\mathfrak{m}=\Bbbk$. Every superscheme morphism $\mathrm{SSpec}(B)\to X$ is uniquely defined by a $\Bbbk$-point $x\in X^e$ and by a local morphism of superalgebras $\mathcal{O}_x\to B$. In particular, such morphism factors through any open neighborhood of $x$, and it is a closed immersion if and only if $\mathcal{O}_x\to B$ is surjective. In particular, we have a canonical closed immersion 
$\mathrm{SSpec}(\mathcal{O}_x/\mathfrak{m}_x^{n+1})\to X$, which is just the $n$-th neighborhood of the closed immersion $i_x : \mathrm{SSpec}(\mathcal{O}_x/\mathfrak{m}_x)\to X$.

\subsection{Morphisms (locally) of finite type}

Recall that a morphism $f : X\to Y$ of geometric superschemes is said to be \emph{locally of finite type} if there is an open covering of $Y$ by affine super-subschemes $V_i\simeq \mathrm{SSpec}(B_i)$
such that for every $i$, the open super-subscheme $f^{-1}(V_i)$ is covered by open super-subschemes
$U_{ij}\simeq\mathrm{SSpec}(A_{ij})$, where each $A_{ij}$ is a finitely generated $B_i$-superalgebra.

If each $f^{-1}(V_i)$ can be covered by a finite number of $U_{ij}$, then $f$ is said to be a morphism of \emph{finite type} (cf. \cite[II.3]{hart}).
\begin{lm}\label{finitely generated}
Let $\phi : B\to A$ be a superalgebra morphism. Then $A$, regarded as a $B$-superalgebra via $\phi$, is finitely generated if and only if there are $b_1, \ldots , b_s\in B_0$ such that $\sum_{1\leq i\leq s}B_0 b_i=B_0$ and $A_{\phi(b_i)}$ is a finitely generated $B_{b_i}$-superalgebra for each $1\leq i\leq s$. Symmetrically, if there are $a_1, \ldots, a_s\in A_0$ such that $\sum_{1\leq i\leq s}A_0a_i=A_0$ and $A_{a_i}$ is a finitely generated $B$-superalgebra for each $1\leq i\leq s$, then $A$ is a finitely generated $B$-superalgebra.  
\end{lm}
\begin{proof}
There is a super-subalgebra $C$ of $A$, finitely generated over $B[\phi(b_1), \ldots, \phi(b_s)]$ (respectively, finitely generated over $B[a_1, \ldots, a_k]$), such that for every $i$, there is $A_{\phi(b_i)}=C_{\phi(b_i)}$ (respectively, $A_{a_i}=C_{a_i}$). Then \cite[Lemma 1.2]{zub2} concludes the proof.
\end{proof}
Combining Lemma \ref{finitely generated} with \cite[Lemma 3.5]{maszub1}, one can easily derive the following characterization of morphisms locally of finite type, and morphisms of finite type as well (cf. \cite[Exercise II.3.3(b)]{hart}).
\begin{lm}\label{finite type}
A morphism $f : X\to Y$ in $\mathcal{SV}$ is locally of finite (respectively, of finite) type if and only if for every open super-subscheme 
$V\simeq\mathrm{SSpec}(B)$ of $Y$, the open super-subscheme $U=f^{-1}(V)$ has a (finite) open covering by
super-subschemes $U_i\simeq\mathrm{SSpec}(A_i)$ such that each $A_i$ is a finitely generated $B$-superalgebra. In particular, if both $X$ and $Y$ are affine, then $f$ is of finite type if and only if $\mathcal{O}(X)$ is a finitely generated $\mathcal{O}(Y)$-superalgebra.
\end{lm}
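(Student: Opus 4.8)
\textbf{The ``if'' direction} is immediate: letting $V$ range over an affine open covering of $Y$ exhibits $f$ as locally of finite type, and if in addition each such $f^{-1}(V)$ admits a \emph{finite} covering of the stated form, then $f$ is of finite type by definition. So the content lies in the converse, and the plan is to imitate the classical argument (cf. \cite{hart}, Exercise II.3.3(b)): given an arbitrary affine open $V\simeq\mathrm{SSpec}(B)$ of $Y$, cover it by principal open super-subschemes $D(b)\simeq\mathrm{SSpec}(B_b)$ that are small enough to be swallowed by the charts coming from the definition of (locally) finite type, and pull them back. The subtle point — the one I expect to be the main obstacle — is that a piece of $f^{-1}(V)$ sitting inside a chart $U_{jk}\simeq\mathrm{SSpec}(A_{jk})$, with $A_{jk}$ finitely generated only over $B_j=\mathcal{O}(V_j)$, must be shown to be finitely generated over $B$, and $B_j$ bears no relation to $B$. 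The remedy is to work over a principal open $D(b)=\mathrm{SSpec}(B_b)$ of $V$ that happens to be contained in $V_j$: over such a $D(b)$ the morphism to $V_j$ factors through $\mathrm{SSpec}(B_b)$, so finite generation over $B_j$ descends to finite generation over $B_b=B[b^{-1}]$, hence over $B$.

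\textbf{The locally-finite-type case.} Fix $V\simeq\mathrm{SSpec}(B)$ and choose an affine open covering $Y=\bigcup_{j}V_j$, $V_j\simeq\mathrm{SSpec}(B_j)$, with each $f^{-1}(V_j)$ covered by affine opens $U_{jk}\simeq\mathrm{SSpec}(A_{jk})$, where $A_{jk}$ is finitely generated over $B_j$ (with only finitely many $k$ in the finite-type case). Since the $V_j$ cover $Y\supseteq V$, the opens $V\cap V_j$ cover $V$, and, principal opens forming a basis and being affine (Lemma 3.5 of \cite{maszub1}), each $V\cap V_j$ is covered by principal opens $D(b)\subseteq V\cap V_j$ of $V$; here $\mathcal{O}(D(b))=B_b=B[b^{-1}]$ is finitely generated over $B$. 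Fix such a $D(b)$, say with $D(b)\subseteq V_j$, and an index $k$. The open super-subscheme $U_{jk}\cap f^{-1}(D(b))$ of the affine $U_{jk}$ can be covered by principal opens $\mathrm{SSpec}(C)$ of $U_{jk}$ that lie in $f^{-1}(D(b))$; such a $C$ is a localization of $A_{jk}$, hence finitely generated over $B_j$, and since $f$ carries $\mathrm{SSpec}(C)$ into $D(b)$ the structure map $B_j\to C$ factors through the restriction map $B_j\to B_b$, so the same finite generating set exhibits $C$ as finitely generated over $B_b$, and therefore — by transitivity of finite generation, together with Lemma \ref{finitely generated} — over $B$. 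Ranging over $j$, over the chosen $D(b)$'s, and over $k$, these affine opens $\mathrm{SSpec}(C)$ cover $f^{-1}(V)$, which proves the statement in the locally-finite-type case.

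\textbf{The finite-type refinement.} One has to make the covering just produced finite, and quasi-compactness of affine superschemes is used twice. First, $V$ is quasi-compact and the principal opens $D(b)$ (over all $j$) cover it, so finitely many of them, $D(b_1),\dots,D(b_r)$ with $D(b_m)\subseteq V\cap V_{j(m)}$, already cover $V$; then $f^{-1}(V)=\bigcup_{m}f^{-1}(D(b_m))=\bigcup_{m}\bigcup_{k}\bigl(U_{j(m)k}\cap f^{-1}(D(b_m))\bigr)$ is a finite union, since each $f^{-1}(V_{j(m)})$ has only finitely many charts $U_{j(m)k}$. Second, each $D(b_m)\simeq\mathrm{SSpec}(B_{b_m})$ is itself quasi-compact, hence covered by finitely many principal opens of $V_{j(m)}$; pulling these back along $f|_{U_{j(m)k}}$ writes $U_{j(m)k}\cap f^{-1}(D(b_m))$ as a \emph{finite} union of principal opens of $U_{j(m)k}$, each contained in $f^{-1}(D(b_m))$, so that their coordinate superalgebras are finitely generated over $B$ by the previous paragraph. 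Thus $f^{-1}(V)$ acquires a finite affine covering of the required kind, completing the argument. The only place where something beyond routine bookkeeping is needed is the descent of finite generation from $B_j$ to $B$ described in the first paragraph; once that device is in place, both cases follow by the standard locality arguments for affine coverings.
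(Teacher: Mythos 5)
Your argument is correct and is exactly the classical Hartshorne Exercise II.3.3(b) argument that the paper itself invokes (it gives no details beyond citing Lemma \ref{finitely generated} and Lemma 3.5 of \cite{maszub1}): shrink to principal opens of $V$ inside the charts $V_j$, pull back to principal opens of the $U_{jk}$, and descend finite generation through $B\to B_b\to C$, with quasi-compactness giving the finite refinement. So the proposal matches the paper's intended proof in substance, only spelled out in full.
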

A superscheme $X$ is said to be \emph{(locally) of finite type} if the canonical morphism $X\to\mathrm{SSpec}(\Bbbk)$ is  (locally) of finite type.

\section{Separated and proper morphisms of superschemes}

Let $\mathcal{P}$ be a property of a class of geometric superschemes or a class of morphisms in $\mathcal{SV}$. We say that this property is \emph{even reducible}, provided $X$ satisfies $\mathcal{P}$ if and only if $X_{ev}$ does (respectively, $f$ satisfies $\mathcal{P}$ if and only if $f_{ev}$ does). For example, the property of a Noetherian (geometric) superscheme to be affine is even reducible (see \cite[Theorem 3.1]{zub1}). The equivalence $\mathcal{SV}\simeq\mathcal{SF}$ naturally translates this definition to the category $\mathcal{SF}$.  That is, $\mathcal{P}$ is even reducible in $\mathcal{SV}$ if and only if it is even reducible in $\mathcal{SF}$.

Since the categories $\mathcal{SV}$ and $\mathcal{SF}$ are equivalent and $\mathcal{SF}$ has fibered products,  $\mathcal{SV}$ has as well. Let $p_X$ and $p_Y$ denote the canonical projection morphisms $X\times_S Y\to X$ and $X\times_S Y\to Y$, respectively.
The fibered product $X\times_{\mathrm{SSpec}(\Bbbk)} Y$, denoted by $X\times Y$, is a direct product of superschemes $X$ and $Y$ in $\mathcal{SV}$.

The following lemma will be used later.
\begin{lm}\label{coveingsofproducts}
Let $X$ and $Y$ be superschemes over a superscheme $S$. Let $\{U_i\}_{i\in I}$ be an open covering of $X$. Then
the superschemes $U_i\times_S Y$ form an open covering of $X\times_S Y$.
\end{lm}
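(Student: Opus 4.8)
The plan is to work with $K$-functors via the Comparison Theorem (Theorem \ref{comparison}), where fibered products are computed pointwise, so the combinatorics become transparent. Writing $\mathbb{X}$, $\mathbb{Y}$, $\mathbb{S}$, $\mathbb{U}_i$ for the $K$-functors attached to $X$, $Y$, $S$, $U_i$, we have by definition of the fibered product in $\mathcal{SF}$ that $(\mathbb{U}_i \times_{\mathbb{S}} \mathbb{Y})(A) = \mathbb{U}_i(A) \times_{\mathbb{S}(A)} \mathbb{Y}(A)$ for every $A \in \mathsf{SAlg}_K$, and similarly for $\mathbb{X} \times_{\mathbb{S}} \mathbb{Y}$. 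The inclusion $\mathbb{U}_i \hookrightarrow \mathbb{X}$ induces a monomorphism of functors $\mathbb{U}_i \times_{\mathbb{S}} \mathbb{Y} \to \mathbb{X} \times_{\mathbb{S}} \mathbb{Y}$, realizing each $\mathbb{U}_i \times_{\mathbb{S}} \mathbb{Y}$ as a subfunctor of $\mathbb{X} \times_{\mathbb{S}} \mathbb{Y}$.

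First I would check that $\mathbb{U}_i \times_{\mathbb{S}} \mathbb{Y}$ is an \emph{open} subfunctor of $\mathbb{X} \times_{\mathbb{S}} \mathbb{Y}$. By the definition of openness, given any morphism ${\bf f} : \mathrm{SSp}(A) \to \mathbb{X} \times_{\mathbb{S}} \mathbb{Y}$, one must show that ${\bf f}^{-1}(\mathbb{U}_i \times_{\mathbb{S}} \mathbb{Y})$ is open in $\mathrm{SSp}(A)$. But ${\bf f}$ is the same as a pair $(\mathbf{g}, \mathbf{h})$ with $\mathbf{g} : \mathrm{SSp}(A) \to \mathbb{X}$, $\mathbf{h} : \mathrm{SSp}(A) \to \mathbb{Y}$ agreeing over $\mathbb{S}$, and a point $\phi \in \mathrm{SSp}(A)(B)$ lands in $(\mathbb{U}_i \times_{\mathbb{S}} \mathbb{Y})$ if and only if $\mathbf{g}(B)(\phi) \in \mathbb{U}_i(B)$; hence ${\bf f}^{-1}(\mathbb{U}_i \times_{\mathbb{S}} \mathbb{Y}) = \mathbf{g}^{-1}(\mathbb{U}_i)$, which is open in $\mathrm{SSp}(A)$ because $\mathbb{U}_i$ is open in $\mathbb{X}$. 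Hence each $\mathbb{U}_i \times_{\mathbb{S}} \mathbb{Y}$ is open in $\mathbb{X} \times_{\mathbb{S}} \mathbb{Y}$.

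Next I would verify the covering condition: for every field extension $K \subseteq F$, one needs $(\mathbb{X} \times_{\mathbb{S}} \mathbb{Y})(F) = \bigcup_{i} (\mathbb{U}_i \times_{\mathbb{S}} \mathbb{Y})(F)$. An element of the left side is a pair $(x, y) \in \mathbb{X}(F) \times \mathbb{Y}(F)$ with equal images in $\mathbb{S}(F)$; since $\{\mathbb{U}_i\}$ covers $\mathbb{X}$, there is $i$ with $x \in \mathbb{U}_i(F)$, and then $(x,y) \in (\mathbb{U}_i \times_{\mathbb{S}} \mathbb{Y})(F)$. The reverse inclusion is immediate. This shows $\{\mathbb{U}_i \times_{\mathbb{S}} \mathbb{Y}\}_{i \in I}$ is an open covering of $\mathbb{X} \times_{\mathbb{S}} \mathbb{Y}$ in the sense of $\mathcal{SF}$. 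Translating back through the Comparison Theorem — using that it identifies fibered products and carries open subfunctors to open super-subschemes — gives that $\{U_i \times_S Y\}_{i\in I}$ is an open covering of $X \times_S Y$.

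The only point requiring a little care is the bookkeeping that $U_i \times_S Y$, formed as a fibered product of superschemes, really corresponds under the equivalence to the subfunctor $\mathbb{U}_i \times_{\mathbb{S}} \mathbb{Y}$ of $\mathbb{X} \times_{\mathbb{S}} \mathbb{Y}$ — i.e.\ that the open immersion $U_i \to X$ pulls back to an open immersion $U_i \times_S Y \to X \times_S Y$; this is a formal consequence of the Comparison Theorem together with the functorial description of fibered products, and in the affine-local case it is already recorded in the cited results from \cite{maszub1}. I expect no serious obstacle; the argument is essentially a pointwise verification once one commits to the functorial language.
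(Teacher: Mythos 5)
Your proposal is correct and follows the same route as the paper: the paper also passes to the functor category via the Comparison Theorem and then invokes the superization of Jantzen I.1.7(3)--(4), whose content (openness of $\mathbb{U}_i\times_{\mathbb{S}}\mathbb{Y}$ via pullback along morphisms from affine superschemes, and the covering condition on field-valued points) is exactly what you verify explicitly.
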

\begin{proof}
By Theorem 5.14 and remark after \cite[Proposition 5.12]{maszub1}, one can work in the category $\mathcal{SF}$. The easy superization of both \cite[I.1.7(3)]{jan} and \cite[I.1.7(4)]{jan} implies the statement.
\end{proof}
A morphism $f : X\to Y$ in $\mathcal{SV}$ is called {\it separated} if the {\it diagonal} morphism $\delta_f : X\to X\times_Y X$ is a closed immersion. We also say that $X$ is {\it separated} over $Y$. In particular, a superscheme $X$ is called {\it separated}, provided $X$ is separated over $\mathrm{SSpec}(\Bbbk)$ (cf. \cite[II, \S 4]{hart}). For example, any morphism of affine superschemes is separated (see \cite[Proposition II.4.1]{hart}). In particular, any affine superscheme is separated.
The following lemma superizes \cite[Corollary II.4.2]{hart}. 
\begin{lm}\label{when given morphism is separated?}
A morphism $f : X\to Y$ is separated if and only $\delta_f^e(X^e)$ 
is a closed subset of $(X\times_Y X)^e$.
\end{lm}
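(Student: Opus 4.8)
The plan is to reduce the ``if'' direction (the ``only if'' direction being trivial, since a closed immersion has closed image) to a purely topological statement by exploiting the fact that the diagonal $\delta_f$ is always an immersion, and then invoke the comparison between the functorial and geometric pictures to upgrade ``closed image'' to ``closed immersion.'' First I would observe that $\delta_f : X \to X\times_Y X$ always factors through an open super-subscheme: writing $W = X\times_Y X$ and covering $W$ by affine opens $W_{ij} = U_i \times_{V} U_j$ (where $\{V\}$ is an affine open cover of $Y$ and $\{U_i\}$ refines the preimages, using Lemma \ref{coveingsofproducts}), one checks that $\delta_f^{-1}(W_{ij})$ lands inside $U_i \cap U_j$ and that the restriction $\delta_f|_{U_i\cap U_j} : U_i\cap U_j \to W_{ij}$ is, on an affine patch $U_i \cap U_j \cap U_k = \mathrm{SSpec}(C)$ with $W_{ij} = \mathrm{SSpec}(B\otimes_{\mathcal{O}(V)} B)$ where $U_i\cap U_j \subseteq \mathrm{SSpec}(B)$, induced by the multiplication map $B\otimes_{\mathcal{O}(V)} B \to C$, which is surjective. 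Hence $\delta_f^*$ is surjective on stalks and $\delta_f^e$ is a homeomorphism onto its (locally closed) image; that is, $\delta_f$ is an immersion in the sense of the ``Immersions'' subsection.

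Next I would use the standard fact that an immersion is a closed immersion precisely when its image is closed: by Lemma \ref{locality of immersion} and Lemma \ref{locality of immersion 2}, being a closed immersion is local on the target, so it suffices to show each $\delta_f|_{U_i\cap U_j}\to W_{ij}$ is a closed immersion, i.e.\ that $\delta_f^e(X^e) \cap W_{ij}^e$ is closed in $W_{ij}^e$. But $\delta_f^e(X^e)$ being closed in $(X\times_Y X)^e$ gives exactly this by intersecting with the open $W_{ij}^e$. Conversely, if each restriction is a closed immersion its image is closed in the open $W_{ij}^e$, and since the $W_{ij}^e$ cover $(X\times_Y X)^e$ and $\delta_f^e$ is a homeomorphism onto its image, a gluing argument (the image meets each piece of the cover in a closed set and $\delta_f^e(X^e)$ is already locally closed, being an immersion's image) shows $\delta_f^e(X^e)$ is closed. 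This circle of implications delivers the lemma.

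The one point requiring care, and the main obstacle, is showing that $\delta_f$ really is an immersion with locally closed image \emph{without} yet knowing it is closed — that is, the surjectivity of the diagonal $\delta_f^*$ on stalks and the fact that $\delta_f^e$ is a homeomorphism onto a locally closed subset. In the affine case $X = \mathrm{SSpec}(A)$, $Y = \mathrm{SSpec}(B)$ this is Proposition II.4.1 of \cite{hart} superized: the diagonal of $\mathrm{SSpec}(A)$ over $\mathrm{SSpec}(B)$ is the closed immersion $\mathrm{SSpec}(A\otimes_B A) \to \mathrm{SSpec}(A\otimes_B A)$ cut out by the kernel of multiplication, hence certainly an immersion. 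The general case is glued from this via the cover $\{W_{ij}\}$ above, using that $\delta_f^{-1}(W_{ij}) = U_i \cap U_j$ is open in $X$ and these cover $X$; Lemma \ref{locality of immersion} then assembles the local immersions into the global statement that $\delta_f$ is an immersion. Once that is in hand, the passage from ``closed image'' to ``closed immersion'' is automatic by the definitions in the ``Immersions'' subsection, and no deeper input (in particular, no appeal to the Comparison Theorem beyond what Lemma \ref{coveingsofproducts} already packages) is needed.
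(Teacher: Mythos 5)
The ``only if'' direction and the final gluing are fine, and your overall strategy (the diagonal is always an immersion; an immersion with closed image is a closed immersion) is a legitimate variant of the paper's argument, which simply copies Hartshorne's Corollary II.4.2 after checking, via the comparison with $\mathcal{SF}$, that each $U\times_V U\to X\times_Y X$ is an open immersion. The genuine gap is in your first step. You cover $X\times_Y X$ by the opens $W_{ij}=U_i\times_V U_j$ with $i\neq j$ allowed, and claim that the restriction $\delta_f^{-1}(W_{ij})=U_i\cap U_j\to W_{ij}$ is induced on affine patches by a \emph{surjective} map $B\otimes_{\mathcal{O}(V)}B\to C$. This is false for $i\neq j$, and it has to be: if every such restriction were a closed immersion, then by your own locality argument $\delta_f$ would be a closed immersion with no hypothesis on its image, i.e.\ every morphism would be separated. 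A purely even counterexample is the affine line with doubled origin over $Y=\mathrm{SSpec}(K)$: with $U_0\simeq U_1\simeq\mathrm{SSpec}(K[x])$ and $U_0\cap U_1\simeq\mathrm{SSpec}(K[t,t^{-1}])$, the relevant map $K[x]\otimes K[y]\to K[t,t^{-1}]$ sends both variables to $t$ and has image $K[t]$; correspondingly $\delta^e(X^e)\cap W_{01}^e$ is the punctured diagonal of the plane, which is not closed in $W_{01}^e$.

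What is true, and what your plan actually needs, is the claim only for the diagonal pieces $W_{ii}=U_i\times_V U_i$: there the restriction is the affine diagonal $U_i\to U_i\times_V U_i$, a closed immersion by the superized Proposition II.4.1 of Hartshorne. These opens do not cover $X\times_Y X$, but they do cover $\delta_f^e(X^e)$, which gives exactly that $\delta_f$ factors as a closed immersion into the open super-subscheme $\bigcup_i W_{ii}$, i.e.\ that $\delta_f$ is an immersion; alternatively, as in Hartshorne's II.4.2 (the paper's route), one checks surjectivity of $\mathcal{O}_{X\times_Y X}\to(\delta_f)_*\mathcal{O}_X$ only at points of the image using the $W_{ii}$, and uses the closedness hypothesis to handle points off the image, where the stalk of $(\delta_f)_*\mathcal{O}_X$ vanishes. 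With your first step corrected in this way (and noting that the openness of $U\times_V U$ in $X\times_Y X$ is precisely the super-theoretic point the paper verifies functorially), the rest of your argument goes through.
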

\begin{proof}
The proof of \cite[Corollary II.4.2]{hart} can be copied verbatim, provided we prove the following. If $V$ is an affine super-subscheme of $Y$ and $U$ is an affine super-subscheme of $X$ such that $f(U)\subseteq V$, then the natural morphism $U\times _V U\to X\times_Y X$ is an open immersion. By the remark after \cite[Proposition 5.12]{maszub1}, all we need is to check the analogous statement in the category $\mathcal{SF}$ which is evident (see \cite[I.1.7(3)]{jan}).
\end{proof}
\begin{lm}\label{even descent}
For any morphisms $f : X\to S$ and $g : Y\to S$ in $\mathcal{SV}$, the fibred product $X_{ev}\times_{S_{ev}} Y_{ev}$ is isomorphic to $(X\times_S Y)_{ev}$.
\end{lm}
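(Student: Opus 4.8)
The plan is to work in the category of $K$-functors $\mathcal{SF}$, which is legitimate by the Comparison Theorem \ref{comparison}, and to verify the isomorphism on the level of the functors of points, where everything reduces to the description of $\mathbb{X}_{ev}$ given in the Example following Proposition \ref{closed (super)subschemes}. Recall that for a superscheme $\mathbb{X}$ one has $\mathbb{X}_{ev}(A)=\mathbb{X}(\iota_A)(\mathbb{X}(A_0))\simeq\mathbb{X}(A_0)$, where $\iota_A:A_0\hookrightarrow A$ is the canonical inclusion; in other words $\mathbb{X}_{ev}$ is, functorially, the restriction of $\mathbb{X}$ along $A\mapsto A_0$, composed with the inclusion back into $\mathbb{X}$. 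Since $A\mapsto A_0$ is a functor $\mathsf{SAlg}_K\to\mathsf{SAlg}_K$ landing in purely-even superalgebras, and fibred products of $K$-functors are computed argumentwise, the claim will follow once I identify both sides as subfunctors of $\mathbb{X}\times_{\mathbb{S}}\mathbb{Y}$ and check they have the same $A$-points for every $A$.

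First I would spell out the two subfunctors. On the one hand, $(\mathbb{X}\times_{\mathbb{S}}\mathbb{Y})_{ev}(A)$ is the image of $(\mathbb{X}\times_{\mathbb{S}}\mathbb{Y})(A_0)=\mathbb{X}(A_0)\times_{\mathbb{S}(A_0)}\mathbb{Y}(A_0)$ under the map induced by $\iota_A$. On the other hand, $(\mathbb{X}_{ev}\times_{\mathbb{S}_{ev}}\mathbb{Y}_{ev})(A)=\mathbb{X}_{ev}(A)\times_{\mathbb{S}_{ev}(A)}\mathbb{Y}_{ev}(A)$, and by the Example each factor is the image of the corresponding $A_0$-points. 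The key point is that the map $\mathbb{S}(\iota_A):\mathbb{S}(A_0)\to\mathbb{S}(A)$ is injective because $\mathbb{S}$ is a superscheme (this is exactly the injectivity asserted in the Example, and it is what fails for a general $K$-functor). Consequently a pair $(\xi,\eta)\in\mathbb{X}(A_0)\times\mathbb{Y}(A_0)$ has the same image in $\mathbb{S}(A_0)$ if and only if its image in $\mathbb{X}(A)\times\mathbb{Y}(A)$ has the same image in $\mathbb{S}(A)$. This shows the canonical map
\[
\mathbb{X}(A_0)\times_{\mathbb{S}(A_0)}\mathbb{Y}(A_0)\;\longrightarrow\;\mathbb{X}_{ev}(A)\times_{\mathbb{S}_{ev}(A)}\mathbb{Y}_{ev}(A)
\]
is a bijection, and it is clearly natural in $A$, giving the desired natural isomorphism $(\mathbb{X}\times_{\mathbb{S}}\mathbb{Y})_{ev}\simeq\mathbb{X}_{ev}\times_{\mathbb{S}_{ev}}\mathbb{Y}_{ev}$. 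Transporting back through the Comparison Theorem yields the statement for geometric superschemes.

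The main obstacle I anticipate is not any single hard computation but rather bookkeeping: one must be careful that the various occurrences of "$(-)_{ev}$" are compatible — i.e. that the closed immersion $(\mathbb{X}\times_{\mathbb{S}}\mathbb{Y})_{ev}\hookrightarrow\mathbb{X}\times_{\mathbb{S}}\mathbb{Y}$ factors through $\mathbb{X}_{ev}\times_{\mathbb{S}_{ev}}\mathbb{Y}_{ev}$ compatibly with the projections, so that the bijection on points is genuinely induced by a morphism of superschemes and its inverse likewise. Once the projections $p_{\mathbb{X}}$, $p_{\mathbb{Y}}$ are used to pin down the maps in both directions, naturality makes the identification forced; and since $\mathbb{X}_{ev}\times_{\mathbb{S}_{ev}}\mathbb{Y}_{ev}$ is again a superscheme, Lemma \ref{one point subfunctor}-type closedness considerations are not even needed — a bijective morphism of functors that is represented on both sides by superschemes is already an isomorphism of superschemes. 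I would close by remarking that the same argument shows $(-)_{ev}$ commutes with all finite limits of superschemes, of which the absolute product $X\times Y=(X\times Y)_{ev}\leftrightarrow X_{ev}\times Y_{ev}$ is the special case $S=\mathrm{SSpec}(K)$.
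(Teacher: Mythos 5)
Your argument is correct. It is, at bottom, the same adjunction idea as the paper's proof, but run in the opposite category: the paper stays in $\mathcal{SV}$, introduces for an arbitrary test superscheme $L$ the purely-even superscheme $L_0=(L^e,(\mathcal{O}_L)_0)$, and uses the bijection $\mathrm{Mor}(L,Z_{ev})\simeq\mathrm{Mor}(L_0,Z)$ together with the universal property of the fibred product — two lines, no injectivity statement, no Comparison Theorem. You instead pass to $\mathcal{SF}$, restrict to affine test objects via $\mathbb{Z}_{ev}(A)\simeq\mathbb{Z}(A_0)$, and compute the fibred product argumentwise; in that formulation the work done by universality in the paper is done in your proof by the injectivity of $\mathbb{S}(A_0)\to\mathbb{S}(A)$ (needed for surjectivity of your comparison map), which you correctly isolate as the key point and which is exactly the feature of superschemes recorded in the paper's Example. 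Your route is slightly longer because it silently uses that fibred products in $\mathcal{SF}$ agree with the pointwise ones in $\mathcal{F}$ (standard in the paper's framework, but worth a word), while it buys an explicit description of the $A$-points of $(X\times_S Y)_{ev}$ and the pleasant byproduct you mention, that $(-)_{ev}$ commutes with finite limits, of which the paper's Lemma is the case needed later.
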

\begin{proof}
For every $L\in\mathcal{SV}$, let $L_0$ denote a purely-even superscheme $(L^e, (\mathcal{O}_L)_0)$.
The morphisms $L\to Z_{ev}$ are in one-to-one correspondence with the morphisms $L_0\to Z$, which factor through $L_0\to Z_{ev}$. Therefore, by the universality of a fiber product, the morphisms $L\to X_{ev}\times_{S_{ev}} Y_{ev}$ are in one-to-one correspondence with the morphisms $L_0\to X\times_S Y$ hence with the morphisms $L\to (X\times_S Y)_{ev}$, proving the lemma.
\end{proof}
\begin{pr}\label{reduction to even}
A morphism $f : X\to Y$ is separated if and only if $f_{ev}$ is separated, if and only if $f_{res}$ is separated (the latter is regarded as a morphism of geometric schemes).
\end{pr}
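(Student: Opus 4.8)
The plan is to prove the statement as a chain of "if and only if" equivalences, exploiting the earlier results that the property of being a closed immersion reduces to the even part, and that forming $(-)_{ev}$ commutes with fibered products. First I would record the precise ingredients: by Lemma \ref{when given morphism is separated?}, $f$ is separated exactly when $\delta_f^e(X^e)$ is closed in $(X\times_Y X)^e$, and separatedness of $f$ is in fact the statement that $\delta_f$ is a closed immersion. The key structural fact is Lemma \ref{even descent}, which gives a canonical isomorphism $(X\times_Y X)_{ev}\simeq X_{ev}\times_{Y_{ev}}X_{ev}$. Under this isomorphism I claim that the diagonal $\delta_{f_{ev}}\colon X_{ev}\to X_{ev}\times_{Y_{ev}}X_{ev}$ is identified with $(\delta_f)_{ev}\colon X_{ev}\to (X\times_Y X)_{ev}$; this follows from the naturality of the diagonal and the universal property of the fibered product, since both maps are characterized by having both composites with the two projections equal to $\mathrm{id}_{X_{ev}}$.

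Granting that identification, the first equivalence is immediate: $f$ is separated $\iff$ $\delta_f$ is a closed immersion $\iff$ (by the fact stated in Section 2, immediately after the definition of even reducibility, namely that being a closed immersion is even reducible — or equivalently by Proposition \ref{closed (super)subschemes} together with the topological criterion) $(\delta_f)_{ev}$ is a closed immersion $\iff$ $\delta_{f_{ev}}$ is a closed immersion $\iff$ $f_{ev}$ is separated. For the passage between $f_{ev}$ (a morphism of purely-even superschemes) and $f_{res}$ (the same morphism regarded as a morphism of ordinary geometric schemes), I would note that the construction of $X_{ev}\times_{Y_{ev}}X_{ev}$ in $\mathcal{SV}$ restricted to purely-even superschemes agrees on underlying spaces and structure sheaves with the classical fibered product of schemes, so that the diagonal morphisms correspond and "closed immersion" means the same thing in both categories; hence $f_{ev}$ is separated iff $f_{res}$ is.

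The main obstacle I expect is the bookkeeping in the second paragraph: verifying carefully that the isomorphism of Lemma \ref{even descent} really does carry $(\delta_f)_{ev}$ to $\delta_{f_{ev}}$, rather than merely asserting it. The cleanest way to handle this is to work in $\mathcal{SF}$ via the Comparison Theorem and test on points: for $A\in\mathsf{SAlg}_K$, unwind $(X\times_Y X)_{ev}(A)=(X\times_Y X)(A_0)=X(A_0)\times_{Y(A_0)}X(A_0)=X_{ev}(A)\times_{Y_{ev}(A)}X_{ev}(A)$, and observe that $\delta_f$ on $A_0$-points is the genuine set-theoretic diagonal, which is plainly the same map as the diagonal of $f_{ev}$ on $A$-points. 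Once this is in place, the rest is a formal string of equivalences, and the only external input needed is the even-reducibility of "closed immersion", which is available from the discussion at the start of Section 2 (via Proposition \ref{closed (super)subschemes}, Example \ref{the largest purely-even geometric super-subscheme}, and the elementary fact that a morphism of superschemes is a closed immersion iff its even part is and the two have the same underlying continuous map).
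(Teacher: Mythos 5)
Your overall skeleton (reduce to the diagonal, identify $(\delta_f)_{ev}$ with $\delta_{f_{ev}}$ via Lemma \ref{even descent}, then compare with the purely even/ordinary-scheme situation) is the same as the paper's, and your functor-of-points verification that the isomorphism of Lemma \ref{even descent} carries $(\delta_f)_{ev}$ to $\delta_{f_{ev}}$ is fine. The gap is the middle link of your chain of equivalences: you justify ``$\delta_f$ is a closed immersion $\iff$ $(\delta_f)_{ev}$ is'' by an alleged ``elementary fact that a morphism of superschemes is a closed immersion iff its even part is (and the two have the same underlying continuous map)''. No such statement appears in Section 2 (the example given there of an even reducible property is affineness of Noetherian superschemes), and the general fact is false: take $f:\mathrm{SSpec}(K[\theta])\to\mathrm{SSpec}(K)$ with $\theta$ odd, dual to the inclusion $K\to K[\theta]$. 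Then $f_{ev}$ is the identity of $\mathrm{SSpec}(K)$, hence a closed immersion, and $f^e=f_{ev}^e$, yet $f$ is not a closed immersion because $K\to K[\theta]$ is not surjective. Only the implication ``$f$ closed immersion $\Rightarrow$ $f_{ev}$ closed immersion'' holds in general, so your argument breaks exactly in the direction ``$f_{ev}$ separated $\Rightarrow$ $f$ separated''; Proposition \ref{closed (super)subschemes} does not supply the missing converse either, as it only translates closed immersions between the geometric and functorial pictures.

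The repair is the topological criterion you quote at the outset but never actually deploy: by Lemma \ref{when given morphism is separated?}, $f$ is separated iff $\delta_f^e(X^e)$ is closed in $(X\times_Y X)^e$, and the same criterion holds for $f_{ev}$ and for $f_{res}$ (the latter by Hartshorne, Corollary II.4.2). Passing to $(-)_{ev}$ or $(-)_{res}$ changes neither underlying spaces nor underlying continuous maps, and Lemma \ref{even descent} identifies $(X\times_Y X)_{ev}$ with $X_{ev}\times_{Y_{ev}}X_{ev}$ compatibly with the diagonals, so the three closedness conditions are literally one and the same condition; this is the paper's one-line proof. If you prefer to argue through closed immersions, the correct intermediate fact is that the diagonal is always an immersion, and for an immersion (unlike a general morphism) being a closed immersion is detected by closedness of the image; invoking even reducibility of ``closed immersion'' for arbitrary morphisms is not a valid step.
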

\begin{proof}
Lemma \ref{even descent} implies that $\delta_f|_{X_{ev}}$ can be identified with $\delta_{f|_{X_{ev}}}$. The last equivalence is now apparent.
\end{proof}
\begin{cor}\label{separated is even reducible}
The property of a superscheme morphism to be separated is even reducible. In particular, all standard properties of separated morphisms of schemes, formulated in \cite[Corollary II.4.6]{hart}, are valid for superschemes.
\end{cor}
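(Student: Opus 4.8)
The first statement is nothing but a reformulation of Proposition~\ref{reduction to even}: by definition the property ``separated'' is even reducible precisely when $f$ is separated if and only if $f_{ev}$ is, and this --- together with the equivalent formulation in terms of the geometric scheme $f_{res}$ --- is exactly what Proposition~\ref{reduction to even} asserts. So for this part I would simply cite Proposition~\ref{reduction to even}.

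For the second statement the plan is to transfer each assertion of Corollary~II.4.6 of \cite{hart} across the functor $X\mapsto X_{res}$ from superschemes to geometric schemes. This works because $(-)_{res}$, equivalently $(-)_{ev}$, is compatible with all the operations occurring in those assertions, and I would record four such compatibilities. First, $X\mapsto X_{ev}$ is a functor, so $(g\circ f)_{ev}=g_{ev}\circ f_{ev}$. Second, by Lemma~\ref{even descent} it commutes with fibre products, $(X\times_S Y)_{ev}\simeq X_{ev}\times_{S_{ev}}Y_{ev}$, with the projections and the product morphism $f\times g$ going to $(p_X)_{ev}$, $(p_Y)_{ev}$, $(f\times g)_{ev}$; hence it respects base change and products of morphisms. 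Third, it carries open and closed immersions to open and closed immersions: the underlying map $f^e$ is unchanged, and surjectivity of $\mathcal{O}_{Y,f(x)}\to\mathcal{O}_{X,x}$ forces surjectivity on odd parts, hence on the super-ideals $I_{(-)}$, hence on the quotients $\mathcal{O}_{(-)}/I_{(-)}$ that define $(-)_{ev}$. Fourth, $(-)_{ev}$ does not alter underlying topological spaces and commutes with restriction to open super-subschemes and with forming preimages of open subsets. Granting these, every item of Corollary~II.4.6 for superschemes follows by replacing each object and morphism by its image under $(-)_{res}$, verifying the scheme-theoretic hypotheses by means of the four compatibilities and Proposition~\ref{reduction to even}, applying the known scheme statement, and translating the conclusion back through Proposition~\ref{reduction to even}. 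For instance, if $g\circ f$ is separated then $(g\circ f)_{res}=g_{res}\circ f_{res}$ is separated, hence $f_{res}$ is separated, hence $f$ is separated; the characterization of separatedness as a property local on the base uses that $(-)_{res}$ turns an open covering $\{V_i\}$ of $Y$ into an open covering $\{(V_i)_{res}\}$ of $Y_{res}$ with $(f^{-1}(V_i))_{res}=(f_{res})^{-1}((V_i)_{res})$; the remaining items are entirely analogous.

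Essentially the whole argument is a formal transfer along the equivalence $\mathcal{SV}\simeq\mathcal{SF}$; the only step needing a genuine (though short) argument is the third compatibility, namely that a closed, respectively open, immersion $f$ induces a closed, respectively open, immersion $f_{ev}$. This rests on the explicit description of $\mathcal{I}_X$ as the sheafification of $U\mapsto\mathcal{O}_X(U)\,\mathcal{O}_X(U)_1$: since this sheaf is generated in odd degree, a surjection of structure sheaves induces a surjection of the associated purely-even quotient sheaves, and an isomorphism onto an open super-subscheme induces an isomorphism onto an open super-subscheme after applying $(-)_{ev}$. Once this is in hand, there is no further obstacle.
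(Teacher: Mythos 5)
Your argument is correct and matches the paper's (implicit) proof: the first assertion is indeed just a restatement of Proposition~\ref{reduction to even}, and the second follows by the standard transfer of Corollary~II.4.6 of \cite{hart} along $X\mapsto X_{res}$, using Lemma~\ref{even descent} for compatibility with fibred products exactly as the paper intends. The extra compatibilities you verify (functoriality, behaviour on immersions and open coverings) are the routine details the paper leaves unstated, and they are handled correctly.
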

A morphism of geometric superschemes $f : X\to Y$ is called {\it closed} if $f^e$ takes closed subsets of $X^e$ to closed subsets of $Y^e$. It is called {\it universally closed} if for any morphism $Y'\to Y$ the projection 
$X\times_Y Y'\to Y'$ is closed. Obviously, the first property is even reducible. Hence, by Lemma \ref{even descent}, the second one is, too.

A morphism of geometric superschemes $f : X\to Y$ is called {\it proper} if it is separated, universally closed, and of finite type.
\begin{lm}
If $X$ is Noetherian, then $f : X\to Y$ is of finite type if and only if $f_{ev}$ is of finite type, if and only if $f_{res}$ is of finite type.
\end{lm}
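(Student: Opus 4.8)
\emph{Proof proposal.} The plan is to split the three‑way equivalence into the formal part ``$f_{ev}$ of finite type $\Leftrightarrow f_{res}$ of finite type'' and the substantive part ``$f$ of finite type $\Leftrightarrow f_{ev}$ of finite type''. The first part is immediate from the definitions: an open affine super-subscheme of a purely-even superscheme is the spectrum of a purely-even superalgebra, i.e.\ an ordinary open affine subscheme, and ``finitely generated as a superalgebra over a purely-even base'' literally means ``finitely generated as an algebra''; so the two finite-type conditions on $f_{ev}$ are one and the same statement (this mirrors the last equivalence in Proposition \ref{reduction to even}). Throughout the rest I work on the common topological space $X^e$, on which $f^e=f_{ev}^e$, using repeatedly that for an open super-subscheme $U\subseteq X$ the superscheme $U_{ev}$ is the open super-subscheme of $X_{ev}$ on the same underlying open set, and that $\mathrm{SSpec}(R)_{ev}=\mathrm{SSpec}(\overline R)$.

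\emph{$f$ of finite type $\Rightarrow f_{ev}$ of finite type} (here the Noetherian hypothesis is not needed). Choose an affine open cover $\{V_i=\mathrm{SSpec}(B_i)\}_i$ of $Y$ and, for each $i$, a finite affine open cover $\{U_{ij}=\mathrm{SSpec}(A_{ij})\}_j$ of $f^{-1}(V_i)$ with $A_{ij}$ finitely generated over $B_i$, as in the definition. Then $\{(V_i)_{ev}=\mathrm{SSpec}(\overline{B_i})\}_i$ is an affine open cover of $Y_{ev}$, and $f_{ev}^{-1}((V_i)_{ev})=(f^{-1}(V_i))_{ev}$ is finitely covered by the affine open super-subschemes $(U_{ij})_{ev}=\mathrm{SSpec}(\overline{A_{ij}})$; finally $\overline{A_{ij}}$ is finitely generated over $\overline{B_i}$, the images of finitely many homogeneous generators of $A_{ij}$ serving as generators (the odd ones mapping to $0$). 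This is a finite-type presentation of $f_{ev}$.

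\emph{$f_{ev}$ of finite type $\Rightarrow f$ of finite type}. By Lemma \ref{finite type} it suffices to show that for every open affine $V=\mathrm{SSpec}(B)\subseteq Y$ the preimage $f^{-1}(V)$ admits a finite open cover by affine super-subschemes $\mathrm{SSpec}(A_i)$ with $A_i$ finitely generated over $B$. Since $f_{ev}$ is of finite type, Lemma \ref{finite type} applied to $f_{ev}$ and the open affine $V_{ev}=\mathrm{SSpec}(\overline B)\subseteq Y_{ev}$ produces a finite open cover $\{W_i=\mathrm{SSpec}(C_i)\}_i$ of $(f^{-1}(V))_{ev}=f_{ev}^{-1}(V_{ev})$ with $C_i$ finitely generated over $\overline B$. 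Let $U_i\subseteq f^{-1}(V)$ be the open super-subscheme with underlying open set $W_i^e$, so that $(U_i)_{ev}=W_i$ and $(U_i)_{res}=\mathrm{Spec}(C_i)$ is affine. As $X$ is Noetherian, so is the open super-subscheme $U_i$; hence, the property of a Noetherian superscheme to be affine being even reducible (\cite{zub1}, Theorem 3.1), $U_i$ is affine, say $U_i=\mathrm{SSpec}(A_i)$ with $A_i$ a Noetherian superalgebra and $\overline{A_i}\cong C_i$. The $U_i$ form a finite affine open cover of $f^{-1}(V)$, so everything reduces to the purely algebraic claim that each $A:=A_i$ is finitely generated over $B$.

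This algebraic step is the genuine obstacle. The assertion is: \emph{if $A$ is a Noetherian superalgebra with $\overline A$ finitely generated over $\overline B$, then $A$ is finitely generated over $B$.} The argument I have in mind runs as follows. Since $A$ is Noetherian, $A_0$ is a Noetherian ring and $A_1=\sum_{p=1}^{n}A_0\,\theta_p$ for finitely many odd $\theta_p$; as $\mathrm{char}\,K\neq 2$, any product of more than $n$ of the $\theta_p$ vanishes, so the ideal $J:=A_1^2\subseteq A_0$ is finitely generated and nilpotent, with $A_0/J=\overline A$. Now $\overline A\cong C_i$ is finitely generated over $\overline B$, hence over $B_0$ (as $\overline B$ is a quotient of $B_0$), so one can pick a finitely generated $B_0$-subalgebra $R'\subseteq A_0$ with $A_0=R'+J$. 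Feeding this identity into a finite $A_0$-module presentation of $J$ and iterating, using $J^{k}=0$ for $k\gg 0$, one gets that $J$ is a finitely generated $R'$-module; hence $A_0=R'+J$ is a finitely generated $R'$-module and therefore a finitely generated $B_0$-algebra, by finitely many elements $g_1,\dots,g_s$ say. Then $A=A_0\oplus A_1$ is generated over $B_0$, and a fortiori over $B$, by $g_1,\dots,g_s,\theta_1,\dots,\theta_n$ (compare the bookkeeping in the proof of Lemma \ref{finitely generated}). Thus the main difficulty is exactly this lemma: the geometric passage between $X$, $X_{ev}$ and $X_{res}$ on the fixed space $X^e$, and the appeal to even reducibility of affineness, are routine, whereas the nilpotence of $A_1^2$ together with this Nakayama-type lift of finite generation from $\overline A$ to $A_0$ is the one genuinely non-formal ingredient.
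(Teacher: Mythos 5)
Your proposal is correct and takes essentially the same route as the paper: reduce via Lemma \ref{finite type} to the affine statement that a Noetherian superalgebra $A$ with $\overline{A}$ finitely generated over $\overline{B}$ is finitely generated over $B$, and prove that by the nilpotent filtration coming from the odd part (the paper filters by the powers $A_1^n$ and their finitely generated $\overline{A}$-subquotients, you by the powers of $J=A_1^2$ together with the generators $\theta_p$ of $A_1$ --- the same Nakayama-type lift). The only cosmetic difference is that you produce the finite affine cover of $f^{-1}(V)$ by lifting one of $f_{ev}^{-1}(V_{ev})$ via even reducibility of affineness for Noetherian superschemes (\cite{zub1}, Theorem 3.1), whereas the paper covers the Noetherian super-subscheme $f^{-1}(V)$ by affine super-subschemes directly and passes to their even parts.
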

\begin{proof}
By Lemma \ref{finite type}, it is enough to consider the case $X=\mathrm{SSpec}(A)$ and $Y=\mathrm{SSpec}(B)$, where $A$ is a Noetherian superalgebra, and $\overline{A}$ is finitely generated over $\overline{B}$. Since $A_1^n/A_1^{n+1}$ is a finitely generated $\overline{A}$-module for every $n\geq 1$ and $A_1^N =0$ for sufficiently large $N$, $A$ is finitely generated over $B_0$, hence over $B$.
\end{proof}
\begin{cor}\label{II.4.8}
In the full subcategory consisting of Noetherian superschemes, \cite[Corollary II.4.8]{hart}, can be superized verbatim. 
\end{cor}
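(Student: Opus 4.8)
The plan is to first show that, within the full subcategory of Noetherian superschemes, the property of a morphism to be \emph{proper} is even reducible, and then to transport each of the assertions (a)--(f) of Corollary II.4.8 to the purely-even setting, apply Hartshorne's statement there, and lift the conclusion back to superschemes.

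For the first step, recall that $f$ is proper iff it is separated, universally closed, and of finite type. By Proposition \ref{reduction to even}, separatedness is even reducible; being universally closed is even reducible by the remark following its definition together with Lemma \ref{even descent}; and by the lemma immediately preceding this corollary, being of finite type is even reducible whenever the source is Noetherian (here $X_{res}$ is a Noetherian scheme when $X$ is a Noetherian superscheme, since $X_{ev}$ is a closed super-subscheme of $X$). Hence, for a morphism of Noetherian superschemes, $f$ is proper iff $f_{ev}$ is, iff $f_{res}$ is as a morphism of schemes.

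For the second step, recall that $X\mapsto X_{ev}$ (equivalently $X\mapsto X_{res}$) is a functor on $\mathcal{SV}$ (Example \ref{the largest purely-even geometric super-subscheme}) which commutes with fibered products by Lemma \ref{even descent} — in particular $(X\times_S Y)_{res}\simeq X_{res}\times_{S_{res}}Y_{res}$, compatibly with the projection, diagonal and graph morphisms — and which sends closed immersions to closed immersions, since a surjection $\mathcal{O}_X\to f_*\mathcal{O}_Z$ carries $\mathcal{I}_X$ into $f_*\mathcal{I}_Z$ and thus induces a surjection $\mathcal{O}_{X_{ev}}\to f_*\mathcal{O}_{Z_{ev}}$. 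Consequently, each assertion of Corollary II.4.8 read for Noetherian superschemes becomes, under $(-)_{res}$, the corresponding assertion for Noetherian schemes: a closed immersion becomes a closed immersion, a composition, base change or product of morphisms becomes that of the $(-)_{res}$'s, and a separated morphism becomes a separated morphism (Corollary \ref{separated is even reducible}). Applying Hartshorne's Corollary II.4.8 to the resulting scheme statements and using the even reducibility of properness to lift each conclusion back to superschemes proves (a)--(f) verbatim. For instance, for (e): if $g\circ f$ is proper and $g$ is separated, then $(g\circ f)_{res}=g_{res}\circ f_{res}$ is proper and $g_{res}$ is separated, so $f_{res}$ is proper, whence $f$ is proper.

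I expect the only delicate point — and the reason for restricting to Noetherian superschemes — to be the finite-type component of properness, whose even reducibility rests on the fact that for a Noetherian superalgebra $A$ the modules $A_1^n/A_1^{n+1}$ are finitely generated over $\overline{A}$ and vanish for large $n$; Noetherianness is moreover what makes Hartshorne's valuative criterion, which underlies his Corollary II.4.8, available. Granting this, everything else is a purely formal consequence of even reducibility and the compatibility of $(-)_{res}$ with fibered products and closed immersions, requiring no new geometric input beyond the scheme-theoretic corollary.
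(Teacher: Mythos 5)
Your proposal is correct and follows exactly the route the paper intends: the corollary is stated right after the even reducibility of separatedness (Corollary \ref{separated is even reducible}), of universal closedness (via Lemma \ref{even descent}), and of finite type for Noetherian sources, so that properness is even reducible and Hartshorne's II.4.8 transfers through the functor $X\mapsto X_{res}$, which commutes with fibered products and preserves closed immersions. Your write-up merely makes explicit the formal bookkeeping the paper leaves to the reader, with no divergence in method.
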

A Noetherian superscheme $X$ is called {\it complete} if it is separated and proper over $\mathrm{SSpec}(\Bbbk)$, i.e., the morphism $X\to \mathrm{SSpec}(\Bbbk)$ is separated and proper simultaneously. By the above, this property is even reducible.

Recall that a superalgebra $A$ is called \emph{reduced}, provided the algebra $\overline{A}$ is.
A geometric superscheme $X$ is called reduced if $X_{res}$ is a reduced scheme (cf. \cite[2.3]{maszub3}). This property is local, i.e., $X$ is reduced if and only if the superalgebra $\mathcal{O}_x$ is reduced for every $x\in X^e$. In particular, an affine superscheme $\mathrm{SSpec}(A)$ is reduced if and only if the superalgebra $A$ is reduced. Thus a superscheme $X$ is reduced if and only if every open affine super-subscheme of $X$ is reduced. 

Following \cite{milne}, we call a geometric superscheme $X$ {\it (algebraic) supervariety} if $X$ is of finite type, separated and {\it geometrically reduced}. The latter means that the superscheme $X_{\overline{\Bbbk}}=X\times \mathrm{SSpec}(\overline{\Bbbk})$ is reduced for an algebraically closed field $\overline{\Bbbk}$ of $\Bbbk$. By the above, a supervariety is even reducible.

A superalgebra $A$ is called \emph{Grassman-like} if $\overline{A}=\Bbbk$ or, equivalently, if odd elements generate $A$. 
\begin{lm}\label{when an affine superscheme is complete?}
A connected supervariety $\mathrm{SSpec}(A)$ is complete if and only if $A$ is a Grassman-like superalgebra.
\end{lm}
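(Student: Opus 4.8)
The plan is to reduce the statement to the purely-even case, where it becomes a classical fact about complete varieties: a connected complete variety over $K$ has $\mathcal{O}(X_{res})$ a finite field extension of $K$, and if $SSpec\,A$ is geometrically reduced and connected, that extension is $K$ itself. First I would recall that by Lemma~\ref{when an affine superscheme is complete?}'s hypotheses, $SSpec\,A$ being a supervariety means $A$ is a finitely generated reduced superalgebra with $A\otimes_K\overline{K}$ reduced, and connectedness means $(SSpec\,A)^e = (SSpec\,\overline{A})^e$ is connected. The property of being complete is even reducible (as noted after Corollary~\ref{II.4.8}), so $SSpec\,A$ is complete if and only if $X_{res}=SSpec\,\overline{A}$ is a complete scheme.

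Next I would invoke the classical result for schemes: a connected complete geometrically reduced scheme $SSpec\,\overline{A}$ of finite type over $K$ is proper and affine, hence finite over $K$; being geometrically reduced and connected forces $\overline{A}=K$. (Equivalently, one uses that $\mathcal{O}$ of a connected complete variety is a field, and geometric reducedness kills any inseparability, so it equals $K$.) Therefore $SSpec\,A$ complete $\iff \overline{A}=K$, which is exactly the definition of $A$ being Grassman-like. For the converse direction I would argue directly: if $\overline A=K$, then $(SSpec\,A)^e$ is a single point, the morphism $SSpec\,A\to SSpec\,K$ is visibly separated (affine) and of finite type, and it is universally closed because for any $Y'\to SSpec\,K$ the projection $SSpec\,A\times Y'\to Y'$ has $(SSpec\,A\times Y')_{res}\to (Y')_{res}$ a homeomorphism onto a closed subset — indeed $\overline{A\otimes B}=\overline{B}$ when $\overline A=K$ — so it is a closed map; hence the morphism is proper, and $SSpec\,A$ is complete.

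Concretely the key steps, in order, are: (1) unwind the definitions of ``connected supervariety'' and ``complete'' for $SSpec\,A$; (2) use even reducibility of completeness to pass to $X_{res}=SSpec\,\overline A$; (3) apply the classical characterization of connected complete varieties to get $\overline A=K$ in the forward direction; (4) for the backward direction, observe $\overline A=K$ makes $SSpec\,A$ a one-point superscheme with $A$ finitely generated over $K$ (finitely many odd generators, so $A$ is finite-dimensional), check separatedness and finite type trivially, and verify universal closedness by the base-change computation $\overline{A\otimes_K B}=\overline B$.

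The main obstacle I anticipate is step (4), specifically the universal closedness: one must be careful that the reduction $\overline{\,\cdot\,}$ genuinely commutes with the relevant base change, i.e.\ that for $B\in\mathsf{SAlg}_K$ one has $\overline{A\otimes_K B}\cong\overline B$ when $\overline A=K$. This follows because $I_{A\otimes B}=(A\otimes B)(A\otimes B)_1$ is generated by $A_1\otimes B_0+A_0\otimes B_1$, and modulo it one kills all of $A_1$ (since $A_1$ generates the augmentation super-ideal of $A$ as $\overline A=K$) leaving $K\otimes\overline B=\overline B$; once this identification is in hand, the projection on underlying spaces is a homeomorphism and closedness of $f_{res}$ (hence of $f$) is automatic, giving universal closedness and therefore properness.
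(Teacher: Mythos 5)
Your proof follows essentially the same route as the paper: completeness is even reducible, so the statement reduces to the classical fact about connected complete affine varieties over $K$ (the paper simply cites even reducibility together with Milne, A.114(f)), which is exactly your steps (1)--(3). Your hands-on verification of the converse via $\overline{A\otimes_K B}\simeq\overline{B}$ is correct but redundant, since even reducibility already covers that direction: $\overline{A}=K$ gives $X_{res}=\mathrm{Spec}\,K$, which is trivially complete.
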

\begin{proof}
Use even reducibility of this property and \cite[A.114(f)]{milne}.
\end{proof}
Let $f : X\to Y$ be a morphism in $\mathcal{SV}$ and let $X$ be a Noetherian superscheme. Then there is a closed super-subscheme $Z$ of $Y$ such that 
$f^e(X^e)\subseteq Z^e$, the morphism $f$ factors through the closed immersion $Z\to Y$, and for any closed super-subscheme $Z'$ of $Y$ such that $f$ factors through the closed immersion $Z'\to Y$, $Z\to Y$ factors through $Z'\to Y$. Since $X$ is a Noetherian superscheme, the third remark after \cite[Proposition 3.1]{zub1} implies that $f_*\mathcal{O}_X$ is a coherent sheaf of $\mathcal{O}_Y$-supermodules. Then, by \cite[Corollary 3.2]{zub1} and \cite[Proposition 2.5]{maszub3}, the superideal sheaf $\ker (\mathcal{O}_Y\to f_*\mathcal{O}_X)$ is a coherent sheaf of $\mathcal{O}_Y$-supermodules, which defines the superscheme $Z$,  called a {\it superscheme-theoretic image} of $f$ and denoted by $f(X)$. Observe also that if $X$ is (geometrically) reduced, then $f(X)$ is also (geometrically) reduced.
\begin{pr}\label{complete superschemes}
The following statements hold:
\begin{enumerate}
\item If $Z$ is a closed super-subscheme of a complete Noetherian superscheme $X$, then $Z$ is complete.
\item Let $f : X\to Y$ be a morphism of algebraic supervarieties. If $X$ is complete, then $f^e(X^e)$ is closed, and $f(X)$ is complete.
\item If $X$ is a complete connected supervariety, then $\mathcal{O}_X(X^e)$ is a Grassman-like superalgebra.
\end{enumerate}
\end{pr}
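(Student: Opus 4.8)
The plan is to reduce each part to a classical statement about schemes, exploiting that being separated, of finite type for Noetherian superschemes, universally closed, proper, and complete are all even reducible (Proposition~\ref{reduction to even}, Corollary~\ref{II.4.8}, and the paragraph preceding the Proposition), together with the observation that, since $\mathrm{char}\,K\neq 2$, the super-ideal sheaf $\mathcal{I}_X$ of a Noetherian superscheme $X$ is nilpotent, so that $\mathcal{O}_X$ admits a finite filtration $\mathcal{O}_X\supseteq\mathcal{I}_X\supseteq\mathcal{I}_X^{2}\supseteq\cdots$ all of whose graded pieces are coherent sheaves on the scheme $X_{res}$. For part (1): a closed super-subscheme $Z$ of the Noetherian superscheme $X$ is again Noetherian, and $Z\hookrightarrow X\to\mathrm{SSpec}\,K$ is separated and proper, because closed immersions are proper and properness is stable under composition (Corollary~\ref{II.4.8}); hence $Z$ is complete. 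For part (2): since $X$ is proper over $K$ and $Y$ is separated over $K$, the cancellation property (Corollary~\ref{II.4.8}, superizing \cite{hart}, Cor.~II.4.8) shows that $f$ is itself proper, so $f^e$ is a closed map and $f^e(X^e)$ is closed; the superscheme-theoretic image $f(X)$ exists because $X$ is Noetherian, it is a closed super-subscheme of the finite-type separated superscheme $Y$, hence separated and of finite type over $K$ and Noetherian, and the factorization $X\to f(X)$ is surjective on underlying spaces (its image is dense by construction and closed since $f$ is a closed map); from universal closedness of $X\to\mathrm{SSpec}\,K$ and surjectivity of $X\to f(X)$ one concludes that $f(X)\to\mathrm{SSpec}\,K$ is universally closed, hence $f(X)$ is proper over $K$, i.e.\ complete. (As $X$ is geometrically reduced, so is $f(X)$, so $f(X)$ is even a complete supervariety.)

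Part (3) is the crux and needs more care. First, $A:=\mathcal{O}_X(X^e)$ is finite dimensional over $K$: by a d\'evissage along the nilpotent filtration $\mathcal{I}_X^{\bullet}$ this reduces to the classical finiteness theorem for proper morphisms applied to the coherent graded pieces on the proper scheme $X_{res}$. Thus $\mathrm{SSpec}\,A$ is an affine superscheme of finite type over $K$, and there is a canonical morphism $\varphi:X\to\mathrm{SSpec}\,A$ whose comorphism on global sections is the identity of $A$. By cancellation ($X$ is proper over $K$ and $\mathrm{SSpec}\,A$ is separated over $K$), $\varphi$ is proper, so $\varphi^e$ is closed; since the target is affine and the kernel of the identity map $A\to A$ is zero, the superscheme-theoretic image of $\varphi$ is all of $\mathrm{SSpec}\,A$, so $\varphi^e(X^e)$ is dense and, being closed, equals $(\mathrm{SSpec}\,A)^e$. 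Hence $(\mathrm{SSpec}\,A)^e$ is connected (it is a continuous image of the connected space $X^e$); $\mathrm{SSpec}\,A\to\mathrm{SSpec}\,K$ is universally closed (because $X\to\mathrm{SSpec}\,K$ is and $\varphi$ is surjective), hence proper, so $\mathrm{SSpec}\,A$ is complete; and $\mathrm{SSpec}\,A=\varphi(X)$ is geometrically reduced because $X$ is. Thus $\mathrm{SSpec}\,A$ is a complete connected supervariety, and Lemma~\ref{when an affine superscheme is complete?} yields that $A=\mathcal{O}_X(X^e)$ is Grassman-like.

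The main obstacle is this last step. One cannot deduce $\overline{\mathcal{O}_X(X^e)}=K$ merely from the even-reducible fact that $\Gamma(\mathcal{O}_{X_{res}})=K$ for a complete connected (geometrically reduced) variety, since taking global sections does not commute with $A\mapsto\overline{A}$; passing to the affine model $\mathrm{SSpec}(\mathcal{O}_X(X^e))$ circumvents this, but it forces us to prove (a) a finiteness theorem for the global sections of a proper superscheme, handled by d\'evissage along $\mathcal{I}_X^{\bullet}$, and (b) that this affine model inherits completeness, connectedness, finiteness of type and geometric reducedness, so that Lemma~\ref{when an affine superscheme is complete?} applies. The remaining content is a routine transcription of the classical proper-morphism formalism through even reducibility.
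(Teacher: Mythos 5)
Your proof is correct and, in substance, follows the same route as the paper's. For (1) both arguments are the superized Corollary \ref{II.4.8}. For (2) the paper argues by even reducibility: it observes that $f_{res}(X_{res})$ coincides with $f(X)_{res}$ and then quotes the classical statement (Milne, A.114(d)), whereas you run the proper-morphism formalism directly at the superscheme level (cancellation to get $f$ proper, then descent of universal closedness along the surjection $X\to f(X)$); both are valid, yours being self-contained and the paper's shorter. For (3) the core idea is identical: pass to the affine model $\mathrm{SSpec}(\mathcal{O}(X))$, note that the superscheme-theoretic image is cut out by a superideal $I$ with $A\to A/I\to\mathcal{O}(X)$ the identity, hence $I=0$ and the image is all of $\mathrm{SSpec}(\mathcal{O}(X))$, then conclude with Lemma \ref{when an affine superscheme is complete?}. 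The one place you go beyond the paper is the d\'evissage along the nilpotent filtration $\mathcal{I}_X^{\bullet}$ proving that $\mathcal{O}(X)$ is finite-dimensional; the paper leaves this implicit, but it is genuinely needed for $\mathrm{SSpec}(\mathcal{O}(X))$ to be a Noetherian supervariety of finite type, so that completeness makes sense for it and Lemma \ref{when an affine superscheme is complete?} (and the part-(2) argument) can legitimately be applied — so this addition is an improvement in rigor rather than a detour.
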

\begin{proof}
The first statement follows by superized \cite[Corollary II.4.8(a, b)]{hart}.

It is easy to see that $f_{res}(X_{res})$ coincides with $f(X)_{res}$. Then the second statement follows by \cite[A.114(d)]{milne}.

Let $A$ denote $\mathcal{O}(X)$.
The induced morphism $f : X\to \mathrm{SSpec}(A)$ factors through its superscheme-theoretic image $f(X)$. Since $f(X)$ is closed, $f(X)\simeq \mathrm{SSpec}(A/I)$ for a superideal $I$ of $A$. On the other hand, the composition of superalgebra morphisms $A\to A/I\to\mathcal{O}(X)$ is an identity map. Thus $I=0$, hence $f(X)=\mathrm{SSpec}(A)$ is a complete connected supervariety. Lemma \ref{when an affine superscheme is complete?} concludes the proof.
\end{proof}

\section{Flat and faithfully flat morphisms}

Let $A$ be a superalgebra and $M$ be an $A$-supermodule. Recall that $M$ is said to be \emph{flat}, if the functor $N\mapsto M \otimes_A N$ takes any exact sequence of $A$-supermodules to the exact sequence of superspaces. If this functor is faithfully exact, then $M$ is called
\emph{faithfully flat}. By \cite[Lemma 5.1(1)]{mas}, $A$-supermodule $M$ is (faithfully) flat if and only if $M$ is (faithfully) flat as
the (left and right) $A$-module. Besides, $M$ is faithfully flat as an $A$-supermodule if and only if $M$ is flat and for any maximal super-ideal $\mathfrak{m}$ of $A$ there is $M\neq \mathfrak{m}M$ (combine \cite[Lemma 1.1(iii)]{zub2} and \cite[Proposition 1, I, \S 3.1]{bur}).

A morphism of geometric superschemes $f : X\to Y$ is said to be \emph{flat} if for any $x\in X^e$ the induced
superalgebra morphism ${O}_{Y, f^e(x)}\to \mathcal{O}_{X, x}$ is flat. If, additionally, $f^e$ is surjective, then $f$ is called \emph{faithfully flat}. 
\begin{lm}\label{another definition of flatness}
	A morphism $f : X\to Y$ is flat if and only if for any open affine super-subschemes $U\subseteq X$ and $V\subseteq Y$, such that $f^e(U^e)\subseteq V^e$, the restriction of $f$ to $U$ is flat if and only if 
	$\mathcal{O}(U)$ is a flat $\mathcal{O}(V)$-supermodule via superalgebra morphism $f^*$. 
\end{lm}
\begin{proof}
	The first equivalence is obvious. The part "only if" follows by \cite[Proposition 1.1 (iii)]{zub2}. Conversely, let $\mathcal{O}(U)$ be a flat $\mathcal{O}(V)$-supermodule. For any prime superideal $\mathfrak{p}$ of $\mathcal{O}(U)$ set $\mathfrak{q}=\mathfrak{p}\cap\mathcal{O}(V)$. 
	Then $\mathcal{O}(U)_{\mathfrak{q}}\simeq \mathcal{O}(V)_{\mathfrak{q}}\otimes_{\mathcal{O}(V)} \mathcal{O}(U)$ is flat over $\mathcal{O}(V)_{\mathfrak{q}}$. By \cite[Lemma 1.2 (i)]{zub2}, $\mathcal{O}(U)_{\mathfrak{p}}$ is flat over $\mathcal{O}(U)_{\mathfrak{q}}$. Lemma is proven.  
\end{proof}	
\begin{cor}\label{faithfully flat}
	If $X$ and $Y$ are affine superschemes, then $f$ is faithfully flat if and only if $\mathcal{O}(X)$ is faithfully flat over $\mathcal{O}(Y)$. 	
\end{cor}
\begin{proof}
The map $f^e$ is surjective if and only if $\mathcal{O}(X)\mathfrak{n}\neq\mathcal{O}(X)$ for any maximal super-ideal $\mathfrak{n}$ of $\mathcal{O}(Y)$.	
\end{proof}
Recall that a morphism of geometric superschemes $f : X\to Y$ is called \emph{affine}, if for any affine superscheme $Z$ and any morphism $Z \to Y$ the fibered product
$X\times_Y Z$ is an affine superscheme (cf. \cite{jan, maszub1}). In particular, if $f$ is affine and $Z$ is an affine super-subscheme of $Y$, then $f^{-1}(Z)$ is an affine super-subscheme of $X$.
\begin{lm}\label{finite type and flatness}
	Let 
	\[\begin{array}{ccc} 
	Z & \to & W \\
	\downarrow & & \downarrow \\
	U & \to & V
	\end{array}  \]	
	be a cartesian square of geometric superschemes. Assume that the morphisms $p_U : Z\to U, \ g : U\to V$ and $q : W\to V$ are affine, and $g$ is faithfully flat. If the superscheme $Z$ is of finite type and $g$ factors through $q$, then $W$ is of finite type as well.    
\end{lm}
\begin{proof}
Choose a covering of $V$ by open affine super-subschemes $V_i\simeq \mathrm{SSpec}(A_i), 1\leq i\leq l$. The open super-subschemes $W_i=q^{-1}(V_i)$ are affine and form a covering of $W$.  

Since $p_U^{-1}g^{-1}(V_i)=p_W^{-1}(W_i)$ for each $1\leq i\leq l$, 
we have cartesian squares in the subcategory of affine superschemes  
	\[\begin{array}{ccc} 
	(gp_U)^{-1}(V_i) & \to & W_i \\
	\downarrow & & \downarrow \\
	g^{-1}(V_i) & \to & V_i
	\end{array},   \]	
each of which  satisfies the conditions of our lemma. Therefore, without loss of generality one can assume that
all superschemes $V, U, W, Z$ are affine, say $V\simeq\mathrm{SSpec}(A), W\simeq\mathrm{SSpec}(B), U\simeq \mathrm{SSpec}(C)$ and
$Z\simeq\mathrm{SSpec}(C\otimes_A B)$. 
Since $C\otimes_A B$ is a finitely generated $\Bbbk$-superalgebra, there is a finitely generated $\Bbbk$-super-subalgebra $L$ of $B$ such that $C\otimes_A L=C\otimes_A B$. Since $C$ is a faithfully flat $A$-supermodule, we obtain $L=B$.  Lemma is proven.	
\end{proof}

\section{Group superschemes}

A group object in the category $\mathcal{SF}$ is called a \emph{group superscheme}. Similarly, 
a group object in the category $\mathcal{SV}$ is called a \emph{geometric group superscheme}.
These objects form subcategories in $\mathcal{SF}$ and $\mathcal{SV}$, respectively, with morphisms preserving their group structures. They are denoted by $\mathcal{SFG}$ and $\mathcal{SVG}$, correspondingly. Theorem \ref{comparison} implies that $\mathcal{SFG}\simeq\mathcal{SVG}$.

A geometric group superscheme is called \emph{locally algebraic} if it is of locally finite type as a superscheme. Symmetrically, a group superscheme $\mathbb{G}$ is called \emph{locally algebraic} if its geometric counterpart $G$ is locally algebraic.  

Locally algebraic group superschemes form a full subcategory of $\mathcal{SFG}$, denoted by $\mathcal{SFG}_{la}$. The category $\mathcal{SFG}_{la}$ is equivalent to the full subcategory $\mathcal{SVG}_{la}$ of $\mathcal{SVG}$, consisting of all locally algebraic geometric group superschemes.  

A geometric group superscheme is called \emph{algebraic} if it is of finite type as a superscheme. As above, one can define algebraic group superschemes as two equivalent categories
$\mathcal{SVG}_a$ and $\mathcal{SFG}_a$.

Let $\mathbb{E}$ denote the \emph{trivial group superscheme}. That is, for every $A\in\mathsf{SAlg}_{\Bbbk}$, the group $\mathbb{E}(A)=\{e_A\}$ is trivial. Its geometric counterpart is isomorphic to $e\simeq \mathrm{SSpec}(\Bbbk)$ with the trivial group structure. The unique point of $(\mathrm{SSpec}(\Bbbk))^e$ is also denoted by $e$.
 
A sequence 
\[e\to H\to G\stackrel{f}{\to} R\to e\]
in the category $\mathcal{SVG}$ is called \emph{exact} if the corresponding sequence 
\[\mathbb{E}\to\mathbb{H}\to\mathbb{G}\stackrel{\bf f}{\to} \mathbb{R}\to\mathbb{E}\]
in the category $\mathcal{SFG}$ is exact, that is,
for any superalgebra $A$ we have $\mathbb{H}(A)=\ker{\bf f}(A)$,
and the \emph{sheafification} of the \emph{naive quotient functor} 
\[A\mapsto (\mathbb{G}/\mathbb{H})_{(n)}(A)=\mathbb{G}(A)/\mathbb{H}(A)\subseteq\mathbb{R}(A),\] with respect to the Grothendieck topology of 
\emph{fppf coverings}, coincides with $\mathbb{R}$. The latter is equivalent to the following condition. For any superalgebra $A$ and any $g\in\mathbb{R}(A)$, there is a \emph{finitely presented} $A$-superalgebra $A'$, which is a faithfully flat $A$-supermodule such that 
$\mathbb{R}(\iota_A)(g)\in {\bf f}(A')(\mathbb{G}(A'))$, where $\iota_A : A\to A'$ is the corresponding monomorphism of superalgebras (cf. \cite{maszub1}, page 144, and Proposition 5.15 therein). 

A geometric group superscheme $G$ (or the corresponding group superscheme $\mathbb{G}$) is called an \emph{abelian supervariety} if $G$ is a complete supervariety. Since both properties are even reducible, we obtain that $G$ is an abelian supervariety  if and only if $G_{ev}$ is an abelian variety (cf. \cite[Definition 10.13]{milne}). 

Let $\mathbb{R}$ be a group superscheme that acts on a superscheme $\mathbb{X}$ on the right, and on a superscheme $\mathbb{Y}$ on the left. Then $\mathbb{R}$ acts on
$\mathbb{X}\times\mathbb{Y}$ as $(x, y)\cdot r=(x\cdot r, r^{-1}\cdot y)$ for $x\in\mathbb{X}(B), y\in\mathbb{Y}(B), r\in\mathbb{R}(B)$ and $B\in\mathsf{SAlg}_{\Bbbk}$. The sheafification of the naive quotient
$B\mapsto(\mathbb{X}(B)\times \mathbb{Y}(B))/\mathbb{R}(B)$ is called the \emph{homogeneous fiber quotient}, and it is denoted by $\mathbb{X}\times^{\mathbb{R}}\mathbb{Y}$. This construction is functorial in both $\mathbb{X}$ and $\mathbb{Y}$.

Assume that all the previous superschemes are affine, say $\mathbb{X}\simeq\mathrm{SSp}(A), \mathbb{Y}\simeq\mathrm{SSp}(B)$, and $\mathbb{R}\simeq\mathrm{SSp}(D)$. In particular, 
$A$ and $B$ are right and left $D$-coideal superalgebras (cf. \cite{zub2}).

Let $V$ and $W$ are the right and left $D$-supercomodules respectively. Let $\tau_V$ and $\tau_V$ denote the corresponding supercomodule maps $V\to V\otimes D$ and  $W\to D\otimes W$. Then the 
\emph{cotensor product}  $V\square_{D} W$ is defined as $\ker(\mathrm{id}_V\otimes\tau_W-\tau_V\otimes\mathrm{id}_W)$.
\begin{lm}\label{if it is affine}
If $\mathbb{X}\times^{\mathbb{R}}\mathbb{Y}$ is an affine superscheme, it is canonically isomorphic to $\mathrm{SSp}(A\square_{D}B)$. Moreover, if there are $\mathbb{R}$-equivariant morphisms $\mathbb{X}'\to\mathbb{X}$ and $\mathbb{Y}'\to\mathbb{Y}$, where $\mathbb{X}'\simeq\mathrm{SSp}(A')$, $\mathbb{Y}'\simeq\mathrm{SSp}(B')$, and $\mathbb{X}'\times^{\mathbb{R}}\mathbb{Y}'$ are affine, then the induced morphism 
$\mathbb{X}'\times^{\mathbb{R}}\mathbb{Y}'\to \mathbb{X}\times^{\mathbb{R}}\mathbb{Y}$ is dual to 
the natural superalgebra morphism $A'\square_{D} B'\to A\square_{D} B$.
\end{lm}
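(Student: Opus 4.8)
The plan is to identify, functorially, the $A$-points of $\mathbb{X}\times^{\mathbb{R}}\mathbb{Y}$ with the $A$-points of $\mathrm{SSp}(\mathbb{A}\,\square_{\mathbb{D}}\,\mathbb{B})$, using the fact that the cotensor product co-represents the equalizer that the homogeneous fiber quotient is defined to be. First I would recall that, since $\mathbb{A}$ is a right $\mathbb{D}$-coideal superalgebra and $\mathbb{B}$ a left one, the cotensor product $\mathbb{A}\,\square_{\mathbb{D}}\,\mathbb{B}$ is by definition the equalizer in $\mathsf{SAlg}_K$ of the two maps $\mathbb{A}\otimes\mathbb{B}\rightrightarrows\mathbb{A}\otimes\mathbb{D}\otimes\mathbb{B}$ given by $\rho_{\mathbb{A}}\otimes\mathrm{id}$ and $\mathrm{id}\otimes\lambda_{\mathbb{B}}$ (the right and left coactions). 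Dualizing via $\mathrm{Hom}_{\mathsf{SAlg}_K}(-,A)$, which turns this equalizer into a coequalizer of $A$-points — equivalently, identifies $\mathrm{SSp}(\mathbb{A}\,\square_{\mathbb{D}}\,\mathbb{B})(A)$ with the set of pairs $(x,y)\in\mathbb{X}(A)\times\mathbb{Y}(A)$ that are equalized by the two $\mathbb{R}(A)$-actions in the appropriate sense — I would show that this coincides precisely with the quotient of $\mathbb{X}(A)\times\mathbb{Y}(A)$ by the orbit equivalence relation, at least after the fppf-sheafification that defines $\mathbb{X}\times^{\mathbb{R}}\mathbb{Y}$. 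Here one uses that $\mathrm{SSp}(\mathbb{A}\,\square_{\mathbb{D}}\,\mathbb{B})$ is already a sheaf for the fppf topology (every affine superscheme is), so if the naive quotient presheaf maps to it with the universal property, the sheafification does too.

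The cleanest way to run this is the following. There is a canonical morphism of $K$-functors $\mathbb{X}\times\mathbb{Y}\to\mathrm{SSp}(\mathbb{A}\,\square_{\mathbb{D}}\,\mathbb{B})$, dual to the inclusion $\mathbb{A}\,\square_{\mathbb{D}}\,\mathbb{B}\hookrightarrow\mathbb{A}\otimes\mathbb{B}=\mathcal{O}(\mathbb{X}\times\mathbb{Y})$. This morphism is $\mathbb{R}$-invariant: the two composites $\mathbb{X}\times\mathbb{Y}\times\mathbb{R}\rightrightarrows\mathbb{X}\times\mathbb{Y}\to\mathrm{SSp}(\mathbb{A}\,\square_{\mathbb{D}}\,\mathbb{B})$ agree, because dually the two coactions agree on the cotensor product by its very definition. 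Hence the naive quotient presheaf $(\mathbb{X}\times^{\mathbb{R}}\mathbb{Y})_{(n)}$ maps to $\mathrm{SSp}(\mathbb{A}\,\square_{\mathbb{D}}\,\mathbb{B})$, and since the target is an fppf sheaf, this factors through the sheafification $\mathbb{X}\times^{\mathbb{R}}\mathbb{Y}$. Conversely I would produce a map the other way, or — more efficiently — invoke the hypothesis that $\mathbb{X}\times^{\mathbb{R}}\mathbb{Y}$ is affine, so it equals $\mathrm{SSp}(\mathbb{E})$ for $\mathbb{E}=\mathcal{O}(\mathbb{X}\times^{\mathbb{R}}\mathbb{Y})$; taking global sections of the invariant morphism $\mathbb{X}\times\mathbb{Y}\to\mathbb{X}\times^{\mathbb{R}}\mathbb{Y}$ and using that fppf descent for quasi-coherent sheaves identifies $\mathcal{O}(\mathbb{X}\times^{\mathbb{R}}\mathbb{Y})$ with the equalizer of $\mathbb{A}\otimes\mathbb{B}\rightrightarrows(\mathbb{A}\otimes\mathbb{B})\otimes_{\mathbb{A}\otimes\mathbb{B}}\cdots$ along the fppf cover — which by a standard manipulation (the cover being $\mathbb{A}\otimes\mathbb{D}\otimes\mathbb{B}$ over $\mathbb{A}\otimes\mathbb{B}$, or rather the $\mathbb{R}$-action groupoid) is exactly $\mathbb{A}\,\square_{\mathbb{D}}\,\mathbb{B}$. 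This gives the asserted canonical isomorphism $\mathbb{X}\times^{\mathbb{R}}\mathbb{Y}\simeq\mathrm{SSp}(\mathbb{A}\,\square_{\mathbb{D}}\,\mathbb{B})$.

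For the second, functoriality assertion, I would simply note that everything in sight is natural: the $\mathbb{R}$-equivariant morphisms $\mathbb{X}'\to\mathbb{X}$ and $\mathbb{Y}'\to\mathbb{Y}$ are dual to morphisms of $\mathbb{D}$-coideal superalgebras $\mathbb{A}\to\mathbb{A}'$ and $\mathbb{B}\to\mathbb{B}'$ intertwining the coactions, hence they restrict to a morphism of equalizers $\mathbb{A}\,\square_{\mathbb{D}}\,\mathbb{B}\to\mathbb{A}'\,\square_{\mathbb{D}}\,\mathbb{B}'$; the induced morphism on the quotient presheaves $(\mathbb{X}\times^{\mathbb{R}}\mathbb{Y})_{(n)}\to(\mathbb{X}'\times^{\mathbb{R}}\mathbb{Y}')_{(n)}$, wait — in the direction $\mathbb{X}'\times^{\mathbb{R}}\mathbb{Y}'\to\mathbb{X}\times^{\mathbb{R}}\mathbb{Y}$ — is, on naive quotients, induced by $(\mathbb{X}'\times\mathbb{Y}')(A)\to(\mathbb{X}\times\mathbb{Y})(A)$ and passes to sheafifications; chasing through the identification of the previous paragraph shows it agrees with $\mathrm{SSp}$ applied to $\mathbb{A}'\,\square_{\mathbb{D}}\,\mathbb{B}'\to\mathbb{A}\,\square_{\mathbb{D}}\,\mathbb{B}$. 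The only subtlety is bookkeeping the variances correctly: $\mathbb{X}'\to\mathbb{X}$ gives $\mathbb{A}\to\mathbb{A}'$, so the cotensor product map goes $\mathbb{A}\,\square_{\mathbb{D}}\,\mathbb{B}\to\mathbb{A}'\,\square_{\mathbb{D}}\,\mathbb{B}'$, and dualizing back recovers the stated direction $\mathbb{X}'\times^{\mathbb{R}}\mathbb{Y}'\to\mathbb{X}\times^{\mathbb{R}}\mathbb{Y}$ — consistent, since the lemma writes the superalgebra morphism as $\mathbb{A}'\,\square_{\mathbb{D}}\,\mathbb{B}'\to\mathbb{A}\,\square_{\mathbb{D}}\,\mathbb{B}$, hmm, which would be the wrong way; I would double-check this against the convention in \cite{zub2} and reconcile (most likely the equivariant morphisms in the statement are meant so that the induced superalgebra maps point as written, i.e.\ $\mathbb{X}\to\mathbb{X}'$ etc., or the cotensor map is the evident one in the category of coideal algebras). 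The main obstacle is thus not conceptual depth but the careful verification that fppf-sheafification of the naive quotient is computed by the cotensor product — i.e.\ that the orbit relation groupoid $\mathbb{X}\times\mathbb{Y}\times\mathbb{R}\rightrightarrows\mathbb{X}\times\mathbb{Y}$ has, as its "coequalizer of affine schemes", exactly $\mathrm{SSp}$ of the equalizer of algebras — which is a standard but slightly delicate descent argument in the super setting, and handling the super-signs in the coactions correctly.
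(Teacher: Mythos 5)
Your overall strategy (construct the canonical $\mathbb{R}$-invariant morphism $\mathbb{X}\times\mathbb{Y}\to\mathrm{SSp}(\mathbb{A}\,\square_{\mathbb{D}}\,\mathbb{B})$, factor it through the sheafification, then use affineness of $\mathbb{X}\times^{\mathbb{R}}\mathbb{Y}$) is the right shape, but the step you lean on to conclude is not justified by the hypotheses. You propose to identify $\mathcal{O}(\mathbb{X}\times^{\mathbb{R}}\mathbb{Y})$ with $\mathbb{A}\,\square_{\mathbb{D}}\,\mathbb{B}$ by ``fppf descent for quasi-coherent sheaves along the cover, the descent groupoid being the $\mathbb{R}$-action groupoid.'' That argument needs two things that are nowhere assumed: (i) that the quotient morphism $\mathbb{X}\times\mathbb{Y}\to\mathbb{X}\times^{\mathbb{R}}\mathbb{Y}$ is faithfully flat (of finite presentation), and (ii) that $(\mathbb{X}\times\mathbb{Y})\times_{\mathbb{X}\times^{\mathbb{R}}\mathbb{Y}}(\mathbb{X}\times\mathbb{Y})\simeq(\mathbb{X}\times\mathbb{Y})\times\mathbb{R}$, i.e.\ that the action is free and the quotient is an $\mathbb{R}$-torsor. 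The lemma is stated for an arbitrary action, with the sole hypothesis that the sheaf quotient is affine, so neither (i) nor (ii) is available; in the non-free case the \v{C}ech groupoid of the quotient map is simply not the action groupoid, and the descent identification as you state it fails. For the same reason, the first paragraph's plan of ``identifying the $A$-points'' of the quotient with those of $\mathrm{SSp}(\mathbb{A}\,\square_{\mathbb{D}}\,\mathbb{B})$ cannot be carried out directly: fppf sheafification changes sections in a way you cannot control pointwise, so one must map \emph{out of} the quotient rather than compute its points.

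The argument that actually works (and is what the paper invokes through Proposition 4.1 of \cite{zub2}) needs no flatness or freeness: the sheafified quotient is the coequalizer, in the category of fppf sheaves, of $\mathbb{X}\times\mathbb{Y}\times\mathbb{R}\rightrightarrows\mathbb{X}\times\mathbb{Y}$; since every affine superscheme is an fppf sheaf, for any $\mathbb{C}\in\mathsf{SAlg}_K$ one gets
$\mathrm{Mor}(\mathbb{X}\times^{\mathbb{R}}\mathbb{Y},\mathrm{SSp}(\mathbb{C}))\simeq\{\text{invariant morphisms }\mathbb{X}\times\mathbb{Y}\to\mathrm{SSp}(\mathbb{C})\}\simeq\mathrm{Hom}_{\mathsf{SAlg}_K}(\mathbb{C},\mathbb{A}\,\square_{\mathbb{D}}\,\mathbb{B})$,
the last identification because $\mathbb{A}\,\square_{\mathbb{D}}\,\mathbb{B}$ is the equalizer (in superalgebras) cutting out exactly the invariance condition, and $\mathrm{Hom}(\mathbb{C},-)$ preserves equalizers. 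Naturality in $\mathbb{C}$ plus the hypothesis that $\mathbb{X}\times^{\mathbb{R}}\mathbb{Y}$ is affine then gives the isomorphism by Yoneda (equivalently: injectivity of $q^*$, which follows from $q$ being an epimorphism of fppf sheaves onto an affine target, together with the universal property). Your functoriality paragraph is fine once the variances are straightened out: $\mathbb{X}'\to\mathbb{X}$ and $\mathbb{Y}'\to\mathbb{Y}$ dualize to coideal superalgebra maps $\mathbb{A}\to\mathbb{A}'$, $\mathbb{B}\to\mathbb{B}'$, hence a map $\mathbb{A}\,\square_{\mathbb{D}}\,\mathbb{B}\to\mathbb{A}'\,\square_{\mathbb{D}}\,\mathbb{B}'$ whose dual is the induced morphism of quotients; the direction printed in the statement is a slip of the paper, as its later use (e.g.\ in Lemma \ref{open affine in U/H}, where the induced map is $(\phi\otimes\mathrm{id})\,\square_D\,\mathrm{id}_{\mathbb{D}}$ from the algebra of the larger superscheme to that of the smaller) confirms, so your suspicion there was correct.
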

\begin{proof}
Use \cite[Proposition 4.1]{zub2}. 
\end{proof}

\section{The affinization of group superschemes}

Let $X$ be a geometric superscheme, and $U$ be its open super-subscheme. As it has been already observed, a (geometric) superscheme 
morphism $f : Z\to X$ factors through $U$ if and only if $f^e(Z^e)\subseteq U$. 

In particular, if $X\times_Z Y$ is a fibred product in $\mathcal{SV}$, then for every open super-subschemes $U$ and $V$ of $X$ and $Y$, respectively, the open super-subscheme $p_X^{-1}(U)\cap p_Y^{-1}(V)$ of $X\times_Z Y$ is naturally isomorphic to $U\times_Z V$. Here $p_X : X\times_Z Y\to X$ and $p_Y : X\times_Z Y\to Y$ are the canonical projections. Note that if $\mathbb{X}(\Bbbk)\neq\emptyset$ (respectively, $\mathbb{Y}(\Bbbk)\neq\emptyset$), then $p_X^*$ (respectively, $p^*_Y$) is injective.    

Note that the sheaf morphisms $p_X^*$ and $p_Y^*$ induce a superalgebra morphism 
$\mathcal{O}(X)\otimes_{\mathcal{O}(Z)}\mathcal{O}(Y)\to\mathcal{O}(X\times_Z Y)$. 

Let $G$ be a group superscheme, and let $m, \iota$ and $\epsilon$ denote the multiplication morphism
$G\times G\to G$, the inversion morphism $G\to G$, and the closed immersion $e\to G$, respectively. 
We also denote by $p_1$ and $p_2$ the corresponding projections from $G\times G$ to $G$.

Recall that if $G$ is affine, say $G\simeq\mathrm{SSpec}(A)$, then $A$ is a Hopf superalgebra with the coproduct $\Delta_A=m^*$, counit 
$\epsilon_A=\epsilon^*$ and antipode $S_A=\iota^*$.
\begin{lm}\label{morphism to affine supergroup}
Let $H\simeq\mathrm{SSpec}(B)$ be an affine geometric group superscheme. Then every morphism $G\to H$ in $\mathcal{SVG}$ is uniquely defined by a superalgebra morphism $\phi : B\to\mathcal{O}(G)$, that satisfies the following conditions:
\begin{enumerate}
\item The superalgebra morphism 
\[B\otimes B\stackrel{\phi\otimes\phi}{\to}\mathcal{O}(G)\otimes\mathcal{O}(G)\stackrel{p_1^*\otimes p_2^*}
{\to}\mathcal{O}(G\times G)\] makes the diagram
\[\begin{array}{ccc}
B\otimes B & \to & \mathcal{O}(G\times G) \\
\uparrow & & \uparrow \\
B & \to & \mathcal{O}(G)
\end{array},\]
where the vertical arrows are $\Delta_B$ and $m^*$, respectively, to be commutative. 
\item $\epsilon^* \phi=\epsilon_B$. 
\end{enumerate} 
\end{lm}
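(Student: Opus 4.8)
The plan is to use the Comparison Theorem (Theorem~\ref{comparison}) to pass to the category $\mathcal{SF}$ of $K$-functors, where Yoneda-type reasoning makes the bookkeeping transparent, and then translate the conclusion back. First I would observe that since $H\simeq\mathrm{SSpec}(B)$ is affine, the associated $K$-functor is $\mathbb{H}=\mathrm{SSp}(B)$, and by the Comparison Theorem a morphism $G\to H$ in $\mathcal{SVG}$ is the same datum as a morphism $\mathbb{G}\to\mathbb{H}$ in $\mathcal{SFG}$. Now $\mathrm{Mor}_{\mathcal{SF}}(\mathbb{G},\mathrm{SSp}(B))\cong\mathrm{Hom}_{\mathsf{SAlg}_K}(B,\mathcal{O}(\mathbb{G}))$: indeed $\mathcal{O}(\mathbb{G})=\mathcal{O}_G(G^e)$ corepresents the functor $\mathbb{X}\mapsto\mathrm{Mor}(\mathbb{X},\mathrm{SSp}(B))$ on superschemes because $\mathrm{Mor}(\mathbb{X},\mathrm{SSp}(B))=\mathrm{Hom}_{\mathsf{SAlg}_K}(B,\mathcal{O}(\mathbb{X}))$ for any superscheme $\mathbb{X}$ (glue the affine case along an open cover, using the sheaf property of $\mathcal{O}_X$). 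Thus an arbitrary superscheme morphism $\mathbf{f}:\mathbb{G}\to\mathbb{H}$ corresponds bijectively to a superalgebra map $\phi:B\to\mathcal{O}(G)$, namely $\phi=\mathbf{f}^{*}$ on global sections; call this $\mathbf{f}_\phi$.

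Next I would pin down exactly when $\mathbf{f}_\phi$ is a morphism of group objects, i.e.\ commutes with multiplication, unit, and (automatically) inversion. The multiplication compatibility is the square $\mathbf{f}_\phi\circ m_{\mathbb{G}} = m_{\mathbb{H}}\circ(\mathbf{f}_\phi\times\mathbf{f}_\phi)$ as morphisms $\mathbb{G}\times\mathbb{G}\to\mathbb{H}$. Both sides are morphisms into the affine superscheme $\mathrm{SSp}(B)$, so by the same corepresentability they are equal iff they induce the same map $B\to\mathcal{O}(\mathbb{G}\times\mathbb{G})$. The left side induces $m^{*}\circ\phi$ (note $m^{*}:\mathcal{O}(G)\to\mathcal{O}(G\times G)$ is the comultiplication on global sections). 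The right side induces $(p_1^{*}\otimes p_2^{*})\circ(\phi\otimes\phi)\circ\Delta_B$, where $\Delta_B$ is the comultiplication of $B$ and $p_1^{*}\otimes p_2^{*}$ is the canonical map $\mathcal{O}(G)\otimes\mathcal{O}(G)\to\mathcal{O}(G\times G)$ recalled just before the lemma. Equating these two is precisely the commutativity of the diagram in (1). Similarly, the unit compatibility $\mathbf{f}_\phi\circ\epsilon_{\mathbb{G}}=\epsilon_{\mathbb{H}}$ as morphisms $e\to\mathbb{H}$; since $e=\mathrm{SSp}(K)$ and $\epsilon_{\mathbb{H}}$ corresponds to the counit $\epsilon_B:B\to K$, while $\mathbf{f}_\phi\circ\epsilon_{\mathbb{G}}$ corresponds to $\epsilon^{*}\circ\phi:B\to\mathcal{O}(G)\to K$, this compatibility reads $\epsilon^{*}\phi=\epsilon_B$, which is condition (2). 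Finally, compatibility with inversion is automatic: a map of monoid objects between group objects necessarily preserves inverses (uniqueness of inverses), so no third condition is needed.

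Putting the two directions together: given a group-superscheme morphism $G\to H$, the induced $\phi=\mathbf{f}^{*}$ on global sections satisfies (1) and (2) by the computation above; conversely, given $\phi:B\to\mathcal{O}(G)$ satisfying (1) and (2), the morphism $\mathbf{f}_\phi$ of superschemes is then a morphism of group objects, and it is the unique one inducing $\phi$ because global sections already determine morphisms into an affine target. Transporting back through $\mathcal{SVG}\simeq\mathcal{SFG}$ gives the statement. The only mildly delicate point—the step I expect to be the main obstacle—is the identification $\mathrm{Mor}_{\mathcal{SV}}(X,\mathrm{SSpec}(B))\cong\mathrm{Hom}_{\mathsf{SAlg}_K}(B,\mathcal{O}(X))$ for a general (non-affine) superscheme $X$, and the compatibility of this bijection with forming products, i.e.\ that the map $\mathcal{O}(G)\otimes\mathcal{O}(G)\to\mathcal{O}(G\times G)$ appearing in (1) is exactly the one through which $\mathrm{Mor}(G\times G,\mathrm{SSp}(B))$ is computed; both are handled by covering $G$ (hence $G\times G$, via Lemma~\ref{coveingsofproducts}) by affine opens and invoking the sheaf axiom, together with the remarks preceding the lemma about $p_X^{*},p_Y^{*}$.
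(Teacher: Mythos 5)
Your proposal is correct and follows essentially the same route as the paper: the paper's proof is simply a citation of Lemma 4.1 of \cite{maszub1}, which is exactly the corepresentability $\mathrm{Mor}_{\mathcal{SV}}(X,\mathrm{SSpec}(B))\cong\mathrm{Hom}_{\mathsf{SAlg}_K}(B,\mathcal{O}(X))$ that you establish by gluing over an affine cover, with the translation of the group-object axioms into conditions (1) and (2) left implicit there just as you spell it out. No gaps; your remark that compatibility with inversion is automatic is also the intended reading.
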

\begin{proof}
Use \cite[Lemma 4.1]{maszub1}.
\end{proof}
It is clear that there is the largest super-subalgebra $C$ of $\mathcal{O}(G)$, such that the superalgebra morphism $\Delta_C : C\to\mathcal{O}(G)\stackrel{m^*}{\to} \mathcal{O}(G\times G)$ maps $C$ to $(p_1^*\otimes p_2^*)(C\otimes C)$.
\begin{pr}\label{C is Hopf superalgebra}
$C$ is a Hopf superalgebra with the coproduct $\Delta_C=m^*|_C$, the antipode $\iota^*|_C$ and the counit $\epsilon^*|_C$. 
\end{pr}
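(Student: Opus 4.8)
The plan is to extract the Hopf-algebra structure on $C$ directly from the group-object axioms for $G$, read through the superalgebra of global sections, and then verify that $C$ is closed under every structure map. First I would record what the defining property of $C$ gives us: the largest super-subalgebra on which $\Delta := m^*|_C$ lands inside $(p_1^*\otimes p_2^*)(C\otimes C)$. Since $\mathbb{X}(K)\ne\emptyset$ for $X=G$ (it has the unit point $e$), the map $p_1^*\otimes p_2^*\colon \mathcal{O}(G)\otimes\mathcal{O}(G)\to\mathcal{O}(G\times G)$ is injective, so $\Delta$ may unambiguously be regarded as a coassociative coproduct $C\to C\otimes C$; coassociativity and the counit axiom with $\epsilon^*|_C$ follow by pulling back, along the global-sections functor, the coassociativity diagram $(m\times\mathrm{id})\circ m = (\mathrm{id}\times m)\circ m$ and the unit diagrams $m\circ(e\times\mathrm{id})=\mathrm{id}=m\circ(\mathrm{id}\times e)$ for $G$ — these are equalities of superscheme morphisms, hence give equalities of the induced maps on $\mathcal{O}$, and the injectivity of the various $p_i^*\otimes\cdots$ on products lets us transport them into identities of linear maps on $C$. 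Here I would invoke the natural superalgebra map $\mathcal{O}(X)\otimes_{\mathcal{O}(Z)}\mathcal{O}(Y)\to\mathcal{O}(X\times_Z Y)$ recorded just before Lemma~\ref{morphism to affine supergroup}, and the fact that for triple products $p_1^*\otimes p_2^*\otimes p_3^*$ is again injective because $G\times G$ (resp. $G$) has a $K$-point.

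Next I would show $C$ is stable under the counit, the coproduct, and the antipode. Stability under $\epsilon^*$ is immediate since $\epsilon^*|_C\colon C\to K$ lands in the base field. Stability of $C$ under $\Delta$, i.e. $\Delta(C)\subseteq C\otimes C$ as opposed to merely $C\otimes\mathcal{O}(G)$ or $\mathcal{O}(G)\otimes C$, is forced by coassociativity together with maximality of $C$: if $c\in C$ then applying $\Delta\otimes\mathrm{id}$ and $\mathrm{id}\otimes\Delta$ to $\Delta c$ and comparing shows each tensor-leg of $\Delta c$ lies in the subalgebra on which $m^*$ behaves coalgebraically, hence in $C$. The one genuinely substantive point is closure under the antipode $S:=\iota^*|_C$: I would use that $\iota\colon G\to G$ is an anti-automorphism of superschemes with $\iota^2=\mathrm{id}$ and $m\circ(\iota\times\mathrm{id})\circ\delta = e\circ(\text{structure morphism}) = m\circ(\mathrm{id}\times\iota)\circ\delta$, where $\delta\colon G\to G\times G$ is the diagonal. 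Pulling this back through $\mathcal{O}$ gives the antipode axiom $\sum S(c_{(1)})c_{(2)} = \epsilon(c)1 = \sum c_{(1)}S(c_{(2)})$ \emph{on $C$} once we know $S(C)\subseteq C$; and $S(C)\subseteq C$ follows because $\iota^*$ is a superalgebra automorphism of $\mathcal{O}(G)$ intertwining $m^*$ with the opposite comultiplication $\tau\circ m^*$ (as $m\circ(\iota\times\iota) = \iota\circ m\circ\tau_{G,G}$), so $\iota^*$ carries the ``largest coalgebraic subalgebra'' $C$ into itself.

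I expect the main obstacle to be exactly this last step — verifying $S(C)\subseteq C$ cleanly — because it requires tracking how the flip $\tau_{G,G}$ on $G\times G$ and the superalgebra flip interact with the potential Koszul signs in the super setting, and confirming that the ``largest subalgebra with property $P$'' is preserved by an automorphism that conjugates $P$ to a formally different (opposite) property $P'$ which, by the inversion axiom, turns out to define the same subalgebra. Everything else is a diagram chase: take each of the finitely many commutative diagrams of superscheme morphisms expressing ``$G$ is a group object'', apply the (contravariant, product-compatible) global-sections functor, restrict to $C\otimes\cdots\otimes C$, and use injectivity of the $p_i^*$-products on $G^{\times n}$ to read off the Hopf-superalgebra axioms. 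I would close by noting that super-commutativity of $C$ is inherited from $\mathcal{O}(G)$, so $C$ is indeed a (super-commutative) Hopf superalgebra with the asserted structure maps.
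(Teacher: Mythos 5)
Your proposal is correct and follows essentially the same route as the paper: read coassociativity and the counit off the group axioms through global sections via the comparison maps $p_1^*\otimes p_2^*$ (whose injectivity makes $\Delta_C$ a well-defined map $C\to C\otimes C$), and obtain $\iota^*(C)\subseteq C$ from the identity $\Delta_C\iota^*=t_{C,C}(\iota^*\otimes\iota^*)\Delta_C$ together with the maximality in the definition of $C$, which is exactly the paper's argument. The only superfluous step is your argument that $\Delta(C)\subseteq C\otimes C$: with the paper's definition of $C$ this is built in, and note that injectivity of $p_1^*\otimes p_2^*$ is best justified by an affine-covering/flatness argument rather than by the $K$-point alone.
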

\begin{proof}
Let $q_{12}$ and $q_3$ denote the canonical projections $(G\times G)\times G\to G\times G$ and $(G\times G)\times G\to G$, respectively. Symmetrically, let $q_{1}$ and $q_{23}$ denote the canonical projections $G\times (G\times G)\to G$ and $G\times (G\times G)\to G\times G$, respectively. The natural isomorphism $\alpha : (G\times G)\times G\to
G\times (G\times G)$ is uniquely defined by the identities:
\[q_1\alpha=p_1 p_{12}, \ p_1 q_{23}\alpha=p_2 q_{12} \text{ and } \ q_3=p_2q_{23}\alpha.\]
Moreover, the following identities are satisfied:
\[m q_{12}=p_1(m\times\mathrm{id}_G), \ q_3=p_2(m\times\mathrm{id}_G), \ mq_{23}=p_2(\mathrm{id}_G\times m), 
\text{ and } q_1=p_1(\mathrm{id}_G\times m).\] 
Then there is a commutative diagram
\[\begin{array}{ccccccc}
C^{\otimes 3} & \to & \mathcal{O}(G)^{\otimes 3} & \stackrel{p_1^*\otimes p_2^*\otimes\mathrm{id}_{\mathcal{O}(G)}}{\to} & \mathcal{O}(G\times G)\otimes\mathcal{O}(G) & \stackrel{q_{12}^*\otimes q_3^*}{\to} & \mathcal{O}((G\times G)\times G) \\
\uparrow & & & & \uparrow & & \uparrow \\
C^{\otimes 2} & &  & \longrightarrow & \mathcal{O}(G)^{\otimes 2} & \stackrel{p_1^*\otimes p_2^*}{\to} &
\mathcal{O}(G\times G) \\  
\uparrow & & & & & & \uparrow \\
C & & & & \longrightarrow & & \mathcal{O}(G)
\end{array},\]
where the upper vertical arrows are $\Delta_C\otimes\mathrm{id}_C, m^*\otimes\mathrm{id}_{\mathcal{O}(G)}$, and $(m\times\mathrm{id}_G)^*$, respectively. The lower vertical arrows are $\Delta_C$ and $m^*$, respectively.
Symmetrically, there is a commutative diagram
\[\begin{array}{ccccccc}
\mathcal{O}(G\times (G\times G)) & \stackrel{q_1^*\otimes q_{23}^*}{\leftarrow} & \mathcal{O}(G)\otimes\mathcal{O}(G\times G) & \stackrel{\mathrm{id}_{\mathcal{O}(G)}\otimes p_1^*\otimes p_2^*}{\leftarrow} & \mathcal{O}(G)^{\otimes 3} & \leftarrow & C^{\otimes 3} \\
\uparrow & & \uparrow &  &  & & \uparrow \\
\mathcal{O}(G\times G) & \stackrel{p_1^*\otimes p_2^*}{\leftarrow} &  \mathcal{O}(G)^{\otimes 2} & \longleftarrow &  & &
C^{\otimes 2} \\  
\uparrow & & & & & & \uparrow \\
\mathcal{O}(G) & & \longleftarrow &  &  & & C
\end{array},\]
where the upper vertical arrows are $(\mathrm{id}_G\times m)^*, \mathrm{id}_{\mathcal{O}(G)}\otimes m^*$, and $\mathrm{id}_C\otimes\Delta_C$, respectively. Using the isomorphism $\alpha$ and the axiom of associativity, we obtain that $\Delta_C$ is a coproduct. Therefore, $C$ is a superbialgebra. We leave it for the reader to verify 
the identity 
\[\Delta_C\iota^*=t_{C, C}(\iota^*\otimes\iota^*)\Delta_C,\]
where $t_{C, C}$ is the braiding $c_1\otimes c_2\mapsto (-1)^{|c_1||c_2|}c_2\otimes c_1$ for $c_1, c_2\in C$.
Therefore, $\Delta_C$ maps $\iota^*(C)$ to $\iota^*(C)^{\otimes 2}$, hence
$\iota^*(C)\subseteq C$. The proposition is proven. 
\end{proof}
\begin{cor}\label{affinization}
The geometric group superscheme $\mathrm{SSpec}(C)$ is the largest affine quotient of $G$. This means that every homomorphism from $G$ to an affine geometric group superscheme
uniquely factors through $G\to\mathrm{SSpec}(C)$. Similarly, $\mathrm{SSp}(C)$ is the largest affine quotient of $\mathbb{G}$. We denote the superschemes $\mathrm{SSpec}(C)$ and $\mathrm{SSp}(C)$
by  $G^{aff}$ and $\mathbb{G}^{aff}$, respectively.  
\end{cor}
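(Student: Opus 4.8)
The plan is to exploit Proposition~\ref{C is Hopf superalgebra} together with the universal property built into the definition of $C$. First I would observe that $\mathrm{SSpec}(C)$ is genuinely a geometric group superscheme: by Proposition~\ref{C is Hopf superalgebra}, $C$ carries a Hopf superalgebra structure $(\Delta_C,\iota^*|_C,\epsilon^*|_C)$, and taking $\mathrm{SSpec}$ of a Hopf superalgebra yields an affine group superscheme whose multiplication, inversion, and unit are dual to $\Delta_C$, the antipode, and the counit. Next I would check that the inclusion $C\hookrightarrow\mathcal{O}(G)$ induces a morphism $\pi\colon G\to\mathrm{SSpec}(C)$ in $\mathcal{SVG}$; passing through the Comparison Theorem it suffices to produce the dual morphism on functors of points, and the compatibility with multiplication is exactly the statement that $\Delta_C\colon C\to(p_1^*\otimes p_2^*)(C\otimes C)\subseteq\mathcal{O}(G\times G)$ agrees with $m^*|_C$, which holds by the very definition of $C$ as the largest such super-subalgebra; compatibility with the unit is $\epsilon^*|_C$. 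So $\pi$ is a homomorphism of group superschemes.

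Then comes the universal property. Let $H\simeq\mathrm{SSpec}(B)$ be an affine geometric group superscheme and $f\colon G\to H$ a homomorphism. By Lemma~\ref{morphism to affine supergroup}, $f$ corresponds to a superalgebra morphism $\phi\colon B\to\mathcal{O}(G)$ satisfying the two displayed compatibilities there; in particular, $(p_1^*\otimes p_2^*)(\phi\otimes\phi)\circ\Delta_B = m^*\circ\phi$. I claim $\phi(B)\subseteq C$. Indeed, for $b\in B$ write $\Delta_B(b)=\sum b_{(1)}\otimes b_{(2)}$; then $m^*(\phi(b)) = (p_1^*\otimes p_2^*)\bigl(\sum\phi(b_{(1)})\otimes\phi(b_{(2)})\bigr)\in (p_1^*\otimes p_2^*)(\phi(B)\otimes\phi(B))$. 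Thus the super-subalgebra $\phi(B)$ of $\mathcal{O}(G)$ has the defining property that $m^*$ carries it into $(p_1^*\otimes p_2^*)(\phi(B)\otimes\phi(B))$, whence by maximality $\phi(B)\subseteq C$. Therefore $\phi$ factors as $B\xrightarrow{\bar\phi} C\hookrightarrow\mathcal{O}(G)$, and $\bar\phi$ is a Hopf superalgebra morphism because $\Delta_C$, the antipode, and the counit on $C$ are restrictions of $m^*$, $\iota^*$, $\epsilon^*$, which intertwine with the corresponding structure maps on $B$ via $\phi$. Dualizing, $\bar\phi$ gives a homomorphism $\mathrm{SSpec}(C)\to H$ through which $f$ factors, and this factorization is unique since $C\hookrightarrow\mathcal{O}(G)$ is a monomorphism (so $\pi^*$ is injective, hence $\pi$ is an epimorphism in the relevant sense).

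Finally, the affine counterpart $\mathrm{SSp}(C)$ and its universal property for $\mathbb{G}$ follow immediately by applying the equivalence $\mathcal{SVG}\simeq\mathcal{SFG}$ of Theorem~\ref{comparison}, together with the identification $\mathrm{SSpec}(C)\mapsto\mathrm{SSp}(C)$ under that equivalence; the notation $G^{aff}$, $\mathbb{G}^{aff}$ is then just a name. The main obstacle I anticipate is bookkeeping rather than conceptual: one must be careful that "factors through $G\to\mathrm{SSpec}(C)$" is interpreted correctly — a priori $\pi$ need not be an epimorphism of geometric superschemes, so uniqueness of the factorization has to be argued from injectivity of the algebra inclusion $C\hookrightarrow\mathcal{O}(G)$ (equivalently, density of the image of $G(\bar K)$), and one should make sure the Hopf-algebraic structure maps on $C$ are literally the restrictions claimed in Proposition~\ref{C is Hopf superalgebra} so that $\bar\phi$ really is a morphism of Hopf superalgebras and not merely of superalgebras.
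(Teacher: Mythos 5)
Your proposal is correct and follows essentially the route the paper intends: the corollary is meant to be immediate from Lemma \ref{morphism to affine supergroup} (which turns a homomorphism $G\to H=\mathrm{SSpec}(B)$ into $\phi:B\to\mathcal{O}(G)$ with $m^*\phi=(p_1^*\otimes p_2^*)(\phi\otimes\phi)\Delta_B$) together with the maximality defining $C$ and Proposition \ref{C is Hopf superalgebra}, exactly as you argue. Your observation that $\phi(B)$ is a super-subalgebra satisfying the defining condition of $C$, hence $\phi(B)\subseteq C$, plus uniqueness from injectivity of $C\hookrightarrow\mathcal{O}(G)$, is precisely the intended argument, and the passage to $\mathbb{G}^{aff}$ via the Comparison Theorem is as in the paper.
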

In general, $C$ is a proper super-subalgebra of $\mathcal{O}(G)$. But if $G$ is algebraic, then
$C=\mathcal{O}(G)$. The proof is given in two lemmas below.
\begin{lm}(see \cite[Lemma 1.8, I, \S 2]{DG})\label{no name} 
If $X$ and $Y$ are geometric superschemes such that $Y$ is of finite type and $X$ is affine, then $p^*_X\otimes p_Y^*$ induces an isomorphism $\mathcal{O}(X)\otimes\mathcal{O}(Y)\simeq\mathcal{O}(X\times Y)$, which is functorial in both $X$ and $Y$. 
\end{lm}
\begin{proof}
Choose a covering of $Y$ by open affine super-subschemes $V_i\simeq \mathrm{SSpec}(A_i), 1\leq i\leq t$. By Lemma \ref{coveingsofproducts}, $X\times V_i$ form a covering of $X\times Y$ by open affine super-subschemes. We have a commutative diagram
\[\begin{array}{ccccc}
\mathcal{O}(X\times Y) & \to & \prod_{1\leq i\leq t}\mathcal{O}(X\times V_i) & \rightrightarrows & \prod_{1\leq i\neq j\leq t}\mathcal{O}(X\times (V_i\cap V_j)) \\ 
\uparrow & & \uparrow & & \uparrow \\
\mathcal{O}(X)\otimes \mathcal{O}(Y) & \to & \prod_{1\leq i\leq t}\mathcal{O}(X)\otimes \mathcal{O}(V_i) & \rightrightarrows & \prod_{1\leq i\neq j\leq t}\mathcal{O}(X)\otimes \mathcal{O}(V_i\cap V_j)
\end{array}\]	
with the exact top and bottom rows. Besides, the left vertical arrows are $p_X^*\otimes p_Y^*, p^*_X\otimes p^*_{V_i}$ and 
$p^*_X\otimes p^*_{V_i\cap V_j}$ respectively. Since all of them, but $p^*_X\otimes p_Y^*$, are the isomorphisms, $p^*_X\otimes p_Y^*$ is an isomorphism as well.    
\end{proof}
\begin{lm}\label{no name final}
If $X$ and $Y$ are superschemes of finite type, then $p_X^*\otimes p^*_Y$ is an isomorphism, which is functorial in both $X$ and $Y$.  	
\end{lm}	
\begin{proof}
Just consider the similar diagram for any finite open affine covering of $X$ and apply Lemma \ref{no name}.	
\end{proof}

\section{The tangent functor at specific points}

The content of this section extends \cite{DG, zub2}.

Let $\mathbb{X}$ be a $\Bbbk$-functor. For every point $x\in \mathbb{X}(\Bbbk)$, one can define a $\Bbbk$-functor
$\mathrm{T}_x(\mathbb{X})$, called a \emph{tangent $\Bbbk$-functor} at point $x$, as follows.
For each superalgebra $R$, set $x_R=\mathbb{X}(\iota^R_{\Bbbk})(x)$, where $\iota_{\Bbbk}^R : \Bbbk\to R$ is the canonical embedding. Observe that $x=x_{\Bbbk}$. 

Let $R[\epsilon_0, \epsilon_1]$ denote the \emph{superalgebra of dual super-numbers}, that is
\[R[\epsilon_0, \epsilon_1]=R[x|y]/<x^2,y^2, xy, yx>, \text{ where } |x|=0, |y|=1,\]
and $\epsilon_0$ and $\epsilon_1$ denote the residue classes of $x$ and $y$, respectively. We have two $R$-superalgebra morphisms $p_R : R[\epsilon_0, \epsilon_1]\to R$ and $i_R : R\to   R[\epsilon_0, \epsilon_1]$ such that $p_R(r)=r$,  $p_R(\epsilon_i)=0$ for $i=0, 1$, and $i_R(r)=r$ for $r\in R$. Since, $p_r i_R=\mathrm{id}_R$,
the map $\mathbb{X}(p_R)$ is surjective. 

Set $\mathrm{T}_x(\mathbb{X})(R)=\mathbb{X}(p_R)^{-1}(x_R)$. It is clear that $\mathrm{T}_x(\mathbb{X})$ is a $\Bbbk$-functor. 

Assume that $\mathbb{X}$ is a superscheme. Let $\mathbb{U}\simeq\mathrm{SSp}(A)$ be an open affine super-subscheme of $\mathbb{X}$. As it has been already observed, $x_R$ belongs to $\mathbb{U}(R)$ if and only if $x$ belongs to $\mathbb{U}(\Bbbk)$.
Besides, $x$ can be identified with a superalgebra morphism $\epsilon_A : A\to \Bbbk$ and $x_R$ can be identifed with $\iota^R_{\Bbbk}\epsilon_A$. Let $\mathfrak{m}_A$ denote $\ker\epsilon_A$. 

Recall that there is a superscheme 
morphism ${\bf i}_x : \mathrm{SSp}(\mathcal{O}_x)\to\mathbb{X}$ (corresponding to the closed immersion $i_x : \mathrm{SSpec}(\mathcal{O}_x)\to X$) that factors through every open neighborhood of $x$.
\begin{lm}\label{some pre-image}
For every open super-subscheme $\mathbb{U}$ of $\mathbb{X}$ and every $R\in\mathsf{SAlg}_{\Bbbk}$, there is
$\mathbb{X}(p_R)^{-1}(\mathbb{U}(R))\subseteq\mathbb{U}(R[\epsilon_0, \epsilon_1])$. In particular,
$\mathrm{T}_x(\mathbb{X})(R)=\mathrm{T}_x(\mathbb{U})(R)$, provided $x\in\mathbb{U}(\Bbbk)$.
\end{lm}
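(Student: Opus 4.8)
The plan is to reduce the claim to a statement about the underlying topological map $\mathbb{X}(p_R)$ of a morphism of affine superschemes and then to use the fact that ${\bf i}_x$ factors through every open neighborhood of $x$. First I would fix $R\in\mathsf{SAlg}_K$ and an element $\psi\in\mathbb{X}(R[\epsilon_0,\epsilon_1])$ with $\mathbb{X}(p_R)(\psi)\in\mathbb{U}(R)$; I must show $\psi\in\mathbb{U}(R[\epsilon_0,\epsilon_1])$. The key geometric observation is that $p_R\colon R[\epsilon_0,\epsilon_1]\to R$ is a nilpotent thickening in the sense that its kernel $\langle\epsilon_0,\epsilon_1\rangle$ is a nilpotent super-ideal; hence the induced map on prime spectra $\mathrm{SSpec}(R)\to\mathrm{SSpec}(R[\epsilon_0,\epsilon_1])$ is a homeomorphism onto the whole space (primes of $R[\epsilon_0,\epsilon_1]$ all contain the nilpotent ideal). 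Translated through the Comparison Theorem, $\psi$ corresponds to a geometric morphism $h_\psi\colon\mathrm{SSpec}(R[\epsilon_0,\epsilon_1])\to X$, its restriction along $i_R$ (dual to $p_R$) to $h_{\psi'}\colon\mathrm{SSpec}(R)\to X$ with $\psi'=\mathbb{X}(p_R)(\psi)$, and these two morphisms have the same topological image: $h_\psi^e$ and $h_{\psi'}^e$ have the same image in $X^e$ because their sources have homeomorphic underlying spaces via $p_R$.

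The second step is to invoke the characterization, recorded in the excerpt, that for an open super-subscheme $\mathbb{U}$ of $\mathbb{X}$ and any superalgebra $B$, a morphism $h\colon\mathrm{SSpec}(B)\to X$ lies in $\mathbb{U}(B)$ if and only if $h^e((\mathrm{SSpec}(B))^e)\subseteq U^e$ (this is exactly the remark ``a superscheme morphism $f\colon Z\to X$ factors through $U$ if and only if $f^e(Z^e)\subseteq U$'', used in the proof of Proposition \ref{closed (super)subschemes} and again at the start of Section~4). Since $\psi'\in\mathbb{U}(R)$ gives $h_{\psi'}^e((\mathrm{SSpec}(R))^e)\subseteq U^e$, and the image of $h_\psi^e$ coincides with the image of $h_{\psi'}^e$ by the first step, we conclude $h_\psi^e((\mathrm{SSpec}(R[\epsilon_0,\epsilon_1]))^e)\subseteq U^e$, i.e. $\psi\in\mathbb{U}(R[\epsilon_0,\epsilon_1])$. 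This proves the inclusion $\mathbb{X}(p_R)^{-1}(\mathbb{U}(R))\subseteq\mathbb{U}(R[\epsilon_0,\epsilon_1])$.

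For the ``in particular'' clause, assume $x\in\mathbb{U}(K)$. Then $x_R\in\mathbb{U}(R)$ since $\mathbb{U}_{ev}$-type stability gives $\mathbb{U}(\iota_K^R)(x)\in\mathbb{U}(R)$ (equivalently, $x$ corresponds to $\epsilon_A\colon A\to K$ on an affine chart $\mathbb{U}\simeq\mathrm{SSp}(A)$ containing $x$, so $x_R=\iota_K^R\epsilon_A$ lands in $\mathbb{U}(R)$). Hence, by definition, $\mathrm{T}_x(\mathbb{X})(R)=\mathbb{X}(p_R)^{-1}(x_R)\subseteq\mathbb{X}(p_R)^{-1}(\mathbb{U}(R))\subseteq\mathbb{U}(R[\epsilon_0,\epsilon_1])$; so every element of $\mathrm{T}_x(\mathbb{X})(R)$ already lies in $\mathbb{U}(R[\epsilon_0,\epsilon_1])$, and since the condition $\mathbb{X}(p_R)(\psi)=x_R$ and $\mathbb{U}(p_R)(\psi)=x_R$ agree for such $\psi$, we get $\mathrm{T}_x(\mathbb{X})(R)=\mathrm{T}_x(\mathbb{U})(R)$. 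Functoriality in $R$ is immediate, so $\mathrm{T}_x(\mathbb{X})=\mathrm{T}_x(\mathbb{U})$ as $K$-functors.

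**Main obstacle.** The only genuine point to get right is the first step: verifying that $p_R$ is a nilpotent thickening and therefore that $h_\psi$ and $h_{\psi'}$ have the same topological image in $X^e$. The subtlety is that one works with $K$-functors rather than directly with geometric morphisms, so one should either pass through the Comparison Theorem (Theorem \ref{comparison}) cleanly, or argue functor-theoretically using an affine chart: if $\mathbb{U}\simeq\mathrm{SSp}(A)$ and $\psi$ restricts into $\mathbb{D}(J)$-type open pieces, track that the condition $\phi(I)B=B$ defining $\mathbb{D}(I)$ is insensitive to quotienting out a nilpotent ideal, because a unit remains a unit. Everything else is bookkeeping.
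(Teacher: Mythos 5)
Your proposal is correct and takes essentially the same route as the paper: translate to geometric morphisms via the Comparison Theorem, observe that $p_R$ has nilpotent kernel so every prime super-ideal of $R[\epsilon_0,\epsilon_1]$ has the form $\mathfrak{p}+\epsilon_0R+\epsilon_1R$ and the underlying spaces of $\mathrm{SSpec}(R[\epsilon_0,\epsilon_1])$ and $\mathrm{SSpec}(R)$ are identified, and then use that membership in $\mathbb{U}$ is the purely topological condition that the image lies in $U^e$. The only cosmetic slip is the labeling of the geometric morphism you compose with: it is $\mathrm{SSpec}(p_R)$ (dual to $p_R$), not $i_R$; the rest, including the deduction of $\mathrm{T}_x(\mathbb{X})(R)=\mathrm{T}_x(\mathbb{U})(R)$, matches the paper's argument.
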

\begin{proof}
Let $U$ be an open super-subscheme of $X$ that corresponds to $\mathbb{U}$. Then the statement of the lemma is equivalent to the following. If $f : \mathrm{SSpec}(R[\epsilon_0, \epsilon_1])\to X$ satisfies
$(f\mathrm{SSpec}(p_R))^e(\mathrm{SSpec}(R)^e)\subseteq U^e$, then $f^e((\mathrm{SSpec}(R[\epsilon_0, \epsilon_1]))^e)\subseteq U^e$. 

Note that any prime super-ideal of $R[\epsilon_0, \epsilon_1]$ has a form $\mathfrak{p}+\epsilon_0 R +\epsilon_1 R$, where $\mathfrak{p}\in (\mathrm{SSpec}(R))^e$. Since $(\mathrm{SSpec}(p_R))^e(\mathfrak{p})=\mathfrak{p}+\epsilon_0 R +\epsilon_1 R$, the statement follows. 
\end{proof}
Let $V$ be a superspace. Denote by $\mathbb{V}_a$ the $\Bbbk$-functor given by $\mathbb{V}_a(R)=V\otimes R$ for  $R\in\mathsf{SAlg}_{\Bbbk}$. From now on, assume that $\mathbb{X}$ is locally of finite type, i.e., its geometric counterpart $X$ is.
Lemma \ref{some pre-image} implies that the superspace $\mathfrak{m}_x/\mathfrak{m}_x^2$ is finite-dimensional.  

\begin{lm}\label{standard description of tangent functor}
The functor $\mathrm{T}_x(\mathbb{X})$ is isomorphic to $\mathbb{V}_a$, where $V=(\mathfrak{m}_x/\mathfrak{m}_x^2)^*$.
\end{lm}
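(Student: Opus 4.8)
The plan is to reduce the statement to a purely local, affine computation and then identify the tangent functor with the functor of points of the dual of the cotangent space. Since $\mathrm{T}_x(\mathbb{X})(R)$ depends only on an open super-subscheme $\mathbb{U}$ containing $x$ by Lemma \ref{some pre-image}, I would first fix an open affine $\mathbb{U}\simeq\mathrm{SSp}(A)$ with $x\in\mathbb{U}(K)$, so that $x$ corresponds to the counit-type morphism $\epsilon_A : A\to K$ with kernel $\mathfrak{m}_A$, and $x_R=\iota^R_K\epsilon_A$. Because $\mathbb{X}$ is of locally finite type, $A$ is a finitely generated $K$-superalgebra, hence $\mathfrak{m}_A/\mathfrak{m}_A^2$ is finite-dimensional; and by the local description of $X$ one has a canonical identification $\mathfrak{m}_A/\mathfrak{m}_A^2\simeq\mathfrak{m}_x/\mathfrak{m}_x^2$, so writing $V=(\mathfrak{m}_x/\mathfrak{m}_x^2)^*$ is the same as working with $A$ at $\mathfrak{m}_A$.

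Next I would unwind the definition. An element of $\mathrm{T}_x(\mathbb{X})(R)=\mathbb{X}(p_R)^{-1}(x_R)$ is, by the previous step, a superalgebra morphism $\psi : A\to R[\epsilon_0,\epsilon_1]$ such that $p_R\circ\psi=\iota^R_K\circ\epsilon_A$. Writing $\psi(a)=\epsilon_A(a)\cdot 1 + D(a)$ with $D(a)\in\epsilon_0 R\oplus\epsilon_1 R\subset R[\epsilon_0,\epsilon_1]$, the multiplicativity of $\psi$ together with $\epsilon_0^2=\epsilon_1^2=\epsilon_0\epsilon_1=0$ forces $D$ to be an $\epsilon_A$-derivation $A\to \epsilon_0 R\oplus\epsilon_1 R$ of the appropriate parity behaviour, i.e. $D(ab)=\epsilon_A(a)D(b)+(-1)^{|a||D|}D(a)\epsilon_A(b)$; since $D$ lands in a square-zero ideal annihilated by $\mathfrak{m}_A$, it factors through $\mathfrak{m}_A/\mathfrak{m}_A^2$ and is $K$-linear there. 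Identifying $\epsilon_0 R\oplus\epsilon_1 R$ with $R$ as a super vector space (degree-shifting the $\epsilon_1$-part), this gives a natural bijection
\[
\mathrm{T}_x(\mathbb{X})(R)\;\xrightarrow{\ \sim\ }\;\mathrm{Hom}_{K}^{\mathrm{super}}\bigl(\mathfrak{m}_A/\mathfrak{m}_A^2,\,R\bigr)\;\simeq\;\bigl((\mathfrak{m}_A/\mathfrak{m}_A^2)^*\otimes_K R\bigr)=\mathbb{V}_a(R),
\]
where $\mathrm{Hom}^{\mathrm{super}}$ means parity-preserving $K$-linear maps and the last identification uses finite-dimensionality of $\mathfrak{m}_A/\mathfrak{m}_A^2$. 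Finally I would check naturality in $R$: a superalgebra map $R\to R'$ induces the obvious map on dual super-numbers commuting with the $p$'s and $i$'s, and under the bijection above this is visibly $\mathrm{id}\otimes(R\to R')$, so the bijections assemble into an isomorphism of $K$-functors $\mathrm{T}_x(\mathbb{X})\simeq\mathbb{V}_a$.

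The main obstacle, and the step deserving the most care, is the bookkeeping of the $\mathbb{Z}/2$-grading in the derivation computation: one must check that even and odd elements of $A$ are sent into the $\epsilon_0 R$ and $\epsilon_1 R$ parts respectively (so that $\psi$ is genuinely a morphism in $\mathsf{SAlg}_K$ and not merely an ungraded algebra map), and that the resulting identification of $\epsilon_0 R\oplus\epsilon_1 R$ with $R$ is the parity-correct one making $\mathrm{Hom}^{\mathrm{super}}(\mathfrak{m}_A/\mathfrak{m}_A^2,R)\simeq (\mathfrak{m}_A/\mathfrak{m}_A^2)^*\otimes R$ hold as super vector spaces. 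Everything else — the reduction to the affine case, the square-zero argument, and naturality — is routine and essentially the super-analogue of the classical computation in \cite{DG} and \cite{zub2}, which I would invoke for the underlying scheme-theoretic facts.
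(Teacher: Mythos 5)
Your proposal is correct and follows essentially the same route as the paper: reduce to an affine neighborhood via Lemma \ref{some pre-image}, identify the lifts $A\to R[\epsilon_0,\epsilon_1]$ of $\epsilon_A$ with $R$-valued linear functionals on $\mathfrak{m}_A/\mathfrak{m}_A^2$ (the paper just writes the inverse map explicitly, $(\phi\otimes r)(a)=\epsilon_A(a)+(-1)^{|r||a|}\epsilon_{k}\phi(\bar{a})r$ with $k\equiv|r|+|\phi|$), and then use $\mathfrak{m}_A/\mathfrak{m}_A^2\simeq\mathfrak{m}_x/\mathfrak{m}_x^2$ together with functoriality in $R$. The only point to tighten is the wording of the middle term of your displayed chain: the bijection is with the full $\mathrm{Hom}_K(\mathfrak{m}_A/\mathfrak{m}_A^2,R)\simeq(\mathfrak{m}_A/\mathfrak{m}_A^2)^*\otimes R$, the even functionals giving the $\epsilon_0$-coefficients and the odd ones the $\epsilon_1$-coefficients, rather than "parity-preserving maps to $R$", which would only account for the even part of $\mathbb{V}_a(R)$.
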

\begin{proof}
By Lemma \ref{some pre-image}, we have a commutative diagram
\[\begin{array}{ccccc}
\mathrm{T}_x(\mathbb{X})(R) & \to & \mathbb{X}(R[\epsilon_0, \epsilon_1]) & \to & \mathbb{X}(R) \\
\parallel & & \uparrow & & \uparrow \\
\mathrm{T}_x(\mathbb{U})(R) & \to & \mathbb{U}(R[\epsilon_0, \epsilon_1]) & \to & \mathbb{U}(R) \\
\parallel & & \uparrow & & \uparrow \\
\mathrm{T}_x(\mathrm{SSp}(\mathcal{O}_x))(R) & \to & \mathrm{SSp}(\mathcal{O}_x)(R[\epsilon_0, \epsilon_1]) & \to & \mathrm{SSp}(\mathcal{O}_x)(R),
\end{array}\]
where $\mathbb{U}$ is an open affine super-subscheme of $\mathbb{X}$ with $x\in\mathbb{U}(\Bbbk)$. The lower middle and rightmost vertical arrows are ${\bf i}_x(R[\epsilon_0, \epsilon_1])$ and ${\bf i}_x(R)$, respectively.

In particular, if $\mathbb{U}\simeq\mathrm{SSp}(A)$, then $(\mathfrak{m}_A/\mathfrak{m}_A^2)^*\otimes R\simeq \mathrm{T}_x(\mathbb{U})(R)$ via
\[(\phi\otimes r)(a)=\epsilon_A(a)+(-1)^{|r||a|}\epsilon_{k}\phi(\bar{a})r,\] 
where $r\in R, \phi\in (\mathfrak{m}_A/\mathfrak{m}_A^2)^*, a\in A, \bar{a}=a-\epsilon_A(a)$ and $k\equiv |r|+|\phi| \pmod 2$. Similarly, we have $(\mathfrak{m}_x/\mathfrak{m}_x^2)^*\otimes R\simeq \mathrm{T}_x(\mathrm{SSp}(\mathcal{O}_x))(R)$, and both isomorphisms are functorial in $R$. Since
$\mathfrak{m}_A/\mathfrak{m}_A^2\simeq \mathfrak{m}_x/\mathfrak{m}_x^2$, the lemma follows.
\end{proof}
To an homogeneous element $r\in R$, we associate an endomorphism $\hat{r}$ of the superalgebra $R[\epsilon_0, \epsilon_1]$ defined by
\[\hat{r}(a+\epsilon_0 b+\epsilon_1 c)=a+(-1)^{|a||r|}(\epsilon_{|r|}br+\epsilon_{1+|r|}cr).\]
In fact, we have
\[\hat{r}((a+\epsilon_0 b+\epsilon_1 c)(a'+\epsilon_0 b'+\epsilon_1 c'))=\]
\[\hat{r}(aa'+(ba'+ab')\epsilon_0+((-1)^{|a|}ac'+ca')\epsilon_1)=\]
\[aa' +(-1)^{|r|(|a|+|a'|)}[\epsilon_{|r|}(ba'+ab')r+\epsilon_{1+|r|}((-1)^{|a|}ac'+ca')r] =\]
\[(a+(-1)^{|r||a|}(\epsilon_{|r|}br+\epsilon_{1+|r|}cr))(a'+(-1)^{|r||a'|}(\epsilon_{|r|}b'r+\epsilon_{1+|r|}c'r))=\]
\[\hat{r}(a+b\epsilon_0+c\epsilon_1)\hat{r}(a'+b'\epsilon_0+c'\epsilon_1).\]
Moreover, if $r'$ is another homogeneous element, then $\widehat{r r'}=\widehat{r'}\widehat{r}$. 
This proves the following lemma.

\begin{lm}\label{R-supermodule structure}
An endomorphism $\hat{r}$ satisfies $p_R\hat{r}=p_R$ and $\hat{r}i_R=i_R$. Therefore, $\mathbb{X}(\hat{r})$
maps $\mathrm{T}_x(\mathbb{X})(R)$ to itself. Moreover, if we identify $\mathrm{T}_x(\mathbb{X})(R)$ with
$(\mathfrak{m}_x/\mathfrak{m}_x^2)^*\otimes R$, then $\mathbb{X}(\hat{r})$ coincides with the map
$\phi\otimes r'\mapsto \phi\otimes r'r$. In particular, $\mathrm{T}_x(\mathbb{X})(R)$ has a natural structure of the right $R$-supermodule.
\end{lm}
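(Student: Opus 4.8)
The plan is to read off all the assertions directly from the explicit formula for $\hat{r}$, continuing the computation carried out just above. First I would observe that $\hat{r}$ fixes the $R$-component of any element of $R[\epsilon_0,\epsilon_1]$ and only rescales and permutes the $\epsilon_0$- and $\epsilon_1$-components; since $p_R$ is precisely the projection onto the $R$-component (killing $\epsilon_0,\epsilon_1$) and $i_R(r)=r$ has no $\epsilon$-part, the identities $p_R\hat{r}=p_R$ and $\hat{r}i_R=i_R$ are immediate. The verification above shows that $\hat{r}$ is an endomorphism of the superalgebra $R[\epsilon_0,\epsilon_1]$, so $\mathbb{X}(\hat{r})$ is defined; for $\xi\in\mathrm{T}_x(\mathbb{X})(R)=\mathbb{X}(p_R)^{-1}(x_R)$, functoriality of $\mathbb{X}$ together with $p_R\hat{r}=p_R$ gives $\mathbb{X}(p_R)(\mathbb{X}(\hat{r})(\xi))=\mathbb{X}(p_R)(\xi)=x_R$, hence $\mathbb{X}(\hat{r})$ maps $\mathrm{T}_x(\mathbb{X})(R)$ into itself.

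For the explicit form of the action I would reduce to the affine case. By Lemma \ref{some pre-image} we may replace $\mathbb{X}$ by an open affine super-subscheme $\mathbb{U}\simeq\mathrm{SSp}(A)$ with $x\in\mathbb{U}(K)$ (or by $\mathrm{SSp}(\mathcal{O}_x)$), so that $\mathrm{T}_x(\mathbb{X})(R)=\mathrm{T}_x(\mathbb{U})(R)$. Under the isomorphism from the proof of Lemma \ref{standard description of tangent functor}, the point $\phi\otimes r'$ corresponds to the superalgebra morphism $a\mapsto\epsilon_A(a)+(-1)^{|r'||a|}\epsilon_k\phi(\bar a)r'$ with $k\equiv|r'|+|\phi|\pmod 2$. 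Composing this with $\hat{r}$, and using that for homogeneous $r$ left multiplication by the $\epsilon$-part carries the $\epsilon_0$-coefficient to the $\epsilon_{|r|}$-coefficient and the $\epsilon_1$-coefficient to the $\epsilon_{1+|r|}$-coefficient, each multiplied by $r$ with the sign $(-1)^{|a||r|}$ prescribed by the formula for $\hat{r}$, one checks that $\mathbb{X}(\hat{r})(\phi\otimes r')$ is the morphism $a\mapsto\epsilon_A(a)+(-1)^{|r'r||a|}\epsilon_{k'}\phi(\bar a)(r'r)$ with $k'\equiv|r'r|+|\phi|\pmod 2$, i.e.\ exactly $\phi\otimes(r'r)$. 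I expect the only point needing care to be reconciling the sign $(-1)^{|a||r|}$ introduced by $\hat{r}$ with the signs already built into $\phi\otimes r'$; once the parities $|\phi|=|\bar a|=|a|$ are tracked carefully, everything cancels as indicated.

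Finally, for the supermodule structure I would put $\xi\cdot r:=\mathbb{X}(\hat{r})(\xi)$ for homogeneous $r$ and extend additively in $r$. From $\widehat{r r'}=\widehat{r'}\widehat{r}$ and functoriality of $\mathbb{X}$ one obtains $\mathbb{X}(\widehat{r r'})=\mathbb{X}(\widehat{r'})\mathbb{X}(\widehat{r})$, whence $(\xi\cdot r)\cdot r'=\xi\cdot(rr')$; and $\hat{1}=\mathrm{id}$ gives $\xi\cdot1=\xi$. The remaining module axioms and the compatibility with the $\mathbb{Z}/2$-grading are immediate from the previous paragraph, since the transported action is nothing but the right multiplication on the free right $R$-supermodule $(\mathfrak{m}_x/\mathfrak{m}_x^2)^*\otimes R$. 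Thus $\mathrm{T}_x(\mathbb{X})(R)$ carries a natural structure of right $R$-supermodule, compatible with the isomorphism $\mathrm{T}_x(\mathbb{X})\simeq\mathbb{V}_a$ of Lemma \ref{standard description of tangent functor}.
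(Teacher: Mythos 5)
Your proposal is correct and follows essentially the same route as the paper, which treats the lemma as an immediate consequence of the preceding computation (multiplicativity of $\hat{r}$, the identities $p_R\hat{r}=p_R$, $\hat{r}i_R=i_R$, the anti-multiplicativity $\widehat{rr'}=\widehat{r'}\widehat{r}$) combined with the identification of Lemma \ref{standard description of tangent functor}. You merely write out the sign bookkeeping and the module axioms that the paper leaves implicit, and these check out.
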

Let ${\bf f} : \mathbb{X}\to\mathbb{Y}$ be a morphism of $\Bbbk$-functors. Choose points $x\in\mathbb{X}(K)$ and $y\in \mathbb{Y}(\Bbbk)$ such that ${\bf f}(\Bbbk)(x)=y$. For every superalgebra $R$, the map ${\bf f}(R)$
takes $x_R$ to $y_R$. Therefore, there is a commutative diagram
\[\begin{array}{ccccc}
\mathrm{T}_x(\mathbb{X})(R) & \to & \mathbb{X}(R[\epsilon_0, \epsilon_1]) & \to & \mathbb{X}(R) \\
\downarrow & & \downarrow & & \downarrow \\
\mathrm{T}_y(\mathbb{Y})(R) & \to & \mathbb{Y}(R[\epsilon_0, \epsilon_1]) & \to & \mathbb{Y}(R)
\end{array}.\]
The induced map $\mathrm{T}_x(\mathbb{X})(R)\to \mathrm{T}_y(\mathbb{Y})(R)$ is functorial in $R$. That is,  $\bf f$ induces a $\Bbbk$-functor morphism $\mathrm{d}_x{\bf f} : \mathrm{T}_x(\mathbb{X})\to \mathrm{T}_y(\mathbb{Y})$, called the \emph{differential} of $\bf f$ at the point $x$.  

Let $f : X\to Y$ be a morphism of locally algebraic geometric superschemes. If $f^e(x)=y$, then $f^*_x$ induces
a morphism of superspaces $d_x f : (\mathfrak{m}_x/\mathfrak{m}_x^2)^*\to (\mathfrak{n}_y/\mathfrak{n}_y^2)^*$, where
$\mathfrak{n}_y$ is the maximal superideal of $\mathcal{O}_{Y, y}$.
\begin{lm}\label{another interpretation of differential}
If we identify $\mathrm{T}_x(\mathbb{X})$ and $\mathrm{T}_y(\mathbb{Y})$ with $\mathbb{V}_a$ and $\mathbb{W}_a$, respectively, where $V=(\mathfrak{m}_x/\mathfrak{m}_x^2)^*$ and $W=(\mathfrak{n}_y/\mathfrak{n}_y^2)^*$, then $\mathrm{d}_x{\bf f}$ is naturally induced by $d_x f$.
\end{lm}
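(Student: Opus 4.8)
The plan is to reduce to the affine case via the localization of tangent functors (Lemma \ref{some pre-image}) and then compare the two maps using the explicit formula of Lemma \ref{standard description of tangent functor}.

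First I would choose an open affine super-subscheme $V\simeq\mathrm{SSpec}(B)$ of $Y$ with $y\in V^e$, and then, using continuity of $f^e$ together with $x\in (f^e)^{-1}(V^e)$, an open affine super-subscheme $U\simeq\mathrm{SSpec}(A)$ of $X$ with $x\in U^e$ and $f^e(U^e)\subseteq V^e$. Then $f|_U : U\to V$ is dual to a superalgebra morphism $\psi : B\to A$, and $f^e(x)=y$ forces $\epsilon_A\psi=\epsilon_B$, so $\psi(\mathfrak{m}_B)\subseteq\mathfrak{m}_A$ and $\psi(\mathfrak{m}_B^2)\subseteq\mathfrak{m}_A^2$; hence $\psi$ induces a parity-preserving linear map $\bar\psi : \mathfrak{m}_B/\mathfrak{m}_B^2\to\mathfrak{m}_A/\mathfrak{m}_A^2$, whose transpose $\bar\psi^{*} : (\mathfrak{m}_A/\mathfrak{m}_A^2)^{*}\to(\mathfrak{m}_B/\mathfrak{m}_B^2)^{*}$ is, under the canonical identifications $\mathfrak{m}_A/\mathfrak{m}_A^2\simeq\mathfrak{m}_x/\mathfrak{m}_x^2$ and $\mathfrak{m}_B/\mathfrak{m}_B^2\simeq\mathfrak{n}_y/\mathfrak{n}_y^2$, precisely $d_x f$. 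By Lemma \ref{some pre-image} we have $\mathrm{T}_x(\mathbb{X})=\mathrm{T}_x(\mathbb{U})$ and $\mathrm{T}_y(\mathbb{Y})=\mathrm{T}_y(\mathbb{V})$, and the diagram defining $\mathrm{d}_x{\bf f}$ restricts to the analogous diagram for $\mathbb{U}\to\mathbb{V}$, so it suffices to treat ${\bf f}=\mathrm{SSp}(\psi)$.

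Next, fix $R\in\mathsf{SAlg}_K$ and a tangent vector in $\mathrm{T}_x(\mathbb{U})(R)$; by Lemma \ref{standard description of tangent functor} it has the form $\phi\otimes r$ with $\phi\in(\mathfrak{m}_A/\mathfrak{m}_A^2)^{*}$, $r\in R$ homogeneous, corresponding to the superalgebra morphism $\theta : A\to R[\epsilon_0,\epsilon_1]$, $\theta(a)=\epsilon_A(a)+(-1)^{|r||a|}\epsilon_k\phi(\bar a)r$ with $\bar a=a-\epsilon_A(a)$ and $k\equiv |r|+|\phi|\pmod 2$. By construction $\mathrm{d}_x{\bf f}(R)$ sends it to $\theta\circ\psi : B\to R[\epsilon_0,\epsilon_1]$. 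Using $\epsilon_A\psi=\epsilon_B$ and $\overline{\psi(b)}=\psi(\bar b)$ one gets $\theta(\psi(b))=\epsilon_B(b)+(-1)^{|r||b|}\epsilon_k(\bar\psi^{*}\phi)(\bar b)r$, and since $|\bar\psi^{*}\phi|=|\phi|$ the parity index $k$ is unchanged; comparing with the formula of Lemma \ref{standard description of tangent functor} applied to $\mathbb{V}$, this is exactly the tangent vector $(\bar\psi^{*}\phi)\otimes r\in\mathrm{T}_y(\mathbb{V})(R)$. Hence $\mathrm{d}_x{\bf f}(R)=\bar\psi^{*}\otimes\mathrm{id}_R$ under the identifications, i.e. the map induced by $d_x f$, and naturality in $R$ is automatic.

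The hard part will be purely bookkeeping: checking that the identifications $\mathfrak{m}_A/\mathfrak{m}_A^2\simeq\mathfrak{m}_x/\mathfrak{m}_x^2$ and $\mathfrak{m}_B/\mathfrak{m}_B^2\simeq\mathfrak{n}_y/\mathfrak{n}_y^2$ used in Lemma \ref{standard description of tangent functor} intertwine $\bar\psi$ with the cotangent map of $f^{*}_x$ (a routine localization fact, as $\mathcal{O}_x$ is a local ring of $A$ and $\psi$ induces $f^{*}_x$ after localizing), and that the super-signs stay consistent throughout the substitution $a=\psi(b)$ — which the computation above verifies directly. No further input is required.
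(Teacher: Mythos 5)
Your proposal is correct and follows essentially the same route as the paper: replace $\mathbb{X}$ and $\mathbb{Y}$ by open affine neighborhoods $\mathbb{U}\subseteq{\bf f}^{-1}(\mathbb{V})$ of $x$ and $y$ and use the identification from Lemma \ref{standard description of tangent functor}. The paper leaves the resulting computation implicit, while you spell out that $\mathrm{d}_x{\bf f}(R)$ becomes $\bar\psi^{*}\otimes\mathrm{id}_R$, which is a correct and welcome elaboration of the same argument.
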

\begin{proof}
Replace $\mathbb{X}$ and $\mathbb{Y}$ by open affine neighborhoods $\mathbb{U}$ and $\mathbb{V}$ of $x$ and $y$, respectively, such that $\mathbb{U}\subseteq {\bf f}^{-1}(\mathbb{V})$, and use the diagram from Lemma \ref{standard description of tangent functor}.
\end{proof}

\section{Lie superalgebra of a group superscheme}

Let $\mathbb{G}$ be a locally algebraic group superscheme, $e$ be the identity element of $\mathbb{G}(\Bbbk)$, and 
$\mathrm{Lie}(\mathbb{G})$ be the tangent functor $\mathrm{T}_e(\mathbb{G})$.
Then $\mathrm{Lie}(\mathbb{G})$ is called the \emph{Lie superalgebra functor} of $\mathbb{G}$.
 
Arguing as in Lemma \ref{standard description of tangent functor}, we obtain the exact sequence
\[\begin{array}{ccccccccc}
1 & \to & \mathrm{Lie}(\mathbb{G})(R) & \to & \mathbb{G}(R[\epsilon_0, \epsilon_1]) & \to & \mathbb{G}(R) & \to & 1\\
 & & \parallel & & \uparrow & & \uparrow & & \\
0 & \to & (\mathfrak{m}_e/\mathfrak{m}^2_e)^*\otimes R & \to & \mathrm{SSp}(\mathcal{O}_e)(R[\epsilon_0, \epsilon_1]) & \to &
\mathrm{SSp}(\mathcal{O}_e)(R) & & .
\end{array}\]
Denote the superspace $(\mathfrak{m}_e/\mathfrak{m}^2_e)^*$ by $\mathfrak{g}$  and call it the \emph{Lie superalgebra} of $\mathbb{G}$.

Since $\mathbb{G}(p_R)$ and $\mathbb{G}(i_R)$ are group homomorphisms, there is an exact split sequence of groups:
\[1\to \mathrm{Lie}(\mathbb{G})(R)\to\mathbb{G}(R[\epsilon_0, \epsilon_1])\stackrel{\mathbb{G}(p_R)}{\to}\mathbb{G}(R)\to 1.\]
In other words, $\mathbb{G}(R[\epsilon_0, \epsilon_1])\simeq\mathbb{G}(R)\ltimes \mathrm{Lie}(\mathbb{G})(R)$, where $\mathbb{G}(R)$ is identified with the subgroup $\mathbb{G}(i_R)(\mathbb{G}(R))\leq\mathbb{G}(R[\epsilon_0, \epsilon_1])$. The group $\mathbb{G}(R)$ acts on $\mathrm{Lie}(\mathbb{G})(R)$ as
\[\mathrm{Ad}(g)(z)=\mathbb{G}(i_R)(g)z \mathbb{G}(i_R)(g)^{-1} \text{ for } g\in\mathbb{G}(R) \text{ and } z\in\mathrm{Lie}(\mathbb{G})(R).\]
This action is called \emph{adjoint}.
\begin{lm}\label{linearity of adjoint action}
The adjoint action of $\mathbb{G}(R)$ on $\mathrm{Lie}(\mathbb{G})(R)$ is $R$-linear and functorial in $R$. Moreover, $\mathbb{G}(R)$ acts on $\mathrm{Lie}(\mathbb{G})(R)$ by parity preserving operators.
\end{lm}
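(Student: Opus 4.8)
The plan is to exploit the concrete description of $\mathrm{Lie}(\mathbb{G})(R)$ established in the previous lemmas, namely the natural identification $\mathrm{Lie}(\mathbb{G})(R)\simeq\mathfrak{g}\otimes R$ coming from Lemma~\ref{standard description of tangent functor}, and to identify the right $R$-supermodule structure with the one furnished by the endomorphisms $\hat r$ of $R[\epsilon_0,\epsilon_1]$ from Lemma~\ref{R-supermodule structure}. Granting these, $R$-linearity of $\mathrm{Ad}(g)$ amounts to the assertion that $\mathrm{Ad}(g)$ commutes with each $\mathbb{G}(\hat r)$, and parity-preservation is the statement that $\mathrm{Ad}(g)$ commutes with $\mathbb{G}(\hat{1})=\mathbb{G}(\widehat{\epsilon_0\mapsto\epsilon_0,\ \epsilon_1\mapsto-\epsilon_1})$ (the sign-flip on the odd dual number), or more directly that conjugation by $\mathbb{G}(i_R)(g)$ respects the $\mathbb{Z}/2$-grading on $\mathrm{Lie}(\mathbb{G})(R)$.

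The key steps, in order, are as follows. First I would note that the endomorphism $\hat r$ of $R[\epsilon_0,\epsilon_1]$ is in fact an $R$-superalgebra endomorphism (this is the displayed multiplicativity computation preceding Lemma~\ref{R-supermodule structure}) and that it satisfies $p_R\hat r=p_R$ and $\hat r i_R=i_R$. Consequently, for $g\in\mathbb{G}(R)$ the element $\mathbb{G}(i_R)(g)\in\mathbb{G}(R[\epsilon_0,\epsilon_1])$ is fixed by the group homomorphism $\mathbb{G}(\hat r)$, since $\mathbb{G}(\hat r)\bigl(\mathbb{G}(i_R)(g)\bigr)=\mathbb{G}(\hat r i_R)(g)=\mathbb{G}(i_R)(g)$. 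Second, for any $z\in\mathrm{Lie}(\mathbb{G})(R)$ I would compute, using that $\mathbb{G}(\hat r)$ is a group homomorphism,
\[
\mathbb{G}(\hat r)\bigl(\mathrm{Ad}(g)(z)\bigr)=\mathbb{G}(\hat r)\Bigl(\mathbb{G}(i_R)(g)\,z\,\mathbb{G}(i_R)(g)^{-1}\Bigr)=\mathbb{G}(i_R)(g)\,\mathbb{G}(\hat r)(z)\,\mathbb{G}(i_R)(g)^{-1}=\mathrm{Ad}(g)\bigl(\mathbb{G}(\hat r)(z)\bigr).
\]
By Lemma~\ref{R-supermodule structure} the left-hand action of $\mathbb{G}(\hat r)$ on $\mathrm{Lie}(\mathbb{G})(R)\simeq\mathfrak{g}\otimes R$ is exactly right multiplication by the homogeneous element $r$; so the displayed identity says precisely that $\mathrm{Ad}(g)$ is right $R$-linear (it is automatically additive, being a conjugation in the abelian group $\mathrm{Lie}(\mathbb{G})(R)$, and it commutes with scaling by every homogeneous $r$, hence by all of $R$). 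Third, parity-preservation: taking $r$ to run over scalars in $K\subseteq R$ already shows $\mathrm{Ad}(g)$ is $K$-linear; to see it preserves parity, apply the same commutation identity with the specific $R$-superalgebra automorphism of $R[\epsilon_0,\epsilon_1]$ sending $\epsilon_0\mapsto\epsilon_0$, $\epsilon_1\mapsto-\epsilon_1$ — call it $\sigma$ — which again fixes $\mathbb{G}(i_R)(g)$ (it restricts to the identity on $R$), so that $\mathbb{G}(\sigma)\circ\mathrm{Ad}(g)=\mathrm{Ad}(g)\circ\mathbb{G}(\sigma)$; since $\mathbb{G}(\sigma)$ acts on $\mathfrak{g}\otimes R=(\mathfrak{g}_0\oplus\mathfrak{g}_1)\otimes R$ as $+1$ on $\mathfrak{g}_0\otimes R$ and $-1$ on $\mathfrak{g}_1\otimes R$, commuting with it forces $\mathrm{Ad}(g)$ to preserve the two eigenspaces, i.e.\ to preserve parity. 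Finally, functoriality in $R$ is immediate from the naturality of all the constructions involved ($i_R$, $p_R$, $\hat r$, and the identification of Lemma~\ref{standard description of tangent functor} are all natural in $R$), so a base-change $R\to R'$ intertwines the adjoint actions.

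The only genuinely delicate point — and the step I expect to be the main obstacle to write cleanly — is the careful bookkeeping of signs in identifying $\mathbb{G}(\hat r)$ with right multiplication by $r$ under the chosen isomorphism $\mathrm{Lie}(\mathbb{G})(R)\simeq\mathfrak{g}\otimes R$; but this is exactly the content of Lemma~\ref{R-supermodule structure}, which we may invoke, so in the present proof it reduces to quoting that lemma and the formula $(\phi\otimes r')(a)=\epsilon_A(a)+(-1)^{|r'||a|}\epsilon_k\phi(\bar a)r'$ from Lemma~\ref{standard description of tangent functor}. Everything else is a formal consequence of the fact that conjugation by a fixed point of a group endomorphism commutes with that endomorphism.
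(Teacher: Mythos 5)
Your proposal is correct and follows essentially the same route as the paper: $R$-linearity from the identity $\hat r i_R=i_R$ (so conjugation by $\mathbb{G}(i_R)(g)$ commutes with $\mathbb{G}(\hat r)$, which Lemma \ref{R-supermodule structure} identifies with right multiplication by $r$), and parity-preservation via the sign-flip automorphism $\epsilon_1\mapsto-\epsilon_1$, which is exactly the endomorphism $\iota$ used in the paper's proof. The only difference is that you spell out the commutation computations that the paper leaves as "easily derived".
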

\begin{proof}
The first statement can be easily derived from $\hat{r}i_R=i_R$. The second is obvious. To prove the last statement, we define an endomorphism $\iota$ of superalgebra $R[\epsilon_0, \epsilon_1]$ by
\[a+\epsilon_0 b+\epsilon_1 c\mapsto a+\epsilon_0 b-\epsilon_1 \text{ for } a, b, c\in R.\]
It satisfies $p_R\iota=p_R, \iota i_R=i_R$ and commutes with each $\hat{r}$. Thus $\mathbb{G}(\iota)$ induces an automorphism of $R$-supermodule $\mathrm{Lie}(\mathbb{G})(R)$ that commutes with the adjoint action of $\mathbb{G}(R)$. On the other hand, if we identify $\mathrm{Lie}(\mathbb{G})(R)$ with
$(\mathfrak{m}_e/\mathfrak{m}^2_e)^*\otimes R$, then one immediately sees that $\mathbb{G}(\iota)$ sends a homogeneous element $x\in \mathrm{Lie}(\mathbb{G})(R)$ to $(-1)^{|x|}x$.  
\end{proof}

\section{A fragment of differential calculus}

In this section, we superize a fragment of differential calculus from \cite[II, \S 4]{DG}, for locally algebraic group superschemes (see also \cite{ccf, zub2}). 

Let $V$ be a finite-dimensional superspace. Recall that the affine (algebraic) group superscheme $\mathrm{GL}(V)$ is defined as
\[\mathrm{GL}(V)(R)=\mathrm{End}_R(V\otimes R)^{\times}_0 \text{ for }  R\in\mathsf{SAlg}_{\Bbbk}.\]
In other words, each $\mathrm{GL}(V)(R)$ consists of all graded (parity-preserving) $R$-linear automorphisms of $R$-supermodule $V\otimes R$. The group superscheme $\mathrm{GL}(V)$ is called the \emph{general linear supergroup}. 
 
Let $\mathbb{G}$ be a group superscheme. Then a (finite-dimensional) superspace $V$ is called a \emph{(left) $\mathbb{G}$-supermodule}, provided there is a group superscheme morphism ${\bf f} :\mathbb{G}\to\mathrm{GL}(V)$. 

From now on, all $\mathbb{G}$-supermodules are assumed to be finite-dimensional unless stated otherwise. Note that $\mathbb{G}$-supermodules form an abelian category (with graded morphisms). 

For example, $V$ is a $\mathrm{GL}(V)$-supermodule. If $\mathbb{G}$ is a locally algebraic group superscheme, then Lemma \ref{linearity of adjoint action} implies that the Lie superalgebra $\mathfrak{g}$ of $\mathbb{G}$ is a $\mathbb{G}$-supermodule via $\mathrm{Ad} : \mathbb{G}\to\mathrm{GL}(\mathfrak{g})$. 

Corollary \ref{affinization} immediately implies the following lemma.
\begin{lm}\label{equivalence of categories of supermodules}
Let $\mathbb{G}$ be a group superscheme. Then the category of $\mathbb{G}$-supermodules is naturally isomorphic to the category of $\mathbb{G}^{aff}$-supermodules. It is also isomorphic to the category of (finite-dimensional) right $\mathcal{O}(G^{aff})$-supercomodules (cf. \cite{zub2}).
\end{lm}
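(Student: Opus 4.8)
The plan is to reduce the whole statement to Corollary \ref{affinization} applied to morphisms with affine target, and then to quote the standard dictionary between representations of an affine group superscheme and supercomodules over its coordinate Hopf superalgebra. For the objects: by definition a $\mathbb{G}$-supermodule is a pair $(V,\rho)$ with $\rho:\mathbb{G}\to\mathrm{GL}(V)$ a morphism in $\mathcal{SFG}$, and since $\mathrm{GL}(V)$ is an affine (algebraic) group superscheme, Corollary \ref{affinization} shows that $\rho$ factors uniquely as $\rho=\rho^{aff}\circ\pi$, where $\pi:\mathbb{G}\to\mathbb{G}^{aff}$ is the canonical homomorphism and $\rho^{aff}:\mathbb{G}^{aff}\to\mathrm{GL}(V)$. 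Conversely, precomposition with $\pi$ turns any $\mathbb{G}^{aff}$-supermodule into a $\mathbb{G}$-supermodule, and these two assignments are mutually inverse on objects.

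The only real work is the correspondence on morphisms: I claim a parity-preserving $K$-linear map $\phi:V\to W$ is a morphism of $\mathbb{G}$-supermodules $(V,\rho)\to(W,\sigma)$ if and only if it is a morphism of $\mathbb{G}^{aff}$-supermodules $(V,\rho^{aff})\to(W,\sigma^{aff})$. The implication from $\mathbb{G}^{aff}$ to $\mathbb{G}$ is immediate, since $\pi$ is a group homomorphism. For the converse, let $\mathbb{P}_\phi$ be the subfunctor of the affine group superscheme $\mathrm{GL}(V)\times\mathrm{GL}(W)$ whose $R$-points are the pairs $(a,b)$ with $b\circ(\phi\otimes R)=(\phi\otimes R)\circ a$; it is a subgroup cut out by equations that are linear in the matrix entries, hence a closed, in particular affine, group super-subscheme. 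By construction $\phi$ is a morphism of $\mathbb{G}$-supermodules precisely when $(\rho,\sigma)=(\rho^{aff},\sigma^{aff})\circ\pi$ factors through $\mathbb{P}_\phi$, and likewise $\phi$ is a morphism of $\mathbb{G}^{aff}$-supermodules precisely when $(\rho^{aff},\sigma^{aff})$ does; so it suffices to show that the first factorization forces the second. Assuming it holds, set $\mathbb{Q}=(\rho^{aff},\sigma^{aff})^{-1}(\mathbb{P}_\phi)$, a closed, hence affine, group super-subscheme of $\mathbb{G}^{aff}$; then $\pi$ factors as $\pi=j\circ r$ with $j:\mathbb{Q}\hookrightarrow\mathbb{G}^{aff}$ the inclusion and $r:\mathbb{G}\to\mathbb{Q}$. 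Since $\mathbb{Q}$ is affine, Corollary \ref{affinization} gives $r=r'\circ\pi$ with $r':\mathbb{G}^{aff}\to\mathbb{Q}$, so $(j\circ r')\circ\pi=\pi$; the uniqueness clause of Corollary \ref{affinization} (applied to the affine target $\mathbb{G}^{aff}$) yields $j\circ r'=\mathrm{id}_{\mathbb{G}^{aff}}$, whence $\mathbb{Q}(R)=\mathbb{G}^{aff}(R)$ for every $R$, i.e.\ $\mathbb{Q}=\mathbb{G}^{aff}$. Thus $(\rho^{aff},\sigma^{aff})$ factors through $\mathbb{P}_\phi$, so $\phi$ is $\mathbb{G}^{aff}$-equivariant. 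Being bijective on objects and on hom-sets, the above assignments constitute an isomorphism between the category of $\mathbb{G}$-supermodules and the category of $\mathbb{G}^{aff}$-supermodules.

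Finally, $\mathbb{G}^{aff}=\mathrm{SSp}(C)$ with $C=\mathcal{O}(G^{aff})$ a Hopf superalgebra by Proposition \ref{C is Hopf superalgebra}, and the category of $\mathbb{G}^{aff}$-supermodules is isomorphic to that of finite-dimensional right $C$-supercomodules by the standard correspondence for affine group superschemes: a morphism $\mathbb{G}^{aff}\to\mathrm{GL}(V)$ amounts to a coaction $V\to V\otimes C$, and equivariant linear maps correspond to comodule maps (see \cite{zub2}). I expect the descent argument in the morphism step to be the main obstacle; everything else is a direct application of Corollary \ref{affinization} and of facts already recorded in the excerpt.
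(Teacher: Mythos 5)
Your proposal is correct and follows the paper's intended route: the paper proves this lemma simply by invoking Corollary \ref{affinization}, and your argument is exactly that corollary applied to $\mathrm{GL}(V)$ (for objects) and to the closed, hence affine, transporter subgroup $\mathbb{P}_\phi\subseteq\mathrm{GL}(V)\times\mathrm{GL}(W)$ (for morphisms), followed by the standard affine-supergroup/supercomodule dictionary from \cite{zub2}. The only remark is that your detour through $\mathbb{Q}=(\rho^{aff},\sigma^{aff})^{-1}(\mathbb{P}_\phi)$ can be shortened: apply Corollary \ref{affinization} directly to $(\rho,\sigma):\mathbb{G}\to\mathbb{P}_\phi$ and use the uniqueness of the factorization of $(\rho,\sigma)$ through $\mathbb{G}^{aff}$ into $\mathrm{GL}(V)\times\mathrm{GL}(W)$ to conclude that $(\rho^{aff},\sigma^{aff})$ lands in $\mathbb{P}_\phi$.
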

\begin{lm}\label{exact sequence of Lie superalgebras}
If
\[\mathbb{E} \to \mathbb{H}\stackrel{\bf i}{\to}\mathbb{G}\stackrel{\bf p}{\to}\mathbb{R}\to\mathbb{E}\]
is an exact sequence of locally algebraic group superschemes, then we have an exact sequence of $\Bbbk$-functors
\[\mathbb{E}\to \mathrm{Lie}(\mathbb{H})\stackrel{\mathrm{d}_e\bf i}{\to} \mathrm{Lie}(\mathbb{G})\stackrel{\mathrm{d}_e\bf p}{\to} \mathrm{Lie}(\mathbb{R}),\]
that is, the sequence of $A$-supermodules
\[0\to \mathrm{Lie}(\mathbb{H})(A)\stackrel{\mathrm{d}_e{\bf i}(A)}{\to} \mathrm{Lie}(\mathbb{G})(A)\stackrel{\mathrm{d}_e{\bf p}(A)}{\to} \mathrm{Lie}(\mathbb{R})(A)\]
is exact for every $A\in\mathsf{SAlg}_{\Bbbk}$.
\end{lm}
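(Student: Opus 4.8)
The plan is to test the exactness claim against an affine super-subgroup in which everything is computable, using the functorial description of the tangent spaces already established in Lemma \ref{standard description of tangent functor} and the fact (Corollary \ref{affinization}) that $\mathrm{Lie}$ of a group superscheme depends only on its affinization in the relevant local sense. First I would fix $A\in\mathsf{SAlg}_K$ and unwind the definition: by construction $\mathrm{Lie}(\mathbb{H})(A)=\ker\big(\mathbb{H}(p_A):\mathbb{H}(A[\epsilon_0,\epsilon_1])\to\mathbb{H}(A)\big)$, and similarly for $\mathbb{G}$ and $\mathbb{R}$; the maps $\mathrm{d}_e{\bf i}(A)$ and $\mathrm{d}_e{\bf p}(A)$ are just the restrictions of $\mathbb{H}({\bf i})$ and $\mathbb{G}({\bf p})$ to these kernels. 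So the sequence in question is obtained by applying the two functors $\mathbb{G}(-)$-evaluated-at-$A[\epsilon_0,\epsilon_1]$ and $\mathbb{G}(-)$-evaluated-at-$A$ to the exact sequence of group superschemes and comparing kernels of the vertical ``augmentation'' maps $\mathbb{G}(p_A)$.

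The heart of the argument is a diagram chase in the $3\times 3$ square of groups whose columns are the split exact sequences
\[
1\to\mathrm{Lie}(\mathbb{G})(A)\to\mathbb{G}(A[\epsilon_0,\epsilon_1])\xrightarrow{\mathbb{G}(p_A)}\mathbb{G}(A)\to 1
\]
for $\mathbb{H},\mathbb{G},\mathbb{R}$, and whose rows are induced by ${\bf i}$ and ${\bf p}$. Injectivity of $\mathrm{d}_e{\bf i}(A)$ is immediate: ${\bf i}(A[\epsilon_0,\epsilon_1])$ is injective because ${\bf i}$ is a closed immersion (an exact sequence has $\mathbb{H}(A')=\ker{\bf p}(A')$ for all $A'$, in particular a monomorphism of functors), so its restriction to $\mathrm{Lie}(\mathbb{H})(A)$ is too. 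For the kernel equality $\ker\mathrm{d}_e{\bf p}(A)=\mathrm{im}\,\mathrm{d}_e{\bf i}(A)$: the inclusion $\supseteq$ follows from ${\bf p}{\bf i}=e$. For $\subseteq$, take $z\in\mathrm{Lie}(\mathbb{G})(A)\subseteq\mathbb{G}(A[\epsilon_0,\epsilon_1])$ with ${\bf p}(A[\epsilon_0,\epsilon_1])(z)=e$; since $\mathbb{H}(A[\epsilon_0,\epsilon_1])=\ker{\bf p}(A[\epsilon_0,\epsilon_1])$ by exactness of the original sequence evaluated at $A[\epsilon_0,\epsilon_1]$, we get $z\in\mathbb{H}(A[\epsilon_0,\epsilon_1])$; and $\mathbb{H}(p_A)(z)=p_A$-image computed inside $\mathbb{G}$ equals $\mathbb{G}(p_A)(z)=e$ (naturality of $p_A$), so $z\in\ker\mathbb{H}(p_A)=\mathrm{Lie}(\mathbb{H})(A)$, i.e. $z\in\mathrm{im}\,\mathrm{d}_e{\bf i}(A)$. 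The $R$-supermodule structure is the one from Lemma \ref{R-supermodule structure}, and $\mathrm{d}_e{\bf i}(A)$, $\mathrm{d}_e{\bf p}(A)$ are $A$-linear because they are induced by functoriality applied to the endomorphisms $\hat{r}$, which commute with ${\bf i}$ and ${\bf p}$.

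The one genuinely delicate point — and what I expect to be the main obstacle — is that the equality $\mathbb{H}(A[\epsilon_0,\epsilon_1])=\ker{\bf p}(A[\epsilon_0,\epsilon_1])$ is used freely above, and this is exactly the defining property of an exact sequence in $\mathcal{SFG}$ (it holds for \emph{every} superalgebra, by definition), so in fact no sheafification or faithfully flat descent enters here at all: the kernel condition is literal. Thus the proof reduces to the purely formal diagram chase, and the only care needed is to check that the identification of $\mathrm{Lie}(\mathbb{G})(A)$ as a \emph{subset} of $\mathbb{G}(A[\epsilon_0,\epsilon_1])$ is compatible along ${\bf i}$ and ${\bf p}$ — which is automatic from functoriality of the tangent construction (the commutative square preceding the definition of $\mathrm{d}_x{\bf f}$) — and that the local finiteness hypothesis is only invoked to know each $\mathrm{Lie}(-)(A)$ is an honest finite rank free-ish $A$-supermodule via Lemma \ref{standard description of tangent functor}, which is not needed for exactness itself. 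So I would present this as a short formal lemma, emphasizing that exactness of $\mathrm{Lie}$ on the left is nothing more than left-exactness of the $\ker$-functor applied twice and the definition of exactness in $\mathcal{SFG}$.
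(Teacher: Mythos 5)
Your proposal is correct and follows essentially the same route as the paper: the paper's proof is exactly the "standard diagram chasing" you carry out, using that exactness in $\mathcal{SFG}$ means $\mathbb{H}(A')=\ker{\bf p}(A')$ for \emph{every} superalgebra $A'$ (in particular $A'=A[\epsilon_0,\epsilon_1]$), while the $A$-supermodule statement is handled in the paper by Lemma \ref{another interpretation of differential} and in your write-up equivalently via functoriality with respect to the endomorphisms $\hat{r}$ of Lemma \ref{R-supermodule structure}.
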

\begin{proof}
The standard diagram chasing implies the first statement. The second statement follows by Lemma \ref{another interpretation of differential}.
\end{proof}
From now on, all group superschemes are locally algebraic. Following \cite{ccf, DG, zub2}, we denote the image of $x\otimes r\in\mathfrak{g}\otimes R$ in $\mathbb{G}(R[\epsilon_0, \epsilon_1])$ by $e^{\epsilon_{|x|+|r|} x\otimes r}$. If ${\bf f} : \mathbb{G}\to\mathbb{H}$ is a morphism of group superschemes, then 
\[{\bf f}(R[\epsilon_0, \epsilon_1])(e^{\epsilon_{|x|+|r|} x\otimes r})=e^{\epsilon_{|x|+|r|}\mathrm{d}_e {\bf f}(R)(x\otimes r)}=e^{\epsilon_{|x|+|r|}\mathrm{d}_e{\bf f}(\Bbbk)(x)\otimes r}.\]
In particular, if $V$ is a $\mathbb{G}$-supermodule with respect to a homomorphism ${\bf f} :
\mathbb{G}\to\mathrm{GL}(V)$, then we have
\[{\bf f}(R[\epsilon_0, \epsilon_1])(e^{\epsilon_{|x|+|r|} x\otimes r})=\mathrm{id}_V +\epsilon_{|x|+|r|}\mathrm{d}_e {\bf f}(\Bbbk)(x)\otimes r.\]
If we denote 
\[({\bf f}(R[\epsilon_0, \epsilon_1])(e^{\epsilon_{|x|+|r|} x\otimes r}))(v\otimes 1) \ \mbox{and} \ (\mathrm{d}_e{\bf f} (R)(x\otimes r))(v\otimes 1)\] by 
\[e^{\epsilon_{|x|+|r|} x\otimes r}\cdot (v\otimes 1) \ \mbox{and} \ (x\otimes r)\cdot (v\otimes 1),\] respectively, where $v\in V$, then the above formula can be recorded as
\[e^{\epsilon_{|x|+|r|} x\otimes r}\cdot (v\otimes 1)=v\otimes 1+(-1)^{|r||v|}\epsilon_{|x|} (x\cdot v)\otimes r.\]

Consider the category of pairs $(\mathbb{G}, V)$, where $\mathbb G$ is a group superscheme, and $V$ is a $\mathbb{G}$-supermodule. The morphisms in this category are couples $({\bf f}, h)$, where ${\bf f} : \mathbb{G}\to\mathbb{H}$ is a group superscheme morphism and $h : V\to W$ is a linear map of superspaces, such that for every $R\in\mathsf{SAlg}_K$ the diagram
\[\begin{array}{ccc}
\mathbb{G}(R)\times\mathbb{V}_a(R) & \to & \mathbb{V}_a(R) \\
\downarrow & & \downarrow \\
\mathbb{H}\times\mathbb{W}_a(R) & \to & \mathbb{W}_a(R)
\end{array}\] 
is commutative. Here the horizontal maps correspond to the actions of $\mathbb{G}(R)$ and $\mathbb{H}(R)$ on $\mathbb{V}_a(R)$ and $\mathbb{W}_a(R)$, respectively, the right vertical map is ${\bf h}_a(R)=h\otimes\mathrm{id}_R$, and the left vertical map is ${\bf f}(R)\times {\bf h}_a(R)$.
\begin{lm}\label{differential of morphism of pairs}
If $({\bf f}, h)$ is a morphism of pairs $(\mathbb{G}, V)\to (\mathbb{H}, W)$, then for every
$x\in\mathfrak{g}, r\in R$, and $v\in V$ there is
\[{\bf h}_a(R)((x\otimes r)\cdot (v\otimes 1))=(-1)^{|r||v|}(\mathrm{d}_e{\bf f}(\Bbbk)(x)\cdot h(v))\otimes r.\]
\end{lm}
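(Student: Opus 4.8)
The statement is the super-analogue of the classical fact (\cite{DG}, II, \S 4) that the differential of an equivariant morphism intertwines the differentiated actions, and the plan is to prove it by differentiating the commutative square that defines a morphism of pairs, i.e.\ by evaluating that square at the superalgebra $R[\epsilon_0,\epsilon_1]$ of dual super-numbers. By $K$-linearity in $x$ and additivity in $r$ and $v$ we may assume $x\in\mathfrak g$, $r\in R$ and $v\in V$ homogeneous. Throughout I use that $h$, being a morphism of superspaces, is parity-preserving (so $|h(v)|=|v|$), and that $|\mathrm{d}_e{\bf f}(K)(x)|=|x|$, since the differential of a superscheme morphism preserves parity (Lemma \ref{another interpretation of differential} and the construction of $d_x f$ preceding it).

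The main step is to feed the pair $\bigl(e^{\epsilon_{|x|+|r|}x\otimes r},\, v\otimes 1\bigr)\in\mathbb G(R[\epsilon_0,\epsilon_1])\times\mathbb V_a(R[\epsilon_0,\epsilon_1])$ into the square defining $({\bf f},h)$, now taken at the superalgebra $R[\epsilon_0,\epsilon_1]$. Along the top edge (first act, then apply ${\bf h}_a$): by the displayed formula for the $\mathbb G$-action on exponentials immediately preceding the statement, $e^{\epsilon_{|x|+|r|}x\otimes r}\cdot(v\otimes 1)=v\otimes 1+(-1)^{|r||v|}\epsilon_{|x|}(x\cdot v)\otimes r$, and since ${\bf h}_a(R[\epsilon_0,\epsilon_1])=h\otimes\mathrm{id}$ is $R[\epsilon_0,\epsilon_1]$-linear and $h$ is even, the image equals $h(v)\otimes 1+(-1)^{|r||v|}\epsilon_{|x|}h(x\cdot v)\otimes r$. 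Along the left edge (first apply ${\bf f}\times{\bf h}_a$, then act): the first component becomes ${\bf f}(R[\epsilon_0,\epsilon_1])\bigl(e^{\epsilon_{|x|+|r|}x\otimes r}\bigr)=e^{\epsilon_{|x|+|r|}\mathrm{d}_e{\bf f}(K)(x)\otimes r}$ by the displayed formula for the effect of a group morphism on exponentials, the second becomes $h(v)\otimes 1$, and acting with the former on the latter through the $\mathbb H$-supermodule $W$ — again by the same formula, now with $\mathrm{d}_e{\bf f}(K)(x)\in\mathfrak h$ in the role of $x$ and $h(v)$ in the role of $v$, using $|h(v)|=|v|$ — gives $h(v)\otimes 1+(-1)^{|r||v|}\epsilon_{|x|}\bigl(\mathrm{d}_e{\bf f}(K)(x)\cdot h(v)\bigr)\otimes r$.

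Commutativity of the square equates these two elements of $W\otimes R[\epsilon_0,\epsilon_1]$; projecting onto the $\epsilon_{|x|}$-summand of the $R$-module decomposition $W\otimes R[\epsilon_0,\epsilon_1]=(W\otimes R)\oplus(W\otimes\epsilon_0 R)\oplus(W\otimes\epsilon_1 R)$ yields $h(x\cdot v)\otimes r=\bigl(\mathrm{d}_e{\bf f}(K)(x)\cdot h(v)\bigr)\otimes r$ in $W\otimes R$. Since, by the conventions preceding the statement, $(x\otimes r)\cdot(v\otimes 1)=(-1)^{|r||v|}(x\cdot v)\otimes r\in V\otimes R$ and hence ${\bf h}_a(R)\bigl((x\otimes r)\cdot(v\otimes 1)\bigr)=(-1)^{|r||v|}h(x\cdot v)\otimes r$, the desired identity follows. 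The only real work is the bookkeeping of Koszul signs and of the parity shifts hidden in the indices $\epsilon_{|x|+|r|}$ and $\epsilon_{|x|}$, together with the elementary remark that ${\bf h}_a$ respects the above decomposition, so that applying ${\bf h}_a(R)$ to the "tangent part" of a vector agrees with taking the tangent part of its image under ${\bf h}_a(R[\epsilon_0,\epsilon_1])$; I do not expect any conceptual obstacle.
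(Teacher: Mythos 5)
Your proof is correct and follows essentially the same route as the paper: the paper's own argument also evaluates the equivariance square defining $({\bf f},h)$ on the element $e^{\epsilon_{|x|+|r|}x\otimes r}$ over $R[\epsilon_0,\epsilon_1]$, expands both sides via the displayed exponential formulas, and reads off the coefficient of the dual number. The only difference is organizational (you project onto the $\epsilon$-summand and then reinsert the sign $(-1)^{|r||v|}$, while the paper keeps ${\bf h}_a(R)((x\otimes r)\cdot(v\otimes 1))$ abstract and compares the two expansions directly), which is immaterial.
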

\begin{proof}
We have 
\[{\bf h}_a(R)(e^{\epsilon_{|x|+|r|}x\otimes r}\cdot v)=h(v)\otimes 1+\epsilon_{|x|+|r|}{\bf h}_a(R)((x\otimes r)\cdot (v\otimes 1))=\]\[e^{\epsilon_{|x|+|r|}\mathrm{d}_e{\bf f}(\Bbbk)(x)\otimes r}\cdot (h(v)\otimes 1)=h(v)\otimes 1+(-1)^{|r||v|}\epsilon_{|x|+|r|}(\mathrm{d}_e{\bf f}(\Bbbk)(x)\cdot h(v))\otimes r,\]
proving the formula.
\end{proof}
Let $\mathrm{ad}$ denote $\mathrm{d}_e\mathrm{Ad} : \mathrm{Lie}(\mathbb{G})\to \mathrm{Lie}(\mathrm{GL}(\mathfrak{g}))=\mathfrak{gl}(\mathfrak{g})_a$.
For every $x, y\in\mathfrak{g}=\mathrm{Lie}(\mathbb{G})(\Bbbk)$ and $r, r'\in R$ we set $[x\otimes r, y\otimes r']=(\mathrm{ad}(R)(x\otimes r))(y\otimes r')$.
\begin{example}\label{Lie superalgebra gl(V)}
The Lie superalgebra of $\mathrm{GL}(V)$ is canonically isomorphic to $\mathfrak{gl}(V)=\mathrm{End}_{\Bbbk}(V)$, regarded as a superspace with $\mathfrak{gl}(V)_i=\{\phi\mid \phi(V_j)\subseteq V_{i+j\pmod 2}\}$ for  $i=0, 1$. Moreover, 
if $X, Y\in\mathfrak{gl}(V)$ and $r, r'\in R$, then 
\[ [X\otimes r, Y\otimes r']=(-1)^{|r||Y|}[X, Y]\otimes rr'=(-1)^{|r||Y|}(XY-(-1)^{|X||Y|}YX)\otimes rr'.\]
Indeed, we have 
\[(e^{\epsilon_{|X|+|r|}X\otimes r})\cdot (Y\otimes r')=(\mathrm{id}_V\otimes 1 +\epsilon_{|X|+|r|}X\otimes r)(Y\otimes r')(\mathrm{id}_V\otimes 1 -\epsilon_{|X|+|r|}X\otimes r)=\]
\[Y\otimes r'+\epsilon_{|X|+|r|}((-1)^{|r||Y|}XY\otimes rr'-(-1)^{(|X|+|r|)(|Y|+|r'|)+|r'||X|}YX\otimes r'r)=\]
\[Y\otimes r' +(-1)^{|r||Y|}\epsilon_{|X|+|r|}(XY-(-1)^{|X||Y|}YX)\otimes rr',\]
proving the formula.
\end{example}
\begin{lm}\label{differential is a Lie superalgebra morphism}
Let ${\bf f} : \mathbb{G}\to\mathbb{H}$ be a morphism of locally algebraic group superschemes. Let $\mathfrak{h}$ denote the Lie superalgebra of $\mathbb{H}$. Then
for every $x, y\in\mathfrak{g}$ and $r, r'\in R$ there is 
\[\mathrm{d}_e{\bf f}(R)([x\otimes r, y\otimes r'])=(-1)^{|r||y|}[\mathrm{d}_e{\bf f}(\Bbbk)(x), \mathrm{d}_e{\bf f}(\Bbbk)(y)]\otimes rr'.\]
In particular, $\mathfrak{g}\otimes R$ and $\mathfrak{h}\otimes R$ are Lie superalgebras for the operation $[ \ , \ ]$ and
$\mathrm{d}_e{\bf f}$ is a morphism of Lie superalgebra functors.
\end{lm}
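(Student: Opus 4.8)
My plan is to obtain the transformation formula from Lemma~\ref{differential of morphism of pairs}, applied to the adjoint module structures, and then to deduce the ``in particular'' part by specializing to ${\bf f}=\mathrm{Ad}$ and invoking Example~\ref{Lie superalgebra gl(V)}. The first step is to observe that $({\bf f},\mathrm{d}_e{\bf f}(K))$ is a morphism of pairs $(\mathbb{G},\mathfrak{g})\to(\mathbb{H},\mathfrak{h})$, where $\mathfrak{g}$ and $\mathfrak{h}$ carry the $\mathbb{G}$- and $\mathbb{H}$-module structures given by $\mathrm{Ad}$. This is formal: $\mathrm{d}_e{\bf f}(R)$ is the restriction of the group homomorphism ${\bf f}(R[\epsilon_0,\epsilon_1])$ to $\ker\mathbb{G}(p_R)=\mathrm{Lie}(\mathbb{G})(R)$, while naturality of ${\bf f}$ gives ${\bf f}(R[\epsilon_0,\epsilon_1])\circ\mathbb{G}(i_R)=\mathbb{H}(i_R)\circ{\bf f}(R)$; conjugating yields $\mathrm{d}_e{\bf f}(R)(\mathrm{Ad}(g)(z))=\mathrm{Ad}({\bf f}(R)(g))(\mathrm{d}_e{\bf f}(R)(z))$ for $g\in\mathbb{G}(R)$ and $z\in\mathrm{Lie}(\mathbb{G})(R)$. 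Under the identifications $\mathrm{Lie}(\mathbb{G})\simeq\mathfrak{g}_a$, $\mathrm{Lie}(\mathbb{H})\simeq\mathfrak{h}_a$ of Lemma~\ref{standard description of tangent functor} and $\mathrm{d}_e{\bf f}(R)=\mathrm{d}_e{\bf f}(K)\otimes\mathrm{id}_R$ of Lemma~\ref{another interpretation of differential}, this is exactly the commutativity of the square defining a morphism of pairs.

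Applying Lemma~\ref{differential of morphism of pairs} to this morphism of pairs, with $V=\mathfrak{g}$, $W=\mathfrak{h}$, $h=\mathrm{d}_e{\bf f}(K)$, $v=y$ and $r=1$, and observing that by the definition of the bracket $(x\otimes1)\cdot(y\otimes1)=(\mathrm{ad}(K)(x))(y)=[x\otimes1,y\otimes1]$, and similarly over $\mathfrak{h}$, one obtains the desired formula in the case $r=r'=1$. For general $r,r'$ I use Lemma~\ref{another interpretation of differential} once more: $\mathrm{ad}(R)(x\otimes r)=\mathrm{ad}(K)(x)\otimes r$ in $\mathfrak{gl}(\mathfrak{g})_a(R)$, so the explicit description of the $\mathrm{GL}(\mathfrak{g})$-action obtained earlier (cf.~Example~\ref{Lie superalgebra gl(V)}) gives $[x\otimes r,y\otimes r']=(-1)^{|r||y|}[x\otimes1,y\otimes1]\otimes rr'$, and likewise in $\mathfrak{h}$; combined with $\mathrm{d}_e{\bf f}(R)=\mathrm{d}_e{\bf f}(K)\otimes\mathrm{id}_R$ and the case just done, this yields the stated identity.

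For the last assertion, take ${\bf f}=\mathrm{Ad}:\mathbb{G}\to\mathrm{GL}(\mathfrak{g})$ in the formula just proved; since Example~\ref{Lie superalgebra gl(V)} identifies the induced bracket on $\mathrm{Lie}(\mathrm{GL}(\mathfrak{g}))=\mathfrak{gl}(\mathfrak{g})_a$ with the super-commutator, this reads $\mathrm{ad}(R)([u,w])=[\mathrm{ad}(R)(u),\mathrm{ad}(R)(w)]$ for all $u,w\in\mathfrak{g}\otimes R$, and evaluating both sides on a third element is precisely the super-Jacobi identity; the same computation over $\mathrm{GL}(\mathfrak{h})$ handles $\mathfrak{h}\otimes R$. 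Super-antisymmetry $[x,y]=-(-1)^{|x||y|}[y,x]$ is the one point that does not follow formally from the above; here I would run the super-analogue of the corresponding step in \cite{DG}, II, \S4, computing the group commutator of $e^{\epsilon_{|x|}x\otimes1}$ and $e^{\epsilon'_{|y|}y\otimes1}$ inside $\mathbb{G}(R[\epsilon_0,\epsilon_1]\otimes_R R[\epsilon'_0,\epsilon'_1])$, identifying it with $e^{\epsilon_{|x|}\epsilon'_{|y|}[x,y]}$ via $\mathrm{ad}=\mathrm{d}_e\mathrm{Ad}$, and then using the relation $\epsilon'_{|y|}\epsilon_{|x|}=(-1)^{|x||y|}\epsilon_{|x|}\epsilon'_{|y|}$. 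Once $\mathfrak{g}\otimes R$ and $\mathfrak{h}\otimes R$ are known to be Lie superalgebras, the formula says exactly that $\mathrm{d}_e{\bf f}$ is a morphism of Lie superalgebra functors.

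I expect the only genuine work to be the antisymmetry argument: the two-parameter dual-super-number computation must be arranged carefully enough to pin down its sign, and throughout one has to keep the Koszul signs consistent with the conventions fixed in the tangent-functor and differential-calculus sections. The remaining steps are essentially bookkeeping built on Lemmas~\ref{differential of morphism of pairs}, \ref{another interpretation of differential}, \ref{standard description of tangent functor} and Example~\ref{Lie superalgebra gl(V)}.
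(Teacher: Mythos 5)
Your treatment of the displayed formula and of super skew-symmetry and the super Jacobi identity follows the paper's own route: the first statement via the morphism of pairs $({\bf f},\mathrm{d}_e{\bf f}(K))$ and Lemma~\ref{differential of morphism of pairs}, the Jacobi identity by specializing to ${\bf f}=\mathrm{Ad}$ together with Example~\ref{Lie superalgebra gl(V)}, and skew-symmetry by the same two-parameter group-commutator computation with $R=K[\epsilon'_0,\epsilon'_1]$ and the identity $[g,h]=[h,g]^{-1}$ (your sketch of this step would still need to be carried out with the signs, but it is the paper's argument).

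There is, however, a genuine gap: you never verify the identity $(B2)$, namely $[[v,v],v]=0$ for $v\in\mathfrak{g}_1$, which is part of the definition of a Lie superalgebra used here (the identities $(B2)$, $(B3)$, $(B4)$ of \cite{masshib}). Since the ground field is allowed to have any odd characteristic, the case $\mathrm{char}\,K=3$ must be addressed, and there $(B2)$ does \emph{not} follow from super skew-symmetry and the super Jacobi identity, so your ``evaluating on a third element'' argument does not close the proof. The paper makes exactly this point and defers $(B2)$ to Lemma~\ref{(B2) for any g}, whose proof identifies $\mathfrak{g}$ with the Lie superalgebra $\mathfrak{s}$ of the formal neighborhood $\mathbb{S}$ and uses the associative product of $\mathrm{hyp}(\mathbb{G})$ (Lemma~\ref{super-hyperalgebra}), where $[[\phi,\phi],\phi]=[2\phi^2,\phi]=0$ is immediate. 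To complete your proof you must either restrict to $\mathrm{char}\,K\neq 3$ or supply an argument of this hyperalgebra type (or an equivalent one) for $(B2)$.
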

\begin{proof}
We have
\[\mathrm{d}_e{\bf f}(R)\mathrm{Ad}(R)(g)=\mathrm{Ad}(R)({\bf f}(R)(g))\mathrm{d}_e{\bf f}(R)\]
for every superalgebra $R$ and $g\in\mathbb{G}(R)$. That is, $({\bf f}, \mathrm{d}_e{\bf f}(\Bbbk))$ is a morphism of pairs $(\mathbb{G}, \mathfrak{g})\to (\mathbb{H}, \mathfrak{h})$, where $\mathfrak{g}$ and $\mathfrak{h}$ are regarded as $\mathbb{G}$-supermodule and $\mathbb{H}$-supermodule with respect to the adjoint actions. Lemma \ref{differential of morphism of pairs} implies the first statement.

To prove the second statement, one needs to show that the operation $[ \ , \ ]$ on $\mathfrak{g}$ satisfies the identities $(B2)$, $(B3)$, and $(B4)$ from \cite{masshib} (recall that $char(\Bbbk)\neq 2$). Applying the first statement to $\mathrm{Ad} : \mathbb{G}\to\mathrm{GL}(\mathfrak{g})$, one obtains
\[[[x, y], z]=(XY-(-1)^{|x||y|}YX)(z),\]
where $X=\mathrm{ad}(\Bbbk)(x), \ Y=\mathrm{ad}(\Bbbk)(y)$. Therefore, $(B4)$ (or the \emph{super Jacobi identity}) 
\[[[x, y], z]=[x, [y, z]]-(-1)^{|x||y|} [y, [x, z]]\]
follows.

Next, let $R$ denote $\Bbbk[\epsilon'_0, \epsilon'_1]$. Then the \emph{group commutator}
\[[e^{\epsilon_{|x|}x}, e^{\epsilon'_{|y|}y}]=e^{\epsilon_{|x|}x}e^{\epsilon'_{|y|}y}e^{-\epsilon_{|x|}x}e^{-\epsilon'_{|y|}y},\]
calculated in $\mathbb{G}(R[\epsilon_0, \epsilon_1])$, is equal to
\[e^{\mathrm{Ad}(R[\epsilon_0, \epsilon_1])(e^{\epsilon_{|x|}x})((-1)^{|y|}y\otimes \epsilon'_{|y|})}e^{-\epsilon'_{|y|}y}=e^{\epsilon'_{|y|}y+(-1)^{|x||y|}\epsilon_{|x|}\epsilon'_{|y|}(\mathrm{ad}(\Bbbk))(x))(y)}e^{-\epsilon'_{|y|}y}=\]\[e^{(-1)^{|x||y|}\epsilon_{|x|}\epsilon'_{|y|}[x, y]}.\]
The group identity $[g, h]=[h, g]^{-1}$ implies $\epsilon_{|x|}\epsilon'_{|y|}[x, y]=-\epsilon'_{|y|}\epsilon_{|x|}[y, x]$, hence $(B3)$ (or the \emph{super skew-symmetry}) $[x, y]=-(-1)^{|x||y|}[y, x]$ follows. 

The identity $(B2)$ states $[[x, x], x]=0$ for every $x\in\mathfrak{g}_1$. If $char(\Bbbk)\neq 3$, then $(B2)$ follows from $(B4)$. If $char(\Bbbk)=3$, then $(B4)$ does not imply $(B2)$. However, in Lemma \ref{(B2) for any g}, we prove the identity $(B2)$ in general. This finishes the proof.
\end{proof}

\begin{lm}\label{adjoint action commutes with super-bracket}
The adjoint action of $\mathbb{G}$ on $\mathrm{Lie}(\mathbb{G})$ commutes with the super-bracket functor.
\end{lm}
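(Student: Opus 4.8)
The plan is to use two facts. First, the adjoint action of $g\in\mathbb{G}(R)$ on $\mathrm{Lie}(\mathbb{G})(R)$ is by definition conjugation by $\mathbb{G}(i_R)(g)$, and conjugation is a group automorphism. Second, exactly as in the proof of Lemma \ref{differential is a Lie superalgebra morphism}, the super-bracket on $\mathfrak{g}$ is expressed through a group commutator of ``infinitesimal'' points; since an automorphism preserves group commutators, the adjoint action will preserve the bracket. To begin, what must be shown is that for all $R\in\mathsf{SAlg}_K$, $g\in\mathbb{G}(R)$ and $z,w\in\mathrm{Lie}(\mathbb{G})(R)=\mathfrak{g}\otimes R$ one has $\mathrm{Ad}(R)(g)([z,w])=[\mathrm{Ad}(R)(g)(z),\mathrm{Ad}(R)(g)(w)]$. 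Since the super-bracket is biadditive and $\mathrm{Ad}(R)(g)$ is additive, it suffices to treat homogeneous $z,w$; by Lemma \ref{linearity of adjoint action} the elements $\mathrm{Ad}(R)(g)(z)$ and $\mathrm{Ad}(R)(g)(w)$ are then homogeneous of the same respective parities, so both sides are meaningful.

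Next I would record, over the base $R$, the identity obtained just as in the proof of Lemma \ref{differential is a Lie superalgebra morphism}: with $R'=R[\epsilon'_0,\epsilon'_1]$ and computing in the group $\mathbb{G}(R'[\epsilon_0,\epsilon_1])$, for homogeneous $z,w\in\mathrm{Lie}(\mathbb{G})(R)$,
\[
[\,e^{\epsilon_{|z|}z},\,e^{\epsilon'_{|w|}w}\,]=e^{(-1)^{|z||w|}\epsilon_{|z|}\epsilon'_{|w|}[z,w]},
\]
where $e^{\epsilon_{|z|}z}$ and $e^{\epsilon'_{|w|}w}$ are the images of $z$ and $w$ under $\mathrm{Lie}(\mathbb{G})(R)\hookrightarrow\mathbb{G}(R[\epsilon_0,\epsilon_1])$, respectively $\mathrm{Lie}(\mathbb{G})(R)\hookrightarrow\mathbb{G}(R[\epsilon'_0,\epsilon'_1])$, followed by base change to $\mathbb{G}(R'[\epsilon_0,\epsilon_1])$; biadditivity of both sides in $z$ and $w$, valid inside the abelian subgroup $\ker(\mathbb{G}(R'[\epsilon_0,\epsilon_1])\to\mathbb{G}(R'))$, reduces this to the case of homogeneous simple tensors already handled in that proof. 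Let $\bar{g}$ denote the image of $g$ in $\mathbb{G}(R'[\epsilon_0,\epsilon_1])$ and $c_{\bar g}$ the inner automorphism $h\mapsto\bar{g}h\bar{g}^{-1}$. Unwinding the definition of the adjoint action and its functoriality in $R$ (Lemma \ref{linearity of adjoint action}), one checks that $c_{\bar g}(e^{\epsilon_{|z|}z})=e^{\epsilon_{|z|}\mathrm{Ad}(R)(g)(z)}$ and $c_{\bar g}(e^{\epsilon'_{|w|}w})=e^{\epsilon'_{|w|}\mathrm{Ad}(R)(g)(w)}$: each of these elements lies, after the relevant base change, in a subgroup $\mathrm{Lie}(\mathbb{G})(S)\subseteq\mathbb{G}(S[\epsilon_0,\epsilon_1])$ (or with primed dual numbers) on which conjugation by the image of $g$ is, by definition, $\mathrm{Ad}(S)$ of that image, and naturality identifies this with $\mathrm{Ad}(R)(g)$ followed by base change.

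Applying the automorphism $c_{\bar g}$ to the displayed identity, evaluating the left-hand side first by the two formulas just obtained and then by the displayed identity again for the homogeneous elements $\mathrm{Ad}(R)(g)(z),\mathrm{Ad}(R)(g)(w)$, and evaluating the right-hand side by the $R'$-linearity and functoriality of $\mathrm{Ad}$, one arrives at
\[
e^{(-1)^{|z||w|}\epsilon_{|z|}\epsilon'_{|w|}[\mathrm{Ad}(R)(g)(z),\mathrm{Ad}(R)(g)(w)]}=e^{(-1)^{|z||w|}\epsilon_{|z|}\epsilon'_{|w|}\mathrm{Ad}(R)(g)([z,w])}
\]
in $\mathbb{G}(R'[\epsilon_0,\epsilon_1])$. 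Since $\mathrm{Lie}(\mathbb{G})(R')\hookrightarrow\mathbb{G}(R'[\epsilon_0,\epsilon_1])$ is injective and multiplication by $\epsilon'_{|w|}$ is injective on $\mathfrak{g}\otimes R$ (because $R'=R\oplus R\epsilon'_0\oplus R\epsilon'_1$ as an $R$-module), one cancels to obtain $[\mathrm{Ad}(R)(g)(z),\mathrm{Ad}(R)(g)(w)]=\mathrm{Ad}(R)(g)([z,w])$ in $\mathfrak{g}\otimes R$, which by the reduction in the first paragraph proves the lemma. I expect the main obstacle to be purely bookkeeping: keeping track of the two sets of dual super-numbers, of the parity signs, and of the several base-change maps when verifying the commutator identity over a general $R$ and the behaviour of $c_{\bar g}$ on the infinitesimal elements; everything else is formal.
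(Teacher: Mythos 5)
Your proposal is correct and follows essentially the same route as the paper: both apply conjugation by (the image of) $g$, which is a group automorphism, to the group-commutator identity $[e^{\epsilon_{|x|}x}, e^{\epsilon'_{|y|}y}]=e^{(-1)^{|x||y|}\epsilon_{|x|}\epsilon'_{|y|}[x,y]}$ from Lemma \ref{differential is a Lie superalgebra morphism}, and then use the $R$-linearity and functoriality of $\mathrm{Ad}$ to identify both evaluations and cancel. Your extra bookkeeping (general homogeneous $z,w\in\mathfrak{g}\otimes R$, explicit base changes, injectivity of the cancellation) only spells out details the paper leaves implicit.
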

\begin{proof}
For every $g\in \mathbb{G}(R)$ we have 
\[\mathrm{Ad}(R)(g)e^{(-1)^{|x||y|}\epsilon_{|x|}\epsilon'_{|y|}[x, y]}=e^{(-1)^{|x||y|}\epsilon_{|x|}\epsilon'_{|y|}\mathrm{Ad}(R)(g)[x, y]}=\]\[\mathrm{Ad}(R)(g)[e^{\epsilon_{|x|}x}, e^{\epsilon'_{|y|}y}]=[\mathrm{Ad}(R)(g)e^{\epsilon_{|x|}x}, \mathrm{Ad}(R)(g)e^{\epsilon'_{|y|}y}]=\]
\[[e^{\epsilon_{|x|}\mathrm{Ad}(R)(g)x} , e^{\epsilon'_{|y|}\mathrm{Ad}(R)(g)y}]=e^{(-1)^{|x||y|}\epsilon_{|x|}\epsilon'_{|y|}[\mathrm{Ad}(R)(g)x, \mathrm{Ad}(R)(g)y]},\]
proving our statement.
\end{proof}

\section{The formal neighborhood of the identity}

Let $G$ be a geometric group superscheme. Let $\epsilon$ denote the identity morphism $e\to G$. We have a commutative diagram
\[\begin{array}{ccccc}
& & G & & \\
& & \uparrow & & \\
& & G\times G & &  \\
 & \swarrow p_1 & \uparrow & p_2 \searrow  &  \\
G & \stackrel{\epsilon}{\leftarrow} & e & \stackrel{\epsilon}{\rightarrow} & G 
\end{array},\]
where the closed immersion $e\to G\times G$ is induced by the universality of the direct product. For every affine open neighborhood $U$ of $e$, the immersion $e\to G\times G$  factors through $U\times U\subseteq G\times G$. If we denote the image of the point $e$ in $(G\times G)^e$ by $e\times e$, then $\mathcal{O}_{e\times e}$ is naturally isomorphic to $(\mathcal{O}_e\otimes\mathcal{O}_e)_{\mathfrak{n}_{e\times e}}$, where $\mathfrak{n}_{e\times e}={\mathcal{O}_e\otimes\mathfrak{m}_e+\mathfrak{m}_e\otimes\mathcal{O}_e}$. Moreover, $m^*_{e\times e}$ is a local superalgebra morphism from $\mathcal{O}_e$ to $(\mathcal{O}_e\otimes\mathcal{O}_e)_{\mathfrak{n}_{e\times e}}$.

For any non-negative integer $k$, let  $N_k(G)\simeq\mathrm{SSpec}(\mathcal{O}_e/\mathfrak{m}_e^{k+1})$ denote the $k$-th neighborhood of $\epsilon$.
\begin{lm}\label{a certain super-subgroup}
For any non-negative integers $k$ and $t$, there are commutative diagrams
\[\begin{array}{ccc}
G\times G & \stackrel{m}{\to} & G \\
\uparrow & & \uparrow \\
N_k(G)\times N_t(G) & \to & N_{k+t}(G)
\end{array}\]
and
\[\begin{array}{ccc}
G & \stackrel{\iota}{\to} & G \\
\uparrow & & \uparrow \\
N_k(G) & \to & N_k(G)
\end{array}.\]
\end{lm}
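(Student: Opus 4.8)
The plan is to reduce both assertions to statements about local superalgebras at the identity and then verify them by a direct computation with the coproduct $m^*_{e\times e}$ on the ideal-adic quotients. Recall from the discussion preceding the lemma that $m$ restricts to a morphism on the formal neighborhoods: $m^*_{e\times e}$ carries $\mathcal{O}_e$ into $(\mathcal{O}_e\otimes\mathcal{O}_e)_{\mathfrak{n}}$ and $\mathfrak{m}_e$ into $\mathfrak{n}_{\mathfrak{n}}$, where $\mathfrak{n}=\mathcal{O}_e\otimes\mathfrak{m}_e+\mathfrak{m}_e\otimes\mathcal{O}_e$. By Lemma \ref{locality of immersion}, Lemma \ref{locality of immersion 2}, and the fact that every morphism out of $S_k=\mathrm{SSpec}(\mathcal{O}_e/\mathfrak{m}_e^{k+1})$ (a local superscheme with nilpotent maximal ideal) factors through any open neighborhood of $e$ and is a closed immersion, the first diagram amounts to showing that the composite
\[
\mathcal{O}_e \xrightarrow{\ m^*_{e\times e}\ } (\mathcal{O}_e\otimes\mathcal{O}_e)_{\mathfrak{n}} \twoheadrightarrow \mathcal{O}_e/\mathfrak{m}_e^{k+1}\otimes \mathcal{O}_e/\mathfrak{m}_e^{t+1}
\]
kills $\mathfrak{m}_e^{k+t+1}$, so that it descends to $\mathcal{O}_e/\mathfrak{m}_e^{k+t+1}\to \mathcal{O}_e/\mathfrak{m}_e^{k+1}\otimes\mathcal{O}_e/\mathfrak{m}_e^{t+1}$, i.e. to a morphism $S_k\times S_t\to S_{k+t}$ compatible with $m$ and the two inclusions $S_k,S_t\hookrightarrow G$.

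The key computation for the first diagram is the inclusion $m^*_{e\times e}(\mathfrak{m}_e)\subseteq \mathfrak{n}_{\mathfrak{n}}$, already recorded above. Raising to the power $k+t+1$ gives $m^*_{e\times e}(\mathfrak{m}_e^{k+t+1})\subseteq \mathfrak{n}_{\mathfrak{n}}^{k+t+1}$. Now $\mathfrak{n}=\mathcal{O}_e\otimes\mathfrak{m}_e+\mathfrak{m}_e\otimes\mathcal{O}_e$, so expanding $\mathfrak{n}^{k+t+1}$ binomially, every monomial term lies in $\mathfrak{m}_e^a\otimes\mathfrak{m}_e^b$ with $a+b=k+t+1$, hence has either $a\ge k+1$ or $b\ge t+1$; therefore $\mathfrak{n}^{k+t+1}$ maps to zero in $\mathcal{O}_e/\mathfrak{m}_e^{k+1}\otimes\mathcal{O}_e/\mathfrak{m}_e^{t+1}$, and the same holds after localizing at $\mathfrak{n}$ since the localization map is compatible with the projections. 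This gives the desired factorization; commutativity of the resulting diagram then follows because it commutes before passing to the quotients (it is just the restriction of $m$ along the closed immersions $S_k\times S_t\hookrightarrow U\times U$ and $S_{k+t}\hookrightarrow U$), and a closed immersion is a monomorphism in $\mathcal{SV}$, so the unique lift is automatically a morphism of the asserted type. For the second diagram one argues identically using $\iota^*_e$: since $\iota\epsilon=\epsilon$, the local morphism $\iota^*_e:\mathcal{O}_e\to\mathcal{O}_e$ sends $\mathfrak{m}_e$ into $\mathfrak{m}_e$, hence $\mathfrak{m}_e^{k+1}$ into $\mathfrak{m}_e^{k+1}$, and thus descends to an endomorphism of $\mathcal{O}_e/\mathfrak{m}_e^{k+1}$, giving the morphism $S_k\to S_k$ over $\iota$.

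The main obstacle, such as it is, is bookkeeping rather than a genuine difficulty: one must be careful that the identification $\mathcal{O}_{e\times e}\simeq(\mathcal{O}_e\otimes\mathcal{O}_e)_{\mathfrak{n}}$ is compatible with the two projections $p_1^*,p_2^*$ and with the closed immersion $e\to G\times G$, so that the tensor-product truncation $\mathcal{O}_e/\mathfrak{m}_e^{k+1}\otimes\mathcal{O}_e/\mathfrak{m}_e^{t+1}$ really is the structure ring of $S_k\times S_t$ (this uses that $S_k\times S_t$ is affine with this coordinate ring, which follows from Lemma \ref{coveingsofproducts} together with the affineness of $S_k$, $S_t$ and the compatibility of open super-subschemes with fibered products noted at the start of Section 4). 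Once that identification is in place, the power-counting argument above is routine, and the verification that the constructed morphisms are compatible with $m$ and $\iota$ is immediate from the universal property of closed immersions.
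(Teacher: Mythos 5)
Your proposal is correct and follows essentially the same route as the paper: identify $\mathcal{O}_{e\times e}$ with $(\mathcal{O}_e\otimes\mathcal{O}_e)_{\mathfrak{n}}$, use $m^*_{e\times e}(\mathfrak{m}_e)\subseteq\mathfrak{n}_{\mathfrak{n}}$ together with the observation that $\mathfrak{n}^{k+t+1}\subseteq\mathcal{O}_e\otimes\mathfrak{m}_e^{k+1}+\mathfrak{m}_e^{t+1}\otimes\mathcal{O}_e$ to descend to $\mathcal{O}_e/\mathfrak{m}_e^{k+t+1}\to\mathcal{O}_e/\mathfrak{m}_e^{k+1}\otimes\mathcal{O}_e/\mathfrak{m}_e^{t+1}$, and treat the antipode analogously via $\iota^*_e(\mathfrak{m}_e)\subseteq\mathfrak{m}_e$. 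Your explicit pigeonhole power-count is exactly the step the paper leaves implicit when it asserts that $m^*_{e\times e}$ "induces" the required local morphism.
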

\begin{proof}
The local superalgebra $\mathcal{O}_e/\mathfrak{m}_e^{k+1}\otimes \mathcal{O}_e/\mathfrak{m}_e^{t+1}$ is canonically isomorphic to 
$(\mathcal{O}_e\otimes\mathcal{O}_e)_{\mathfrak{n}_{e\times e}}/({\mathcal{O}_e\otimes\mathfrak{m}^{k+1}_e+\mathfrak{m}^{t+1}_e\otimes\mathcal{O}_e})_{\mathfrak{n}_{e\times e}}$. Moreover, $m^*_{e\times e}$ induces
a local superalgebra morphism 
\[\mathcal{O}_e/\mathfrak{m}_e^{k+t+1}\to (\mathcal{O}_e\otimes\mathcal{O}_e)_{\mathfrak{n}_{e\times e}}/(\mathfrak{m}^{k+1}_e\otimes\mathcal{O}_e+\mathcal{O}_e\otimes\mathfrak{m}^{t+1})_{\mathfrak{n}_{e\times e}}\]
that makes the diagram
\[\begin{array}{ccc}
\mathcal{O}_{e\times e} & \stackrel{m^*_{e\times e}}{\leftarrow} & \mathcal{O}_e \\
\downarrow & & \downarrow \\
\mathcal{O}_e/\mathfrak{m}_e^{k+1}\otimes \mathcal{O}_e/\mathfrak{m}_e^{t+1} & \leftarrow & \mathcal{O}_e/\mathfrak{m}_e^{k+t+1}
\end{array}\]
 commutative. The proof of the second statement is analogous.
\end{proof}

Translating to the category $\mathcal{SFG}$, one sees that the group superscheme $\mathbb{G}$
contains an ascending chain of closed super-subschemes $\mathbb{N}_0(\mathbb{G})\subseteq\mathbb{N}_1(\mathbb{G})\subseteq \ldots$, such that each $\mathbb{N}_k(\mathbb{G})$ is isomorphic to $\mathrm{SSp}(\mathcal{O}_e/\mathfrak{m}_e^{k+1})$. Moreover, the terms  satisfy $\mathbb{N}_k(\mathbb{G})^{-1}\subseteq\mathbb{N}_k(\mathbb{G})$ and $\mathbb{N}_k(\mathbb{G})\mathbb{N}_t(\mathbb{G})\subseteq\mathbb{N}_{k+t}(\mathbb{G})$ for any non-negative integers $k$ and $t$. Therefore, $\mathbb{N}(\mathbb{G})=\cup_{k\geq 0}\mathbb{N}_k(\mathbb{G})$ is a group subfunctor of $\mathbb{G}$ such that for every $R\in\mathsf{SAlg}_{\Bbbk}$, the group $\mathbb{N}(\mathbb{G})(R)$ consists of all superalgebra morphisms $\mathcal{O}_e\to R$ vanishing on some power of $\mathfrak{m}_e$. 
We call $\mathbb{N}(\mathbb{G})$ the \emph{formal neighborhood} of the identity in
$\mathbb{G}$. 

The group functor $\mathbb{N}(\mathbb{G})$ is "quasi-affine" in the following sense. The complete
local superalgebra $\widehat{\mathcal{O}_e}=\varprojlim_n \mathcal{O}_e/\mathfrak{m}_e^{n+1}$ is a \emph{complete Hopf superalgebra} for the comultiplication $\Delta : \widehat{\mathcal{O}_e}\to \widehat{\mathcal{O}_e}\widehat{\otimes} \widehat{\mathcal{O}_e}$ and the antipode $S : \widehat{\mathcal{O}_e}\to \widehat{\mathcal{O}_e}$, induced by $m^*_{e\times e}$ and $\iota^*_e$, respectively (see \cite{homastak}, Definition 3.3). Moreover, there is \[\Delta(\widehat{\mathfrak{m}_e})\subseteq
\widehat{\mathcal{O}_e}\widehat{\otimes} \widehat{\mathfrak{m}_e}+\widehat{\mathfrak{m}_e}\widehat{\otimes} \widehat{\mathcal{O}_e} \ \mbox{and} \ S(\widehat{\mathfrak{m}_e})\subseteq \widehat{\mathfrak{m}_e},\]
that is, $\widehat{\mathfrak{m}_e}$ is a (closed) Hopf superideal of $\widehat{\mathcal{O}_e}$. 

For every $R\in\mathsf{SAlg}_{\Bbbk}$, the group $\mathbb{N}(\mathbb{G})(R)$ consists of all continuous superalgebra morphisms $\widehat{\mathcal{O}_e}\to R$, where $R$ is regarded as a discrete superalgebra. The group operations of $\mathbb{N}(\mathbb{G})(R)$ are defined by $(gh)(a)=(g\otimes h)(\Delta(a))$ and $g^{-1}(a)=g(S(a))$ for $g, h\in\mathbb{N}(\mathbb{G})(R)$ and $a\in \widehat{\mathcal{O}_e}$. We use the "Sweedler notation" and write $\Delta(a)= a_{(0)}\otimes a_{(1)}$, by omitting the summation symbol. The sum on the left is convergent in the $\mathfrak{n}_{e\times e}$-adic topology. Then $(gh)(a)=g(a_{(0)}) h(a_{(1)})$, where the sum on the right contains only finitely many non-zero summands. 

Assume again that $\mathbb{G}$ is locally algebraic. For a non-negative integer $n$, let $\mathrm{hyp}_n(\mathbb{G})$ denote the superspace $(\widehat{\mathcal{O}_e}/\widehat{\mathfrak{m}_e}^{n+1})^*$. Set $\mathrm{hyp}(\mathbb{G})=\cup_{n\geq 0}\mathrm{hyp}_n(\mathbb{G})\subseteq (\widehat{\mathcal{O}_e})^*$.
The superspace $\mathrm{hyp}(\mathbb{G})$ has a natural Hopf superalgebra structure with respect to
the product
\[(\phi\psi)(a)= (-1)^{|\psi||a_{(0)}|}\phi(a_{(0)})\psi(a_{(1)}),\] 
the coproduct $\Delta^*(\phi)=\phi_{(0)}\otimes\phi_{(1)}$, uniquely defined by the identity
\[\phi(ab)= (-1)^{|\phi_{(1)}||a|}\phi_{(0)}(a)\phi_{(1)}(b) \text{ for } \phi, \psi\in\mathrm{hyp}(\mathbb{G})
\text{ and } a, b\in \widehat{\mathcal{O}_e},\]
and the antipode $S^*$, such that $(S^*(\phi))(a)=\phi(S(a))$. 
Furthermore, for every $R\in\mathsf{SAlg}_{\Bbbk}$, $\widehat{\mathcal{O}_e}\otimes R=\varprojlim_n
(\mathcal{O}_e\otimes R)/(\mathfrak{m}_e\otimes R)^{n+1}$ is a complete Hopf $R$-superalgebra with respect to the comultiplication $\Delta\otimes\mathrm{id}_R$ and the antipode $S\otimes\mathrm{id}_R$, and $\mathrm{hyp}(\mathbb{G})\otimes R$ has the unique structure of Hopf $R$-superalgebra  for every $R\in\mathsf{SAlg}_{\Bbbk}$ such that the pairing $(\mathrm{hyp}(\mathbb{G}))\otimes  R)\times (\widehat{\mathcal{O}_e}\otimes R)\to R$ given by 
$<\phi\otimes r, a\otimes r'>=(-1)^{|r||a|}\phi(a)rr'$,
is a Hopf pairing in the sense of \cite{masshib}.

The following lemma is folklore.
\begin{lm}\label{standard properties of hyp}
For every non-negative integers $k$ and $t$, there is:
\begin{enumerate}
\item $\mathrm{hyp}_k(\mathbb{G})\mathrm{hyp}_t(\mathbb{G})\subseteq\mathrm{hyp}_{k+t}(\mathbb{G})$,
\item $\Delta^*(\mathrm{hyp}_k(\mathbb{G}))\subseteq\oplus_{0\leq s\leq k}\mathrm{hyp}_s(\mathbb{G})\otimes \mathrm{hyp}_{k-s}(\mathbb{G})$,
\item $S^*(\mathrm{hyp}_k(\mathbb{G}))\subseteq \mathrm{hyp}_k(\mathbb{G})$.
\end{enumerate}
\end{lm}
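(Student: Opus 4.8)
The plan is to prove all three statements by unwinding the definitions of the product, coproduct, and antipode of $\mathrm{hyp}(\mathbb{G})$ in terms of the dual structure on the complete Hopf superalgebra $\widehat{\mathcal{O}_e}$, and then track how the filtration behaves. The key input is the second part of Lemma \ref{a certain super-subgroup} in its algebraic incarnation: $m^*_{e\times e}$ carries $\mathcal{O}_e/\mathfrak{m}_e^{k+t+1}$ into $(\mathcal{O}_e\otimes\mathcal{O}_e)/(\mathfrak{m}_e^{k+1}\otimes\mathcal{O}_e+\mathcal{O}_e\otimes\mathfrak{m}_e^{t+1})$, equivalently, passing to completions, $\Delta(\widehat{\mathfrak{m}_e}^{\,n}) \subseteq \sum_{i+j=n}\widehat{\mathfrak{m}_e}^{\,i}\widehat{\otimes}\widehat{\mathfrak{m}_e}^{\,j}$ for every $n\ge 0$. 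This single filtration-compatibility for $\Delta$ is what drives (1) and (2); statement (3) similarly comes from $S(\widehat{\mathfrak{m}_e})\subseteq\widehat{\mathfrak{m}_e}$, hence $S(\widehat{\mathfrak{m}_e}^{\,n})\subseteq\widehat{\mathfrak{m}_e}^{\,n}$, recorded already in the text.

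For (1): recall $\mathrm{hyp}_k(\mathbb{G})=(\widehat{\mathcal{O}_e}/\widehat{\mathfrak{m}_e}^{\,k+1})^*$, which under the inclusion $\mathrm{hyp}_k(\mathbb{G})\subseteq(\widehat{\mathcal{O}_e})^*$ is exactly $\{\phi : \phi(\widehat{\mathfrak{m}_e}^{\,k+1})=0\}$. Given $\phi\in\mathrm{hyp}_k(\mathbb{G})$, $\psi\in\mathrm{hyp}_t(\mathbb{G})$ and $a\in\widehat{\mathfrak{m}_e}^{\,k+t+1}$, write $\Delta(a)=a_{(0)}\otimes a_{(1)}$; by the filtration-compatibility of $\Delta$, this convergent sum can be arranged so that in every term either $a_{(0)}\in\widehat{\mathfrak{m}_e}^{\,k+1}$ or $a_{(1)}\in\widehat{\mathfrak{m}_e}^{\,t+1}$, so each term of $(\phi\psi)(a)=(-1)^{|\psi||a_{(0)}|}\phi(a_{(0)})\psi(a_{(1)})$ vanishes; hence $\phi\psi\in\mathrm{hyp}_{k+t}(\mathbb{G})$. (One must say a word about why the grouping is legitimate: the pairing $\mathrm{hyp}_k\otimes\mathrm{hyp}_t$ with $\widehat{\mathcal{O}_e}$ factors through the finite-dimensional quotient $\widehat{\mathcal{O}_e}/\widehat{\mathfrak{m}_e}^{\,k+t+1}\widehat{\otimes}\cdots$, so everything reduces to a finite-dimensional computation where no convergence subtleties arise.)

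For (2): the coproduct $\Delta^*$ on $\mathrm{hyp}(\mathbb{G})$ is the transpose of the product $\mu$ of $\widehat{\mathcal{O}_e}$, so for $\phi\in\mathrm{hyp}_k(\mathbb{G})$ the functional $\Delta^*(\phi)=\phi\circ\mu$ lives on $\widehat{\mathcal{O}_e}\widehat{\otimes}\widehat{\mathcal{O}_e}$ and vanishes on $\mu^{-1}(\widehat{\mathfrak{m}_e}^{\,k+1})$, which contains $\sum_{s+u=k+1}\widehat{\mathfrak{m}_e}^{\,s}\widehat{\otimes}\widehat{\mathfrak{m}_e}^{\,u}$ since $\widehat{\mathfrak{m}_e}^{\,s}\widehat{\mathfrak{m}_e}^{\,u}\subseteq\widehat{\mathfrak{m}_e}^{\,s+u}$. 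Dualizing the resulting bound on the support, $\Delta^*(\phi)$ decomposes along $\bigoplus_{s+u=k}(\widehat{\mathcal{O}_e}/\widehat{\mathfrak{m}_e}^{\,s+1})^*\otimes(\widehat{\mathcal{O}_e}/\widehat{\mathfrak{m}_e}^{\,u+1})^* = \bigoplus_{0\le s\le k}\mathrm{hyp}_s(\mathbb{G})\otimes\mathrm{hyp}_{k-s}(\mathbb{G})$; the defining identity $\phi(ab)=(-1)^{|\phi_{(1)}||a|}\phi_{(0)}(a)\phi_{(1)}(b)$ is precisely this statement rephrased via the pairing. For (3): $S^*$ is the transpose of $S$, and $S(\widehat{\mathfrak{m}_e}^{\,k+1})\subseteq\widehat{\mathfrak{m}_e}^{\,k+1}$ gives $(S^*\phi)(\widehat{\mathfrak{m}_e}^{\,k+1})=\phi(S(\widehat{\mathfrak{m}_e}^{\,k+1}))\subseteq\phi(\widehat{\mathfrak{m}_e}^{\,k+1})=0$ immediately.

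The main obstacle, if any, is bookkeeping rather than depth: one has to be careful that the "Sweedler sums" for $\Delta$ on the completion are only convergent, not finite, so the decomposition arguments should be carried out on the finite-dimensional quotients $\widehat{\mathcal{O}_e}/\widehat{\mathfrak{m}_e}^{\,n+1}$ where the dual statements are literally finite-dimensional linear algebra, and then the inclusions $\mathrm{hyp}_k(\mathbb{G})\subseteq\mathrm{hyp}(\mathbb{G})$ are used only at the end. Signs must be checked against the stated formulas for the product and coproduct of $\mathrm{hyp}(\mathbb{G})$, but they do not affect the filtration statements. Since the lemma is explicitly called folklore, it is reasonable to present this as a brief sketch, emphasizing the reduction to the filtration property $\Delta(\widehat{\mathfrak{m}_e}^{\,n})\subseteq\sum_{i+j=n}\widehat{\mathfrak{m}_e}^{\,i}\widehat{\otimes}\widehat{\mathfrak{m}_e}^{\,j}$ and its dual.
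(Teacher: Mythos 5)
Your sketch is correct, and it supplies exactly what the paper omits: the authors state this lemma as folklore and give no proof, so there is no argument to compare against, but yours is the intended standard one. All three items indeed reduce to the dual (algebraic) form of Lemma \ref{a certain super-subgroup}, namely that $\Delta$ carries $\widehat{\mathfrak{m}_e}^{\,k+t+1}$ into $\widehat{\mathfrak{m}_e}^{\,k+1}\widehat{\otimes}\widehat{\mathcal{O}_e}+\widehat{\mathcal{O}_e}\widehat{\otimes}\widehat{\mathfrak{m}_e}^{\,t+1}$ and $S$ preserves $\widehat{\mathfrak{m}_e}^{\,k+1}$, together with $\widehat{\mathfrak{m}_e}^{\,s}\widehat{\mathfrak{m}_e}^{\,u}\subseteq\widehat{\mathfrak{m}_e}^{\,s+u}$ for item (2); note that for item (1) you only need this single $(k,t)$-instance, not the full tensor-filtration compatibility. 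The one step you leave at the level of a gesture is the duality identification in (2), i.e.\ that a functional vanishing on $\sum_{s+u=k+1}\widehat{\mathfrak{m}_e}^{\,s}\widehat{\otimes}\widehat{\mathfrak{m}_e}^{\,u}$ lies in $\oplus_{0\leq s\leq k}\mathrm{hyp}_s(\mathbb{G})\otimes\mathrm{hyp}_{k-s}(\mathbb{G})$; since the quotients $\widehat{\mathcal{O}_e}/\widehat{\mathfrak{m}_e}^{\,n+1}$ are finite-dimensional (the point $e$ is $K$-rational and $\mathbb{G}$ is locally algebraic), this is exactly the adapted-bases computation used in Proposition \ref{gr commutes with tensor product}, so your reduction to finite-dimensional quotients is legitimate and closes the argument.
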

For a Hopf superalgebra $H$, let $\mathsf{Gpl}(H)$ denote the subgroup of $H^{\times}$ consisting of all (even) group-like elements of $H$.
Let $\mathfrak{n}_{\mathbb{G}}$ denote the Lie superalgebra of $\mathbb{N}(\mathbb{G})$.
\begin{lm}\label{super-hyperalgebra}
The group functor $\mathbb{N}(\mathbb{G})$ is canonically identified with the group functor $R\mapsto \mathsf{Gpl}(\mathrm{hyp}(\mathbb{G})\otimes R)$.
Additionally, we have $\mathfrak{n}_{\mathbb{G}}\simeq\mathrm{hyp}_1(\mathbb{G})^+=\{\phi\in\mathrm{hyp}_1(\mathbb{G})\mid \phi(1)=0\}$ and the Lie super-bracket on $\mathfrak{n}_{\mathbb{G} }$ is defined by $[\phi, \psi]=\phi\psi-(-1)^{|\phi||\psi|}\psi\phi$. In particular, this operation satisfies the identity $(B2)$.
\end{lm}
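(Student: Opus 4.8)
The strategy is to transport everything about $\mathbb{S}$ across the Hopf pairing between $\widehat{\mathcal{O}_e}$ and $\mathrm{hyp}(\mathbb{G})$, reducing the three assertions to elementary statements about group-like and primitive elements of the associative superalgebra $\mathrm{hyp}(\mathbb{G})$.

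\textbf{Step 1 (identifying $\mathbb{S}$ with group-likes).} For a superalgebra $R$, an element $g\in\mathbb{S}(R)$ is a continuous superalgebra morphism $\widehat{\mathcal{O}_e}\to R$, hence factors through some finite-dimensional quotient $\mathcal{O}_e/\mathfrak{m}_e^{n+1}=\widehat{\mathcal{O}_e}/\widehat{\mathfrak{m}_e}^{n+1}$; via the pairing $\langle-,-\rangle$ it therefore corresponds to a unique element $\gamma_g\in\mathrm{hyp}_n(\mathbb{G})\otimes R\subseteq\mathrm{hyp}(\mathbb{G})\otimes R$. Since the counit, product, and coproduct of $\mathrm{hyp}(\mathbb{G})$ are dual, respectively, to the unit, the coproduct $\Delta$, and the product of $\widehat{\mathcal{O}_e}$, the conditions that $g$ be unital, multiplicative, and homogeneous of degree $0$ translate exactly into $\epsilon(\gamma_g)=1$, $\Delta^*(\gamma_g)=\gamma_g\otimes\gamma_g$, and $\gamma_g\in(\mathrm{hyp}(\mathbb{G})\otimes R)_0$; thus $g\mapsto\gamma_g$ is a bijection onto $\mathsf{Gpl}(\mathrm{hyp}(\mathbb{G})\otimes R)$. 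The group law $(gh)(a)=g(a_{(0)})h(a_{(1)})$ and the inversion $g^{-1}(a)=g(S(a))$ are again dual to the multiplication and antipode of $\mathrm{hyp}(\mathbb{G})\otimes R$, so this bijection is a group isomorphism, visibly functorial in $R$.

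\textbf{Step 2 (the Lie superalgebra).} Put $\mathfrak{s}=\mathrm{Lie}(\mathbb{S})(K)=\mathrm{T}_e(\mathbb{S})(K)$ (the constructions of Sections 5 and 6 apply verbatim to the group functor $\mathbb{S}$). Applying Step 1 with $R=K[\epsilon_0,\epsilon_1]$, an element of $\mathfrak{s}$ is a group-like $\gamma\in\mathrm{hyp}(\mathbb{G})\otimes K[\epsilon_0,\epsilon_1]$ with $(\mathrm{id}\otimes p_K)(\gamma)=1$, hence of the form $\gamma=1+\epsilon_0u+\epsilon_1v$; reducing the group-like conditions modulo $\epsilon_0^2=\epsilon_1^2=\epsilon_0\epsilon_1=0$ shows that $\gamma$ is group-like iff $u$ and $v$ are primitive in $\mathrm{hyp}(\mathbb{G})$. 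Hence $\mathfrak{s}$ is canonically isomorphic to the space of primitives of $\mathrm{hyp}(\mathbb{G})$, a homogeneous primitive $\phi$ being represented by the group-like $1+\epsilon_{|\phi|}\phi$. Finally, the primitives coincide with $\mathrm{hyp}_1(\mathbb{G})^+$: from $\phi(ab)=\phi(a)\epsilon(b)+(-1)^{|\phi||a|}\epsilon(a)\phi(b)$ one sees a primitive kills $1$ and $\mathfrak{m}_e^2$, hence factors through $\mathcal{O}_e/\mathfrak{m}_e^2$, while conversely any $\phi\in\mathrm{hyp}_1(\mathbb{G})$ with $\phi(1)=0$ satisfies that identity and so is primitive.

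\textbf{Step 3 (the bracket and $(B2)$).} By Step 1 the group operations of $\mathbb{S}(-)$ are the restrictions of the associative multiplication of $\mathrm{hyp}(\mathbb{G})\otimes-$. Hence, for primitive $x,y$, the group commutator $[e^{\epsilon_{|x|}x},e^{\epsilon'_{|y|}y}]$ computed in $\mathbb{S}(K[\epsilon'_0,\epsilon'_1][\epsilon_0,\epsilon_1])$ is just the associative commutator of $1+\epsilon_{|x|}x$ and $1+\epsilon'_{|y|}y$, which by the same short expansion as in Example \ref{Lie superalgebra gl(V)} equals $1+\epsilon_{|x|}\epsilon'_{|y|}\bigl((-1)^{|x||y|}xy-yx\bigr)$. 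Comparing with the identity $[e^{\epsilon_{|x|}x},e^{\epsilon'_{|y|}y}]=e^{(-1)^{|x||y|}\epsilon_{|x|}\epsilon'_{|y|}[x,y]}$ from the proof of Lemma \ref{differential is a Lie superalgebra morphism} yields $[x,y]=xy-(-1)^{|x||y|}yx$. For $x\in\mathfrak{s}_1$ we then get $[x,x]=2x^2$, an even element, whence $[[x,x],x]=2x^2x-2xx^2=0$, which is $(B2)$; conceptually, $\mathfrak{s}$ is realized as a sub-Lie-superalgebra of $\mathrm{hyp}(\mathbb{G})$ under the super-commutator, where $(B2)$ holds automatically regardless of the characteristic. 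The main technical point to be careful about is Step 1: making the duality precise requires tracking the Koszul signs in the pairing and in every structure map, and checking that the convention for $\mathsf{Gpl}$ (group-likes homogeneous of degree $0$) is precisely what matches group-likes with superalgebra morphisms.
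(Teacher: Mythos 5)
Your proof is correct, and its first half (identifying $\mathbb{S}(R)$ with $\mathsf{Gpl}(\mathrm{hyp}(\mathbb{G})\otimes R)$ via the pairing, and $\mathfrak{s}$ with the primitives via dual super-numbers) is exactly the paper's route; the interesting divergence is in how you finish the last two claims. For $\mathfrak{s}=\mathrm{hyp}_1(\mathbb{G})^+$ the paper only checks the inclusion $\mathrm{hyp}_1(\mathbb{G})^+\subseteq\mathsf{P}$ and gets the equality by a dimension count, citing forward to Lemma \ref{(B2) for any g} for $\dim\mathfrak{s}=\dim\mathfrak{g}$; your direct computation that a primitive $\phi$ satisfies $\phi(1)=0$ (using $\mathrm{char}\,K\neq 2$) and kills $\widehat{\mathfrak{m}_e}^2$, hence lies in $\mathrm{hyp}_1(\mathbb{G})^+$, is self-contained and removes that forward reference, which is a genuine improvement in the local logic of Section 8. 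For the bracket formula the paper mimics the calculation of Lemma 3.4 of \cite{zub2}, whereas you recover it by computing the group commutator of $1+\epsilon_{|x|}x$ and $1+\epsilon'_{|y|}y$ as an associative commutator in $\mathrm{hyp}(\mathbb{G})\otimes R$ and comparing with the identity $[e^{\epsilon_{|x|}x},e^{\epsilon'_{|y|}y}]=e^{(-1)^{|x||y|}\epsilon_{|x|}\epsilon'_{|y|}[x,y]}$; this is precisely the ``same trick with group commutators'' that the paper itself deploys later in Lemma \ref{(B2) for any g}, so you are in effect folding that step into the present lemma. One small point you should make explicit: that commutator identity is established in the proof of Lemma \ref{differential is a Lie superalgebra morphism} for $\mathbb{G}$ and the bracket of $\mathfrak{g}$ defined through $\mathrm{Ad}_{\mathbb{G}}$, while here you need it for the group functor $\mathbb{S}$ and the bracket of $\mathfrak{s}$; this is harmless, since the derivation uses only the split exact sequence $1\to\mathrm{Lie}(\mathbb{S})(R)\to\mathbb{S}(R[\epsilon_0,\epsilon_1])\to\mathbb{S}(R)\to 1$ and the functorial adjoint action, which carry over verbatim to the subgroup functor $\mathbb{S}$ (equivalently, $\mathrm{Lie}(\mathbb{S})\subseteq\mathrm{Lie}(\mathbb{G})$ compatibly with $\mathrm{Ad}$), but as written you are quoting a statement proved in a slightly different setting. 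The verification of $(B2)$ via $[[\phi,\phi],\phi]=[2\phi^2,\phi]=0$ coincides with the paper's.
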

\begin{proof}
We identify the superspace $\mathrm{hyp}(\mathbb{G})\otimes R$ with $\varinjlim_n \mathrm{Hom}_K(\widehat{\mathcal{O}_e}/\mathfrak{m}_e^{n+1}, R)$ via the above pairing.
Then the first statement follows. 

Furthermore, the previous dual super-numbers technique shows that the Lie superalgebra $\mathfrak{n}_{\mathbb{G}  }$ can be identified with the super-subspace $\mathsf{P}$ of $\mathrm{hyp}(\mathbb{G})^+$ consisting of all primitive elements. Also, $\dim\mathfrak{n}_{\mathbb{G}}=\dim\mathfrak{g}$ (see Lemma \ref{(B2) for any g} below) and $\mathrm{hyp}_1(\mathbb{G})^+\subseteq\mathsf{P}$.

Since $\mathbb{N}(\mathbb{G})$ is "represented" by the complete Hopf superalgebra $\widehat{\mathcal{O}_e}$,  to prove the last statement, one can mimic the calculation from \cite[Lemma 3.4]{zub2}. Finally, $[[\phi, \phi], \phi]=[2\phi^2, \phi]=0$ for every $\phi\in\mathfrak{s}_1$. 
\end{proof}
\begin{lm}\label{(B2) for any g}
Let $\mathbb{G}$ be a locally algebraic group superscheme. Then the operation $[ \ , \ ]$ on $\mathfrak{g}$ satisfies the identity $(B2)$.
\end{lm}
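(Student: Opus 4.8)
The plan is to transfer $(B2)$ to $\mathfrak{g}$ from the Lie superalgebra $\mathfrak{s}$ of the formal neighborhood $\mathbb{S}$ of the identity, where it is forced by associativity because the bracket on $\mathfrak{s}$ is a commutator in $\mathrm{hyp}(\mathbb{G})$. (Recall that $(B2)$ is only needed when $\mathrm{char}\,K=3$; otherwise it is a formal consequence of the super Jacobi identity $(B4)$ established in Lemma \ref{differential is a Lie superalgebra morphism}.) First I would check that the inclusion ${\bf j}\colon\mathbb{S}\hookrightarrow\mathbb{G}$ of group subfunctors identifies $\mathrm{Lie}(\mathbb{S})$ with $\mathrm{Lie}(\mathbb{G})$. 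Indeed, an element of $\mathrm{Lie}(\mathbb{S})(R)$ is a continuous superalgebra map $\widehat{\mathcal{O}_e}\to R[\epsilon_0,\epsilon_1]$ lying over the counit of $R$; it carries $\widehat{\mathfrak{m}_e}$ into the square-zero super-ideal $\epsilon_0R+\epsilon_1R$, hence factors through $\mathcal{O}_e/\mathfrak{m}_e^2$, so $\mathrm{Lie}(\mathbb{S})(R)$ is $(\mathfrak{m}_e/\mathfrak{m}_e^2)^*\otimes R$ exactly as $\mathrm{Lie}(\mathbb{G})(R)$ is in Lemma \ref{standard description of tangent functor}, and $\mathrm{d}_e{\bf j}$ is the identity map. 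In particular $\dim\mathfrak{s}=\dim\mathfrak{g}$, which is the equality cited in the proof of Lemma \ref{super-hyperalgebra}.

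Secondly, since $\mathbb{S}$ is a group subfunctor of $\mathbb{G}$, for $g\in\mathbb{S}(R)$ the operator $\mathrm{Ad}(g)$ — conjugation inside the group $\mathbb{S}(R[\epsilon_0,\epsilon_1])$ — agrees with the conjugation computed in the overgroup $\mathbb{G}(R[\epsilon_0,\epsilon_1])$; thus $\mathrm{Ad}_{\mathbb{S}}=\mathrm{Ad}_{\mathbb{G}}\circ{\bf j}$ as morphisms into $\mathrm{GL}(\mathfrak{g})=\mathrm{GL}(\mathfrak{s})$. Differentiating at $e$ and using $\mathrm{d}_e{\bf j}=\mathrm{id}$ gives $\mathrm{ad}_{\mathbb{S}}=\mathrm{ad}_{\mathbb{G}}$, so the super-brackets of $\mathfrak{s}$ and $\mathfrak{g}$ coincide under the identification above. (Equivalently, $({\bf j},\mathrm{d}_e{\bf j}(K))$ is a morphism of pairs $(\mathbb{S},\mathfrak{s})\to(\mathbb{G},\mathfrak{g})$ for the adjoint actions and one invokes Lemma \ref{differential of morphism of pairs} — or the first assertion of Lemma \ref{differential is a Lie superalgebra morphism} — whose proofs do not use $(B2)$.)

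It then suffices to recall from the last line of Lemma \ref{super-hyperalgebra} that under $\mathfrak{s}\simeq\mathrm{hyp}_1(\mathbb{G})^+\subseteq\mathrm{hyp}(\mathbb{G})$ the bracket is $[\phi,\psi]=\phi\psi-(-1)^{|\phi||\psi|}\psi\phi$: for $\phi\in\mathfrak{s}_1$ we get $[[\phi,\phi],\phi]=[2\phi^2,\phi]=2(\phi^2\phi-\phi\phi^2)=0$ by associativity, so $(B2)$ holds in $\mathfrak{s}$, hence in $\mathfrak{g}$. The one delicate (though minor) point — and thus the main obstacle — is that $\mathbb{S}$ is not a superscheme: one must confirm that $\mathrm{Lie}(\mathbb{S})$, the split exact sequence $1\to\mathrm{Lie}(\mathbb{S})(R)\to\mathbb{S}(R[\epsilon_0,\epsilon_1])\to\mathbb{S}(R)\to1$, the $R$-linearity of $\mathrm{Ad}_{\mathbb{S}}$ (as in Lemma \ref{linearity of adjoint action}), and the differential calculus of the preceding sections all go through verbatim — which is clear, since each of these uses only the abstract group-functor structure $\mathbb{S}$ inherits from $\mathbb{G}$, together with the quasi-affine description of $\mathbb{S}$ by $\widehat{\mathcal{O}_e}$.
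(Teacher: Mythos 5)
Your proposal is correct and follows essentially the same route as the paper: identify $\mathrm{Lie}(\mathbb{G})$ with the Lie superalgebra $\mathfrak{s}$ of the formal neighborhood $\mathbb{S}$, check that the two brackets agree because all the relevant conjugations/commutators are computed inside the subgroup functor $\mathbb{S}$, and then conclude from Lemma \ref{super-hyperalgebra} that the bracket is a supercommutator in $\mathrm{hyp}(\mathbb{G})$, where $[[\phi,\phi],\phi]=[2\phi^2,\phi]=0$ by associativity. The only cosmetic difference is that you phrase the comparison of brackets via $\mathrm{Ad}_{\mathbb{S}}=\mathrm{Ad}_{\mathbb{G}}\circ\mathbf{j}$ and its differential, whereas the paper uses the group-commutator computation of Lemma \ref{differential is a Lie superalgebra morphism}; these are interchangeable.
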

\begin{proof}
Since each immersion $\mathbb{N}_k(\mathbb{G})\to\mathbb{G}$ factors through any open affine neighborhood $\mathbb{U}$ of $e=e_{\Bbbk}\in\mathbb{G}(\Bbbk)$, we have the following commutative diagram
\[\begin{array}{ccccccccc}
1 & \to & \mathrm{Lie}(\mathbb{G})(R) & \to & \mathbb{G}(R[\epsilon_0, \epsilon_1]) & \to & \mathbb{G}(R) & \to & 1\\
 & & \parallel & & \uparrow & & \uparrow & & \\
 & & (\mathfrak{m}_e/\mathfrak{m}^2_e)^*\otimes R & \to & \mathbb{N}_k(\mathbb{G})(R[\epsilon_0, \epsilon_1]) & \to &
\mathbb{N}_k(\mathbb{G})(R) & & \\
& & \parallel & &  \uparrow & & \uparrow & & \\ 
& & (\mathfrak{m}_e/\mathfrak{m}^2_e)^*\otimes R & \to & \mathbb{N}_t(\mathbb{G})(R[\epsilon_0, \epsilon_1]) & \to &
\mathbb{N}_t(\mathbb{G})(R) & &
\end{array},\]
where $t\leq k$. Using Lemma \ref{a certain super-subgroup}, we obtain a commutative diagram of groups
\[\begin{array}{ccccccccc}
1 & \to & \mathrm{Lie}(\mathbb{G})(R) & \to & \mathbb{G}(R[\epsilon_0, \epsilon_1]) & \to & \mathbb{G}(R) & \to & 1\\
 & & \parallel & & \uparrow & & \uparrow & & \\
1 & \to & (\mathfrak{m}_e/\mathfrak{m}^2_e)^*\otimes R & \to & \mathbb{N}(\mathbb{G})(R[\epsilon_0, \epsilon_1]) & \to &
\mathbb{N}(\mathbb{G})(R) & \to & 1 
\end{array}.\] 
Thus $\mathfrak{g}=\mathfrak{n}_{\mathbb{G}}$ and using the same trick with group commutators as in Lemma \ref{differential is a Lie superalgebra morphism}, one can derive that the Lie super-bracket on $\mathfrak{g}$ coincides with the Lie super-bracket on $\mathfrak{n}_{\mathbb{G}}$. Lemma \ref{super-hyperalgebra} concludes the proof.
\end{proof}
Let the $\Bbbk$-functor morphism ${\bf c} : \mathbb{G}\times \mathbb{G}\to \mathbb{G}$ defines the conjugation action of $\mathbb{G}$ on itself. In other words, for any $R\in\mathsf{SAlg}_{\Bbbk}$ and $g, h\in\mathbb{G}(R)$ there is
${\bf c}(R)(g, h)=ghg^{-1}$.  
\begin{lm}\label{formal neighborhood is normal}
Each $\mathbb{N}_k(\mathbb{G})$ is invariant with respect to the conjugation action of $\mathbb{G}$ on itself.
Therefore, $\mathbb{N}(\mathbb{G})$ is a normal group subfunctor of $\mathbb{G}$. 
\end{lm}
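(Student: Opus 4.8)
The statement is a purely geometric one, so the plan is to realize the conjugation action as the morphism $c\colon\mathbb{G}\times\mathbb{G}\to\mathbb{G}$, $(g,x)\mapsto gxg^{-1}$, and to show that $c$ restricted to $\mathbb{G}\times\mathbb{S}_n$ factors through the closed super-subscheme $\mathbb{S}_n\hookrightarrow\mathbb{G}$. Since $\mathbb{S}_n$ is a subfunctor of $\mathbb{G}$, this is equivalent to the pointwise assertion that $g\,\mathbb{S}_n(R)\,g^{-1}\subseteq\mathbb{S}_n(R)$ for every $R\in\mathsf{SAlg}_K$ and every $g\in\mathbb{G}(R)$; once this is proved for all $n$ it transfers to $\mathbb{S}=\bigcup_{n\ge 0}\mathbb{S}_n$ (any $s\in\mathbb{S}(R)$ lies in some $\mathbb{S}_n(R)$), and, since $\mathbb{S}$ is already a group subfunctor, that is exactly the normality claim.

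So fix $R$ and $g\in\mathbb{G}(R)$. The inner automorphism $x\mapsto gxg^{-1}$ makes sense functorially over all $R$-superalgebras, hence defines an automorphism $\mathrm{inn}_g$ of the group superscheme $\mathbb{G}_R:=\mathbb{G}\times\mathrm{SSp}(R)$ over $\mathrm{SSp}(R)$, with inverse $\mathrm{inn}_{g^{-1}}$, and $\mathrm{inn}_g$ fixes the identity section $e_R\colon\mathrm{SSp}(R)\to\mathbb{G}_R$ because $g e_R g^{-1}=e_R$. Passing to $\mathcal{SV}$ via Theorem \ref{comparison}, $\mathrm{inn}_g$ is an automorphism of the geometric superscheme $G\times\mathrm{SSpec}(R)$ that fixes the closed immersion $e_R$. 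The crux is then to identify the closed super-subscheme $\mathbb{S}_n\times\mathrm{SSp}(R)$ of $\mathbb{G}_R$ with the $n$-th neighbourhood of $e_R$ in the sense of Lemma \ref{compatibility in general}. For this I would work in an affine chart $\mathrm{SSpec}(A)\times\mathrm{SSpec}(R)=\mathrm{SSpec}(A\otimes R)$ with $e\in\mathrm{SSpec}(A)$: there $e_R$ is cut out by the super-ideal $(\ker\epsilon_A)\otimes R$, so its $n$-th neighbourhood is $\mathrm{SSpec}\bigl((A\otimes R)/((\ker\epsilon_A)^{n+1}\otimes R)\bigr)=\mathrm{SSpec}\bigl((A/(\ker\epsilon_A)^{n+1})\otimes R\bigr)$; since $\ker\epsilon_A$ is a maximal super-ideal the superalgebra $A/(\ker\epsilon_A)^{n+1}$ is already local, so it equals $\mathcal{O}_e/\mathfrak{m}_e^{n+1}$, whence the $n$-th neighbourhood of $e_R$ is $\mathrm{SSpec}\bigl((\mathcal{O}_e/\mathfrak{m}_e^{n+1})\otimes R\bigr)=S_n\times\mathrm{SSpec}(R)$. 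Finally, an automorphism of a superscheme that stabilizes a closed super-subscheme, say with super-ideal sheaf $\mathcal{J}$, also stabilizes the closed super-subscheme cut out by $\mathcal{J}^{n+1}$ (it preserves $\mathcal{J}$, hence $\mathcal{J}^{n+1}$); applying this to $\mathrm{inn}_g$ and $e_R$ shows $\mathrm{inn}_g$ stabilizes $\mathbb{S}_n\times\mathrm{SSp}(R)$. Evaluating at $R$ gives $g\,\mathbb{S}_n(R)\,g^{-1}\subseteq\mathbb{S}_n(R)$, as required, and the last paragraph's reduction finishes the proof.

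I expect the only step that is not pure formalism to be the identification of $\mathbb{S}_n\times\mathrm{SSp}(R)$ with the $n$-th neighbourhood of $e_R$, i.e. the fact that forming the $n$-th infinitesimal neighbourhood commutes with the base change $-\times\mathrm{SSp}(R)$ along $\mathrm{SSp}(R)\to\mathrm{SSp}(K)$; concretely this reduces to interchanging localization and quotients by powers of a maximal super-ideal with the functor $-\otimes_K R$, plus the bookkeeping for the super-structure. If one prefers to bypass base change entirely, the same conclusion can be reached directly: since $\mathcal{O}_e/\mathfrak{m}_e^{n+1}$ has nilpotent maximal super-ideal, the closed immersion $G\to G\times S_n$ induced by $e\to S_n$ is a homeomorphism onto $(G\times S_n)^e$ with nilpotent defining ideal; the composite $c|_{G\times S_n}$ restricted to this copy of $G$ is the constant morphism with value $e$, so $c|_{G\times S_n}$ has topological image $\{e\}$, and inspecting the induced map $\mathcal{O}_e\to\mathcal{O}_{G\times S_n}$ on a chart $\mathrm{SSpec}(B)\times S_n$ one finds that it carries $\mathfrak{m}_e$ into the nilpotent ideal $B\otimes(\mathfrak{m}_e/\mathfrak{m}_e^{n+1})=\ker(B\otimes\mathcal{O}_e/\mathfrak{m}_e^{n+1}\to B)$, hence carries $\mathfrak{m}_e^{n+1}$ to $0$, so $c|_{G\times S_n}$ factors through $S_n$.
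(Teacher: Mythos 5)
Your argument is correct, and its two halves relate to the paper differently. The fallback sketch in your last paragraph is essentially the paper's own proof: the paper restricts the conjugation morphism $c$ to $V\times U$ with $U\simeq\mathrm{SSpec}(A)$ an affine neighbourhood of $e$, uses that $c|_{V\times e}$ is the constant morphism $e$ (plus faithfulness of localization on the charts $\mathrm{SSpec}((B\otimes A)_g)$ covering $c^{-1}(U)\cap(V\times U)$) to get $c^*(\mathfrak{m}_A)\subseteq (B\otimes\mathfrak{m}_A)_g$, and then takes $(n{+}1)$-st powers to conclude that $c$ carries $G\times S_n$ into $S_n$; your variant replaces the localization bookkeeping by the observation that $(G\times S_n)^e=(G\times e)^e$, which is the same prime-ideal computation the paper records in Lemma \ref{crucial property of formal neighborhood}. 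Your main route is a genuinely different packaging: you fix $g\in\mathbb{G}(R)$, realize $\mathrm{inn}_g$ as an automorphism of $G\times\mathrm{SSpec}(R)$ over $\mathrm{SSpec}(R)$ fixing the identity section $e_R$, identify $S_n\times\mathrm{SSpec}(R)$ with the $n$-th neighbourhood of $e_R$ (your chart computation $\ker(\epsilon_A\otimes\mathrm{id}_R)^{n+1}=\mathfrak{m}_A^{n+1}\otimes R$ and $A/\mathfrak{m}_A^{n+1}\simeq\mathcal{O}_e/\mathfrak{m}_e^{n+1}$ is fine, and Lemma \ref{compatibility in general} covers independence of the chart), and then use that an automorphism satisfying $\sigma\circ e_R=e_R$ preserves the ideal sheaf of the image, hence all its powers; evaluating sections over $R$ gives the pointwise invariance, which, as you note, is exactly what the functorial statement means. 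What this buys is a clean one-variable argument that never touches the two-variable morphism $c$ on charts, at the cost of the relative/base-change formalities you flag (representability of $\mathrm{inn}_g$ over $\mathrm{SSpec}(R)$ via the Comparison Theorem, and compatibility of infinitesimal neighbourhoods with $-\times\mathrm{SSpec}(R)$); the paper's proof needs neither base change nor sections over general $R$, but instead handles the covering of $c^{-1}(U)$ by localizations. The mathematical kernel — conjugation fixes $e$, hence preserves $\mathfrak{m}_e$ modulo the ambient ideal, hence its powers, with nilpotency of $\mathfrak{m}_e/\mathfrak{m}_e^{n+1}$ finishing the job — is the same in both.
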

\begin{proof}
We use the equivalent language of geometric superschemes. The conjugation action is defined by a superscheme morphism $ c : G\times G\to G$. Let $U\simeq\mathrm{SSpec}(A)$ be an affine neighborhood of $e$.  For every affine open super-subscheme $V\simeq\mathrm{SSpec}(B)$, there is a commutative diagram
\[\begin{array}{ccc}
V\times U & \stackrel{c|_{V\times U}}{\to} & G \\
\uparrow & & \uparrow \\
V\times e & \stackrel{c|_{V\times e}}{\to} & e
\end{array}.\]
Then $c^{-1}|_{V\times U}(U)=(V\times U)\cap c^{-1}(U)$ can be covered by open affine supersubschemes $\mathrm{SSpec}((B\otimes A)_{g})$. There is a dual commutative diagram 
\[\begin{array}{ccc}
(B\otimes A)_g & \stackrel{c^*}{\leftarrow} & A \\
\downarrow & & \downarrow \\
B_{\overline{g}} & \leftarrow & \Bbbk
\end{array},\]
where the vertical arrows are $\mathrm{id}_B\otimes\epsilon_A$ and $\epsilon_A$, respectively, and 
$\overline{g}=(\mathrm{id}_B\otimes\epsilon_A)(g)$. Since localization is a faithful functor, we obtain $c^*(\mathfrak{m}_A)\subseteq (B\otimes\mathfrak{m}_A)_g$. Since $c^*(\mathfrak{m}^{k+1}_A)\subseteq (B\otimes\mathfrak{m}_A)^{k+1}_g=(B\otimes\mathfrak{m}^{k+1}_A)_g$, $c^*$ induces a superalgebra morphism $(B\otimes A/\mathfrak{m}_A^{k+1})_g \leftarrow  A/\mathfrak{m}^{k+1}_A$. Considering all $g$ and $V$, one sees that $c$ sends $G\times N_k(G)$ to $N_k(G)$, which proves the lemma.
\end{proof}
\begin{lm}\label{if one factor is n-th neighborhood}
Let $X$ be a geometric superscheme. For every finite-dimensional superalgebra $A$, we have 
$\mathcal{O}(X\times\mathrm{SSpec}(A))\simeq\mathcal{O}(X)\otimes A$.
\end{lm}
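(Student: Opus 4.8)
Since $\mathrm{SSpec}(A)$ is affine with $\mathcal{O}(\mathrm{SSpec}(A))=A$, and $K$ is a field, $A$ is free of finite rank $n=\dim_K A$ over $K$; hence the functor $-\otimes_K A$ is isomorphic to $(-)^{\oplus n}$, so it is exact and commutes with \emph{arbitrary} products and with kernels. For any geometric superscheme $U$ the sheaf maps $p_U^{*}$ and $p_{\mathrm{SSpec}(A)}^{*}$ induce, as recalled in the discussion preceding Lemma~\ref{morphism to affine supergroup} (taken over $Z=\mathrm{SSpec}(K)$), a superalgebra morphism $\mu_U\colon \mathcal{O}(U)\otimes_K A\to\mathcal{O}(U\times\mathrm{SSpec}(A))$, and $\mu$ is natural with respect to open immersions $U'\hookrightarrow U$. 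When $U\simeq\mathrm{SSpec}(C)$ is affine, $U\times\mathrm{SSpec}(A)\simeq\mathrm{SSpec}(C\otimes_K A)$ and $\mu_U$ is the canonical identification. The claim amounts to showing that $\mu_X$ is an isomorphism. The plan is: first prove that $\mu_U$ is injective for \emph{every} superscheme $U$, then use this to prove that $\mu_X$ is surjective.

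For injectivity, fix an affine open covering $\{V_\alpha\simeq\mathrm{SSpec}(C_\alpha)\}$ of $U$. By the remark on preimages preceding Lemma~\ref{morphism to affine supergroup} (with the second factor equal to all of $\mathrm{SSpec}(A)$), $p_U^{-1}(V_\alpha)\simeq V_\alpha\times\mathrm{SSpec}(A)\simeq\mathrm{SSpec}(C_\alpha\otimes_K A)$, and by Lemma~\ref{coveingsofproducts} these form an open covering of $U\times\mathrm{SSpec}(A)$; hence the separation axiom embeds $\mathcal{O}(U\times\mathrm{SSpec}(A))$ into $\prod_\alpha (C_\alpha\otimes_K A)$. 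By naturality of $\mu$ and the affine case, the composite
\[\mathcal{O}(U)\otimes_K A\xrightarrow{\ \mu_U\ }\mathcal{O}(U\times\mathrm{SSpec}(A))\hookrightarrow\prod_\alpha (C_\alpha\otimes_K A)\]
coincides with the map obtained by tensoring the separation embedding $\mathcal{O}(U)\hookrightarrow\prod_\alpha C_\alpha$ with $A$, which is injective because $A$ is flat over $K$. Hence $\mu_U$ is injective.

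For surjectivity of $\mu_X$, fix an affine open covering $\{U_i\simeq\mathrm{SSpec}(B_i)\}$ of $X$. The sheaf axiom for $\mathcal{O}_X$ gives an exact sequence $0\to\mathcal{O}(X)\to\prod_i B_i\xrightarrow{d}\prod_{i,j}\mathcal{O}(U_i\cap U_j)$, and tensoring with $A$ (exact, commuting with the products) yields an exact sequence $0\to\mathcal{O}(X)\otimes_K A\to\prod_i (B_i\otimes_K A)\xrightarrow{d\otimes A}\prod_{i,j}(\mathcal{O}(U_i\cap U_j)\otimes_K A)$. Given $s\in\mathcal{O}(X\times\mathrm{SSpec}(A))$, put $\beta_i=\mu_{U_i}^{-1}\bigl(s|_{U_i\times\mathrm{SSpec}(A)}\bigr)\in\mathcal{O}(U_i)\otimes_K A$ (using that $\mu_{U_i}$ is an isomorphism). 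Naturality of $\mu$ along $U_i\cap U_j\hookrightarrow U_i$ shows $\mu_{U_i\cap U_j}(\beta_i|_{U_i\cap U_j})=s|_{(U_i\cap U_j)\times\mathrm{SSpec}(A)}=\mu_{U_i\cap U_j}(\beta_j|_{U_i\cap U_j})$, so by the already-proved injectivity of $\mu_{U_i\cap U_j}$ the family $(\beta_i)_i$ lies in $\ker(d\otimes A)$. By exactness there is a (unique) $t\in\mathcal{O}(X)\otimes_K A$ with $t|_{U_i}=\beta_i$ for all $i$; then $\mu_X(t)|_{U_i\times\mathrm{SSpec}(A)}=\mu_{U_i}(\beta_i)=s|_{U_i\times\mathrm{SSpec}(A)}$, and since these opens cover $X\times\mathrm{SSpec}(A)$ we get $\mu_X(t)=s$. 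Combined with injectivity, $\mu_X$ is the desired isomorphism.

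I expect the genuine obstacle to be that the overlaps $U_i\cap U_j$ are in general neither affine nor even of finite type, so the statement cannot be applied to them directly; separating out the injectivity of $\mu_U$ — which requires no finiteness — is precisely what lets one recognise the family $(\beta_i)_i$ as a descent datum. The only other place where the finite-dimensionality of $A$ is essential is in letting $-\otimes_K A$ commute with the possibly infinite products in the sheaf axiom; for quasi-compact $X$ this hypothesis would be superfluous.
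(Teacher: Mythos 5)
Your proof is correct and follows essentially the same route as the paper's: both reduce to an affine open covering $\{U_i\}$ of $X$, identify $\mathcal{O}(U_i\times\mathrm{SSpec}(A))$ with $\mathcal{O}(U_i)\otimes A$, and exploit the finite-dimensionality (freeness) of $A$ so that $-\otimes_K A$ commutes with the sheaf-axiom equalizer, letting sections glue componentwise. Your separate injectivity step for arbitrary $U$ serves the same purpose as the paper's passage to affine open super-subschemes of the overlaps $V_i\cap V_s$ (handling the fact that overlaps need not be affine), so it is a cleaner packaging of the same argument rather than a genuinely different one.
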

\begin{proof}
Let $Y$ denote $\mathrm{SSpec}(A)$. Choose a covering of $X$ by affine open super-subschemes $U_i$. Then $V_i=U_i\times Y$ is an open covering of $X\times Y$ by affine super-subschemes. If the elements $a_1, \ldots , a_k$ form a basis of $A$, then each element $f\in \mathcal{O}(V_i)\simeq\mathcal{O}(U_i)\otimes A$ can be uniquely expressed as $\sum_{1\leq j\leq k} f_j\otimes a_j$. Moreover, if for some $f'\in\mathcal{O}(V_s)$ we have $f|_{V_i\cap V_s}=f'|_{V_i\cap V_s}$, then $f_j|_{W}=f'_j|_W$ for any open affine super-subscheme $W$ of $U_i\cap U_s$ and $1\leq j\leq k$, that is, $f_j|_{U_i\cap U_s}=f'_j|_{U_i\cap U_s}$ for every $1\leq j\leq k$. Thus, our lemma follows.  
\end{proof}
\begin{lm}\label{affinization of conjugation action}
The conjugation action of $\mathbb{G}$ on $\mathbb{N}(\mathbb{G})$ factors through the action of $\mathbb{G}^{aff}$ on $\mathbb{N}(\mathbb{G})$.
\end{lm}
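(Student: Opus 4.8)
The plan is to reduce to the finite neighborhoods $\mathbb{S}_n$ and then dualize the associativity axiom of the action. Fix $n$ and put $V_n=\mathcal{O}_e/\mathfrak{m}_e^{n+1}$; since $\mathbb{G}$ is locally algebraic, $\mathfrak{m}_e/\mathfrak{m}_e^2$ is finite-dimensional, hence so is $V_n$, and $\mathbb{S}_n\simeq\mathrm{SSp}(V_n)$ is affine with $\mathcal{O}(S_n)=V_n$. By Lemma \ref{formal neighborhood is normal} the conjugation action of $\mathbb{G}$ restricts to a morphism $\alpha_n:\mathbb{G}\times\mathbb{S}_n\to\mathbb{S}_n$, and by Lemma \ref{if one factor is n-th neighborhood} we have $\mathcal{O}(G\times S_n)=\mathcal{O}(G)\otimes V_n$, so $\alpha_n$ is encoded by a superalgebra morphism $\rho_n:V_n\to\mathcal{O}(G)\otimes V_n$. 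The whole point will be to show
\[
\rho_n(V_n)\subseteq C\otimes V_n,\qquad C=\mathcal{O}(G^{aff}).
\]
Granting this, $\rho_n$ factors as $V_n\to C\otimes V_n=\mathcal{O}(G^{aff}\times S_n)\hookrightarrow\mathcal{O}(G)\otimes V_n$, which dualizes to a factorization $\mathbb{G}\times\mathbb{S}_n\xrightarrow{\ \pi\times\mathrm{id}\ }\mathbb{G}^{aff}\times\mathbb{S}_n\xrightarrow{\ \bar\alpha_n\ }\mathbb{S}_n$ of $\alpha_n$, where $\pi:\mathbb{G}\to\mathbb{G}^{aff}$ is the canonical morphism. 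These factorizations are compatible with the closed immersions $\mathbb{S}_n\hookrightarrow\mathbb{S}_{n+1}$, which on coordinate rings are the surjections $V_{n+1}\twoheadrightarrow V_n$, so the $\bar\alpha_n$ glue to a morphism $\bar\alpha:\mathbb{G}^{aff}\times\mathbb{S}\to\mathbb{S}$ with $\bar\alpha\circ(\pi\times\mathrm{id}_{\mathbb{S}})=\alpha$; that $\bar\alpha$ is again an action follows formally, since $\pi$ is a quotient morphism (Corollary \ref{affinization}), so $\pi\times\pi\times\mathrm{id}_{\mathbb{S}}$ is an epimorphism through which the two action-axiom diagrams for $\bar\alpha$ can be tested, and they reduce to those for $\alpha$.

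To prove the displayed inclusion, choose a $K$-basis $v_1,\dots,v_k$ of $V_n$ and write $\rho_n(v_i)=\sum_j a_{ij}\otimes v_j$ with $a_{ij}\in\mathcal{O}(G)$. Let $D\subseteq\mathcal{O}(G)$ be the finite-dimensional superspace spanned by the $a_{ij}$, and $D'$ the super-subalgebra of $\mathcal{O}(G)$ generated by $D$. The associativity axiom $\alpha_n\circ(m\times\mathrm{id}_{S_n})=\alpha_n\circ(\mathrm{id}_G\times\alpha_n)$, read on coordinate rings using $\mathcal{O}(G\times G\times S_n)=\mathcal{O}(G\times G)\otimes V_n$ (Lemma \ref{if one factor is n-th neighborhood} again), together with the fact that $(\mathrm{id}_G\times\alpha_n)^{*}$ sends $f\otimes v\in\mathcal{O}(G)\otimes V_n$ to $\sum (p_1^{*}\otimes p_2^{*})(f\otimes v_{(-1)})\otimes v_{(0)}$ where $\rho_n(v)=\sum v_{(-1)}\otimes v_{(0)}$, yields, upon comparing coefficients in the basis $\{v_l\}$,
\[
m^{*}(a_{il})\;=\;\sum_{j}(p_1^{*}\otimes p_2^{*})(a_{ij}\otimes a_{jl}),\qquad 1\le i,l\le k.
\]
Hence $m^{*}(D)\subseteq(p_1^{*}\otimes p_2^{*})(D\otimes D)\subseteq(p_1^{*}\otimes p_2^{*})(D'\otimes D')$; since $m^{*}$ and $p_1^{*}\otimes p_2^{*}$ are superalgebra morphisms and $(p_1^{*}\otimes p_2^{*})(D'\otimes D')$ is a subalgebra, the subalgebra $m^{*}(D')$ generated by $m^{*}(D)$ is contained in $(p_1^{*}\otimes p_2^{*})(D'\otimes D')$. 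By the maximality characterizing $C$ (see the paragraph preceding Proposition \ref{C is Hopf superalgebra}), $D'\subseteq C$, so all $a_{ij}$ lie in $C$ and $\rho_n(V_n)\subseteq C\otimes V_n$, as needed.

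The single point that needs genuine care is the identification of $(\mathrm{id}_G\times\alpha_n)^{*}$ used above: one spells it out from the compatibilities $p_1^{G\times S_n}\circ(\mathrm{id}_G\times\alpha_n)=p_1$ and $p_2^{G\times S_n}\circ(\mathrm{id}_G\times\alpha_n)=\alpha_n\circ p_{23}$ on the triple product, the Koszul signs arising from super-commutativity being harmless since they do not affect membership in $C\otimes V_n$. Everything else is formal bookkeeping, and the argument is in the same spirit as Lemma \ref{equivalence of categories of supermodules}, extended from linear actions to actions on affine superschemes of finite type.
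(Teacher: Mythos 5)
Your proposal is correct and follows essentially the same route as the paper's proof: restrict the conjugation to the finite neighborhoods $\mathbb{S}_n$, use Lemma \ref{if one factor is n-th neighborhood} to read the action as a map $\mathcal{O}(S_n)\to\mathcal{O}(G)\otimes\mathcal{O}(S_n)$, dualize the associativity axiom (handling the identification of $(\mathrm{id}_G\times c_n)^*$ as in Proposition \ref{C is Hopf superalgebra}) to see that the coefficient super(sub)algebra lands in $C=\mathcal{O}(G^{aff})$ by the maximality defining $C$, and then pass to the limit over $n$. The only cosmetic difference is that the paper packages the final step as the left $\mathcal{O}(G^{aff})$-coideal superalgebra structure on $\widehat{\mathcal{O}_e}=\varprojlim\mathcal{O}(S_n)$ rather than gluing the factored actions $\bar\alpha_n$, which amounts to the same thing.
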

\begin{proof}
For simplicity, we use $N_k$ instead of $N_k(G)$ and $\mathbb{N}$ instead of $\mathbb{N}(\mathbb{G})$. Let $c_k$ denote $c|_{G\times N_k}$.  
By Lemma \ref{if one factor is n-th neighborhood}, $c_k^*$ maps $\mathcal{O}(N_k)$ to $\mathcal{O}(G)\otimes\mathcal{O}(N_k)$, that is, in Sweedler's notation, $c_k^*(a)=a_{(0)}\otimes a_{(1)}$ for $a\in\mathcal{O}(N_k)$. The superspace generated by the coefficients $a_{(0)}$, where $a$ ranges over $\mathcal{O}(N_k)$, is called the \emph{coefficient superspace} of $\mathcal{O}(N_k)$, and it is denoted by $\mathrm{cf}(\mathcal{O}(N_k))$. 

The fact that $c_k$ defines an action implies the commutativity of the diagram 
\[\begin{array}{cccccc}
\mathcal{O}(G)\otimes\mathcal{O}(N_k) & \stackrel{m^*\otimes \mathrm{id}_{\mathcal{O}(N_k)}}{\to} & \mathcal{O}(G\times G)\otimes\mathcal{O}(N_k) & \stackrel{\beta^*}{\leftarrow} & \mathcal{O}(G\times (G\times N_k)) \\  
\uparrow & & & & \uparrow \\
\mathcal{O}(N_k) & & \stackrel{c_k^*}{\longrightarrow} & & \mathcal{O}(G)\otimes\mathcal{O}(N_k)
\end{array},\]
where the vertical arrows are $c_k^*$ and $(\mathrm{id}_G\times c_k)^*$, respectively, and $\beta$ is the canonical isomorphism $(G\times G)\times N_k\to G\times (G\times N_k)$. Arguing as in 
Proposition \ref{C is Hopf superalgebra}, one sees that $(\mathrm{id}_G\times c_k)^*$ factors through 
$\mathrm{id}_{\mathcal{O}(G)}\otimes c_k^*$. Moreover, the composition of the morphism 
\[\mathcal{O}(G)^{\otimes 2}\otimes\mathcal{O}(N_k)\simeq\mathcal{O}(G)\otimes \mathcal{O}(G\times N_k)\to\mathcal{O}(G\times (G\times N_k))\]
with $\beta^*$ can be identified with $p_1^*\otimes p_2^*\otimes\mathrm{id}_{\mathcal{O}(N_k)}$.
Therefore, $\mathrm{cf}(\mathcal{O}(N_k))\subseteq\mathcal{O}(G^{aff})$, and
$\mathcal{O}(N_k)=\mathcal{O}_e/\mathfrak{m}_e^{k+1}$ is a left $\mathcal{O}(G^{aff})$-coideal superalgebra (cf. \cite{zub2}).
Furthermore, for any $t\geq k$, there is a commutative diagram
\[\begin{array}{ccc}
\mathcal{O}_e/\mathfrak{m}_e^{t+1} & \stackrel{c^*_t}{\to} & \mathcal{O}(G^{aff})\otimes \mathcal{O}_e/\mathfrak{m}_e^{t+1} \\
\downarrow & & \downarrow \\
\mathcal{O}_e/\mathfrak{m}_e^{k+1} & \stackrel{c^*_k}{\to} & \mathcal{O}(G^{aff})\otimes \mathcal{O}_e/\mathfrak{m}_e^{k+1},
\end{array}
\]
hence $c^*=\varprojlim c^*_k$ defines a structure of left $\mathcal{O}(G^{aff})$-coideal superalgebra on
$\widehat{\mathcal{O}_e}$. In other words, the conjugation action of $\mathbb{G}$ on $\mathbb{N}$ is defined by
\[(g\cdot h)(a)=(\overline{g}\cdot h)(a)=\overline{g}(a_{(0)})h(a_{(1)}),\]
where  $g\in\mathbb{G}(R), h\in\mathbb{N}(R), a\in\widehat{\mathcal{O}_e}, c^*(a)=a_{(0)}\otimes a_{(1)}, R\in\mathsf{SAlg}_{\Bbbk}$, and $\overline{g}$ is the image of $g$ under the group homomorphism $\mathbb{G}(R)\to\mathbb{G}^{aff}(R)$. 
\end{proof}
Since $\widehat{\mathcal{O}_e}$ is a left $\mathcal{O}(G^{aff})$-coideal superalgebra, $\mathbb{G}(R)$ acts on $\widehat{\mathcal{O}_e}\otimes R$ on the right by $R$-superalgebra automorphisms
\[(a\otimes r)\cdot g=\overline{g}(a_{(0)}) a_{(1)}\otimes r \text{ for } g\in\mathbb{G}(R) \text{ and } r\in R,\]
and this action is functorial in $R$. We also have
\[<g\cdot h, x>=<h, x\cdot g> \text{ for } x\in\widehat{\mathcal{O}_e}\otimes R, \] where $h\in\mathbb{N}(R)$ is regarded as a group-like element of $\mathrm{hyp}(\mathbb{G})\otimes R$. 
\begin{lm}\label{conjugation action on certain superalgebras}
The group superscheme $\mathbb{G}$ acts on  $\widehat{\mathcal{O}_e}$ by Hopf superalgebra automorphisms.
Moreover, if we define its action on $\mathrm{hyp}(\mathbb{G})$ by the above formula
\[<g\cdot \phi, a>=<\phi, a\cdot g> \text{ for } g\in\mathbb{G}(R), \phi\in \mathrm{hyp}(\mathbb{G})\otimes R, \text{ and }a\in \widehat{\mathcal{O}_e}\otimes R,\]
then this action also preserves the Hopf superalgebra structure of $\mathrm{hyp}(\mathbb{G})\otimes R$ and is functorial in $R$.
\end{lm}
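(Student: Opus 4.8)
The plan is to reduce everything to a single fact: conjugation realizes $\mathbb{G}$ as a group of \emph{group} automorphisms of the formal neighborhood $\mathbb{S}$, and then to carry the resulting structure across the Hopf pairing to $\mathrm{hyp}(\mathbb{G})$. So I would proceed in three steps. (a) For every $R\in\mathsf{SAlg}_K$ and $g\in\mathbb{G}(R)$, the $R$-superalgebra automorphism $\rho_g\colon a\mapsto a\cdot g$ of $\widehat{\mathcal{O}_e}\otimes R$ (already produced before the lemma) also respects the comultiplication $\Delta\otimes\mathrm{id}_R$ and the antipode $S\otimes\mathrm{id}_R$ and fixes the counit, i.e. $\rho_g$ is an automorphism of the complete Hopf $R$-superalgebra $\widehat{\mathcal{O}_e}\otimes R$. (b) Dualizing (a) along the Hopf pairing $\langle\,,\,\rangle$ shows that $\phi\mapsto g\cdot\phi$ is an automorphism of the Hopf $R$-superalgebra $\mathrm{hyp}(\mathbb{G})\otimes R$. (c) Both families of automorphisms are functorial in $R$. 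The statement ``$\mathbb{G}$ acts on $\widehat{\mathcal{O}_e}$ by Hopf superalgebra automorphisms'' is then just a geometric restatement of (a) and (c).

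For step (a), recall from Lemma \ref{formal neighborhood is normal} that $c\colon G\times G\to G$ maps $\mathbb{G}\times\mathbb{S}$ into $\mathbb{S}$, and from Lemma \ref{affinization of conjugation action} that the induced maps $c_n^*\colon\mathcal{O}_e/\mathfrak{m}_e^{n+1}\to\mathcal{O}(G^{aff})\otimes\mathcal{O}_e/\mathfrak{m}_e^{n+1}$ make the quotients $\mathcal{O}_e/\mathfrak{m}_e^{n+1}$ into left $\mathcal{O}(G^{aff})$-coideal superalgebras compatibly in $n$, so that $c^*=\varprojlim_n c_n^*$ gives $\widehat{\mathcal{O}_e}$ the structure of a left $\mathcal{O}(G^{aff})$-coideal superalgebra and $\rho_g$ is the evaluation of this comodule structure at $\overline{g}\in\mathbb{G}^{aff}(R)$. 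The assertion that $\mathbb{G}$ acts on $\mathbb{S}$ \emph{by group automorphisms} — the identities $g(h_1h_2)g^{-1}=(gh_1g^{-1})(gh_2g^{-1})$, $gh^{-1}g^{-1}=(ghg^{-1})^{-1}$, $geg^{-1}=e$ in $\mathbb{S}(R)$ — translates, after dualizing and inserting $(g\cdot h)(a)=\overline{g}(a_{(0)})h(a_{(1)})$, precisely into the statements that the structure maps $m_{e\times e}^*$, $\iota_e^*$, $\epsilon_e^*$ of the complete Hopf superalgebra $\widehat{\mathcal{O}_e}$ are morphisms of $\mathcal{O}(G^{aff})$-supercomodules; equivalently, $\widehat{\mathcal{O}_e}$ is a Hopf superalgebra object in the symmetric category of $\mathcal{O}(G^{aff})$-supercomodules (equivalently again, by the anti-equivalence between formal supergroups and complete Hopf superalgebras of \cite{homastak}, conjugation by $g$ is an automorphism of the group functor $\mathbb{S}\times_K R$). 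This is checked level by level on the finite-dimensional quotients $\mathcal{O}_e/\mathfrak{m}_e^{n+1}$, with Lemma \ref{a certain super-subgroup} controlling which level each product lands in, and then taken to the inverse limit. Once this is known, the standard computation — using only that $\overline{g}$ is an algebra map together with the comodule axioms — shows that evaluation at $\overline{g}$ produces a Hopf $R$-superalgebra automorphism $\rho_g$ of $\widehat{\mathcal{O}_e}\otimes R$; in particular $\rho_g$ fixes the counit, hence preserves $\widehat{\mathfrak{m}_e}$ and each of its powers.

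For step (b), first observe that $g\cdot\phi$ is well defined and lands in $\mathrm{hyp}(\mathbb{G})\otimes R$: if $\phi\in\mathrm{hyp}_n(\mathbb{G})\otimes R$ then $\langle g\cdot\phi,-\rangle$ kills $(\widehat{\mathfrak{m}_e}\otimes R)^{n+1}$ because $\rho_g$ preserves it, so $g\cdot\phi\in\mathrm{hyp}_n(\mathbb{G})\otimes R$ by the definition of $\mathrm{hyp}_n(\mathbb{G})$. Since $\langle\,,\,\rangle$ is a Hopf pairing, the product of $\mathrm{hyp}(\mathbb{G})\otimes R$ is transpose to the comultiplication of $\widehat{\mathcal{O}_e}\otimes R$, its comultiplication to the product, its antipode to the antipode, its unit is dual to the counit of $\widehat{\mathcal{O}_e}$ and its counit to the unit. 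As $\rho_g$ respects all of these maps, a one-line adjunction computation for each structure map of $\mathrm{hyp}(\mathbb{G})\otimes R$ — for instance, expand $\langle g\cdot(\phi\psi),a\rangle=\langle\phi\psi,\rho_g(a)\rangle$ using the product formula on $\mathrm{hyp}(\mathbb{G})$ and the compatibility of $\rho_g$ with $\Delta$, the signs being exactly those in the definitions of $\langle\,,\,\rangle$ and of the operations on $\mathrm{hyp}(\mathbb{G})$ — shows that $\phi\mapsto g\cdot\phi$ preserves it. Hence $\phi\mapsto g\cdot\phi$ is a Hopf $R$-superalgebra automorphism. Step (c) is immediate: both action formulas are assembled from $c^*$ (defined over $K$) and evaluation, so they commute with base change $R\to R'$, and $\langle\,,\,\rangle$ is functorial by construction.

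The main obstacle is step (a): writing out, with the correct signs and with due attention to the $\widehat{\mathfrak{m}_e}$-adic completions, the equivalence between ``conjugation acts by group automorphisms of $\mathbb{S}$'' and ``$\widehat{\mathcal{O}_e}$ is a Hopf superalgebra in the category of $\mathcal{O}(G^{aff})$-supercomodules''. After that, the transfer to $\mathrm{hyp}(\mathbb{G})$ and the functoriality are purely formal manipulations with a Hopf pairing.
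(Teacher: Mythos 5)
Your proposal is correct and follows essentially the same route as the paper: the paper likewise expresses the compatibility of conjugation with the multiplication of $\mathbb{S}$ as a commutative diagram of functors, dualizes it to the identity $(\star)$ relating $c^*$ and $\Delta$ (your ``$\widehat{\mathcal{O}_e}$ is a Hopf superalgebra in the category of $\mathcal{O}(G^{aff})$-supercomodules''), applies $g\otimes\mathrm{id}^{\otimes 2}$ to obtain $\Delta(a\cdot g)=a'_{(0)}\cdot g\otimes a'_{(1)}\cdot g$, and then obtains the statement for $\mathrm{hyp}(\mathbb{G})\otimes R$ formally from the Hopf pairing, exactly as in your steps (b) and (c).
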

\begin{proof}
Since $< \ , \ >$ is a Hopf duality, the second statement is obvious.

We have a commutative diagram
\[\begin{array}{ccc}
\mathbb{G}^{aff}\times\mathbb{N} & \stackrel{{\bf c}}{\to} & \mathbb{N} \\
 & &  \uparrow\\
\uparrow & & \mathbb{N}\times\mathbb{N} \\
 & & \uparrow \\
 \mathbb{G}^{aff}\times\mathbb{N}\times\mathbb{N} & \to & (\mathbb{G}^{aff}\times\mathbb{N})\times (\mathbb{G}^{aff}\times\mathbb{N}), 
\end{array}\]
where the lower horizontal arrow is defined as $(g, s, s')\mapsto (g, s, g, s')$, the left vertical arrow is $\mathrm{id}_{\mathbb{G}^{aff}}\times {\bf m}$, and the right vertical arrows are ${\bf c}\times {\bf c}$ and $\bf m$, read from the bottom to the top, respectively. The commutativity of the corresponding diagram of superalgebras 
\[\begin{array}{ccc}
\mathcal{O}(G^{aff})\otimes\widehat{\mathcal{O}_e} & \stackrel{c^*}{\leftarrow} & \widehat{\mathcal{O}_e} \\
 & &  \downarrow\\
\downarrow & & \widehat{\mathcal{O}_e}\otimes\widehat{\mathcal{O}_e}\\
 & & \downarrow \\
 \mathcal{O}(G^{aff})\otimes\widehat{\mathcal{O}_e}\otimes\widehat{\mathcal{O}_e} & \leftarrow & (\mathcal{O}(G^{aff})\otimes\widehat{\mathcal{O}_e})\otimes (\mathcal{O}(G^{aff})\otimes\widehat{\mathcal{O}_e}), 
\end{array}\]
is equivalent to the identity
\[(\star) \  a_{(0)}\otimes (a'_{(1)})'_{(0)}\otimes (a'_{(1)})'_{(1)}= (-1)^{|(a'_{(1)})_{(0)}||(a'_{(0)})_{(1)}|}(a'_{(0)})_{(0)}(a'_{(1)})_{(0)}\otimes (a'_{(0)})_{(1)}\otimes (a'_{(1)})_{(1)},\]
for any $a\in \widehat{\mathcal{O}_e}$, where 
\[\Delta(a)=a'_{(0)}\otimes a'_{(1)}, \ c^*(a)=a_{(0)}\otimes a_{(1)}, \ \Delta(a_{(1)})=(a_{(1)})'_{(0)}\otimes (a_{(1)})'_{(1)}, \] 
\[c^*(a'_{(0)})=(a'_{(0)})_{(0)}\otimes (a'_{(0)})_{(1)}, \ c^*(a'_{(1)})= (a'_{(1)})_{(0)}\otimes (a'_{(1)})_{(1)} .\]    
Applying $g\otimes\mathrm{id}_{\widehat{\mathcal{O}_e}}^{\otimes 2}$ to both parts of $(\star)$, we obtain $\Delta(a\cdot g)= a'_{(0)}\cdot g\otimes a_{(1)}'\cdot g$. 
\end{proof}
\begin{cor}\label{another definition of Ad}
The above action of $\mathbb{G}$ on $\mathfrak{n}_{\mathbb{G}}\simeq\mathrm{hyp}(\mathbb{G})_1^+$ coincides with $\mathrm{Ad}$.
\end{cor}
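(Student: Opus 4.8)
The plan is to rewrite both actions inside the Hopf superalgebra $\mathrm{hyp}(\mathbb{G})$, using the identifications $\mathbb{S}(R)=\mathsf{Gpl}(\mathrm{hyp}(\mathbb{G})\otimes R)$ and $\mathfrak{s}\simeq\mathrm{hyp}_1(\mathbb{G})^+=\mathsf{P}$ from Lemma \ref{super-hyperalgebra}, together with the equality $\mathrm{Lie}(\mathbb{G})=\mathrm{Lie}(\mathbb{S})$ from Lemma \ref{(B2) for any g}. As a preliminary sanity check, both $\mathrm{Ad}\colon\mathbb{G}\to\mathrm{GL}(\mathfrak{s})$ and the action of Lemma \ref{conjugation action on certain superalgebras} factor through $\mathbb{G}^{aff}$ (the former by Corollary \ref{affinization}, the latter by Lemma \ref{affinization of conjugation action}), so it would suffice to compare them on $\mathbb{G}^{aff}$-points; in fact the argument below needs no such reduction.

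First I would set up a dictionary between the conjugation action and the $\mathrm{hyp}$-action. By Lemma \ref{formal neighborhood is normal}, $\mathbb{S}$ is a normal subfunctor of $\mathbb{G}$, and by Lemma \ref{affinization of conjugation action} the induced conjugation action ${\bf c}$ is given, for $h\in\mathbb{S}(R)$ viewed as a group-like element of $\mathrm{hyp}(\mathbb{G})\otimes R$, by $(g\cdot h)(a)=\overline{g}(a_{(0)})\,h(a_{(1)})$ with $c^*(a)=a_{(0)}\otimes a_{(1)}$. Comparing this with the defining formula $<g\cdot\phi,a>=<\phi,a\cdot g>$ of Lemma \ref{conjugation action on certain superalgebras} and using that $<\ ,\ >$ is a Hopf pairing, one sees that the action of $\mathbb{G}(R)$ on $\mathrm{hyp}(\mathbb{G})\otimes R$ by Hopf $R$-superalgebra automorphisms restricts, on the subgroup $\mathsf{Gpl}(\mathrm{hyp}(\mathbb{G})\otimes R)=\mathbb{S}(R)$, precisely to ${\bf c}$.

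Next I would unwind $\mathrm{Ad}$ through this dictionary. Fix a homogeneous $\phi\in\mathfrak{s}=\mathsf{P}$. Since $\phi$ is primitive, $\eta:=1+\epsilon_{|\phi|}\otimes\phi$ is a group-like element of $\mathrm{hyp}(\mathbb{G})\otimes R[\epsilon_0,\epsilon_1]$, and the dual super-numbers computation of Lemma \ref{super-hyperalgebra} identifies $\eta$ with the element of $\mathrm{Lie}(\mathbb{S})(R)=\mathrm{Lie}(\mathbb{G})(R)\subseteq\mathbb{S}(R[\epsilon_0,\epsilon_1])\subseteq\mathbb{G}(R[\epsilon_0,\epsilon_1])$ corresponding to $\phi\otimes 1\in\mathfrak{s}\otimes R$. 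By definition $\mathrm{Ad}(g)(\eta)=\mathbb{G}(i_R)(g)\,\eta\,\mathbb{G}(i_R)(g)^{-1}$ in $\mathbb{G}(R[\epsilon_0,\epsilon_1])$; as $\mathbb{S}$ is normal and $\eta\in\mathbb{S}(R[\epsilon_0,\epsilon_1])$, this conjugate lies in $\mathbb{S}(R[\epsilon_0,\epsilon_1])$ and, by the dictionary, is the image of $\eta$ under the Hopf-automorphism action of $\mathbb{G}(i_R)(g)$ on $\mathrm{hyp}(\mathbb{G})\otimes R[\epsilon_0,\epsilon_1]$. Applying the functoriality in $R$ of that action to the map $i_R\colon R\to R[\epsilon_0,\epsilon_1]$ shows that $\mathbb{G}(i_R)(g)$ acts on $\mathrm{hyp}(\mathbb{G})\otimes R[\epsilon_0,\epsilon_1]=(\mathrm{hyp}(\mathbb{G})\otimes R)\otimes_R R[\epsilon_0,\epsilon_1]$ as $(g\cdot-)\otimes_R\mathrm{id}$; since this map is $R[\epsilon_0,\epsilon_1]$-linear and fixes $1$, it sends $\eta$ to $1+\epsilon_{|\phi|}\otimes(g\cdot\phi)$, and $g\cdot\phi\in\mathfrak{s}\otimes R$ because the action preserves the Hopf structure, hence the subspace $\mathrm{hyp}_1(\mathbb{G})^+$ of primitives. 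Translating back, $\mathrm{Ad}(g)(\phi\otimes 1)=g\cdot\phi$; $R$-linearity and functoriality in $R$ then propagate the equality of the two actions to all of $\mathrm{Lie}(\mathbb{G})$, and restriction to the odd part is the assertion of the corollary.

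The main obstacle is purely organizational: one must keep the three incarnations of ``the action'' separate — conjugation in $\mathbb{G}(R[\epsilon_0,\epsilon_1])$, the action ${\bf c}$ of $\mathbb{G}$ on $\mathbb{S}$, and the Hopf-automorphism action on $\mathrm{hyp}(\mathbb{G})\otimes R$ — and keep track both of the signs in the pairing $<\ ,\ >$ and of which $\epsilon_i$ is attached to which homogeneous element. No computation beyond those already carried out in Lemmas \ref{affinization of conjugation action} and \ref{conjugation action on certain superalgebras} is required.
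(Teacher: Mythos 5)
Your proposal is correct and follows the route the paper intends: the authors state this corollary without any proof, as an immediate consequence of Lemma \ref{super-hyperalgebra}, Lemma \ref{(B2) for any g}, Lemma \ref{formal neighborhood is normal} and Lemmas \ref{affinization of conjugation action}--\ref{conjugation action on certain superalgebras}. Your write-up is exactly the definitional unwinding they leave implicit — identifying $\mathbb{S}(R)$ with group-likes, using normality of $\mathbb{S}$ so that $\mathrm{Ad}(g)$ is computed by the conjugation action, and then invoking functoriality in $R$ along $i_R$ together with $R$-linearity and preservation of primitives — so no gap remains.
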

\begin{lm}\label{crucial property of formal neighborhood}
If $\mathbb{U}$ is an open super-subscheme of $\mathbb{G}$, then $\mathbb{N}\mathbb{U}\subseteq\mathbb{U}$,
and  $\mathbb{U}\mathbb{N}\subseteq\mathbb{U}$.
\end{lm}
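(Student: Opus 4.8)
The plan is to pass to geometric superschemes and to prove both inclusions ``level by level'': for each $n\ge 0$ I would show that the composite $S_n\times U\hookrightarrow G\times G\xrightarrow{m}G$, and likewise $U\times S_n\hookrightarrow G\times G\xrightarrow{m}G$, factors through the open super-subscheme $U\subseteq G$ corresponding to $\mathbb U$. Granting this, Theorem~\ref{comparison} translates it back to the statement that the morphism $\mathbb S_n\times\mathbb U\to\mathbb G$, $(s,u)\mapsto su$ (resp. $(u,s)\mapsto us$), takes values in $\mathbb U$; taking the union over $n$ and recalling $\mathbb S=\bigcup_{n\ge 0}\mathbb S_n$ then gives $\mathbb S\mathbb U\subseteq\mathbb U$ and $\mathbb U\mathbb S\subseteq\mathbb U$. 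Throughout I would use the criterion (recalled repeatedly above) that a geometric superscheme morphism $f\colon Z\to X$ factors through an open super-subscheme $U$ precisely when $f^e(Z^e)\subseteq U^e$; thus the whole matter becomes one about underlying topological spaces.

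The first step is to pin down $(S_n\times U)^e$. Put $B=\mathcal O_e/\mathfrak m_e^{n+1}$: its maximal super-ideal $\mathfrak m_B$ is nilpotent and its residue field is $K$ (because $e\in\mathbb G(K)$). Covering $U$ by affine opens $U_i\simeq\mathrm{SSpec}(A_i)$, the chart $S_n\times U_i$ is $\mathrm{SSpec}(B\otimes A_i)$, and $\mathfrak m_B\otimes A_i$ is a nil super-ideal with $(B\otimes A_i)/(\mathfrak m_B\otimes A_i)\simeq A_i$; hence the closed immersion $e\times U_i\to S_n\times U_i$ dual to $B\otimes A_i\to A_i$ is a homeomorphism on underlying spaces. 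Gluing over $i$, the closed immersion $j_n=\epsilon_n\times\mathrm{id}_U\colon e\times U\to S_n\times U$, where $\epsilon_n\colon e\to S_n$ is the closed immersion onto the ``$0$-th level'', induces a homeomorphism $(e\times U)^e\xrightarrow{\sim}(S_n\times U)^e$; in particular $j_n^e$ is surjective.

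The second step is the key commutative triangle. Since $S_n$ is the $n$-th neighborhood of the identity immersion $\epsilon\colon e\to G$, the composite $e\xrightarrow{\epsilon_n}S_n\hookrightarrow G$ equals $\epsilon$, so $(m|_{S_n\times U})\circ j_n=m\circ(\epsilon\times\iota_U)$, with $\iota_U\colon U\hookrightarrow G$ the open immersion. But the identity axiom of the group law says exactly that $m\circ(\epsilon\times\iota_U)$, read through the canonical isomorphism $U\simeq e\times U$, is $\iota_U$ itself. Consequently $(m|_{S_n\times U})^e$, restricted to $j_n^e\bigl((e\times U)^e\bigr)=(S_n\times U)^e$, agrees with $\iota_U^e$ up to that homeomorphism, hence has image exactly $U^e\subseteq U^e$. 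By the topological criterion, $m|_{S_n\times U}$ factors through $U$. The case of $U\times S_n$ is identical with the two factors interchanged, using $m\circ(\iota_U\times\epsilon)=\iota_U$ instead; this completes the reduction.

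The one place I expect to have to be careful is the first step: the underlying space of the superscheme product $S_n\times U$ is not in general the product of the underlying spaces, and the argument works only because base change by $B=\mathcal O_e/\mathfrak m_e^{n+1}$ adds no new points and no residue-field extensions---which is precisely what ``$\mathfrak m_B$ nilpotent'' together with ``$B/\mathfrak m_B=K$'' guarantee. After that, everything follows formally from the group axioms and the defining property of the neighborhoods $S_n$, with no further computation.
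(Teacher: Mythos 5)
Your argument is correct and follows essentially the same route as the paper's proof: reduce to showing that for each $n$ the morphism $S_n\times U\to G$ (resp.\ $U\times S_n\to G$) factors through $U$ topologically, identify $(S_n\times U)^e$ with $(e\times U)^e$, and conclude by the unit axiom of the group law. The only cosmetic difference is that you obtain the identification of underlying spaces from the nilpotence of $\mathfrak{m}_B\otimes A_i$, whereas the paper writes out the prime super-ideals of $A\otimes\mathcal{O}_e/\mathfrak{m}_e^{n+1}$ explicitly; these are the same observation.
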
 
\begin{proof}
All one needs to show is that for any $k\geq 0$, the morphism $U\times N_k\stackrel{m}{\to} G$ factors through $U$. That is, $m^e$ maps $(U\times N_k)^e$ to $U^e$. Without losing generality, one can assume that $U$ is affine, say $U\simeq\mathrm{SSpec}(A)$. Then every prime super-ideal of $A\otimes\mathcal{O}_e/\mathfrak{m}_e^{k+1}$ has a form $\mathfrak{p}\otimes \mathcal{O}_e/\mathfrak{m}_e^{k+1}+A\otimes \mathfrak{m}_e/\mathfrak{m}_e^{k+1}$, where $\mathfrak{p}\in (\mathrm{SSpec}(A))^e$. Thus 
$(U\times N_k)^e=(U\times N_0)^e=(U\times e)^e$ and our statement follows by one of the group axioms. 
\end{proof}

\section{The functor $\mathsf{gr}$}

\subsection{Filtered superalgebras and their graded companions}

A \emph{(downward) filtered} superalgebra is a couple $(A, \mathsf{I}$), where $A$ is a superalgebra, and $\mathsf{I}$ is a super-ideal filtration
\[A=I_0\supseteq I_1\supseteq I_2\supseteq\ldots \]
of $A$ such that $I_k I_l\subseteq I_{k+l}$ for every $k, l\geq 0$. We associate with any filtered superalgebra $(A, \mathsf{I})$ a graded superalgebra $\mathsf{gr}_{\mathsf{I}}(A)=\oplus_{k\geq 0} I_k/I_{k+1}$.

If there is a super-ideal $I$ such that $I_k=I^k$ for each $k\geq 0$, then this filtration is called $I$-\emph{adic}, and the corresponding graded superalgebra is denoted by $\mathsf{gr}_I(A)$.

If $(B, \mathsf{J})$ is another filtered superalgebra, then the tensor product $A\otimes B$ is a filtered superalgebra with respect to the filtration
\[T_k=I_0\otimes J_k+I_1\otimes J_{k-1}+\ldots +I_k\otimes J_0 \text{ for } k\geq 0.\] 
We call the filtration $\mathsf{T}$ a \emph{tensor product} of filtrations $\mathsf{I}$ and $\mathsf{J}$.
 
A morphism of filtered superalgebras from $(A, \mathsf{I})$ to $(B, \mathsf{J})$, is a superalgebra morphism
$\phi : A\to B$ such that $\phi(I_k)\subseteq J_k$ for every $k\geq 0$. Any such morphism induces a superalgebra morphism $\mathsf{gr}_{\mathsf{I}}(A)\to \mathsf{gr}_{\mathsf{J}}(B)$.

If $(A, \mathsf{I})$ is a filtered superalgebra, then for every $n\geq 0$ the superalgebra $A/I_{n+1}$ has a finite filtration 
\[A/I_{n+1}=I_0/I_{n+1}\supseteq I_1/I_{n+1}\supseteq\ldots\supseteq I_n/I_{n+1}\supseteq 0,\]
which is denoted by $\mathsf{I}_{\leq n}$.  
\begin{pr}\label{gr commutes with tensor product}
There is a natural isomorphism
\[\mathsf{gr}_{\mathsf{I}}(A)\otimes \mathsf{gr}_{\mathsf{J}}(B)\simeq\mathsf{gr}_{\mathsf{T}}(A\otimes B),\]
induced by the canonical embeddings $A\to A\otimes B$ and $B\to A\otimes B$, which is functorial in both $A$ and $B$. 
\end{pr}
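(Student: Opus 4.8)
The plan is to write down the natural map explicitly and then show it is bijective in each graded degree. The canonical superalgebra embeddings $A\to A\otimes B$ and $B\to A\otimes B$ are morphisms of filtered superalgebras, since $I_k\otimes J_0\subseteq T_k$ and $I_0\otimes J_k\subseteq T_k$ for all $k$; hence they induce graded superalgebra morphisms $\mathsf{gr}_{\mathsf I}(A)\to\mathsf{gr}_{\mathsf T}(A\otimes B)$ and $\mathsf{gr}_{\mathsf J}(B)\to\mathsf{gr}_{\mathsf T}(A\otimes B)$, whose images super-commute because the target is supercommutative. By the universal property of the tensor product in the category of supercommutative superalgebras these combine into a graded morphism
\[
\phi\colon\mathsf{gr}_{\mathsf I}(A)\otimes\mathsf{gr}_{\mathsf J}(B)\longrightarrow\mathsf{gr}_{\mathsf T}(A\otimes B),
\]
which in bidegree $(k,l)$ sends $(a+I_{k+1})\otimes(b+J_{l+1})$, with $a\in I_k$, $b\in J_l$, to $a\otimes b+T_{k+l+1}$; this is well defined because $I_{k+1}\otimes J_l+I_k\otimes J_{l+1}\subseteq T_{k+l+1}$, and it is manifestly functorial in $A$ and $B$. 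In degree $n$ the map $\phi$ is surjective at once: $T_n=\sum_{k+l=n}I_k\otimes J_l$ is spanned by elements $a\otimes b$ with $a\in I_k$, $b\in J_l$, $k+l=n$, and each such element lies in the image of $\phi$.

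It remains to prove that the degree-$n$ component
\[
\phi_n\colon\bigoplus_{k+l=n}(I_k/I_{k+1})\otimes(J_l/J_{l+1})\longrightarrow T_n/T_{n+1}
\]
is injective, and here I would reduce to finite filtrations. Since $K$ is a field, every superspace is flat over it, so the kernel of $A\otimes B\to(A/I_{n+1})\otimes(B/J_{n+1})$ is $I_{n+1}\otimes B+A\otimes J_{n+1}$, which is contained in $T_{n+1}$. Consequently $T_n/T_{n+1}$ is carried isomorphically onto $\overline{T}_n/\overline{T}_{n+1}$, where $\overline{T}_m$ denotes the image of $T_m$ in $(A/I_{n+1})\otimes(B/J_{n+1})$; and one checks directly that $\overline{T}_n$ and $\overline{T}_{n+1}$ are exactly the $n$-th and $(n+1)$-st steps of the tensor-product filtration attached to the truncated filtrations $\mathsf I_{\le n}$ on $A/I_{n+1}$ and $\mathsf J_{\le n}$ on $B/J_{n+1}$. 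As $\mathsf{gr}_{\mathsf I}(A)$ agrees with $\mathsf{gr}_{\mathsf I_{\le n}}(A/I_{n+1})$ in degree $n$ (and similarly for $B$), and $\phi_n$ is compatible, by naturality, with the corresponding map for the truncated data, it is enough to establish the statement for finite filtrations.

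For finite filtrations the argument is elementary. Choosing sub-superspaces $V_k$ (respectively $W_l$) with $I_k'=V_k\oplus I_{k+1}'$ (respectively $J_l'=W_l\oplus J_{l+1}'$) yields internal decompositions $I_k'=\bigoplus_{j\ge k}V_j$ and $J_l'=\bigoplus_{j\ge l}W_j$, so that $I_k'\otimes J_l'=\bigoplus_{k'\ge k,\,l'\ge l}V_{k'}\otimes W_{l'}$ inside $A'\otimes B'$ and hence $T_m'=\sum_{k+l=m}I_k'\otimes J_l'=\bigoplus_{k'+l'\ge m}V_{k'}\otimes W_{l'}$. Therefore $T_m'/T_{m+1}'=\bigoplus_{k+l=m}V_k\otimes W_l$, and under $V_k\simeq I_k'/I_{k+1}'$, $W_l\simeq J_l'/J_{l+1}'$ this is precisely the source of $\phi_m'$, mapped by the identity; so $\phi_m'$, and with it $\phi_n$, is an isomorphism. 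It follows that $\phi$ is a bijective graded morphism of superalgebras, hence an isomorphism, natural in $A$ and $B$.

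The main obstacle is the reduction carried out in the second paragraph. For an arbitrary filtration, which need be neither separated nor exhaustive, one cannot split $A$ globally as $\bigoplus_k V_k$ — for instance $K[[t]]$ with its $t$-adic filtration is not even of countable dimension over $K$ — so the clean complement computation of the third paragraph is legitimate only after passing to a finite filtration; the delicate point is the verification that this truncation leaves the quotients $T_n/T_{n+1}$ (and the map $\phi_n$) unchanged, which is exactly where flatness over the field $K$ enters.
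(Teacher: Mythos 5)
Your proof is correct and follows essentially the same route as the paper: you construct the canonical graded morphism, note surjectivity in each degree, and prove injectivity by passing to the truncated filtrations $\mathsf{I}_{\leq n}$, $\mathsf{J}_{\leq n}$ on $A/I_{n+1}$ and $B/J_{n+1}$ (identifying $T_n/T_{n+1}$ with $T'_n/T'_{n+1}$) before an explicit splitting argument. The only cosmetic difference is that you split each $I'_k$ by complements $V_k$ while the paper chooses bases of $I_k$ modulo $I_{k+1}$, which amounts to the same computation.
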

\begin{proof}
Both $A\to A\otimes B$ and $B\to A\otimes B$ are morphisms of filtered superalgebras.
Therefore, they induce the canonical morphisms of graded superalgebras
\[\mathsf{gr}_{\mathsf{I}}(A)\to \mathsf{gr}_{\mathsf{T}}(A\otimes B) \text{ and } \mathsf{gr}_{\mathsf{J}}(B)\to \mathsf{gr}_{\mathsf{T}}(A\otimes B)\]
given by 
\[a+I_{k+1}\mapsto a\otimes 1+T_{k+1} \text{ and } b+J_{l+1}\mapsto 1\otimes b+T_{l+1}.\] 
Thus, there is a superalgebra morphism
\[\mathsf{gr}_{\mathsf{I}}(A)\otimes \mathsf{gr}_{\mathsf{J}}(B)\to\mathsf{gr}_{\mathsf{T}}(A\otimes B)\]
which takes $(a+I_{k+1})\otimes (b+J_{l+1})$ to $a\otimes b +T_{k+l+1}$ for every $a\in I_k, b\in J_l$.

Moreover, if we define a graded superalgebra structure on $\mathsf{gr}_{\mathsf{I}}(A)\otimes \mathsf{gr}_{\mathsf{J}}(B)$ as 
\[(\mathsf{gr}_{\mathsf{I}}(A)\otimes \mathsf{gr}_{\mathsf{J}}(B))(n)=\oplus_{0\leq k\leq n} I_k/I_{k+1}\otimes J_{n-k}/J_{n-k+1} \text{ for } n\geq 0,\]
then the above map is graded superalgebra morphism.

For every $k\geq 0$, choose elements $a_{i, k}\in I_k$ and $b_{j, k}\in J_k$, where $i$ and $j$ run over index sets $S_k$  and $L_k$, respectively, such that they form bases of $I_k$ modulo $I_{k+1}$, and $J_k$ modulo $J_{k+1}$, respectively. Then the homogeneous component $(\mathsf{gr}_{\mathsf{I}}(A)\otimes \mathsf{gr}_{\mathsf{J}}(B))_n$ has a basis consisting of all elements 
\[(a_{i, k}+I_{k+1})\otimes (b_{j, n-k}+J_{n-k+1}) \text{ for } 0\leq k\leq n, i\in S_k \text{ and } j\in L_{n-k}.\]  
Moreover, the elements
\[a_{i, k}\otimes b_{j, n-k} \text{ for } 0\leq k\leq n, i\in S_k \text{ and }j\in L_{n-k}\]
generate $T_n$ modulo $T_{n+1}$. Since our morphism induces a one-to-one correspondence between the basis of $(\mathsf{gr}_{\mathsf{I}}(A)\otimes \mathsf{gr}_{\mathsf{J}}(B))_n$ and the generating set of $T_n/T_{n+1}$, all we need is to show that the elements $a_{i, k}\otimes b_{j, n-k}$ are linearly independent over $T_{n+1}$.  

Consider the filtered superalgebras $(A/I_{n+1}, \mathsf{I}_{\leq n})$ and $(B/J_{n+1}, \mathsf{J}_{\leq n})$. The elements $\overline{a_{i, k}}=a_{i, k}+I_{n+1}$ and $\overline{b_{j, l}}=b_{j, l}+I_{n+1}$, where $0\leq k, l\leq n, i\in S_k,$ and  $j\in L_l$, form bases of vector (super)spaces $A/I_{n+1}$ and $B/J_{n+1}$, respectively. The superalgebra 
\[A/I_{n+1}\otimes B/J_{n+1}\simeq (A\otimes B)/(I_{n+1}\otimes B +A\otimes J_{n+1})\]  
has a basis 
\[\overline{a_{i, k}}\otimes \overline{b_{j, l}} \text{ for } 0\leq k, l\leq n, i\in S_k \text{ and } j\in L_l.\]
Let $\mathsf{T}'$ denote the tensor product of filtrations $\mathsf{I}_{\leq n}$ and $\mathsf{J}_{\leq n}$. Then $T'_{2n+1}=0$ and for every $0\leq s\leq 2n$ the superspace $T'_s/T'_{s+1}$ is spanned by the part of the above basis consisting of the elements $\overline{a_{i, k}}\otimes \overline{b_{j, s-k}}$, where $k, s-k\leq n$. In particular, the latter elements are linearly independent over $T'_{s+1}$.   
If we apply this remark to $s=n$, and observe that $T'_n/T'_{n+1}$ is naturally isomorphic to $T_n/T_{n+1}$, the proof concludes.
\end{proof}

\subsection{Graded companions of geometric superschemes}

Recall that if $\mathcal{F}$ is a presheaf of abelian groups on a topological space $X$, then its sheafification is denoted by $\mathcal{F}^+$.

If the index set $I$ is infinite, then a direct sum of sheaves $\mathcal{F}=\oplus_{i\in I}\mathcal{F}_i$ is a presheaf but not necessary a sheaf (see \cite[Exercise II.1.10]{hart}). 

The direct product $\overline{\mathcal{F}}=\prod_{i\in I}\mathcal{F}_i$ is a sheaf on $X$. Let $\widehat{\mathcal{F}}$ denote a sub-presheaf of $\overline{\mathcal{F}}$ such that for every open subset $U\subseteq X$ a section $f\in\overline{\mathcal{F}}(U)$ belongs to $\widehat{\mathcal{F}}(U)$ if and only if there is an open covering $\{U_j\}_{j\in J}$ of $U$ with $f|_{U_j}\in\mathcal{F}(U_j)$ for each $j\in J$. The following lemma can be easily derived from \cite[Proposition-Definition II.1.2]{hart}.  
\begin{lm}\label{sheafification of  direct sum}
$\widehat{\mathcal{F}}$ is a sheaf isomorphic to $\mathcal{F}^+$.
\end{lm}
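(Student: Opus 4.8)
The plan is to deduce everything from the universal property and the stalk-wise characterization of sheafification recorded in Proposition-Definition II.1.2 of \cite{hart}. First I would check the two preliminary claims hidden in the statement, namely that $\widehat{\mathcal{F}}$ is a sub-presheaf of $\overline{\mathcal{F}}$ and that it is in fact a sheaf. Being a sub-presheaf is immediate: if $f\in\widehat{\mathcal{F}}(U)$ is witnessed by a covering $\{U_j\}_{j\in J}$ with $f|_{U_j}\in\mathcal{F}(U_j)$, then for any open $V\subseteq U$ the covering $\{U_j\cap V\}_{j\in J}$ witnesses $f|_V\in\widehat{\mathcal{F}}(V)$. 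For the sheaf axioms, $\widehat{\mathcal{F}}$ is separated because it is a sub-presheaf of the sheaf $\overline{\mathcal{F}}$; and given a covering $\{V_k\}_{k}$ of $U$ with compatible sections $f_k\in\widehat{\mathcal{F}}(V_k)$, these glue in $\overline{\mathcal{F}}$ to some $f\in\overline{\mathcal{F}}(U)$, and refining each chosen $\mathcal{F}$-witness of $f_k$ yields a covering of $U$ that witnesses $f\in\widehat{\mathcal{F}}(U)$.

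Next I would produce the comparison morphism. The inclusions $\mathcal{F}(U)\hookrightarrow\overline{\mathcal{F}}(U)$ land in $\widehat{\mathcal{F}}(U)$ (use the trivial one-element covering $\{U\}$), so they assemble into a morphism of presheaves $\iota\colon\mathcal{F}\to\widehat{\mathcal{F}}$. Since $\widehat{\mathcal{F}}$ is a sheaf, the universal property of sheafification gives a unique morphism $\bar\iota\colon\mathcal{F}^{+}\to\widehat{\mathcal{F}}$ with $\bar\iota\circ\theta=\iota$, where $\theta\colon\mathcal{F}\to\mathcal{F}^{+}$ is the canonical map. It then remains to show $\bar\iota$ is an isomorphism, and for this I would verify that it induces an isomorphism on every stalk; combined with the standard fact (also in Proposition-Definition II.1.2) that a morphism of sheaves which is a stalk isomorphism everywhere is an isomorphism, this finishes the argument.

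The stalk computation is the heart of the matter. On one side $\theta$ induces $\mathcal{F}_x\xrightarrow{\ \sim\ }(\mathcal{F}^{+})_x$ for all $x$. On the other side I claim $\iota$ induces $\mathcal{F}_x\xrightarrow{\ \sim\ }\widehat{\mathcal{F}}_x$: a germ in $\widehat{\mathcal{F}}_x$ is represented by some $f\in\widehat{\mathcal{F}}(U)$, and picking a witnessing covering $\{U_j\}$ for $f$ together with an index $j$ with $x\in U_j$, the germ of $f$ at $x$ equals the germ of $f|_{U_j}\in\mathcal{F}(U_j)$, so $\iota_x$ is surjective; injectivity holds because two sections of $\mathcal{F}$ with the same germ in $\widehat{\mathcal{F}}_x$ already agree near $x$ as sections of $\overline{\mathcal{F}}$, hence of $\mathcal{F}$. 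Since $\bar\iota_x\circ\theta_x=\iota_x$ with both $\theta_x$ and $\iota_x$ isomorphisms, $\bar\iota_x$ is an isomorphism for every $x$, and the lemma follows. The one place requiring genuine (if modest) care — the closest thing to an obstacle — is making sure the ``locally lies in $\mathcal{F}$'' condition defining $\widehat{\mathcal{F}}$ is stable under restriction, gluing, and refinement of coverings, and that it is exactly what the construction of $\mathcal{F}^{+}$ encodes; once that bookkeeping is pinned down, everything else is formal.
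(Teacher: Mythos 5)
Your proposal is correct, and it fills in exactly the argument the paper intends: the paper gives no proof beyond the remark that the lemma "can be easily derived from Proposition-Definition II.1.2" of Hartshorne, and your route (check $\widehat{\mathcal{F}}$ is a sheaf, invoke the universal property to get $\mathcal{F}^{+}\to\widehat{\mathcal{F}}$, and verify it is a stalkwise isomorphism via $\mathcal{F}_x\simeq\widehat{\mathcal{F}}_x$) is precisely that derivation. No gaps; the bookkeeping you flag (stability of the local-membership condition under restriction, gluing, and refinement) is handled correctly.
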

Let $X$ be a geometric superscheme. A quasi-coherent super-ideal sheaf $\mathcal{J}$ on $X$ is called locally nilpotent if for every  $x\in X^e$ the stalk $\mathcal{J}_x$ is a locally nilpotent super-ideal of
$\mathcal{O}_{X, x}$. Let $\mathsf{gr}_{\mathcal{J}}(\mathcal{O}_X)$  denote the sheaf of superalgebras
$(\oplus_{n\geq 0}\mathcal{J}^n/\mathcal{J}^{n+1})^+ .$
Recall that $\mathcal{I}_X$ is the super-ideal sheaf generated by $(\mathcal{O}_X)_1$.
\begin{pr}\label{graded superscheme}
The following statements hold:
\begin{enumerate}
\item A geometric superspace $\mathsf{gr}_{\mathcal{J}}(X)=(X^e, \mathsf{gr}_{\mathcal{J}}(\mathcal{O}_X))$ is a superscheme;
\item $X\to \mathsf{gr}_{\mathcal{I}_X}(X)$ is an endofunctor of the category $\mathcal{SV}$ that takes immersions to immersions;
\item A morphism $f : X\to Y$ of superschemes of locally finite type is an isomorphism if and only if $\mathsf{gr}(f) : \mathsf{gr}_{\mathcal{I}_X}(X)\to \mathsf{gr}_{\mathcal{I}_Y}(Y)$ is. 
\end{enumerate}
\end{pr}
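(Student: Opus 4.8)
The plan is to handle the three parts in order; almost all of the substance lies in (3), so (1) and (2) I would dispatch quickly.

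For (1) I would use that being a superscheme is local, and work over an affine open $U\simeq\mathrm{SSpec}(A)$ on which $\mathcal J$ is given by a super-ideal $J\lhd A$. Since $J$ is locally nilpotent it lies in the nilradical, so $U^e=\mathrm{Spec}(\overline A)$ is unchanged after passing to $A/J$, and more generally every element of $\bigoplus_{n\ge 1}J^n/J^{n+1}$ is nilpotent in $\mathsf{gr}_J(A)$, so $\mathrm{SSpec}(\mathsf{gr}_J(A))$ has underlying space $U^e$. I would then show $\mathsf{gr}_{\mathcal J}(\mathcal O_X)|_U\simeq\mathcal O_{\mathrm{SSpec}(\mathsf{gr}_J(A))}$. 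The key point is that localization commutes with the formation of the associated graded: for $f\in A_0$ exactness of localization gives $(J^n/J^{n+1})_f\simeq J_f^{\,n}/J_f^{\,n+1}$, so on the basic opens $D(f)$ the presheaf $\bigoplus_n\mathcal J^n/\mathcal J^{n+1}$ already produces $\mathsf{gr}_{J_f}(A_f)=\mathsf{gr}_J(A)_{f}$. Feeding this into Lemma~\ref{sheafification of direct sum} identifies the stalks of both sheaves at $\mathfrak p\in U^e$ with $\mathsf{gr}_{J_{\mathfrak p}}(A_{\mathfrak p})$, compatibly with restrictions, which yields the desired isomorphism of structure sheaves.

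For (2): first, $\mathcal I_X$ is locally nilpotent for every $X\in\mathcal{SV}$, since $\mathcal I_{X,x}=I_{\mathcal O_x}$ and, as $\mathrm{char}\,K\neq 2$ forces $\theta^2=0$ for odd $\theta$, every element $\sum a_i\theta_i\in I_{\mathcal O_x}$ is nilpotent; hence (1) gives $\mathsf{gr}_{\mathcal I_X}(X)\in\mathcal{SV}$. Any morphism $f:X\to Y$ satisfies $f^*(\mathcal I_Y)\subseteq f_*\mathcal I_X$, hence $f^*(\mathcal I_Y^{\,n})\subseteq f_*\mathcal I_X^{\,n}$, so $f^*$ induces $\mathsf{gr}(f):\mathsf{gr}_{\mathcal I_Y}(Y)\to\mathsf{gr}_{\mathcal I_X}(X)$ over the unchanged map $f^e$; functoriality is immediate. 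If $f$ is an immersion, then $f^e$ is already a homeomorphism onto a closed subset of an open $U\subseteq Y^e$ (the same datum works for $\mathsf{gr}(f)$, which has the same underlying spaces and map), and each $\phi:=f^*_x:\mathcal O_{Y,f^e(x)}\to\mathcal O_{X,x}$ is onto, whence $\phi(I_{\mathcal O_{Y,f^e(x)}})=I_{\mathcal O_{X,x}}$ (as $\phi$ is parity-preserving), so $\phi(I^n)=I^n$ and $\mathsf{gr}(\phi)$ is onto in every degree; surjectivity at every stalk shows $\mathsf{gr}(f)$ is an immersion, and an open, resp.\ closed, immersion when $f$ is (using that $\mathsf{gr}$ of a stalkwise isomorphism is a stalkwise isomorphism).

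For (3), the ``only if'' direction is just functoriality of $\mathsf{gr}$. For ``if'', assume $\mathsf{gr}(f)$ is an isomorphism; its underlying map is $f^e$, which is therefore a homeomorphism, so it remains to show each $\phi:=f^*_x:\mathcal O_{Y,f^e(x)}\to\mathcal O_{X,x}$ is an isomorphism, knowing that $\mathsf{gr}(f)^*_x=\mathsf{gr}(\phi)$ is. The crucial input — and the reason for the finite-type hypothesis — is that the $\mathcal I_X$-adic filtration on each stalk is \emph{nilpotent}, not merely locally nilpotent: $\mathcal O_x$ is a localization of a finitely generated superalgebra, so $I_{\mathcal O_x}$ is generated as an ideal by finitely many, say $r$, odd elements, and since $\theta^2=0$ any product of $r+1$ of them super-commutes into something divisible by some $\theta_i^2=0$, whence $I_{\mathcal O_x}^{\,r+1}=0$. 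Given this, the standard filtered-ring argument finishes: with $I_B^{\,N+1}=I_A^{\,N+1}=0$ where $B=\mathcal O_{Y,f^e(x)}$, $A=\mathcal O_{X,x}$, degreewise surjectivity of $\mathsf{gr}(\phi)$ gives $A=\phi(B)+I_A^{\,k}$ for all $k$ by induction, hence $A=\phi(B)$; and if $0\ne b\in\ker\phi$, the largest $k$ with $b\in I_B^{\,k}$ yields a nonzero class in $I_B^{\,k}/I_B^{\,k+1}$ killed by the injective map $\mathsf{gr}(\phi)_k$, a contradiction. Thus $\phi$ is bijective, and being local it is a local isomorphism; since this holds for all $x$ and $f^e$ is a homeomorphism, $f$ is an isomorphism in $\mathcal{SV}$. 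I expect the main obstacle to be precisely this nilpotency step: one must use finiteness (not just local nilpotence) of the stalkwise filtrations, because ``$\mathsf{gr}$ an isomorphism $\Rightarrow$ isomorphism'' genuinely fails for general separated filtrations; the rest is routine localization and stalk bookkeeping.
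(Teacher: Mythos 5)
Your proposal is correct and follows the same overall route as the paper: reduce (1) to the affine case and identify $\mathsf{gr}_{\mathcal{J}}(\mathcal{O}_X)$ with the structure sheaf of $\mathrm{SSpec}(\mathsf{gr}_J(A))$ using that localization commutes with forming the associated graded, deduce (2) from the stalkwise identification $\mathcal{O}_{\mathsf{gr}(X),x}\simeq\mathsf{gr}_{I_{\mathcal{O}_x}}(\mathcal{O}_x)$, and prove (3) by passing to stalks where $\mathsf{gr}$ of the local morphism is an isomorphism. The only real divergence is the finishing step of (3): the paper invokes Proposition 1.10 of \cite{maszub3} (the stalks are Hausdorff in their $I$-adic topologies), whereas you prove outright that $I_{\mathcal{O}_x}$ is \emph{nilpotent} for superschemes of locally finite type (finitely many odd generators, squares zero in odd characteristic) and run the standard filtered-ring argument. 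Your variant is, if anything, more self-contained and airtight, since separatedness of the filtration by itself yields injectivity but not surjectivity of the stalk map; the nilpotence (which also underlies the paper's setting) is what closes that gap, and your explicit verification of it is correct.
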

\begin{proof}
Without loss of generality, one can assume that $X$ is affine, say $X=\mathrm{SSpec}(A)$. Then $\mathcal{J}=\widetilde{J}$, where $J$ is a super-ideal of $A$ such that $J_{\mathfrak{p}}$ is locally nilpotent for every $\mathfrak{p}\in (\mathrm{SSpec}(A))^e$. Therefore, $J$ is locally nilpotent.

Since $(\mathrm{SSpec}(\mathsf{gr}_J(A)))^e=(\mathrm{SSpec}(A/J))^e=(\mathrm{SSpec}(A))^e$, \cite[Proposition 2.1 (3)]{zub1} (see also \cite[Proposition II.5.2 (c)]{hart}) implies
\[\mathsf{gr}_{\mathcal{J}}(\mathcal{O}_X)\simeq (\oplus_{n\geq 0}\widetilde{J^n/J^{n+1}})^+\simeq \mathcal{O}_{\mathrm{SSpec}(\mathsf{gr}_J(A))},\]
hence $\mathsf{gr}_{\mathcal{J}}(X)\simeq\mathrm{SSpec}(\mathsf{gr}_J(A))$.

If $f : X\to Y$ is a morphism in $\mathcal{SV}$, then $f^*(\mathcal{I}_Y)\subseteq f_*^e\mathcal{I}_X$, and $f^*$ induces the required morphism of sheaves $\mathsf{gr}_{\mathcal{I}_Y}(\mathcal{O}_Y)\to f_*^e\mathsf{gr}_{\mathcal{I}_X}(\mathcal{O}_X)$. 

Since the functor $A\to\mathsf{gr}_{I_A}(A)$ commutes with localizations, for every $x\in X^e$ there is
\[\mathcal{O}_{\mathsf{gr}_{\mathcal{I}_X}(X), x}\simeq\mathsf{gr}_{I_{\mathcal{O}_x}}(\mathcal{O}_x),\]
and this isomorphism is functorial in $X$. Thus, the second statement follows.

Similarly, if $\mathsf{gr}(f)$ is an isomorphism, then $f^e$ is a homeomorphism of topological spaces and for every $x\in X^e$ the local morphism $f^*_e$ induces the isomorphism   
\[\mathsf{gr}_{I_{\mathcal{O}_{Y, y}}}(\mathcal{O}_{Y, y})\simeq \mathsf{gr}_{I_{\mathcal{O}_{X, x}}}(\mathcal{O}_{X, x}),\]
where $y=f^e(x)$. By \cite[Proposition 1.10]{maszub3}, the local superalgebras $\mathcal{O}_{X, x}$, and   
$\mathcal{O}_{Y, y}$ are Hausdorff spaces with respect to their $I_{\mathcal{O}_{X, x}}$-adic, and $I_{\mathcal{O}_{Y, y}}$-adic topologies, respectively. Hence $\mathcal{O}_{Y, y}\simeq \mathcal{O}_{X, x}$ and the proposition is proven.
\end{proof}
\begin{pr}\label{direct product and gr}
For every geometric superschemes $X$ and $Y$, there is an isomorphism
\[\mathsf{gr}_{\mathcal{I}_{X\times Y}}(X\times Y)\simeq \mathsf{gr}_{\mathcal{I}_X}(X)\times \mathsf{gr}_{\mathcal{I}_Y}(Y).\]
Moreover, for every morphisms $X\to Z$ and $Y\to T$ in $\mathcal{SV}$, the diagram
\[\begin{array}{ccc}
\mathsf{gr}_{\mathcal{I}_{X\times Y}}(X\times Y) & \simeq & \mathsf{gr}_{\mathcal{I}_X}(X)\times \mathsf{gr}_{\mathcal{I}_Y}(Y)\\
\downarrow & & \downarrow \\
\mathsf{gr}_{\mathcal{I}_{Z\times T}}(Z\times T) & \simeq & \mathsf{gr}_{\mathcal{I}_Z}(Z)\times \mathsf{gr}_{\mathcal{I}_T}(T)
\end{array}\]
is commutative.
\end{pr}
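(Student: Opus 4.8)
The plan is to reduce everything to the affine case, where Proposition~\ref{gr commutes with tensor product} applies, to build the comparison morphism functorially (so that the ``Moreover'' part is automatic), and then to check it is an isomorphism on an affine cover. The algebraic identity that does the work is the following. For $A,B\in\mathsf{SAlg}_K$ one has $(A\otimes B)_1=A_0\otimes B_1+A_1\otimes B_0$, whence a routine check gives $I_{A\otimes B}=I_A\otimes B+A\otimes I_B$; since $I_A\otimes B$ and $A\otimes I_B$ multiply componentwise, taking powers yields
\[
I_{A\otimes B}^{\,k}=\sum_{i+j=k} I_A^{\,i}\otimes I_B^{\,j}=T_k ,
\]
where $\mathsf{T}$ is the tensor product of the $I_A$-adic filtration on $A$ and the $I_B$-adic filtration on $B$. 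Hence Proposition~\ref{gr commutes with tensor product} gives an isomorphism $\mathsf{gr}_{I_A}(A)\otimes\mathsf{gr}_{I_B}(B)\simeq\mathsf{gr}_{I_{A\otimes B}}(A\otimes B)$, induced by the canonical embeddings $A\to A\otimes B$, $B\to A\otimes B$, and functorial in $A$ and $B$; dually, $\mathsf{gr}(\mathrm{SSpec}(A)\times\mathrm{SSpec}(B))\simeq\mathsf{gr}(\mathrm{SSpec}(A))\times\mathsf{gr}(\mathrm{SSpec}(B))$ via the two projections.

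For the global statement, by Proposition~\ref{graded superscheme}(2) the assignment $\mathsf{gr}$ is an endofunctor of $\mathcal{SV}$; applying it to the projections $p_X:X\times Y\to X$ and $p_Y:X\times Y\to Y$ and invoking the universal property of the product in $\mathcal{SV}$, I obtain a canonical morphism $\theta:\mathsf{gr}_{\mathcal{I}_{X\times Y}}(X\times Y)\to\mathsf{gr}_{\mathcal{I}_X}(X)\times\mathsf{gr}_{\mathcal{I}_Y}(Y)$. This $\theta$ is manifestly natural in $X$ and $Y$ (as a composite of functorial constructions), so the commutative diagram in the ``Moreover'' part follows at once, once $\theta$ is known to be an isomorphism.

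To see that $\theta$ is an isomorphism I would work on an affine cover. Pick affine open coverings $X=\bigcup_i U_i$, $U_i\simeq\mathrm{SSpec}(A_i)$, and $Y=\bigcup_j V_j$, $V_j\simeq\mathrm{SSpec}(B_j)$; by Lemma~\ref{coveingsofproducts} (applied twice, identifying $p_X^{-1}(U_i)\cap p_Y^{-1}(V_j)$ with $U_i\times V_j\simeq\mathrm{SSpec}(A_i\otimes B_j)$) the $U_i\times V_j$ cover $X\times Y$. Since $A\mapsto\mathsf{gr}_{I_A}(A)$ commutes with localization, $\mathsf{gr}$ commutes with restriction to open super-subschemes and is computed stalkwise as $\mathsf{gr}_{I_{\mathcal{O}_x}}(\mathcal{O}_x)$ (see the proof of Proposition~\ref{graded superscheme}); together with the functoriality of the affine isomorphism of the first paragraph this shows that $\theta$ restricts on each $U_i\times V_j$ to $\mathsf{gr}(\mathrm{SSpec}(A_i\otimes B_j))\simeq\mathsf{gr}(\mathrm{SSpec}(A_i))\times\mathsf{gr}(\mathrm{SSpec}(B_j))$, which is an isomorphism. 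As being an isomorphism is local on the base, $\theta$ is an isomorphism. I expect the main obstacle to be precisely this last gluing: one must verify that the pointwise affine isomorphisms are compatible with the localization maps along which the $U_i$, $V_j$, $U_i\times V_j$ are glued, which rests on the functoriality clause of Proposition~\ref{gr commutes with tensor product}; a secondary technical point is that $\mathcal{I}_{X\times Y}$ is a sheafification, so the identity $I_{A\otimes B}=I_A\otimes B+A\otimes I_B$ and its powers must be carried over to the associated sheaves on the pieces $U_i\times V_j$.
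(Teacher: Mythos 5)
Your proof is correct and follows essentially the same route as the paper: both reduce to the affine case via the identity $I_{A\otimes B}^k=\sum_{i+j=k}I_A^i\otimes I_B^j$ together with Proposition \ref{gr commutes with tensor product}, and then assemble the global isomorphism from an affine cover of $X\times Y$. The only (harmless) difference is organizational: you build the global comparison morphism at once from the universal property applied to $\mathsf{gr}(p_X)$, $\mathsf{gr}(p_Y)$ and check it is an isomorphism locally, whereas the paper glues the local open immersions $\phi_{UV}$; your packaging makes the compatibility on overlaps and the ``Moreover'' naturality statement formal consequences of uniqueness in the universal property.
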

\begin{proof}
Let $U\simeq\mathrm{SSpec}(A)$ and $V\simeq\mathrm{SSpec}(B)$ be open super-subschemes of $X$ and $Y$, respectively. Let $p_X$ and $p_Y$ denote the canonical projections $X\times Y\to X$ and $X\times Y\to Y$ respectively. As we have already observed, $p_X^{-1}(U)\cap p_Y^{-1}(V)\simeq U\times V\simeq\mathrm{SSpec}(A\otimes B)$ is an open super-subscheme of $X\times Y$.
Proposition \ref{graded superscheme} and Proposition \ref{gr commutes with tensor product} imply that there is the unique open immersion 
\[\phi_{U V} : \mathsf{gr}_{\mathcal{I}_{U\times V}}(U\times V)\to \mathsf{gr}_{\mathcal{I}_{X}}(X)\times \mathsf{gr}_{\mathcal{I}_{Y}}(Y)\]
such that
\[p_{\mathsf{gr}_{\mathcal{I}_{X}}(X)}\phi_{U V}=\mathsf{gr}(p_{U}) \text{ and } \ p_{\mathsf{gr}_{\mathcal{I}_{Y}}(Y)}\phi_{U V}=\mathsf{gr}(p_{V}).\]
Besides, $\phi_{U V}$ induces an isomorphism onto the open affine super-subscheme $\mathsf{gr}_{\mathcal{I}_{U}}(U)\times \mathsf{gr}_{\mathcal{I}_{V}}(V)$ of $\mathsf{gr}_{\mathcal{I}_{X}}(X)\times \mathsf{gr}_{\mathcal{I}_{Y}}(Y)$. Furthermore, for every open affine super-subschemes $U'\subseteq U$ and 
$V'\subseteq V$, the composition of the natural open immersion
\[\mathsf{gr}_{\mathcal{I}_{U'\times V'}}(U'\times V')\to \mathsf{gr}_{\mathcal{I}_{U\times V}}(U\times V)\]
and $\phi_{U, V}$ coincides with $\phi_{U' V'}$. 

Using these remarks, one can construct a collection of open immersions
\[\phi_{U V} : \mathsf{gr}_{\mathcal{I}_{U\times V}}(U\times V)\to \mathsf{gr}_{\mathcal{I}_{X}}(X)\times \mathsf{gr}_{\mathcal{I}_{Y}}(Y)\]
compatible with each other, where $U$ and $V$ run over open affine coverings of $X$ and $Y$, respectively. Thus the first statement follows. 

The  same reduction to affine open coverings can be used to prove the second statement. We leave the details for the reader.
\end{proof}
From now on, the graded companion $\mathsf{gr}_{\mathcal{I}_X}(X)$ of a geometric superscheme $X$ is denoted just by $\mathsf{gr}(X)$. Similarly, $\mathsf{gr}_{I_A}(A)$ is denoted by $\mathsf{gr}(A)$.

\begin{pr}\label{gr and group objects}
$G\mapsto \mathsf{gr}(G)$ is an endofunctor of the category $\mathcal{SVG}$. Moreover, if $G$ is (locally) algebraic, then  $\mathsf{gr}(G)$ is.
\end{pr}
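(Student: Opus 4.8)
The plan is to leverage the two structural facts already established about $\mathsf{gr}$: that it commutes with finite direct products (Proposition \ref{direct product and gr}), and that it is a functor on $\mathcal{SV}$ preserving immersions (Proposition \ref{graded superscheme}(2)), together with the preservation of (locally) finite type which I will verify at the end. First I would observe that a geometric group superscheme $G$ is, by definition, a group object in $\mathcal{SV}$: it comes equipped with morphisms $m : G\times G\to G$, $\iota : G\to G$, and $\epsilon : e\to G$ satisfying the associativity, unit, and inverse axioms expressed as commutative diagrams built out of $m, \iota, \epsilon$ and the canonical product/projection morphisms. Applying the functor $\mathsf{gr}$ to each of these morphisms and using that $\mathsf{gr}$ respects composition and identities, one gets morphisms $\mathsf{gr}(m)$, $\mathsf{gr}(\iota)$, $\mathsf{gr}(\epsilon)$ on $\mathsf{gr}(G)$.

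The key point making these into a genuine group structure is that $\mathsf{gr}$ carries the domain and codomain of $m$ correctly: $\mathsf{gr}(G\times G)\simeq \mathsf{gr}(G)\times\mathsf{gr}(G)$ canonically and naturally, by Proposition \ref{direct product and gr}. Likewise $\mathsf{gr}(e)\simeq e$, since $e\simeq\mathrm{SSpec}(K)$ is purely even so $\mathcal{I}_e = 0$ and $\mathsf{gr}_{\mathcal{I}_e}(e)=e$. Hence $\mathsf{gr}(m)$ can legitimately be read as a morphism $\mathsf{gr}(G)\times\mathsf{gr}(G)\to\mathsf{gr}(G)$, and similarly for the unit. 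Now each group axiom for $G$ is a commutative diagram; applying $\mathsf{gr}$ and inserting the canonical isomorphisms $\mathsf{gr}(G\times G)\simeq\mathsf{gr}(G)\times\mathsf{gr}(G)$ (and its threefold analogue $\mathsf{gr}(G\times G\times G)\simeq\mathsf{gr}(G)^{\times 3}$), the compatibility clause of Proposition \ref{direct product and gr} — i.e. that these isomorphisms are natural and commute with the structural morphisms of products — guarantees that the image diagram still commutes. This yields the associativity, unit, and inverse axioms for $(\mathsf{gr}(G),\mathsf{gr}(m),\mathsf{gr}(\iota),\mathsf{gr}(\epsilon))$, so $\mathsf{gr}(G)$ is a geometric group superscheme. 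Functoriality on morphisms: a homomorphism $f : G\to H$ in $\mathcal{SVG}$ is a morphism in $\mathcal{SV}$ commuting with $m,\iota,\epsilon$; applying $\mathsf{gr}$ and again transporting along the product isomorphisms shows $\mathsf{gr}(f)$ commutes with the corresponding structure morphisms, hence is a homomorphism; preservation of identities and composition is inherited from $\mathsf{gr}$ as an endofunctor of $\mathcal{SV}$.

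For the last clause I would argue locally: if $G$ is of (locally) finite type, cover it by finitely many (resp. arbitrarily many) affine opens $U_i\simeq\mathrm{SSpec}(A_i)$ with each $A_i$ a finitely generated $K$-superalgebra. By the computation in the proof of Proposition \ref{graded superscheme}, $\mathsf{gr}(U_i)\simeq\mathrm{SSpec}(\mathsf{gr}_{I_{A_i}}(A_i))$, and $\{\mathsf{gr}(U_i)\}$ is an open covering of $\mathsf{gr}(G)$ since $\mathsf{gr}$ does not change underlying topological spaces and preserves open immersions. It remains to note that $\mathsf{gr}_{I_{A}}(A)$ is finitely generated over $K$ whenever $A$ is: the generators of $A$, together with a finite spanning set of the finite-dimensional space $A_1$ (recall $A_1^N = 0$ for large $N$ since $A$ is Noetherian of finite type, so the filtration by powers of $I_A = AA_1$ is finite), give generators of the associated graded ring. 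Hence $\mathsf{gr}(G)$ is of (locally) finite type.

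The main obstacle I anticipate is purely bookkeeping rather than conceptual: carefully checking that the canonical isomorphisms $\mathsf{gr}(G\times G)\simeq\mathsf{gr}(G)\times\mathsf{gr}(G)$ are compatible not only with the two projections (which is what Proposition \ref{direct product and gr} states explicitly) but also with the diagonal and with the threefold associativity isomorphism $(G\times G)\times G\simeq G\times(G\times G)$, so that the associativity square for $\mathsf{gr}(G)$ really does follow. This is exactly the kind of coherence verification that the excerpt tends to leave "for the reader," and I would handle it the same way, citing the naturality and product-compatibility already recorded in Proposition \ref{direct product and gr}.
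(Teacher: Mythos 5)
Your proposal is correct and follows essentially the same route as the paper: transport the structure morphisms $m,\iota,\epsilon$ through $\mathsf{gr}$, use the natural isomorphism $\mathsf{gr}(G\times G)\simeq\mathsf{gr}(G)\times\mathsf{gr}(G)$ of Proposition \ref{direct product and gr} to read $\mathsf{gr}(m)$ as a multiplication on $\mathsf{gr}(G)$, and deduce the group axioms from the naturality of that isomorphism, exactly as in the paper's associativity diagram. Your explicit treatment of $\mathsf{gr}(e)\simeq e$ and of the (locally) finite type clause via $\mathsf{gr}_{I_A}(A)$ being finitely generated is a welcome completion of details the paper leaves implicit.
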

\begin{proof}
Let $m, \iota$, and $\epsilon$ denote the multiplication, the inverse, and the identity morphisms of $G$, respectively. 

Composition of $\mathsf{gr}(m)$ with the above isomorphism  $\mathsf{gr}(G\times G)\simeq\mathsf{gr}(G)\times\mathsf{gr}(G)$ defines the multiplication morphism $m' : \mathsf{gr}(G)\times\mathsf{gr}(G)\to \mathsf{gr}(G)$. Moreover, if $f : G\to H$ is a morphism of (geometric) group superschemes, then 
$\mathsf{gr}(f)$ is a morphism of (geometric) \emph{groupoid} superschemes.

Similarly, $\mathsf{gr}(\iota)$ and $\mathsf{gr}(\epsilon)$ are the inverse and the unit morphisms of $\mathsf{gr}(G)$, respectively. 
 
Using Proposition \ref{direct product and gr}, one obtains the following commutative diagram
\[\begin{array}{ccccc}
\mathsf{gr}(G\times G\times G) & \simeq & \mathsf{gr}(G)\times\mathsf{gr}(G\times G) & \simeq & \mathsf{gr}(G)\times \mathsf{gr}(G)\times \mathsf{gr}(G) \\
\downarrow & & \downarrow & & \downarrow  \\
\mathsf{gr}(G\times G) & \simeq & \mathsf{gr}(G)\times \mathsf{gr}(G) & = & \mathsf{gr}(G)\times \mathsf{gr}(G) \\
\downarrow & & \downarrow & & \downarrow  \\
\mathsf{gr}(G) & = & \mathsf{gr}(G) & = & \mathsf{gr}(G) , 
\end{array}\]
where the top vertical arrows are $\mathsf{gr}(\mathrm{id}_G\times m)$, $\mathrm{id}_{\mathsf{gr}(G)}\times\mathsf{gr}(m)$ and $\mathrm{id}_{\mathsf{gr}(G)}\times m'$, respectively, and the bottom vertical arrows are $\mathsf{gr}(m)$ and twice $m'$, respectively. Constructing the symmetric diagram for 
\[G\times G\times G\stackrel{m\times\mathrm{id}_G}{\to} G\times G\stackrel{m}{\to} G,\]
one can derive that $m'$ satisfies the axiom of associativity. We leave for the reader the routine checking of all remaining group axioms. 
\end{proof}
\begin{lm}\label{embedding in and projection onto even super-subscheme}
Let $X$ be a geometric superscheme. There are natural morphisms $i_X : X_{ev}\to\mathsf{gr}(X)$ and
$q_X : \mathsf{gr}(X)\to X_{ev}$, both functorial in $X$,  such that $q_X i_X=\mathrm{id}_{X_{ev}}$.  
Moreover, $i_X$ is a closed immersion that maps $X_{ev}$ isomorphically onto $\mathsf{gr}(X)_{ev}$.
\end{lm}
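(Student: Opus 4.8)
The plan is to reduce everything to the affine case, where $X=\mathrm{SSpec}(A)$ and the graded companion is $\mathsf{gr}(A)=\mathsf{gr}_{I_A}(A)=\bigoplus_{n\ge0}I_A^n/I_A^{n+1}$, and then glue. Recall $I_A=AA_1$, so $A/I_A=\overline{A}$ corresponds to $X_{ev}$. The key algebraic observation is that there are two canonical superalgebra maps: the projection $q_A:\mathsf{gr}(A)\to A/I_A=\overline{A}$ onto the degree-zero component, and the inclusion $i_A:\overline{A}=A/I_A\hookrightarrow\mathsf{gr}(A)$ as the degree-zero component. Both are $\overline{A}$-linear, $q_A$ is a surjective graded homomorphism, $i_A$ is an injective graded homomorphism, and $q_A i_A=\mathrm{id}_{\overline{A}}$ on the nose. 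Dualizing via $\mathrm{SSpec}$ gives superscheme morphisms $q_X:\mathsf{gr}(X)\to X_{ev}$ and $i_X:X_{ev}\to\mathsf{gr}(X)$ with $q_X i_X=\mathrm{id}_{X_{ev}}$; since $i_A$ is a surjection onto the degree-zero part after killing all positive-degree elements — that is, $\overline{A}\simeq\mathsf{gr}(A)/\mathsf{gr}(A)_{+}$ and, more to the point, $\overline{A}$ is a quotient of $\mathsf{gr}(A)$ because $i_A$ splits $q_A$ — $i_A$ exhibits $X_{ev}$ as a closed super-subscheme of $\mathsf{gr}(X)$ (apply Proposition \ref{closed (super)subschemes}, or directly: $i_A$ corresponds to the closed immersion cut out by the homogeneous ideal $\mathsf{gr}(A)_+=\bigoplus_{n\ge1}I_A^n/I_A^{n+1}$, using that $q_A$ provides a retraction so $i_A$ is indeed a quotient map).

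Next I would identify $\mathsf{gr}(X)_{ev}$. By Example \ref{the largest purely-even geometric super-subscheme}, $\mathsf{gr}(X)_{ev}$ corresponds to $\mathsf{gr}(A)/I_{\mathsf{gr}(A)}$ where $I_{\mathsf{gr}(A)}=\mathsf{gr}(A)\mathsf{gr}(A)_1$. The point is the computation $\mathsf{gr}(A)/I_{\mathsf{gr}(A)}\simeq\overline{A}$ realized precisely through $i_A$. Indeed $\mathsf{gr}(A)_1$, the odd part of $\mathsf{gr}(A)$, consists of the odd components of each $I_A^n/I_A^{n+1}$; one checks that the ideal it generates is exactly $\mathsf{gr}(A)_{+}=\bigoplus_{n\ge1}I_A^n/I_A^{n+1}$. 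This is where the hypothesis that $K$ has characteristic $\neq 2$ is used together with the super-commutativity of $A$: $I_A=AA_1$ is generated by odd elements, each degree-$n$ graded piece $I_A^n/I_A^{n+1}$ is spanned by products of $n$ odd elements of $A$, and such a product lies in $\mathsf{gr}(A)\,\mathsf{gr}(A)_1$ — an $n$-fold product of odd generators, viewed in degree $n$, is (degree $n-1$ element)$\cdot$(odd degree-$1$ element). Hence $I_{\mathsf{gr}(A)}=\mathsf{gr}(A)_{+}$ and the quotient is the degree-zero part $\overline{A}$. Therefore $i_A:\overline{A}\to\mathsf{gr}(A)$, followed by the quotient $\mathsf{gr}(A)\to\mathsf{gr}(A)/I_{\mathsf{gr}(A)}=\overline{A}$, is the identity, so $i_X$ maps $X_{ev}$ isomorphically onto $\mathsf{gr}(X)_{ev}$.

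Finally I would address naturality and globalization. For functoriality: a morphism $f:X\to Y$, i.e. a superalgebra map $\phi:B\to A$, is filtered for the $I$-adic filtrations since $\phi(I_B)\subseteq I_A$, so it induces $\mathsf{gr}(\phi):\mathsf{gr}(B)\to\mathsf{gr}(A)$ (Subsection 9.1), and $\mathsf{gr}(\phi)$ is a graded map, hence commutes with both the degree-zero inclusion and projection; this gives the commuting squares expressing that $i_X$ and $q_X$ are natural transformations $X\mapsto X_{ev}$ $\Rightarrow$ $X\mapsto\mathsf{gr}(X)$ and back. For a general geometric superscheme $X$, cover by affine opens $U_i\simeq\mathrm{SSpec}(A_i)$; by Proposition \ref{graded superscheme}(1) $\mathsf{gr}(U_i)\simeq\mathrm{SSpec}(\mathsf{gr}(A_i))$ and these form an open covering of $\mathsf{gr}(X)$ (the underlying topological space is unchanged), while $U_{i,ev}$ form an open covering of $X_{ev}$; the locally defined maps $i_{U_i}, q_{U_i}$ agree on overlaps because $\mathsf{gr}$ commutes with localization (used in the proof of Proposition \ref{graded superscheme}), so they glue to global $i_X, q_X$, and the identities $q_X i_X=\mathrm{id}$, $i_X$ a closed immersion, and $i_X(X_{ev})=\mathsf{gr}(X)_{ev}$ are all local and hence follow from the affine case. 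The main obstacle is the clean verification that $I_{\mathsf{gr}(A)}=\mathsf{gr}(A)_{+}$, i.e. that the odd part of $\mathsf{gr}(A)$ generates the whole augmentation ideal — everything else is formal once that identification is in place.
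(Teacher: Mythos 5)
Your proposal is correct and follows essentially the paper's own argument: the paper defines $q_X^*$ and $i_X^*$ once and for all as the inclusion of, and projection onto, the $0$th component at the presheaf level over all of $X^e$ (so no gluing over an affine cover is needed, since $X_{ev}$ and $\mathsf{gr}(X)$ share the same underlying space), and only localizes to $X=\mathrm{SSpec}(A)$ for the final statement, where --- exactly as in your computation $I_{\mathsf{gr}(A)}=\mathsf{gr}(A)_+$, which the paper leaves implicit --- $i_X$ is the closed immersion dual to the projection $\mathsf{gr}(A)\to\overline{A}$ and $q_X$ is dual to the embedding $\overline{A}\to\mathsf{gr}(A)$. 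Two small points: under $\mathrm{SSpec}$ your labels cross (your $q_A$ is $i_X^*$ and your $i_A$ is $q_X^*$, so the phrase ``$i_A$ is a surjection'' should refer to the projection), and characteristic $\neq 2$ plays no role in the identification $I_{\mathsf{gr}(A)}=\mathsf{gr}(A)_+$; supercommutativity and $I_A=AA_1$ suffice.
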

\begin{proof}
Since the underlying topological spaces of all the appearing superschemes are the same, all we need is to define morphisms of their superalgebra sheaves. We have two morphisms of superalgebra presheaves
\[\mathcal{O}_{X_{ev}}=\mathcal{O}_{X}/\mathcal{I}_X\to \oplus_{n\geq 0}\mathcal{I}_X^n/\mathcal{I}_X^{n+1}\]
and
\[\oplus_{n\geq 0}\mathcal{I}_X^n/\mathcal{I}_X^{n+1}\to \mathcal{O}_{X}/\mathcal{I}_X=\mathcal{O}_{X_{ev}},\]
where the first morphism is the isomorphism onto the $0$th component of $\oplus_{n\geq 0}\mathcal{I}_X^n/\mathcal{I}_X^{n+1}$ and the second morphism is the projection onto its $0$th component. 
These morphisms uniquely extend to morphisms of superalgebra sheaves
\[ q^*_X : \mathcal{O}_{X_{ev}}\to (\oplus_{n\geq 0}\mathcal{I}_X^n/\mathcal{I}_X^{n+1})^+=\mathcal{O}_{\mathsf{gr}(X)}\]
and
\[i_X^* : \mathcal{O}_{\mathsf{gr}(X)}=(\oplus_{n\geq 0}\mathcal{I}_X^n/\mathcal{I}_X^{n+1})^+\to\mathcal{O}_{X_{ev}}.\]
Since the composition of morphisms of presheaves is an identity map, we have $i_X^* q_X^*=\mathrm{id}_{\mathcal{O}_{X_{ev}}}$. The functoriality follows. 

Finally, since the last statement is local, one can assume that $X$ is affine, say $X=\mathrm{SSpec}(A)$. Then
$i_X$ is induced by the projection $\mathsf{gr}{A}\mapsto\overline{A}$ (and $q_X$ is induced by the embedding $\overline{A}\mapsto\mathsf{gr}(A)$). The lemma is proven.
\end{proof}
\begin{lm}\label{direct product and i and p}
For every geometric superschemes $X$ and $Y$, there are the following commutative diagrams
\[\begin{array}{ccc}
(X\times Y)_{ev} & \simeq & X_{ev}\times Y_{ev} \\
\downarrow & & \downarrow \\
\mathsf{gr}(X\times Y) & \simeq & \mathsf{gr}(X)\times \mathsf{gr}(Y) 
\end{array} \
\mbox{and} \
\begin{array}{ccc}
\mathsf{gr}(X\times Y) & \simeq & \mathsf{gr}(X)\times \mathsf{gr}(Y) \\
\downarrow & & \downarrow \\
(X\times Y)_{ev} & \simeq & X_{ev}\times Y_{ev}
\end{array},\]
where the vertical arrows in the first diagram are $i_{X\times Y}$ and $i_X\times i_Y$, while 
the vertical arrows in the second diagram are $q_{X\times Y}$ and $q_X\times q_Y$. 
\end{lm}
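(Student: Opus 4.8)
The plan is to reduce everything to the affine case, where the two squares become simple statements about the degree-zero components of the graded isomorphism furnished by Proposition \ref{gr commutes with tensor product}. First I would recall the affine picture: for $X=\mathrm{SSpec}(A)$ one has $\mathcal{I}_X=\widetilde{I_A}$ and $\mathsf{gr}(X)=\mathrm{SSpec}(\mathsf{gr}(A))$ with $\mathsf{gr}(A)=\mathsf{gr}_{I_A}(A)$ taken for the $I_A$-adic filtration; under this identification $i_X^{*}\colon\mathsf{gr}(A)\to\overline{A}$ is the projection onto the degree-$0$ component and $q_X^{*}\colon\overline{A}\to\mathsf{gr}(A)$ is the inclusion of the degree-$0$ component, as in the last paragraph of the proof of Lemma \ref{embedding in and projection onto even super-subscheme}. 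Next I would check the elementary fact that $I_{A\otimes B}=I_A\otimes B+A\otimes I_B$, whence $I_{A\otimes B}^{\,k}=\sum_{i+j=k}I_A^{\,i}\otimes I_B^{\,j}$; that is, the $I_{A\otimes B}$-adic filtration is exactly the tensor-product filtration $\mathsf{T}$ of the $I_A$- and $I_B$-adic filtrations, so Proposition \ref{gr commutes with tensor product} applies and yields an isomorphism of \emph{graded} superalgebras $\mathsf{gr}(A)\otimes\mathsf{gr}(B)\simeq\mathsf{gr}(A\otimes B)$ — which is precisely the affine incarnation of the isomorphism $\mathsf{gr}(X\times Y)\simeq\mathsf{gr}(X)\times\mathsf{gr}(Y)$ of Proposition \ref{direct product and gr}. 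In degree $0$ it restricts to the canonical multiplication isomorphism $\overline{A}\otimes\overline{B}\simeq\overline{A\otimes B}$, which is the affine form of $X_{ev}\times Y_{ev}\simeq(X\times Y)_{ev}$ coming from Lemma \ref{even descent}.

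With these identifications in hand, both squares are immediate. Dualizing, the first square asserts that the projection $\mathsf{gr}(A\otimes B)\to\overline{A\otimes B}$ onto the degree-$0$ component agrees, via the graded isomorphism above, with $i_U^{*}\otimes i_V^{*}\colon\mathsf{gr}(A)\otimes\mathsf{gr}(B)\to\overline{A}\otimes\overline{B}$, i.e.\ with the projection onto total degree $0$; the second square asserts the analogous statement for the degree-$0$ inclusions. Both hold because the isomorphism of Proposition \ref{gr commutes with tensor product} respects the total grading and is the identity $\overline{A}\otimes\overline{B}=\overline{A}\otimes\overline{B}$ in degree $0$.

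To pass from affine opens to arbitrary $X$ and $Y$, I would use the functoriality recorded in Proposition \ref{graded superscheme}(2), Lemma \ref{embedding in and projection onto even super-subscheme}, and the construction of the isomorphism in Proposition \ref{direct product and gr}: for open affine $U\simeq\mathrm{SSpec}(A)\subseteq X$ and $V\simeq\mathrm{SSpec}(B)\subseteq Y$, the morphisms $i_{X\times Y}$, $i_X\times i_Y$, $q_{X\times Y}$, $q_X\times q_Y$ restrict over the open super-subscheme $\mathsf{gr}(U)\times\mathsf{gr}(V)\subseteq\mathsf{gr}(X)\times\mathsf{gr}(Y)$ to $i_{U\times V}$, $i_U\times i_V$, $q_{U\times V}$, $q_U\times q_V$ respectively, and these open pieces form a covering; since a superscheme morphism is determined by its restrictions to an open cover, commutativity of the global squares follows from the affine case. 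The only point needing genuine care — and the closest thing to an obstacle — is verifying that the horizontal isomorphism $(X\times Y)_{ev}\simeq X_{ev}\times Y_{ev}$ used here is the one induced by Lemma \ref{even descent} and that it is literally the degree-$0$ part of the $\mathsf{gr}$-isomorphism; once the affine bookkeeping is written out this becomes transparent.
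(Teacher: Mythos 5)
Your proposal is correct and follows essentially the same route as the paper: reduce to the affine case as in Proposition \ref{direct product and gr}, then observe that the dual diagrams of superalgebras commute because $i^*$ and $q^*$ are the degree-zero projection and inclusion, and the graded isomorphism $\mathsf{gr}(A)\otimes\mathsf{gr}(B)\simeq\mathsf{gr}(A\otimes B)$ restricts to $\overline{A}\otimes\overline{B}\simeq\overline{A\otimes B}$ in degree zero. Your extra checks (that the $I_{A\otimes B}$-adic filtration is the tensor-product filtration, and the gluing over affine covers) are exactly the details the paper leaves implicit.
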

\begin{proof}
Arguing as in Proposition \ref{direct product and gr}, one can reduce the general case to $X=\mathrm{SSpec}(A)$ and $ Y=\mathrm{SSpec}(B)$. Then the commutativity of the above diagrams is equivalent to
the commutativity of the diagrams
\[\begin{array}{ccc}
\overline{A\otimes B} & \simeq & \overline{A}\otimes\overline{B} \\
\uparrow & & \uparrow \\
\mathsf{gr}(A\otimes B) & \simeq & \mathsf{gr}(A)\otimes\mathsf{gr}(B) 
\end{array} \
\mbox{and} \
\begin{array}{ccc}
\mathsf{gr}(A\otimes B) & \simeq & \mathsf{gr}(A)\otimes\mathsf{gr}(B) \\
\uparrow & & \uparrow \\
\overline{A\otimes B} & \simeq & \overline{A}\otimes\overline{B} 
\end{array},\]
where the vertical arrows are the corresponding projections and embeddings.
\end{proof}
\begin{cor}\label{i and q for group superschemes}
If $G$ is a group superscheme, then $i_G$ and $q_G$ are morphisms of group superschemes.
\end{cor}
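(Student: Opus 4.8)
The plan is to deduce the corollary formally from the functoriality of $i$ and $q$ (Lemma \ref{embedding in and projection onto even super-subscheme}) and their compatibility with direct products (Lemma \ref{direct product and i and p}), using that both $X\mapsto X_{ev}$ and $X\mapsto\mathsf{gr}(X)$ are product-preserving endofunctors of $\mathcal{SV}$. First I would record that $G_{ev}$ is itself a group superscheme: since $(-)_{ev}$ is an endofunctor of $\mathcal{SV}$ (Example \ref{the largest purely-even geometric super-subscheme}) and $(G\times G)_{ev}\simeq G_{ev}\times G_{ev}$ by Lemma \ref{even descent}, applying $(-)_{ev}$ to the structure morphisms $m,\iota,\epsilon$ of $G$ and composing with this isomorphism yields a group superscheme structure on $G_{ev}$, whose multiplication I will call $m_{ev}$. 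Likewise $\mathsf{gr}(G)$ is a group superscheme by Proposition \ref{gr and group objects}, its multiplication being $m'=\mathsf{gr}(m)$ post-composed with the canonical isomorphism $\mathsf{gr}(G\times G)\simeq\mathsf{gr}(G)\times\mathsf{gr}(G)$ of Proposition \ref{direct product and gr}.

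The main step is the multiplicativity of $i_G$. Applying functoriality of $i$ to $m:G\times G\to G$ produces a commutative square with top arrow $(m)_{ev}$, bottom arrow $\mathsf{gr}(m)$, and vertical arrows $i_{G\times G}$ and $i_G$. I would then paste onto it the first commutative square of Lemma \ref{direct product and i and p}, which identifies $i_{G\times G}$ with $i_G\times i_G$ along the product-decomposition isomorphisms; under these same isomorphisms $(m)_{ev}$ becomes $m_{ev}$ and $\mathsf{gr}(m)$ becomes $m'$ by the very definitions recalled above. The resulting square reads $i_G\circ m_{ev}=m'\circ(i_G\times i_G)$, i.e.\ $i_G$ commutes with multiplication. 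Repeating the argument verbatim with $q$ in place of $i$, using instead the second square of Lemma \ref{direct product and i and p}, gives $q_G\circ m'=m_{ev}\circ(q_G\times q_G)$.

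To finish I would handle compatibility with the units and the inversions, which is immediate: either apply functoriality of $i$ and $q$ to $\iota$ and $\epsilon$ together with the trivial instances of Lemma \ref{direct product and i and p} for the point, or simply invoke the Comparison Theorem \ref{comparison} and observe that, on $R$-points, $i_G$ and $q_G$ are multiplicative maps between the groups $G_{ev}(R)$ and $\mathsf{gr}(G)(R)$ and hence group homomorphisms. I expect the only delicate point to be the bookkeeping in the middle step: one has to use exactly the isomorphisms of Proposition \ref{direct product and gr} and Lemma \ref{even descent} that were used to define $m'$ and $m_{ev}$, so that the phrase ``$\mathsf{gr}(m)$ becomes $m'$'' is literally correct — and this is precisely what the commuting diagrams in Lemma \ref{direct product and i and p} were set up to guarantee, so no genuine obstacle arises.
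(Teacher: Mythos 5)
Your argument is correct and is essentially the paper's own proof: the paper likewise pastes the functoriality square for $i$ (respectively $q$) applied to $m$ with the product-compatibility square of Lemma \ref{direct product and i and p}, using that the multiplications of $G_{ev}$ and $\mathsf{gr}(G)$ are defined through the isomorphisms $(G\times G)_{ev}\simeq G_{ev}\times G_{ev}$ and $\mathsf{gr}(G\times G)\simeq\mathsf{gr}(G)\times\mathsf{gr}(G)$. Your extra remarks on units and inversion are harmless but not needed, since a multiplication-preserving morphism of group objects automatically preserves them.
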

\begin{proof}
For example, let us consider $i_G$. We have two commutative diagrams
\[\begin{array}{ccc}
(G\times G)_{ev} & \simeq & G_{ev}\times G_{ev} \\
\downarrow & & \downarrow \\
\mathsf{gr}(G\times G) & \simeq & \mathsf{gr}(G)\times \mathsf{gr}(G) 
\end{array} \ \mbox{and} \ 
\begin{array}{ccc}
(G\times G)_{ev} & \stackrel{m_{ev}}{\to} & G_{ev} \\
\downarrow & & \downarrow \\
\mathsf{gr}(G\times G) & \stackrel{\mathsf{gr}(m)}{\to }& \mathsf{gr}(G)
\end{array},\]
where $m : G\times G\to G$ is the multiplication morphism. Since the multiplication morphisms of $G_{ev}$ and $\mathsf{gr}(G)$ are factored through the isomorphisms (horizontal arrows) of the first diagram, our statement follows. 
The case of $q_G$ is similar. 
\end{proof}

\section{An interpretation of the functor of points and some useful consequences}

Due to the category equivalence $\mathcal{SVG}\simeq\mathcal{SFG}$, the results of the previous section can be reformulated as follows. There is an endofunctor of the category $\mathcal{SFG}$ that assigns to each
group superscheme $\mathbb{G}$ a group superscheme $\mathsf{gr}(\mathbb{G})$ such that there are natural morphisms (in $\mathcal{SFG}$)
\[{\bf i}_{\mathbb{G}} : \mathbb{G}_{ev}\to\mathsf{gr}(\mathbb{G}), \ {\bf q}_{\mathbb{G}} : \mathsf{gr}(\mathbb{G})\to\mathbb{G}_{ev},\]
where ${\bf i}_{\mathbb{G}}$ is an isomorphism onto $\mathsf{gr}(\mathbb{G})_{ev}$ and ${\bf q}_{\mathbb{G}}{\bf i}_{\mathbb{G}}=\mathrm{\bf id}_{\mathbb{G}_{ev}}$.

Let $\mathbb{G}_{odd}$ denote $\ker {\bf q}_{\mathbb{G}}$. Lemma \ref{one point subfunctor} implies that $\mathbb{G}_{odd}$ is a closed normal group super-subscheme of $\mathsf{gr}(\mathbb{G})$. Moreover, we have \[\mathsf{gr}(\mathbb{G})=\mathsf{gr}(\mathbb{G})_{ev}\ltimes\mathbb{G}_{odd}\simeq\mathbb{G}_{ev}\ltimes\mathbb{G}_{odd}.\]
If ${\bf f} : \mathbb{G}\to\mathbb{H}$ is a morphism of group superschemes, then Lemma \ref{embedding in and projection onto even super-subscheme} implies $\mathsf{gr}({\bf f})(\mathbb{G}_{odd})\leq\mathbb{H}_{odd}$.  Thus, 
$\mathbb{G}\to\mathbb{G}_{odd}$ is an endofunctor of the category $\mathcal{SFG}$. 

\begin{pr}\label{structure of odd complement}
Let $\mathbb{G}$ be a locally algebraic group superscheme. The group superscheme $\mathbb{G}_{odd}$ is affine, and represented by a local graded Hopf superalgebra
$\Lambda(\mathfrak{g}^*_1)$, where the elements of $\mathfrak{g}^*_1$ are assumed to be primitive.

Moreover, if ${\bf f} : \mathbb{G}\to\mathbb{H}$ is a morphism of locally algebraic group superschemes,
then  $\mathbb{G}_{odd}\to\mathbb{H}_{odd}$ is induced by the (dual) linear map $(\mathrm{d}_e{\bf f})^* : \mathfrak{h}_1^*\to\mathfrak{g}^*_1$.
\end{pr}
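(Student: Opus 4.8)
The plan is to identify the coordinate superalgebra of $\mathbb{G}_{odd}$ by a direct local computation, to recognise it as an exterior algebra using that it is a connected graded Hopf superalgebra generated by primitive odd elements, and then to deduce the functorial statement from the rigidity of such Hopf superalgebras.

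First I would show $\mathbb{G}_{odd}$ is affine. All of $G$, $G_{ev}$ and $\mathsf{gr}(G)$ have the same underlying space $G^{e}$, and the geometric morphism realising ${\bf q}_{\mathbb{G}}$ is the identity on it; hence the closed super-subscheme $G_{odd}\subseteq\mathsf{gr}(G)$ attached to $\mathbb{G}_{odd}=\ker{\bf q}_{\mathbb{G}}$ is supported at the single point $e$, and a superscheme with one-point underlying space is affine, say $G_{odd}=\mathrm{SSpec}(A)$. Since $\mathbb{G}_{odd}={\bf q}_{\mathbb{G}}^{-1}(\mathbb{E})$ and, by Lemma \ref{embedding in and projection onto even super-subscheme} together with $\mathcal{O}_{\mathsf{gr}(G),e}\simeq\mathsf{gr}_{I_{\mathcal{O}_e}}(\mathcal{O}_e)$, the morphism ${\bf q}_{\mathbb{G}}$ is locally at $e$ the inclusion of the degree-$0$ part $\overline{\mathcal{O}_e}\hookrightarrow\mathsf{gr}_{I_{\mathcal{O}_e}}(\mathcal{O}_e)$, the super-ideal cutting out $G_{odd}$ at $e$ is $\mathsf{gr}_{I_{\mathcal{O}_e}}(\mathcal{O}_e)\,\overline{\mathfrak{m}_e}$, whence
\[ A\;\simeq\;\mathsf{gr}_{I_{\mathcal{O}_e}}(\mathcal{O}_e)\otimes_{\overline{\mathcal{O}_e}}K\;=\;\bigoplus_{n\ge 0}I^{n}/\mathfrak{m}_eI^{n},\qquad I:=\mathcal{O}_e(\mathcal{O}_e)_1 . \]
This $A$ is a graded Hopf superalgebra (the structure maps, restricted from those of $\mathsf{gr}(\mathbb{G})$, respect the grading by Propositions \ref{gr commutes with tensor product} and \ref{gr and group objects}), connected since $A_0=\mathcal{O}_e/\mathfrak{m}_e=K$, finite-dimensional since $I^{n}=0$ once $n$ exceeds the number of generators of $(\mathcal{O}_e)_1$ over $(\mathcal{O}_e)_0$ and $\mathbb{G}$ is locally algebraic, and generated over $K$ by $A_1$ by Nakayama. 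Using $(\mathcal{O}_e)_1\subseteq\mathfrak{m}_e$ one checks that $A_1=I/\mathfrak{m}_eI$ is purely odd and that the canonical map $A_1\to(\mathfrak{m}_e/\mathfrak{m}_e^2)_1=\mathfrak{g}_1^{*}$ is an isomorphism; moreover the $\mathbb{Z}$-grading forces every element of $A_1$ to be primitive, since the degree-$1$ part of $A\otimes A$ equals $A_1\otimes 1\oplus 1\otimes A_1$ as $A_0=K$.

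It then remains to prove that a connected finite-dimensional graded Hopf superalgebra $A$ generated by a purely odd space of primitive elements $A_1=\mathfrak{g}_1^{*}$ is $\Lambda(\mathfrak{g}_1^{*})$. As $\mathrm{char}\,K\ne 2$, each $v\in A_1$ satisfies $v^{2}=0$, so $A_1\hookrightarrow A$ extends to a surjection of graded Hopf superalgebras $\Lambda(\mathfrak{g}_1^{*})\twoheadrightarrow A$ whose kernel is a graded Hopf ideal $J$ that vanishes in degrees $0$ and $1$. If $J\ne 0$, take $0\ne z\in J$ of minimal degree $n\ge 2$; minimality makes all mixed components of $\Delta(z)$ in $\Lambda(\mathfrak{g}_1^{*})^{\otimes 2}$ vanish, so $z$ is primitive in $\Lambda(\mathfrak{g}_1^{*})$, whereas the primitives of an exterior algebra on an odd space lie in degree $1$ — a contradiction; hence $J=0$ and $A\simeq\Lambda(\mathfrak{g}_1^{*})$. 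For the second assertion, ${\bf f}:\mathbb{G}\to\mathbb{H}$ induces $\mathsf{gr}({\bf f})$ by Proposition \ref{gr and group objects}, which by the functoriality of ${\bf q}$ satisfies ${\bf q}_{\mathbb{H}}\mathsf{gr}({\bf f})={\bf f}_{ev}{\bf q}_{\mathbb{G}}$ and therefore carries $\mathbb{G}_{odd}$ into $\mathbb{H}_{odd}$, giving a graded Hopf map $\varphi:\Lambda(\mathfrak{h}_1^{*})\to\Lambda(\mathfrak{g}_1^{*})$. Since $\Lambda(\mathfrak{h}_1^{*})$ is generated in degree one and $\varphi$ sends primitives to primitives, $\varphi$ is the exterior extension of its degree-one component; unwinding the identification $A_1\simeq(\mathfrak{m}_e/\mathfrak{m}_e^2)_1$ exhibits this component as the odd part of the cotangent map of ${\bf f}$ at $e$, i.e. $(\mathrm{d}_e{\bf f})^{*}$.

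The step I expect to be the main obstacle is the exterior-algebra identification: a priori $A$ is only a graded quotient of $\Lambda(\mathfrak{g}_1^{*})$, and one must rule out extra relations. The decisive input is the primitivity of $A_1$, which is automatic from the $\mathbb{Z}$-grading; with it, the minimal-degree argument forces any relation to produce a primitive element of $\Lambda(\mathfrak{g}_1^{*})$ of degree $\ge 2$, which does not exist. The remaining points — the local computation of $A$, the natural isomorphism $A_1\simeq\mathfrak{g}_1^{*}$, and the identification of the degree-one part of $\varphi$ with $(\mathrm{d}_e{\bf f})^{*}$ — are essentially bookkeeping.
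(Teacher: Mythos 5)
Your proposal is correct, and its skeleton matches the paper's: one-point support gives affineness, the coordinate superalgebra $A$ of $\mathbb{G}_{odd}$ is a connected graded Hopf superalgebra generated by its degree-one part, that part consists of primitives and is identified with $\mathfrak{g}_1^*$, and the functorial statement is read off from the degree-one component, which is $(\mathrm{d}_e{\bf f})^*$. Where you genuinely diverge is in the two technical pillars. First, you compute $A$ directly as the fibre of $q_G$ over $e$, $A\simeq\mathsf{gr}_{I_{\mathcal{O}_e}}(\mathcal{O}_e)\otimes_{\overline{\mathcal{O}_e}}K=\oplus_{n\geq 0}I^n/\mathfrak{m}_eI^n$, and obtain generation by $A_1$ and the isomorphism $A_1\simeq(\mathfrak{m}_e/\mathfrak{m}_e^2)_1\simeq\mathfrak{g}_1^*$ by hand; the paper instead deduces from ${\bf q}_{\mathbb{G}}{\bf i}_{\mathbb{G}}=\mathrm{id}$ and Lemma \ref{exact sequence of Lie superalgebras} that $\mathrm{Lie}(\mathbb{G}_{odd})$ is purely odd, and then works with the decomposition $\widehat{\mathcal{O}_{\mathsf{gr}(G), e}}\simeq\widehat{\mathcal{O}_{G_{ev}, e}}\otimes A$ of complete Hopf superalgebras; the latter costs more bookkeeping but is reused later (Lemma \ref{grading of S_n}, Proposition \ref{isomorphism of graded hyp and hyp of graded}), and it is also where the paper makes explicit the point you only assert, namely that the $\mathbb{Z}$-grading on $A$ is compatible with its Hopf structure maps — your appeal to Propositions \ref{gr commutes with tensor product} and \ref{gr and group objects} is the right input, and the passage through the localization at $e\times e$ is harmless here because $A\otimes A$ is already local, but this step deserves a sentence. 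Second, to show the kernel $J$ of $\Lambda(\mathfrak{g}_1^*)\twoheadrightarrow A$ vanishes you take a homogeneous element of minimal degree, note it must be primitive in $\Lambda(\mathfrak{g}_1^*)$, and use that an exterior algebra on an odd space has no primitives in degree $\geq 2$ (valid in any odd characteristic, the relevant coefficients being $\pm 1$); the paper argues instead by induction on a basis $a_1,\ldots,a_t$ of $A(1)$ that $J\subseteq\cap_i\Lambda(A(1))a_i=Ka_1\cdots a_t$, which is not a Hopf superideal. Your coalgebraic argument is the more standard and arguably cleaner one; the paper's avoids any discussion of primitives of the exterior algebra. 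I see no genuine gap.
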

\begin{proof}
We have the exact sequence 
\[e\to G_{odd}\to\mathsf{gr}(G)\stackrel{q_G}{\to} G_{ev}\to e,\]
that is the geometric counterpart of the exact sequence
\[\mathbb{E}\to\mathbb{G}_{odd}\to\mathsf{gr}(\mathbb{G})\stackrel{{\bf q}_{\mathbb{G}}}{\to}\mathbb{G}_{ev}\to\mathbb{E}.\]
Since for every field extension $\Bbbk\subseteq F$ and every group superscheme $\mathbb{H}$ there is $\mathbb{H}(F)=\mathbb{H}_{ev}(F)$, the underlying topological space of $G_{odd}$ consists of the unit element $e$ only (see \cite[Lemma 5.5]{maszub1}). Therefore, $G_{odd}$ is affine, and represented by a local Hopf superalgebra $A$ with the nilpotent maximal superideal $A^+=\ker\epsilon_A$. 

Since ${\bf q}_{\mathbb{G}}{\bf i}_{\mathbb{G}}={\bf id}_{\mathbb{G}_{ev}}$, Lemma \ref{exact sequence of Lie superalgebras} implies that the Lie superalgebra of $\mathbb{G}_{odd}$ is purely-odd. Hence,
$A$ is generated by odd elements. In particular, $I_A=A^+$.

Furthermore, $\mathsf{gr}(G)$ is naturally isomorphic to $G_{ev}\times G_{odd}$ so that the projection onto $G_{ev}$ is identified with $q_{G}$. 
Thus 
\[\mathcal{O}_{\mathsf{gr}(G), e}\simeq \mathcal{O}_{G_{ev}, e}\otimes\mathcal{O}_{G_{odd}, e}=\mathcal{O}_{G_{ev}, e}\otimes A\]
and the maximal superideal $\mathfrak{n}$ of $\mathcal{O}_{\mathsf{gr}(G), e}$ is identified with 
$\overline{\mathfrak{m}_{e}}\otimes A +\mathcal{O}_{G_{ev}, e}\otimes A^+$, where $\overline{\mathfrak{m}_e}=\mathfrak{m}_e/I_{\mathcal{O}_e}$. Since the superideal $A^+$ is nilpotent, the $\mathfrak{n}$-adic topology on $\mathcal{O}_{\mathsf{gr}(G), e}$ coincides with its $\overline{\mathfrak{m}_e}$-adic topology. Besides, the grading of $\mathcal{O}_{\mathsf{gr}(G), e}$ is given by 
\[\mathcal{O}_{\mathsf{gr}(G), e}(k)\simeq \mathcal{O}_{G_{ev}, e}\otimes (A^+)^k/(A^+)^{k+1} \text{ for } k\geq 0.\]
In particular, 
there is an isomorphism of complete graded superalgebras $\widehat{\mathcal{O}_{\mathsf{gr}(G), e}}\simeq \widehat{\mathcal{O}_{G_{ev}, e}}\otimes A$, such that
\[\widehat{\mathcal{O}_{\mathsf{gr}(G), e}}(k)\simeq \widehat{\mathcal{O}_{G_{ev}, e}}\otimes (A^+)^k/(A^+)^{k+1} \text{ for } k\geq 0.\]
Moreover, the epimorphism of complete Hopf superalgebras $\widehat{\mathcal{O}_{\mathsf{gr}(G), e}}\to A$, induced by the closed immersion $G_{odd}\to\mathsf{gr}(G)$,
is identified with
\[\widehat{\mathcal{O}_{\mathsf{gr}(G), e}}\to \widehat{\mathcal{O}_{\mathsf{gr}(G), e}}/\widehat{\mathcal{O}_{\mathsf{gr}(G), e}}\widehat{\mathcal{O}_{G_{ev}, e}}^+.\]
Thus $A\simeq \mathsf{gr}(A)$ is a graded Hopf superalgebra, generated by 
component $A(1)=A^+/(A^+)^2$, whose elements are primitive. Let $J$ denote the kernel of the natural
epimorphism $\Lambda(A(1))\to A$ of graded Hopf superalgebras. Choose a basis $a_1, \ldots, a_t$ of $A(1)$. The induction on $t$ shows  that for every $1\leq i\leq t$, the induced epimorphism 
$\Lambda(A(1))/\Lambda(A(1))a_i\to A/Aa_i$
is an isomorphism. Hence $J\subseteq\cap_{1\leq i\leq t}\Lambda(A(1))a_i=\Bbbk a_1\ldots a_t$.
Since $\Bbbk a_1\ldots a_t$ is not Hopf superideal, $J=0$. 

Finally, $A(1)$ is the odd component of $\mathfrak{n}/\mathfrak{n}^2\simeq \mathfrak{m}_e/\mathfrak{m}_e^2$.  That is,  $A(1)\simeq\mathfrak{g}_1^*$, and Lemma \ref{another interpretation of differential} concludes the proof.
\end{proof}
\begin{lm}\label{grading of S_n}
The induced morphism $\mathsf{gr}(N_k)\to\mathsf{gr}(G)$ coincides with the $k$th neighborhood of the closed embedding $e\to\mathsf{gr}(G)$. 
\end{lm}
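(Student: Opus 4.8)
The plan is to pass to the stalk at $e$ and compare two ideals there. Write $A=\mathcal{O}_e$, $\mathfrak{m}=\mathfrak{m}_e$, $I=I_A$, and let $\mathfrak{n}$ be the maximal ideal of the graded local superalgebra $\mathsf{gr}_I(A)$, which by Proposition~\ref{graded superscheme} is canonically $\mathcal{O}_{\mathsf{gr}(G),e}$. Since $\mathsf{gr}$ preserves immersions (Proposition~\ref{graded superscheme}(2)) and leaves the underlying topological space unchanged, the induced morphism $\mathsf{gr}(S_k)\to\mathsf{gr}(G)$ is a closed immersion supported at $e$; by Proposition~\ref{closed (super)subschemes} it presents $\mathsf{gr}(S_k)$ as the closed super-subscheme of $\mathsf{gr}(\mathbb{G})$ cut out on the stalk by $\mathsf{J}_k:=\ker\big(\mathsf{gr}_I(A)\to\mathsf{gr}_{I_B}(B)\big)$, where $B=A/\mathfrak{m}^{k+1}$ and the map is induced by $A\twoheadrightarrow B$. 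The $k$-th neighborhood of $e\to\mathsf{gr}(G)$ is, by definition, the closed super-subscheme defined by $\mathfrak{n}^{k+1}$. So the whole statement reduces to the identity $\mathsf{J}_k=\mathfrak{n}^{k+1}$ in $\mathsf{gr}_I(A)$.

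Next I would identify the ideals. Using that $A\to B$ is onto, together with the modular law $I^n\cap(I^{n+1}+\mathfrak{m}^{k+1})=I^{n+1}+(I^n\cap\mathfrak{m}^{k+1})$, one sees that the $n$-th homogeneous component of $\mathsf{J}_k$ is the image of $I^n\cap\mathfrak{m}^{k+1}$ in $I^n/I^{n+1}$; that is, $\mathsf{J}_k=\mathrm{in}_I(\mathfrak{m}^{k+1})$, the initial ideal of $\mathfrak{m}^{k+1}$ for the $I$-adic filtration, and likewise $\mathfrak{n}=\mathrm{in}_I(\mathfrak{m})$. The inclusion $\mathfrak{n}^{k+1}=\mathrm{in}_I(\mathfrak{m})^{k+1}\subseteq\mathrm{in}_I(\mathfrak{m}^{k+1})=\mathsf{J}_k$ is automatic (one always has $\mathrm{in}_I(N)\mathrm{in}_I(N')\subseteq\mathrm{in}_I(NN')$), so $\mathsf{gr}(S_k)$ is already a closed super-subscheme of the $k$-th neighborhood $\mathsf{gr}(\mathbb{G})^k_e$; it remains to prove that this closed immersion is an isomorphism.

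For that I would use the explicit shape of the stalk extracted from the proof of Proposition~\ref{structure of odd complement}, namely $\mathsf{gr}_I(A)\simeq\overline{A}\otimes\Lambda(\mathfrak{g}^*_1)$ with $\overline{A}=\mathcal{O}_{G_{ev},e}$ and the $\mathbb{N}$-grading given by exterior degree, so that $\mathfrak{n}=\overline{\mathfrak{m}}\otimes\Lambda(\mathfrak{g}^*_1)+\overline{A}\otimes\Lambda^{\geq1}(\mathfrak{g}^*_1)$ and a direct computation gives $(\mathfrak{n}^{k+1})_n=\overline{\mathfrak{m}}^{\max(0,\,k+1-n)}\otimes\Lambda^n(\mathfrak{g}^*_1)$; hence $\dim_K\mathcal{O}(\mathsf{gr}(\mathbb{G})^k_e)=\sum_{n\geq0}\binom{\dim\mathfrak{g}_1}{n}\dim_K\big(\overline{A}/\overline{\mathfrak{m}}^{\,k+1-n}\big)$. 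On the other hand $\dim_K\mathcal{O}(\mathsf{gr}(S_k))=\dim_K\mathsf{gr}_{I_B}(B)=\dim_K B=\dim_K(\mathcal{O}_e/\mathfrak{m}_e^{k+1})$, since $\mathsf{gr}$ preserves the $K$-dimension of a finite-dimensional superalgebra. These two numbers coincide because the $\mathfrak{m}_e$-adic Hilbert function of $\mathcal{O}_e$ is the convolution of those of $\overline{\mathcal{O}_e}$ and of $\Lambda(\mathfrak{g}^*_1)$ (with $\mathfrak{g}^*_1$ in degree $1$) — equivalently $\mathsf{gr}_{\mathfrak{m}_e}(\mathcal{O}_e)\simeq\mathsf{gr}_{\overline{\mathfrak{m}}}(\overline{\mathcal{O}_e})\otimes\Lambda(\mathfrak{g}^*_1)$ — which expresses that $\mathcal{O}_e$ is ``smooth in the odd directions'': the odd generators $\theta_1,\dots,\theta_s$ of $I_{\mathcal{O}_e}$ anticommute and square to zero, and by the exterior-algebra form of $\mathsf{gr}_I(\mathcal{O}_e)$ there are no hidden relations among them, so the $I_{\mathcal{O}_e}$-adic filtration is compatible with the powers of $\mathfrak{m}_e$ (equivalently $\mathrm{in}_I(\mathfrak{m}_e^{k+1})=\mathfrak{n}^{k+1}$). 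Granting this, $\mathsf{gr}(S_k)\hookrightarrow\mathsf{gr}(\mathbb{G})^k_e$ is a closed immersion of superschemes of equal finite $K$-dimension, hence an isomorphism, which is the assertion of the lemma.

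The step I expect to be the main obstacle is precisely this last one: the Hilbert-function identity, equivalently the reverse inclusion $\mathrm{in}_I(\mathfrak{m}_e^{k+1})\subseteq\mathfrak{n}^{k+1}$. It is the only point where one must genuinely use that $\mathcal{O}_e$ is the local superalgebra of a group superscheme at the identity — for an arbitrary local superalgebra only the ``easy'' inclusion $\mathrm{in}_I(\mathfrak{m}^{k+1})\supseteq\mathfrak{n}^{k+1}$ is available — and it is exactly what the exterior-algebra description of $\mathsf{gr}_{I_{\mathcal{O}_e}}(\mathcal{O}_e)$ is there to supply.
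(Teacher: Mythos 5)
Your reduction is sound and in fact parallels the paper's own setup: the paper likewise reduces the lemma to the identity, in the stalk $\mathsf{gr}_{I}(\mathcal{O}_e)\simeq\overline{\mathcal{O}_e}\otimes\Lambda(V)$ with $V=I_{\mathcal{O}_e}/I_{\mathcal{O}_e}^2$, between the kernel of $\mathsf{gr}(\mathcal{O}_e)\to\mathsf{gr}(\mathcal{O}_e/\mathfrak{m}_e^{k+1})$ and $\mathfrak{n}^{k+1}=\oplus_i\,\overline{\mathfrak{m}_e}^{\,k+1-i}\otimes\Lambda^i(V)$, and your ``easy'' inclusion $\mathfrak{n}^{k+1}\subseteq\mathrm{in}_I(\mathfrak{m}_e^{k+1})$ is correct. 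The gap is exactly at the point you yourself flag as the main obstacle: the reverse inclusion, equivalently the Hilbert-function identity $\dim_K\mathcal{O}_e/\mathfrak{m}_e^{k+1}=\sum_n\binom{t}{n}\dim_K\overline{\mathcal{O}_e}/\overline{\mathfrak{m}_e}^{\,k+1-n}$, is not proved but only restated. You justify it by the isomorphism $\mathsf{gr}_{\mathfrak{m}_e}(\mathcal{O}_e)\simeq\mathsf{gr}_{\overline{\mathfrak{m}_e}}(\overline{\mathcal{O}_e})\otimes\Lambda(\mathfrak{g}_1^*)$, which is not established anywhere prior to this lemma (the paper only has the $I$-adic, not the $\mathfrak{m}_e$-adic, associated graded in tensor-product form), and your justification of \emph{that} ends with ``\ldots equivalently $\mathrm{in}_I(\mathfrak{m}_e^{k+1})=\mathfrak{n}^{k+1}$'' --- which is the statement to be proved. ``No hidden relations among the odd generators'' is a fact about $\mathsf{gr}_I$; it does not by itself say anything about how $I^n\cap\mathfrak{m}_e^{k+1}$ sits inside $I^n$, which is what the hard inclusion is about.

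What is missing, and what the paper actually supplies, is the following two-step argument. First, because $I_{\mathcal{O}_e}^{t+1}=0$, the isomorphism $\mathsf{gr}_I(\mathcal{O}_e)\simeq\overline{\mathcal{O}_e}\otimes\Lambda(V)$ can be lifted to a ``canonical form'': fixing odd lifts $v_1,\dots,v_t$ of a basis of $V$, every $f\in\mathcal{O}_e$ is written as $\sum_{J}f_Jv_J$ with even coefficients $f_J$ uniquely determined modulo $I_{\mathcal{O}_e}$ (equivalently modulo $(\mathcal{O}_e)_1^2$). Second --- and this is the genuinely nontrivial computation --- one shows that $f\in\mathfrak{m}_e^{k+1}$ if and only if each $f_J\in\mathfrak{m}_e^{\,k+1-|J|}+I_{\mathcal{O}_e}$; the nonobvious direction requires expanding a product $m_1\cdots m_{k+1}$ of homogeneous elements of $\mathfrak{m}_e$ in canonical form and observing that any term contributing to $v_L$ uses the $\emptyset$-component (which lies in $\mathfrak{m}_e\cap(\mathcal{O}_e)_0$) of at least $k+1-|L|$ of the factors. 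Only after this coefficientwise description of $\mathfrak{m}_e^{k+1}$ is available does the quotient $\mathcal{O}_e/\mathfrak{m}_e^{k+1}$ acquire the form $\oplus_J\,\overline{\mathcal{O}_e}/\overline{\mathfrak{m}_e}^{\,k+1-|J|}\cdot\overline{v_J}$, which yields your dimension count (or, as in the paper, directly identifies the kernel with $\mathfrak{n}^{k+1}$). With that step supplied your argument closes; without it, the proof is circular at its central point.
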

\begin{proof}
It is enough to show that the kernel of the induced epimorphism 
\[\mathsf{gr}(\mathcal{O}_e)\to\mathsf{gr}(\mathcal{O}_e/\mathfrak{m}_e^{k+1})\]
coincides with $\mathfrak{n}^{k+1}$, where $\mathfrak{n}$ is the maximal superideal of $\mathsf{gr}(\mathcal{O}_e)\simeq \mathcal{O}_{\mathsf{gr}(G), e}$. 

Let $V$ denote the superspace $I_{\mathcal{O}_e}/I_{\mathcal{O}_e}^2$. As it was already shown in the previous lemma, the graded superalgebra $\mathsf{gr}(\mathcal{O}_e)$ is naturally isomorphic to $\overline{\mathcal{O}_e}\otimes\Lambda(V)$. Besides, $\mathfrak{n}$ is identified with the graded superideal
$\overline{\mathfrak{m}_e}\otimes 1 + \overline{\mathcal{O}_e}\otimes\Lambda(V)^+$, where $\overline{\mathfrak{m}_e}=\mathfrak{m}_e/I_{\mathcal{O}_e}$. For every $k\geq 0$, there is
\[\mathfrak{n}^{k+1}=\sum_{k+1-t\leq l\leq k+1}\overline{\mathfrak{m}_e}^l\otimes (\oplus_{k+1-l\leq s\leq t}\Lambda^s(V))=\oplus_{0\leq i\leq t}\overline{\mathfrak{m}_e}^{k+1-i}\otimes\Lambda^i(V),\]
where $t=\dim V$. In particular, the graded superalgebra $\mathsf{gr}(\mathcal{O}_e)/\mathfrak{n}^{k+1}$ is isomorphic to
$\oplus_{0\leq i\leq t}\overline{\mathcal{O}_e}/\overline{\mathfrak{m}_e}^{k+1-i}\otimes\Lambda^i(V).$

On the other hand, the isomorphism $\mathsf{gr}(\mathcal{O}_{e})\simeq \overline{\mathcal{O}_e}\otimes\Lambda(V)$ implies that each element $f\in \mathcal{O}_e$ has the "canonical" form
\[f=\sum_{0\leq s\leq t, 1\leq i_1<\ldots < i_s\leq t} f_{i_1, \ldots , i_s}v_{i_1}\ldots v_{i_s}, \]
where the elements $v_1, \ldots v_t\in (\mathcal{O}_e)_1$ form a basis of $V$, and the coefficients 
$f_{i_1, \ldots , i_s}\in (\mathcal{O}_e)_0$ are uniquely defined modulo $I_{\mathcal{O}_e}$, or equivalently, modulo $(\mathcal{O}_e)_1^2$. 

In particular, $f$ belongs to $\mathfrak{m}_e$ if and only if its "free" coefficient (corresponding to $s=0$) does. Moreover, 
$f$ belongs to $\mathfrak{m}_e^{k+1}$ if and only if each $f_{i_1, \ldots , i_s}$ belongs to $\mathfrak{m}_e^{k+1-s}$ modulo $I_{\mathcal{O}_e}$. Therefore, every element $\overline{f}$ of superalgebra
$\mathcal{O}_e/\mathfrak{m}_e^{k+1}$ has a form
\[\sum_{0\leq s\leq t, 1\leq i_1<\ldots < i_s\leq t} \overline{f}_{i_1, \ldots , i_s}\overline{v_{i_1}}\ldots \overline{v_{i_s}},  \text{ where } \overline{f}_{i_1, \ldots , i_s}\in (\mathcal{O}_e/\mathfrak{m}_e^{k+1})_0,\]
and  each $f_{i_1, \ldots , i_s}$ is uniquely defined modulo $\mathfrak{m}_e^{k+1-s}+I_{\mathcal{O}_e}$. The lemma  follows.\end{proof}
\begin{cor}\label{formal neighborhood of identity in gr(G)}
The formal neighborhood of the identity in $\mathsf{gr}(\mathbb{G})$ coincides with
$\cup_{k\geq 0}\mathsf{gr}(\mathbb{N}_k)$. 
\end{cor}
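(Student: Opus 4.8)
The plan is to deduce the corollary directly from Lemma \ref{grading of S_n}, whose content is exactly the computation of the $k$th infinitesimal neighborhood of the identity inside $\mathsf{gr}(\mathbb{G})$. First I would note that, since $\mathbb{G}$ is locally algebraic, so is $\mathsf{gr}(\mathbb{G})$ by Proposition \ref{gr and group objects}; hence the construction of Section~8 applies to $\mathsf{gr}(\mathbb{G})$ and produces its formal neighborhood of the identity $\mathbb{S}'=\bigcup_{k\geq 0}\mathbb{S}'_k$. Here, writing $\mathfrak{n}$ for the maximal super-ideal of $\mathcal{O}_{\mathsf{gr}(G),e}\simeq\mathsf{gr}(\mathcal{O}_e)$, each $\mathbb{S}'_k$ is the $k$th neighborhood of the closed immersion $e\to\mathsf{gr}(G)$, so $\mathbb{S}'_k\simeq\mathrm{SSp}\bigl(\mathcal{O}_{\mathsf{gr}(G),e}/\mathfrak{n}^{k+1}\bigr)$, and the transition monomorphisms $\mathbb{S}'_k\hookrightarrow\mathbb{S}'_{k+1}$ are dual to the canonical surjections $\mathsf{gr}(\mathcal{O}_e)/\mathfrak{n}^{k+2}\to\mathsf{gr}(\mathcal{O}_e)/\mathfrak{n}^{k+1}$.

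Next I would invoke Lemma \ref{grading of S_n} verbatim: for each $k\geq 0$ the morphism $\mathsf{gr}(S_k)\to\mathsf{gr}(G)$ induced by the closed immersion $S_k\to G$ coincides with the $k$th neighborhood of $e\to\mathsf{gr}(G)$. Passing through the equivalence $\mathcal{SVG}\simeq\mathcal{SFG}$ of Theorem \ref{comparison} together with Proposition \ref{closed (super)subschemes}, this says precisely that, as closed super-subschemes of $\mathsf{gr}(\mathbb{G})$, one has $\mathsf{gr}(\mathbb{S}_k)=\mathbb{S}'_k$ for every $k$.

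It then remains to check that these equalities are compatible with the two chains of inclusions, so that they pass to the union. The functor $\mathsf{gr}$ preserves immersions (Proposition \ref{graded superscheme}(2)) and is functorial, so it carries the chain of closed immersions $\mathbb{S}_0\subseteq\mathbb{S}_1\subseteq\cdots$ to a chain $\mathsf{gr}(\mathbb{S}_0)\subseteq\mathsf{gr}(\mathbb{S}_1)\subseteq\cdots$ in $\mathsf{gr}(\mathbb{G})$; on local superalgebras the $k$th transition map of this chain is obtained by applying $\mathsf{gr}$ to $\mathcal{O}_e/\mathfrak{m}_e^{k+2}\to\mathcal{O}_e/\mathfrak{m}_e^{k+1}$ and then identifying $\mathsf{gr}(\mathcal{O}_e/\mathfrak{m}_e^{j+1})$ with $\mathsf{gr}(\mathcal{O}_e)/\mathfrak{n}^{j+1}$ by the isomorphism supplied by Lemma \ref{grading of S_n}. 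Because that isomorphism is induced by the single surjection $\mathsf{gr}(\mathcal{O}_e)\to\mathsf{gr}(\mathcal{O}_e/\mathfrak{m}_e^{j+1})$ and $\mathsf{gr}$ is a functor, the relevant triangle commutes, so this chain coincides with $\mathbb{S}'_0\subseteq\mathbb{S}'_1\subseteq\cdots$. Taking unions over $k$ yields $\mathbb{S}'=\bigcup_{k\geq 0}\mathsf{gr}(\mathbb{S}_k)$, which is the assertion.

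The essential work is already done in Lemma \ref{grading of S_n}; the only point requiring care is the last paragraph, namely verifying that the isomorphisms $\mathsf{gr}(\mathbb{S}_k)\cong\mathbb{S}'_k$ can be chosen compatibly with the two filtrations, and even this is immediate once one observes that both filtrations are determined on the local ring by the same tower of quotients $\widehat{\mathcal{O}_{\mathsf{gr}(G),e}}\twoheadrightarrow\mathsf{gr}(\mathcal{O}_e)/\mathfrak{n}^{k+1}$. So I do not anticipate a genuine obstacle.
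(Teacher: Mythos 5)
Your proposal is correct and follows the same route the paper intends: the corollary is an immediate consequence of Lemma \ref{grading of S_n}, since that lemma identifies each $\mathsf{gr}(\mathbb{S}_k)$, as a closed super-subscheme of $\mathsf{gr}(\mathbb{G})$, with the $k$th neighborhood of the identity, and the formal neighborhood is by definition the union of these. Your extra verification of compatibility of the two chains is harmless but not really needed, because once each $\mathsf{gr}(\mathbb{S}_k)$ and the $k$th neighborhood are equal as subfunctors of $\mathsf{gr}(\mathbb{G})$, their unions inside $\mathsf{gr}(\mathbb{G})$ coincide automatically.
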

If we denote the formal neighborhood of the identity in $\mathsf{gr}(\mathbb{G})$ by $\mathsf{gr}(\mathbb{N})$, then the above corollary states that $\mathsf{gr}(\mathbb{N})$ is "represented" by complete
Hopf superalgebra $\mathsf{gr}(\widehat{\mathcal{O}_e})\simeq\widehat{\mathsf{gr}(\mathcal{O}_e)}$, whose comultiplication and antipode are
$\mathsf{gr}(\Delta)$ and $\mathsf{gr}(S)$, respectively. Moreover, arguing as in Lemma \ref{grading of S_n}, one sees that each element $f\in\widehat{\mathcal{O}_e}$ has a "canonical" form
\[\sum_{0\leq s\leq t, 1\leq i_1<\ldots < i_s\leq t} f_{i_1, \ldots , i_s}v_{i_1}\ldots v_{i_s}, \text{ where } f_{i_1, \ldots , i_s}\in (\widehat{\mathcal{O}_e})_0,\]
and the coefficients $f_{i_1, \ldots , i_s}$ are uniquely defined modulo $(\widehat{\mathcal{O}_e})_1^2$.

It is clear that $\mathbb{N}_{ev}=\mathbb{N}\cap\mathbb{G}_{ev}$ is the formal neighborhood of the identity in $\mathbb{G}_{ev}$. Moreover, $\mathrm{hyp}(\mathbb{G}_{ev})$ can be naturally identified with 
the purely-even subalgebra of $\mathrm{hyp}(\mathbb{G})$, consisting of all $\phi\in \mathrm{hyp}(\mathbb{G})$ such that $\phi(I_{\widehat{\mathcal{O}_e}})=0$. More generally, let $\mathrm{hyp}^{(k)}(\mathbb{G})$ denote the super-subspace $\{\phi\in\mathrm{hyp}(\mathbb{G})\mid \phi(I^{k+1}_{\widehat{\mathcal{O}_e}})=0\}$. In particular, $\mathrm{hyp}(\mathbb{G}_{ev})=\mathrm{hyp}^{(0)}(\mathbb{G})$ and $\mathrm{hyp}(\mathbb{G})=\mathrm{hyp}^{(t)}(\mathbb{G})$.  The following lemma is similar to Lemma \ref{standard properties of hyp}.
\begin{lm}\label{standard properties of another filtration of hyp}
For every nonnegative integers $0\leq k, l\leq t$ there is:
\begin{enumerate}
\item $\mathrm{hyp}^{(k)}(\mathbb{G})\mathrm{hyp}^{(l)}(\mathbb{G})\subseteq \mathrm{hyp}^{(k+l)}(\mathbb{G})$,
\item $\Delta^*(\mathrm{hyp}^{(k)}(\mathbb{G}))\subseteq\sum_{0\leq s\leq k} \mathrm{hyp}^{(s)}(\mathbb{G})\otimes \mathrm{hyp}^{(k-s)}(\mathbb{G})$,
\item $S^*(\mathrm{hyp}^{(k)}(\mathbb{G}))\subseteq \mathrm{hyp}^{(k)}(\mathbb{G})$.
\end{enumerate}
\end{lm}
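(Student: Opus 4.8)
The plan is to derive all three assertions from one structural fact: the $I$-adic filtration $\{I^{k}\}_{k\ge 0}$ of $\widehat{\mathcal O_e}$, where $I=I_{\widehat{\mathcal O_e}}$, is compatible with the complete Hopf superalgebra structure, namely
\[I^{k}I^{l}\subseteq I^{k+l},\qquad \Delta(I^{n})\subseteq\sum_{s+u=n}I^{s}\,\widehat{\otimes}\,I^{u},\qquad S(I^{n})\subseteq I^{n}.\]
Here I would first record that $I$ is generated, as a super-ideal, by the finitely many odd elements $v_{1},\dots,v_{t}$ (with $t=\dim\mathfrak{g}_{1}$) occurring in the canonical form of elements of $\widehat{\mathcal O_e}$ discussed after Corollary \ref{formal neighborhood of identity in gr(G)}; in particular $v_{i}^{2}=0$ and $I^{t+1}=0$, so $\mathrm{hyp}^{(t)}(\mathbb{G})=\mathrm{hyp}(\mathbb{G})$, which is why statement (1) is the special case $l=t$ of the sharper inclusion $\mathrm{hyp}^{(k)}(\mathbb{G})\,\mathrm{hyp}^{(l)}(\mathbb{G})\subseteq\mathrm{hyp}^{(k+l)}(\mathbb{G})$ that I would prove instead. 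The first inclusion above is trivial. For the second and third, since $\Delta$ is a continuous superalgebra morphism and $S$ a continuous superalgebra anti-morphism, both parity preserving, each $\Delta(v_{i})$ is odd, hence lies in the closed ideal $\widehat{\mathcal O_e}\,\widehat{\otimes}\,I+I\,\widehat{\otimes}\,\widehat{\mathcal O_e}$ of $\widehat{\mathcal O_e}\,\widehat{\otimes}\,\widehat{\mathcal O_e}$, and each $S(v_{i})$ is odd, hence lies in $I$; propagating through products of the $v_{i}$ and taking $n$-th powers yields the claimed inclusions.

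Now I would dualize. Statement (3) is immediate: for $\phi\in\mathrm{hyp}^{(k)}(\mathbb{G})$ and $a\in I^{k+1}$ we have $(S^{*}\phi)(a)=\phi(S(a))=0$ since $S(a)\in I^{k+1}$. For the sharpened (1), take $\phi\in\mathrm{hyp}^{(k)}(\mathbb{G})$, $\psi\in\mathrm{hyp}^{(l)}(\mathbb{G})$ and $a\in I^{k+l+1}$; writing $\Delta(a)=a_{(0)}\otimes a_{(1)}$ with each summand, by the second inclusion above, in some $I^{s}\,\widehat{\otimes}\,I^{u}$ with $s+u=k+l+1$, every term of $(\phi\psi)(a)=(-1)^{|\psi||a_{(0)}|}\phi(a_{(0)})\psi(a_{(1)})$ has $a_{(0)}\in I^{k+1}$ or $a_{(1)}\in I^{l+1}$, hence vanishes; thus $\phi\psi$ annihilates $I^{k+l+1}$.

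For (2), fix $\phi\in\mathrm{hyp}^{(k)}(\mathbb{G})$ and $n$ with $\phi\in\mathrm{hyp}_{n}(\mathbb{G})$. By Lemma \ref{standard properties of hyp}(2), $\Delta^{*}\phi\in\mathrm{hyp}_{n}(\mathbb{G})\otimes\mathrm{hyp}_{n}(\mathbb{G})=(B\otimes B)^{*}$, where $B=\widehat{\mathcal O_e}/\widehat{\mathfrak m_{e}}^{\,n+1}$ is finite-dimensional and $\mathrm{hyp}_{n}(\mathbb{G})=B^{*}$; moreover, by the defining property of $\Delta^{*}$, evaluating $\Delta^{*}\phi$ on $a\otimes b$ returns $\phi(ab)$. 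Writing $\overline{I}$ for the image of $I$ in $B$, the relations $I^{p}I^{q}\subseteq I^{k+1}$ for $p+q=k+1$ show that $\Delta^{*}\phi$ annihilates $\sum_{p+q=k+1}\overline{I}^{p}\otimes\overline{I}^{q}$. What remains is a matching of annihilators inside $(B\otimes B)^{*}$: in a basis $\{e_{i}\}$ of $B$ adapted to the filtration $\{\overline{I}^{p}\}$, with $w(e_{i})$ the largest $p$ with $e_{i}\in\overline{I}^{p}$, one has $\sum_{p+q=k+1}\overline{I}^{p}\otimes\overline{I}^{q}=\mathrm{span}\{e_{i}\otimes e_{j}:w(e_{i})+w(e_{j})\ge k+1\}$, while $\sum_{0\le s\le k}(\overline{I}^{s+1})^{\perp}\otimes(\overline{I}^{k-s+1})^{\perp}=\mathrm{span}\{e_{i}^{*}\otimes e_{j}^{*}:w(e_{i})+w(e_{j})\le k\}$; these two coordinate subspaces are mutual annihilators, and $(\overline{I}^{s+1})^{\perp}=\mathrm{hyp}^{(s)}(\mathbb{G})\cap\mathrm{hyp}_{n}(\mathbb{G})$, so $\Delta^{*}\phi\in\sum_{0\le s\le k}\mathrm{hyp}^{(s)}(\mathbb{G})\otimes\mathrm{hyp}^{(k-s)}(\mathbb{G})$. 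The argument otherwise runs parallel to the (folklore) proof of Lemma \ref{standard properties of hyp}; the only point requiring a moment's care is this last basis bookkeeping for (2), and I do not expect any substantial obstacle.
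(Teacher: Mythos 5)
Your proof is correct, and since the paper gives no argument for this lemma (it merely declares it ``similar to'' the folklore Lemma \ref{standard properties of hyp}), your route --- establishing that the $I_{\widehat{\mathcal{O}_e}}$-adic filtration is compatible with $\Delta$ and $S$ because $I_{\widehat{\mathcal{O}_e}}$ is generated by the odd elements $v_1,\dots,v_t$ and both maps preserve parity, then dualizing, with the pairing for part (2) reduced to the finite-dimensional quotients $B=\widehat{\mathcal{O}_e}/\widehat{\mathfrak{m}_e}^{\,n+1}$ where the annihilator bookkeeping in an adapted (homogeneous) basis is legitimate --- is exactly the standard argument the authors intend. Your reading of the misprint in (1) (the second exponent should be $l$, the sharper inclusion $\mathrm{hyp}^{(k)}(\mathbb{G})\,\mathrm{hyp}^{(l)}(\mathbb{G})\subseteq\mathrm{hyp}^{(k+l)}(\mathbb{G})$ being what is actually needed, with $\mathrm{hyp}^{(m)}(\mathbb{G})=\mathrm{hyp}(\mathbb{G})$ for $m\geq t$) is also the right one.
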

Lemma \ref{standard properties of another filtration of hyp} immediately implies that \[\mathsf{gr}(\mathrm{hyp}(\mathbb{G}))=\oplus_{0\leq k\leq t}\mathrm{hyp}^{(k)}(\mathbb{G})/\mathrm{hyp}^{(k-1)}(\mathbb{G})\] has a natural structure of (finitely) graded Hopf superalgebra. 

\begin{pr}\label{isomorphism of graded hyp and hyp of graded}
There is a canonical isomorphism $\mathsf{gr}(\mathrm{hyp}(\mathbb{G}))\simeq\mathrm{hyp}(\mathsf{gr}(\mathbb{G}))$ of graded Hopf superalgebras. 
\end{pr}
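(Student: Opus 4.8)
The plan is to realise both sides as the (restricted, i.e.\ ``finite'') continuous dual of the same complete graded Hopf superalgebra, namely $\mathsf{gr}(\widehat{\mathcal{O}_e}) \simeq \widehat{\mathsf{gr}(\mathcal{O}_e)}$, which by Corollary \ref{formal neighborhood of identity in gr(G)} ``represents'' the formal neighborhood of the identity in $\mathsf{gr}(\mathbb{G})$, and then to observe that the $\mathrm{hyp}^{(\bullet)}$-filtration on $\mathrm{hyp}(\mathbb{G})$ is exactly the one dual to the $I_{\widehat{\mathcal{O}_e}}$-power filtration on $\widehat{\mathcal{O}_e}$. First I would record the structure side. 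By Proposition \ref{gr and group objects}, $\mathsf{gr}(\mathbb{G})$ is again locally algebraic, so $\mathrm{hyp}(\mathsf{gr}(\mathbb{G})) = \bigcup_n (\widehat{\mathcal{O}_{\mathsf{gr}(G),e}}/\widehat{\mathfrak{n}}^{\,n+1})^{*}$ is defined; and by Lemma \ref{grading of S_n} and the proof of Proposition \ref{structure of odd complement} the grading on $\widehat{\mathcal{O}_{\mathsf{gr}(G),e}} \simeq \mathsf{gr}(\widehat{\mathcal{O}_e})$ is the one induced by $\widehat{\mathcal{O}_e} = I^0 \supseteq I^1 \supseteq \cdots \supseteq I^{t+1} = 0$ with $I = I_{\widehat{\mathcal{O}_e}}$ and $t = \dim\mathfrak{g}_1$, so that $\mathsf{gr}(\widehat{\mathcal{O}_e})(k) = I^k/I^{k+1}$ and $\mathsf{gr}(\widehat{\mathcal{O}_e}) \simeq \widehat{\mathcal{O}_{G_{ev},e}}\otimes\Lambda(\mathfrak{g}_1^*)$, with comultiplication $\mathsf{gr}(\Delta)$ and antipode $\mathsf{gr}(S)$. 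Consequently $\mathrm{hyp}(\mathsf{gr}(\mathbb{G}))$ inherits this grading: a functional is homogeneous of degree $k$ precisely when it kills every $I^{j}/I^{j+1}$ with $j\neq k$, and since the grading is bounded by $t$ we get $\mathrm{hyp}(\mathsf{gr}(\mathbb{G})) = \bigoplus_{0\le k\le t}\mathrm{hyp}(\mathsf{gr}(\mathbb{G}))(k)$, with $\mathrm{hyp}(\mathsf{gr}(\mathbb{G}))(k)$ the finite dual of $I^{k}/I^{k+1}$.

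Next I would construct the comparison map. The Hopf pairing $\langle\,,\,\rangle : \mathrm{hyp}(\mathbb{G})\times\widehat{\mathcal{O}_e}\to K$ of Section 8 identifies $\mathrm{hyp}(\mathbb{G})$ with the space of continuous functionals on $\widehat{\mathcal{O}_e}$ vanishing on some power of $\widehat{\mathfrak{m}_e}$, and under this identification $\mathrm{hyp}^{(k)}(\mathbb{G})$ is exactly the annihilator of $I^{k+1}$. Restriction therefore gives, for each $k$, a well-defined map $\mathrm{hyp}^{(k)}(\mathbb{G})/\mathrm{hyp}^{(k-1)}(\mathbb{G}) \to \mathrm{hyp}(\mathsf{gr}(\mathbb{G}))(k)$: a functional vanishing on $I^{k+1}$ restricts to $I^{k}$ and descends to $I^{k}/I^{k+1}$, independently of its class modulo $\mathrm{hyp}^{(k-1)}(\mathbb{G})$ (whose members kill $I^{k}$). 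Injectivity is immediate, and surjectivity is a matter of extending a functional along the surjection $\widehat{\mathcal{O}_e}/I^{k+1}\twoheadrightarrow\widehat{\mathcal{O}_e}/I^{k}$; passing first to a finite-dimensional quotient $\widehat{\mathcal{O}_e}/(I^{k+1}+\widehat{\mathfrak{m}_e}^{\,N})$ this is plain linear algebra, so every ``finite'' functional on $I^{k}/I^{k+1}$ lifts. Summing over $0\le k\le t$ yields a graded vector-space isomorphism $\mathsf{gr}(\mathrm{hyp}(\mathbb{G})) \to \mathrm{hyp}(\mathsf{gr}(\mathbb{G}))$.

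Finally, compatibility with the Hopf superalgebra structures. The product of $\mathrm{hyp}(\mathbb{G})$ is dual to $\Delta$ and its coproduct dual to the multiplication of $\widehat{\mathcal{O}_e}$; Lemma \ref{standard properties of another filtration of hyp} says both respect the $\mathrm{hyp}^{(\bullet)}$-filtration, and, dually, the $I$-power filtration is multiplicative and satisfies $\Delta(I^k)\subseteq\sum_{s}I^{s}\widehat{\otimes}I^{k-s}$ because $\Delta$ is parity-preserving and $I$ is odd-generated. Hence, using Proposition \ref{gr commutes with tensor product} to identify $\mathsf{gr}$ of a tensor-product filtration with the tensor product of the graded pieces, the passage to $\mathsf{gr}$ turns $\langle\,,\,\rangle$ into a Hopf pairing $\mathsf{gr}(\mathrm{hyp}(\mathbb{G}))\times\mathsf{gr}(\widehat{\mathcal{O}_e})\to K$; but $\mathsf{gr}(\widehat{\mathcal{O}_e})=\widehat{\mathcal{O}_{\mathsf{gr}(G),e}}$ with comultiplication $\mathsf{gr}(\Delta)$, multiplication $\mathsf{gr}$ of that of $\widehat{\mathcal{O}_e}$, and antipode $\mathsf{gr}(S)$, and $\mathrm{hyp}(\mathsf{gr}(\mathbb{G}))$ is by definition the finite dual of this Hopf superalgebra under evaluation. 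Matching the two pairings then forces the bijection of the previous paragraph to be an isomorphism of graded Hopf superalgebras (the antipode being treated exactly as the coproduct, with $S$ in place of $\Delta$). The step I expect to require the most care is precisely this last one: checking that $\mathsf{gr}$ of the Hopf pairing $\langle\,,\,\rangle$ is literally the evaluation pairing between $\mathrm{hyp}(\mathsf{gr}(\mathbb{G}))$ and $\widehat{\mathcal{O}_{\mathsf{gr}(G),e}}$, i.e.\ that passage to the associated graded commutes simultaneously with all the Hopf-theoretic dualities (and the attendant signs in the super setting).
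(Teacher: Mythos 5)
Your proposal is correct and follows essentially the same route as the paper: identify $\mathrm{hyp}(\mathsf{gr}(\mathbb{G}))$ with $\bigoplus_{0\le k\le t}$ of the continuous duals of $I_{\widehat{\mathcal{O}_e}}^{k}/I_{\widehat{\mathcal{O}_e}}^{k+1}$, map $\mathrm{hyp}^{(k)}(\mathbb{G})/\mathrm{hyp}^{(k-1)}(\mathbb{G})$ into it by restriction via the Hopf pairing, prove bijectivity, and read off the Hopf compatibility from the fact that the graded pairing is the evaluation pairing for the graded Hopf structure $(\mathsf{gr}(\Delta),\mathsf{gr}(S))$ on $\mathsf{gr}(\widehat{\mathcal{O}_e})$. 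The only point to make explicit in your lifting step is that a degree-$k$ functional's continuity (vanishing on a power of $\widehat{\mathfrak{n}}$) indeed lets it factor through $\widehat{\mathcal{O}_e}/(I^{k+1}+\widehat{\mathfrak{m}_e}^{\,N})$; this is exactly what the paper's ``canonical form'' basis argument (as in Lemma \ref{grading of S_n}) provides, and the paper uses that same basis to write down the lift $\psi'$ explicitly where you invoke finite-dimensional linear algebra.
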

\begin{proof}
First of all, the superspace $\mathrm{hyp}(\mathsf{gr}(\mathbb{G}))$ is isomorphic to $\oplus_{0\leq k\leq t}(I_{\widehat{\mathcal{O}_e}}^k/I_{\widehat{\mathcal{O}_e}}^{k+1})^{\underline{*}}$,
where $(I_{\widehat{\mathcal{O}_e}}^k/I_{\widehat{\mathcal{O}_e}}^{k+1})^{\underline{*}}$ consists of all continuous linear maps $I_{\widehat{\mathcal{O}_e}}^k/I_{\widehat{\mathcal{O}_e}}^{k+1}\to \Bbbk$, and $\Bbbk$ is regarded as a discrete vector space. In fact, arguing as in  Lemma \ref{grading of S_n}, one sees that the $\widehat{\mathfrak{n}}$-adic topology of $\mathsf{gr}(\widehat{\mathcal{O}_e})$ (respectively, the factor-topology of $I_{\widehat{\mathcal{O}_e}}^k/I_{\widehat{\mathcal{O}_e}}^{k+1}$)  is equivalent to its $\widehat{\overline{\mathfrak{m}_e}}$-adic topology.
Thus, $\mathsf{gr}(\widehat{\mathcal{O}_e})$ is isomorphic to
$\oplus_{0\leq k\leq t}I_{\widehat{\mathcal{O}_e}}^k/I_{\widehat{\mathcal{O}_e}}^{k+1}$ as a (linear) topological superspace.

Next, we have a natural superspace morphism 
\[\mathrm{hyp}^{(k)}(\mathbb{G})/\mathrm{hyp}^{(k-1)}(\mathbb{G})\to (I_{\widehat{\mathcal{O}_e}}^{k-1}/I_{\widehat{\mathcal{O}_e}}^k)^{\underline{*}},\] induced by the Hopf pairing $< \ , \ >$, which is obviously injective.  On the other hand, every $\phi\in (I_{\widehat{\mathcal{O}_e}}^{k-1}/I_{\widehat{\mathcal{O}_e}}^k)^{\underline{*}}$ is some $\psi\in \mathsf{gr}(\widehat{\mathcal{O}_e})^{\underline{*}}$ restricted on  $I_{\widehat{\mathcal{O}_e}}^{k-1}/I_{\widehat{\mathcal{O}_e}}^k$. Since $\psi$ is continuous, there is $l\geq 0$ such that $\psi(\widehat{\mathfrak{n}}^{l+1})=0$.

Choose a basis of 
$\mathsf{gr}(\widehat{\mathcal{O}_e})$ consisting of elements
$\overline{f} v_{i_1}\ldots v_{i_s}$, where $1\leq i_1<\ldots < i_s\leq t, 0\leq s\leq t$, and $f\in(\widehat{\mathcal{O}_e})_0$. We also assume that for every $k\geq 0$, the elements $\overline{f}$, that belong to $\widehat{\overline{\mathfrak{m}_e}}^k\setminus \widehat{\overline{\mathfrak{m}_e}}^{k+1}$, form a basis
of $\widehat{\overline{\mathfrak{m}_e}}^k /\widehat{\overline{\mathfrak{m}_e}}^{k+1}$. Then the elements
$f v_{i_1}\ldots v_{i_s}$ form a basis of $\widehat{\mathcal{O}_e}$. Moreover, the superideal $\widehat{\mathfrak{m}_e}^{k+1}$ has a basis consisting of all $f v_{i_1}\ldots v_{i_s}$ such that 
$f\in \widehat{\mathfrak{m}_e}^{k+1-s}+ I_{\widehat{\mathcal{O}_e}}$ (or equivalently, $\overline{f}\in \widehat{\overline{\mathfrak{m}_e}}^{k+1-s}$). Similarly, the superideal $I_{\widehat{\mathcal{O}_e}}^k$ has a basis consisting of all $f v_{i_1}\ldots v_{i_s}$ with $s\geq k$.

Set $\psi'(f v_{i_1}\ldots v_{i_s})=\psi(\overline{f} v_{i_1}\ldots v_{i_s})$ for each basic element
$f v_{i_1}\ldots v_{i_s}$. Then $\psi'$ belongs to $\mathrm{hyp}^{(k)}(\mathbb{G})$ and its image in
$(I_{\widehat{\mathcal{O}_e}}^{k-1}/I_{\widehat{\mathcal{O}_e}}^k)^{\underline{*}}$ coincides with $\phi$. In other words, we have a natural graded pairing
\[\mathsf{gr}(\mathrm{hyp}(\mathbb{G}))\times \mathsf{gr}_{I_{\widehat{\mathcal{O}_e}}}(\widehat{\mathcal{O}_e})\to \Bbbk\]
that induces an isomorphism $\mathsf{gr}(\mathrm{hyp}(\mathbb{G}))\simeq\mathrm{hyp}(\mathsf{gr}(\mathbb{G}))$ of superspaces. It is a Hopf pairing by the definition of Hopf superalgebra structure on
$\mathsf{gr}(\widehat{\mathcal{O}_e})$. The proposition is proven.
\end{proof}
Let $\gamma_1, \ldots, \gamma_t$ be a basis of $\mathfrak{g}_1\simeq V^*$, dual to the basis $v_1, \ldots, v_t$ of $V$. 
\begin{pr}\label{a canonical form in hyp(G)}
Every element $\phi\in\mathrm{hyp}_k(\mathbb{G})$ can be uniquely expressed as
\[\phi=\sum_{0\leq s\leq t, 1\leq i_1<\ldots < i_s\leq t}\phi_{i_1, \ldots, i_s}\gamma_{i_1}\ldots\gamma_{i_s}, \text{ where }\phi_{i_1, \ldots, i_s}\in \mathrm{hyp}_{k-s}(\mathbb{G}_{ev}).\]
\end{pr}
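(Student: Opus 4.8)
\emph{Proof proposal.} The plan is to obtain the decomposition by dualizing the ``canonical form'' of $\widehat{\mathcal{O}_e}$ recorded just above the statement, doing the essential bookkeeping on the associated graded object for the filtration $\mathrm{hyp}^{(\bullet)}(\mathbb{G})$. First I would reduce existence and uniqueness to a single spanning statement via a dimension count. The canonical form of $\mathcal O_e/\mathfrak m_e^{k+1}$ (Lemma~\ref{grading of S_n}) exhibits $\widehat{\mathcal O_e}/\widehat{\mathfrak m_e}^{k+1}$ as an internal direct sum, over subsets $S=\{i_1<\dots<i_s\}\subseteq\{1,\dots,t\}$, of copies of $\overline{\mathcal O_e}/\overline{\mathfrak m_e}^{\,k+1-s}$ placed in the $v_{i_1}\cdots v_{i_s}$-slot; dualizing gives $\dim_K\mathrm{hyp}_k(\mathbb{G})=\sum_{S}\dim_K\mathrm{hyp}_{k-|S|}(\mathbb{G}_{ev})$, which equals the dimension of the right-hand side of the proposition once that side is known to be a direct sum. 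Since each $\phi_{i_1,\dots,i_s}\gamma_{i_1}\cdots\gamma_{i_s}$ with $\phi_{i_1,\dots,i_s}\in\mathrm{hyp}_{k-s}(\mathbb{G}_{ev})$ already lies in $\mathrm{hyp}_k(\mathbb{G})$ by Lemma~\ref{standard properties of hyp}(1), it suffices to prove that these products span $\mathrm{hyp}_k(\mathbb{G})$; uniqueness then follows from the dimension count.

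For spanning I would induct on $d$ along the finite filtration $\mathrm{hyp}_k^{(0)}(\mathbb{G})\subseteq\dots\subseteq\mathrm{hyp}_k^{(t)}(\mathbb{G})=\mathrm{hyp}_k(\mathbb{G})$ of Lemma~\ref{standard properties of another filtration of hyp}. The base case $d=0$ is the identification $\mathrm{hyp}_k^{(0)}(\mathbb{G})=\mathrm{hyp}_k(\mathbb{G}_{ev})$ noted before that lemma (a functional killing $\widehat{\mathfrak m_e}^{k+1}$ and $I_{\widehat{\mathcal O_e}}$ factors through $\widehat{\overline{\mathcal O_e}}/\widehat{\overline{\mathfrak m_e}}^{\,k+1}$), and is the $S=\emptyset$ term. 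For the step, first observe $\gamma_i\in\mathrm{hyp}^{(1)}(\mathbb{G})\setminus\mathrm{hyp}^{(0)}(\mathbb{G})$ (since $\gamma_i$ kills $1$ and $\widehat{\mathfrak m_e}^2\supseteq I_{\widehat{\mathcal O_e}}^2$ but $\gamma_i(v_i)\neq 0$), whence $\gamma_{i_1}\cdots\gamma_{i_s}\in\mathrm{hyp}^{(s)}(\mathbb{G})$ by Lemma~\ref{standard properties of another filtration of hyp}(1), so $\phi_{i_1,\dots,i_s}\gamma_{i_1}\cdots\gamma_{i_s}\in\mathrm{hyp}_k^{(s)}(\mathbb{G})$. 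Now I would invoke Proposition~\ref{isomorphism of graded hyp and hyp of graded}, identifying $\mathsf{gr}(\mathrm{hyp}(\mathbb{G}))$ with $\mathrm{hyp}(\mathsf{gr}(\mathbb{G}))$ as graded Hopf superalgebras, together with Proposition~\ref{structure of odd complement} and the split exact sequence $\mathbb{E}\to\mathbb{G}_{odd}\to\mathsf{gr}(\mathbb{G})\to\mathbb{G}_{ev}\to\mathbb{E}$: since $\mathsf{gr}(\mathbb{G})\simeq\mathbb{G}_{ev}\ltimes\mathbb{G}_{odd}$ with $\mathbb{G}_{odd}$ affine and represented by $\Lambda(\mathfrak g_1^*)$ on primitive generators, $\mathrm{hyp}(\mathsf{gr}(\mathbb{G}))$ is the smash (crossed) product of $\mathrm{hyp}(\mathbb{G}_{ev})$ with $\mathrm{hyp}(\mathbb{G}_{odd})=\Lambda(\mathfrak g_1)$ (the graded Hopf dual of $\Lambda(\mathfrak g_1^*)$), the multiplication map $\mathrm{hyp}(\mathbb{G}_{ev})\otimes\Lambda(\mathfrak g_1)\to\mathrm{hyp}(\mathsf{gr}(\mathbb{G}))$ being a filtered isomorphism. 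Matching the gradings and order filtrations, the degree-$d$ component of $\mathrm{hyp}_k(\mathsf{gr}(\mathbb{G}))$ is $\bigoplus_{|S|=d}\mathrm{hyp}_{k-d}(\mathbb{G}_{ev})\,\overline\gamma_S$, where $\overline\gamma_j$ is the image of $\gamma_j$ and $\{\overline\gamma_j\}$ is a basis of $\mathrm{hyp}_1(\mathbb{G}_{odd})=\mathfrak g_1$. Hence, for $\phi\in\mathrm{hyp}_k^{(d)}(\mathbb{G})$, its class in $\mathrm{hyp}_k^{(d)}(\mathbb{G})/\mathrm{hyp}_k^{(d-1)}(\mathbb{G})$ equals $\sum_{|S|=d}\overline{\phi_S\gamma_S}$ for suitable $\phi_S\in\mathrm{hyp}_{k-d}(\mathbb{G}_{ev})$, so $\phi-\sum_{|S|=d}\phi_S\gamma_S\in\mathrm{hyp}_k^{(d-1)}(\mathbb{G})$ and the induction closes.

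The step that will take real care — and which I expect to be the main obstacle — is the twofold filtration bookkeeping inside the smash-product description of $\mathrm{hyp}(\mathsf{gr}(\mathbb{G}))$: one must check that $\mathrm{hyp}_k(\mathbb{G}_{ev}\ltimes\mathbb{G}_{odd})=\sum_{i+j=k}\mathrm{hyp}_i(\mathbb{G}_{odd})\mathrm{hyp}_j(\mathbb{G}_{ev})$, that $\mathrm{hyp}_i(\mathbb{G}_{odd})=\bigoplus_{|S|\le i}K\gamma_S$ is the filtration of $\Lambda(\mathfrak g_1)$ dual to the $\Lambda(\mathfrak g_1^*)^+$-adic one, and that the grading supplied by Proposition~\ref{isomorphism of graded hyp and hyp of graded} assigns $\gamma_S$ exactly the degree $|S|$ — the last point resting on the fact that $\mathfrak g_1^*$ is the degree-one piece of the algebra representing $\mathbb{G}_{odd}$, equivalently (via Lemma~\ref{grading of S_n}) the odd part of $I_{\widehat{\mathcal O_e}}/I_{\widehat{\mathcal O_e}}^2$. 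An alternative that avoids $\mathsf{gr}(\mathbb{G})$ altogether is to compute directly that the functional dual to the $v_{i_1}\cdots v_{i_s}$-slot is, up to a nonzero scalar, $\gamma_{i_1}\cdots\gamma_{i_s}$ left-multiplied by an element of $\mathrm{hyp}(\mathbb{G}_{ev})$: using that each $\gamma_i$ is primitive and kills $1$, $\widehat{\mathfrak m_e}^2$ and all even elements, while $\gamma_i(v_j)=\delta_{ij}$, the only surviving terms of the iterated coproduct $\Delta^{(s)}(f\,v_{j_1}\cdots v_{j_s})$ are those placing one odd factor in each $\gamma$-slot, which forces $\{j_1,\dots,j_s\}=\{i_1,\dots,i_s\}$; there the only subtlety is keeping track of the Koszul signs.
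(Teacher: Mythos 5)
Your plan is essentially the paper's proof in a different packaging: the paper also passes to the associated graded object (it simply declares it suffices to treat $\mathbb{G}\simeq\mathsf{gr}(\mathbb{G})$, the justification being exactly the $\mathrm{hyp}^{(\bullet)}$-filtration bookkeeping and Proposition \ref{isomorphism of graded hyp and hyp of graded} that you spell out), and it then settles the graded case by the very pairing computation you sketch as your ``alternative'': writing $\widehat{\mathcal{O}_e}\simeq A\otimes\Lambda(V)$, using that the pairing is graded, and showing by induction on $s$ that $\langle\phi\gamma_{i_1}\cdots\gamma_{i_s},\,f v_{j_1}\cdots v_{j_l}\rangle\neq0$ forces $s=l$ and matching indices, with the value reducing to a pairing over $\mathrm{hyp}(\mathbb{G}_{ev})$. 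What you do differently is organizational: you get uniqueness from the dimension count supplied by Lemma \ref{grading of S_n} and prove only spanning by induction along $\mathrm{hyp}^{(d)}(\mathbb{G})$, whereas the paper extracts existence and uniqueness simultaneously from the triangularity of the pairing; your version makes the reduction step explicit (a genuine service, since the paper is terse there) at the price of leaving the graded-case core — that the order-$k$, degree-$d$ piece of $\mathrm{hyp}(\mathsf{gr}(\mathbb{G}))$ is exactly $\bigoplus_{|S|=d}\mathrm{hyp}_{k-d}(\mathbb{G}_{ev})\overline{\gamma}_S$ — as the flagged verification, which is precisely what the paper proves in detail. One caution: the claim that one can ``avoid $\mathsf{gr}(\mathbb{G})$ altogether'' is too optimistic as stated, because in the ungraded $\widehat{\mathcal{O}_e}$ the elements $v_j$ are not skew-primitive, so terms of $\Delta^{(s)}$ can place several odd letters coming from a single $v_j$ (or from $f$) into the $\gamma$-slots; the ``one odd factor per slot, hence matching index sets'' count is only valid modulo the $I_{\widehat{\mathcal{O}_e}}$-adic filtration, i.e.\ after passing to the associated graded Hopf superalgebra — which is what both the paper and your main route do, so your proof is fine, but the functional dual to the $v_S$-slot equals $\psi\gamma_S$ only up to terms of lower $\mathrm{hyp}^{(\bullet)}$-filtration, not on the nose.
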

\begin{proof}
It is sufficient to prove this statement for a (locally algebraic) group superscheme $\mathbb{G}\simeq\mathsf{gr}(\mathbb{G})$. In this case, we have $\widehat{\mathcal{O}_e}\simeq A\otimes\Lambda(V)$, where
$A\simeq\overline{\widehat{\mathcal{O}_e}}$ is a closed Hopf subalgebra of $\widehat{\mathcal{O}_e}$. Since $\widehat{\mathcal{O}_e}$ is also a graded Hopf superalgebra with $\widehat{\mathcal{O}_e}(1)=A\otimes V$, then each $v\in V$ is skew
primitive, i.e. $\Delta'(v)=v\otimes a+ b\otimes v$, where $a$ and $b$ are group-like elements from $A$, and $\Delta'=\mathsf{gr}(\Delta)$.

Further, the Hopf pairing $\mathrm{hyp}(\mathbb{G})\times\widehat{\mathcal{O}_e}\to \Bbbk$ is graded. Therefore, it is enough to show that 
$<\phi\gamma_{i_1}\ldots\gamma_{i_s}, f v_{j_1}\ldots v_{j_l}>\neq 0$ implies $s=l, i_1=j_1, \ldots, i_s=j_s$. Moreover, in this case
\[<\phi\gamma_{i_1}\ldots\gamma_{i_s}, f v_{i_1}\ldots v_{i_s}>=<\phi, fb_{i_1}\ldots b_{i_s}>,\]
where $\Delta'(v_i)=a_i\otimes v_i+v_i\otimes b_i, 1\leq i\leq t$.
We use an induction on $s$. 

If $s=0$, then the statement is obvious. Let $s\geq 1$. We have
\[\Delta'(f v_{j_1}\ldots v_{j_l})=( f_{(0)}\otimes f_{(1)})\prod_{1\leq p\leq l}(v_{j_p}\otimes a_{j_p}+ b_{j_p}\otimes v_{j_p})=\]\[\sum_{K\subseteq\{j_1, \ldots, j_l\}} \pm f_{(0)}b_{\overline{K}}v_K\otimes f_{(1)} a_{K}v_{\overline{K}},\]
where $\Delta'(f)=f_{(0)}\otimes f_{(1)}$ and $v_K$ (respectively, $a_K$ or $b_K$) denotes a product of all $v_{j_p}$ (respectively, $a_{j_p}$ or $b_{j_p}$) for $j_p\in K$ (in an arbitrary order), and $\overline{K}=\{j_1, \ldots, j_l\}\setminus K$ is the complement of $K$. Then
\[<(\phi\gamma_{i_1}\ldots\gamma_{i_{s-1}})\gamma_{i_s}, f v_{j_1}\ldots v_{j_l}>=\]\[
\sum_{K\subseteq\{j_1, \ldots, j_l\}}\pm <\phi\gamma_{i_1}\ldots\gamma_{i_{s-1}}, f_{(0)}b_{\overline{K}}v_K><\gamma_{i_s}, f_{(1)} a_{K}v_{\overline{K}}>\]
contains nonzero summands for $K=\{i_1, \ldots, i_{s-1}\}$ only. Moreover, $\overline{K}$ should be a singleton, say $\overline{K}=\{j_p\}$. Thus $s=l$ and $<\gamma_{i_s}, f_{(1)} a_{K}v_{j_p}>=\epsilon_A(f_{(1)})<\gamma_{i_s}, v_{j_p}>\neq 0$ implies $i_s=j_p$. Besides, $<(\phi\gamma_{i_1}\ldots\gamma_{i_{s-1}})\gamma_{i_s}, f v_{j_1}\ldots v_{j_l}>$ is equal to
\[<\phi\gamma_{i_1}\ldots\gamma_{i_{s-1}}, f_{(0)} b_{i_s} v_{i_1}\ldots v_{i_{s-1}}>\epsilon_A(f_{(1)})=<\phi, fb_{i_1}\ldots b_{i_s}>.\]
The proposition is proven.
\end{proof}

\section{The fundamental equivalence}

\subsection{The category of Harish-Chandra pairs}

In this subsection, we follow \cite{masshib}. 

A pair $(\mathsf{G}, \mathsf{V})$, where $\mathsf{G}$ is a locally algebraic group scheme, and $\mathsf{V}$ is a  $\mathsf{G}$-module, is a \emph{Harish-Chandra pair} if the following conditions hold:
\begin{enumerate}
\item[(a)] There is a functor $\mathsf{V}_a\times\mathsf{V}_a\to\mathsf{g}_a$, denoted by $[ \ , \ ]$, where $\mathsf{g}$ is the Lie algebra of $\mathsf{G}$, such that for every $R\in\mathsf{Alg}_{\Bbbk}$, the map
$\mathsf{V}_a(R)\times\mathsf{V}_a(R)\to\mathsf{g}_a(R)$ is $R$-bilinear and symmetric.
\item[(b)] The functor $\mathsf{V}_a\times\mathsf{V}_a\to\mathsf{g}_a$ is $\mathsf{G}$-equivariant for the diagonal action of $\mathsf{G}$ on $\mathsf{V}_a\times\mathsf{V}_a$ and the adjoint action of $\mathsf{G}$ on $\mathsf{g}_a$.
\item[(c)] The induced action of $\mathsf{g}$ on $\mathsf{V}$ satisfies $[v, v]\cdot v=0$ for $v\in\mathsf{V}$.
\end{enumerate}  

The morphism of Harish-Chandra pairs $(\mathsf{G}, \mathsf{V})\to (\mathsf{H}, \mathsf{W})$ is a morphism of pairs 
$(\mathsf{f}, u)$ as in Section 7, such that the diagram
\[\begin{array}{ccc}
\mathsf{V}_a\times \mathsf{V}_a & \to & \mathsf{g}_a \\
\downarrow & & \downarrow \\
\mathsf{W}_a\times \mathsf{W}_a & \to & \mathsf{h}_a 
\end{array}\]
is commutative, where the first vertical map is $\mathsf{u}_a\times\mathsf{u}_a$ and the second vertical map is $\mathrm{d}_e\mathsf{f}$. The category of Harish-Chandra pairs is denoted by $\mathsf{HCP}$.

To every algebraic group superscheme $\mathbb{G}$, we associate a Harish-Chandra pair $(\mathsf{G}, \mathsf{V})$ with $\mathsf{G}=\mathbb{G}_{ev}$ (regarded as a group scheme), and $\mathsf{V}=\mathfrak{g}_1$.  Besides, the action of $\mathsf{G}$ on $\mathsf{V}$ is induced by the adjoint action of $\mathbb{G}$, and $[ \ , \ ]$ is the restriction of Lie super-bracket of $\mathfrak{g}$ on its odd component.
\begin{lm}\label{first functor}
The correspondence $\Phi : \mathbb{G}\mapsto (\mathsf{G}, \mathsf{V})$ is a functor from $\mathcal{SFG}_{la}$ to $\mathsf{HCP}$.
\end{lm}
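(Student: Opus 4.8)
The plan is to verify three things in turn: that $\Phi(\mathbb{G})=(\mathbb{G}_{ev},\mathfrak{g}_1)$ is genuinely an object of $\mathsf{HCP}$ (understood in its locally algebraic variant, since $\mathbb{G}_{ev}$ need only be locally algebraic); that a morphism ${\bf f}\colon\mathbb{G}\to\mathbb{H}$ in $\mathcal{SFG}_{la}$ is carried to a morphism $({\bf f}_{ev},\mathrm{d}_e{\bf f}(K)|_{\mathfrak{g}_1})$ of Harish-Chandra pairs; and that $\Phi$ respects identities and composition. The recurring preliminary is that Harish-Chandra pairs are set up over the subcategory $\mathsf{Alg}_K\subseteq\mathsf{SAlg}_K$ of purely-even superalgebras, so all super-geometric data must be restricted along this inclusion. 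Since $\mathbb{G}$ is locally algebraic, $\mathfrak{g}=(\mathfrak{m}_e/\mathfrak{m}_e^2)^*$ is finite-dimensional, so $\mathsf{V}=\mathfrak{g}_1$ is a finite-dimensional vector space; from $I_{\mathcal{O}_e}\equiv(\mathcal{O}_e)_1\pmod{\mathfrak{m}_e^2}$ one identifies the cotangent space of $\mathbb{G}_{ev}$ at $e$ with $(\mathfrak{m}_e/\mathfrak{m}_e^2)_0$, whence $\mathsf{g}=\mathrm{Lie}(\mathbb{G}_{ev})=\mathfrak{g}_0$ and the Lie super-bracket restricts to the bracket of $\mathbb{G}_{ev}$; and the restriction of $\mathrm{Ad}$, which is parity preserving by Lemma~\ref{linearity of adjoint action}, makes $\mathsf{V}=\mathfrak{g}_1$ a $\mathbb{G}_{ev}$-module.

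For the three Harish-Chandra axioms: restricting the super-bracket functor $\mathrm{Lie}(\mathbb{G})\times\mathrm{Lie}(\mathbb{G})\to\mathrm{Lie}(\mathbb{G})$ (well-defined by Lemma~\ref{differential is a Lie superalgebra morphism}) to $\mathsf{V}_a\times\mathsf{V}_a$ over purely-even $R$ makes the sign $(-1)^{|r||y|}$ in $[x\otimes r,y\otimes r']=(-1)^{|r||y|}[x,y]\otimes rr'$ disappear, so the induced functor $\mathsf{V}_a\times\mathsf{V}_a\to\mathsf{g}_a$ is $R$-bilinear, and it is symmetric because $(B3)$ specialized to two odd arguments reads $[x,y]=[y,x]$; this gives axiom (a). Axiom (b) --- $\mathsf{G}$-equivariance for the diagonal action on $\mathsf{V}_a\times\mathsf{V}_a$ and the adjoint action on $\mathsf{g}_a$ --- is Lemma~\ref{adjoint action commutes with super-bracket} restricted to $\mathbb{G}_{ev}$ and to the odd component. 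For axiom (c), the action of $\mathsf{g}$ on $\mathsf{V}=\mathfrak{g}_1$ obtained by differentiating $\mathrm{Ad}$ is $\mathrm{ad}$, so $[v,v]\cdot v=\mathrm{ad}([v,v])(v)=[[v,v],v]$ for $v\in\mathfrak{g}_1$, and this vanishes by Lemma~\ref{(B2) for any g}, i.e. it is precisely the identity $(B2)$.

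On morphisms, put $\Phi({\bf f})=({\bf f}_{ev},u)$ with $u=\mathrm{d}_e{\bf f}(K)|_{\mathfrak{g}_1}$. The morphism of local superalgebras at $e$ induced by ${\bf f}$ is parity preserving, hence so is $\mathrm{d}_e{\bf f}(K)$, so $u$ does land in $\mathfrak{h}_1$; moreover $\mathrm{d}_e({\bf f}_{ev})$ agrees with the restriction of $\mathrm{d}_e{\bf f}(K)$ to $\mathfrak{g}_0$ by functoriality of cotangent spaces under $X\mapsto X_{ev}$. That $({\bf f}_{ev},u)$ is a morphism of pairs in the sense of Section~7 follows from the intertwining identity $\mathrm{d}_e{\bf f}(R)\,\mathrm{Ad}(R)(g)=\mathrm{Ad}(R)({\bf f}(R)(g))\,\mathrm{d}_e{\bf f}(R)$ established inside the proof of Lemma~\ref{differential is a Lie superalgebra morphism}, specialized to $g\in\mathbb{G}_{ev}$; and the commuting bracket-square required of a morphism in $\mathsf{HCP}$ is again Lemma~\ref{differential is a Lie superalgebra morphism} with the sign trivialized over purely-even $R$. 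Finally $\Phi(\mathrm{id}_{\mathbb{G}})=\mathrm{id}$ and $\Phi({\bf g}{\bf f})=\Phi({\bf g})\Phi({\bf f})$ are immediate from functoriality of $X\mapsto X_{ev}$ and of the construction ${\bf f}\mapsto\mathrm{d}_e{\bf f}$ of Section~5. I expect the only step needing genuine care rather than routine bookkeeping to be this systematic passage to $\mathsf{Alg}_K$: one must confirm that restricting $\mathrm{Lie}(\mathbb{G})$, $\mathrm{Ad}$ and $[ \ , \ ]$ along $\mathsf{Alg}_K\hookrightarrow\mathsf{SAlg}_K$ recovers, on the even and odd components, precisely $\mathrm{Lie}(\mathbb{G}_{ev})$, the $\mathbb{G}_{ev}$-module $\mathsf{V}=\mathfrak{g}_1$ with its restricted adjoint action, and the Harish-Chandra bracket, with all super-signs becoming trivial; the remaining assertions are direct consequences of the lemmas of Sections 6--8.
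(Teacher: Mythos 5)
Your proposal is correct and follows exactly the route the paper intends: the paper states Lemma \ref{first functor} without proof, treating it as an immediate consequence of the preceding material, and your verification fills in precisely those routine checks --- bilinearity and symmetry of the restricted bracket from Lemma \ref{differential is a Lie superalgebra morphism} with the signs trivialized over $\mathsf{Alg}_K$, equivariance from Lemma \ref{adjoint action commutes with super-bracket}, axiom (c) from Lemma \ref{(B2) for any g}, and compatibility of morphisms from the intertwining identity in the proof of Lemma \ref{differential is a Lie superalgebra morphism}. Your attention to restricting the super data along $\mathsf{Alg}_K\hookrightarrow\mathsf{SAlg}_K$ and identifying $\mathrm{Lie}(\mathbb{G}_{ev})$ with $\mathfrak{g}_0$ is exactly the bookkeeping the paper leaves implicit.
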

We call $\Phi$ the \emph{Harish-Chandra functor}.

Recall that each group $\mathbb{N}(R)$ is identified with $\mathsf{Gpl}(\mathrm{hyp}(\mathbb{G})\otimes R)$.
For every $v\in\mathsf{V}=\mathfrak{g}_1, x\in\mathsf{g}=\mathfrak{g}_0$ and $a\in R_1, b\in R_0$ such that $b^2=0$,
define the group-like elements 
\[f(b , x)=\epsilon_e^*\otimes 1+x\otimes b, e(a, v)=\epsilon_e^* \otimes 1+v\otimes a .\]
These elements are images of the group-like elements $e^{\epsilon_0(x\otimes 1)}$ and $e^{-\epsilon_1(v\otimes 1)}$ from $\mathbb{N}(R[\epsilon_0, \epsilon_1])$ under a homomorphism of groups, induced by $\epsilon_0\mapsto b, \epsilon_1\mapsto a$. Using the identity involving group commutators 
from Lemma \ref{differential is a Lie superalgebra morphism}, we obtain the first three relations in the list:
\begin{enumerate}
\item $[e(a, v), e(a', v')]=f(-aa', [v, v'])$,
\item $[f(b, x), e(a, v)]=e(ba, [x, v])$,
\item $[f(b, x), f(b', x')]=f(bb', [x, x'])$,
\item $e(a, v)e(a', v)=f(-aa', \frac{1}{2}[v, v])e(a+a', v)$.
\end{enumerate}
A direct computation can derive the fourth relation.
The relations $(1), (2), (3)$ are the relations $(\mathrm{i}), (\mathrm{iii}), (\mathrm{iv})$ from \cite[Lemma 4.2]{masshib}, and the relation $(4)$ is $(\mathrm{ii})$ therein.

Also, for every $g\in\mathsf{G}(R)$ we have 
\begin{enumerate}
\item[(5)] $e(a, v)^g=e(a, \mathrm{Ad}(g)v)) \text{ and } f(b, x)^g=f(b, \mathrm{Ad}(g)x))$.
\end{enumerate}
Define the group subfunctors $\bf\Sigma$ and $\mathsf{F}$ of $\mathbb{N}$ and $\mathsf{N}=\mathbb{N}_{ev}$ respectively, such that ${\bf\Sigma}(R)$ is generated by all the elements $f(b, x)$ and $e(a, v)$, but $\mathsf{F}(R)$ is generated only by $f(b, x)$, where 
$a, b\in R$ and $R\in\mathsf{SAlg}_{\Bbbk}$.

\begin{lm}\label{decomposition of Sigma}
The following statements hold:
\begin{enumerate}
\item $\mathsf{F}(R)={\bf \Sigma}(R)\cap\mathsf{N}(R)$.
\item If $v_1, \ldots , v_t$ is a basis of $\mathsf{V}$, then each element of ${\bf \Sigma}(R)$ is uniquely expressed in the form
\[f e(a_1, v_1)\ldots e(a_t, v_t),\]
where $f\in\mathsf{F}(R)$ and $a_i\in R_1$ for $1\leq i\leq t$.
\item Both group subfunctors $\mathsf{F}$ and $\bf\Sigma$ are invariant for the conjugation action of $\mathsf{G}$.
\end{enumerate}
\end{lm}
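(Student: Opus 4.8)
The plan is to work ($R$ by $R$) functorially, deriving everything from the five relations (1)--(5) listed just above, together with the description of $\mathbb{S}(R)$ as group-like elements of $\mathrm{hyp}(\mathbb{G})\otimes R$ and the canonical-form expansion from Proposition~\ref{a canonical form in hyp(G)}.

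First I would prove (2), since (1) follows from it and (3) is essentially a separate (easier) point. For existence of the normal form: relations (1) and (2) in the list let me move any factor $f(b,x)$ to the left past any $e(a,v)$ at the cost of introducing new factors of the form $f(\ast,\ast)$ (and $e(a,v)$-factors stay $e$-factors up to such $f$'s); so an arbitrary word in the generators can be rewritten with all $\mathsf{F}(R)$-factors collected on the left, followed by a word in the $e(a_i,v_i)$. Then relation (4) together with bilinearity of $[\ ,\ ]$ lets me (a) merge repeated occurrences of the \emph{same} $e(\cdot, v_i)$ and (b), combined with (1), reorder the $e(a_i,v_i)$ into the fixed order $v_1,\dots,v_t$, again only spawning extra $\mathsf{F}(R)$-factors. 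This gives the claimed expression $f\,e(a_1,v_1)\cdots e(a_t,v_t)$. For \emph{uniqueness} I would pass to $\mathrm{hyp}(\mathbb{G})\otimes R$: by Proposition~\ref{a canonical form in hyp(G)} every element of $\mathrm{hyp}(\mathbb{G})\otimes R$ has a unique expansion $\sum \phi_{i_1\dots i_s}\gamma_{i_1}\cdots\gamma_{i_s}$ with $\phi_{i_1\dots i_s}\in\mathrm{hyp}(\mathbb{G}_{ev})\otimes R$; computing the group-like element $f\,e(a_1,v_1)\cdots e(a_t,v_t)$ in these coordinates, the top-degree-in-$\gamma$ part reads off $a_t$ (up to a group-like unit), then descending induction on the number of $\gamma$-factors recovers the remaining $a_i$ and finally $f$. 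The key input is that $e(a,v)=\epsilon_e^\ast\otimes 1 + v\otimes a$ contributes exactly the $\gamma_{i}$-linear coefficient $a_i$ when $v=v_i$, and that $f\in\mathsf{F}(R)\subseteq\mathsf{S}(R)=\mathbb{S}_{ev}(R)$ kills all odd generators, i.e. lies in $\mathrm{hyp}^{(0)}$.

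Given (2), statement (1) is immediate: clearly $\mathsf{F}(R)\subseteq{\bf\Sigma}(R)\cap\mathsf{S}(R)$; conversely if $g\in{\bf\Sigma}(R)$ lies in $\mathsf{S}(R)=\mathbb{S}_{ev}(R)$, write $g=f\,e(a_1,v_1)\cdots e(a_t,v_t)$; as an element of $\mathrm{hyp}(\mathbb{G})\otimes R$ it must lie in $\mathrm{hyp}(\mathbb{G}_{ev})\otimes R$, i.e. annihilate $I_{\widehat{\mathcal{O}_e}}$, which by the coordinate computation of the previous paragraph forces all $a_i=0$, whence $g=f\in\mathsf{F}(R)$. (Alternatively, since $R\mapsto\mathsf{S}(R)$ is the even part, the $e(a,v)$-coordinates are odd and must vanish.) Statement (3) follows directly from relation (5): conjugation by $g\in\mathsf{G}(R)$ sends $f(b,x)\mapsto f(b,\mathrm{Ad}(g)x)$ and $e(a,v)\mapsto e(a,\mathrm{Ad}(g)v)$, so it maps the generating set of $\mathsf{F}(R)$ (resp. ${\bf\Sigma}(R)$) into itself; one should note here that $\mathrm{Ad}(g)$ preserves $\mathsf{g}=\mathfrak{g}_0$ and $\mathsf{V}=\mathfrak{g}_1$ by Lemma~\ref{linearity of adjoint action}, and that, by Lemma~\ref{affinization of conjugation action} and Corollary~\ref{another definition of Ad}, this conjugation action is the honest action of $\mathbb{G}^{aff}\supseteq\mathsf{G}$, so it is well-defined and functorial in $R$. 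Functoriality of all three subgroup assignments in $R$ is clear since the generators $f(b,x),e(a,v)$ are defined functorially.

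The main obstacle is the uniqueness half of (2): one must be careful that the ``canonical form'' map is genuinely injective, i.e. that distinct tuples $(f;a_1,\dots,a_t)$ give distinct group-like elements. This is where I expect to spend real effort, and the cleanest route is the filtered/graded bookkeeping via $\mathrm{hyp}^{(k)}(\mathbb{G})$ and Proposition~\ref{a canonical form in hyp(G)}: peel off the $\gamma$-degree-$t$ component to isolate $a_t$, quotient, and induct downward, using at each stage that $f(b,x)$ and the already-determined $e(a_j,v_j)$ contribute only in strictly lower $\gamma$-degree. A secondary subtlety is verifying relation (4) itself --- the ``direct computation'' --- but since this is asserted in the excerpt I will take it as given; the genuinely new work is organizing the rewriting in the right order (collect $\mathsf{F}$ on the left \emph{first}, then sort and merge the $e$'s) so that the process terminates and the relations suffice.
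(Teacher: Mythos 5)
Your strategy is the same as the paper's: the paper proves (1) and (2) by invoking Proposition \ref{a canonical form in hyp(G)} and copying the proof of Proposition 4.3 of \cite{masshib} (existence of the normal form by rewriting with relations $(1)$--$(4)$, uniqueness via the canonical expansion in $\mathrm{hyp}(\mathbb{G})\otimes R$), and it gets (3) from relation $(5)$, exactly as you propose. Your deduction of (1) from (2) and your handling of (3) are fine.

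There is, however, one step that fails as written: the uniqueness extraction. For $g=f\,e(a_1,v_1)\cdots e(a_t,v_t)$ the expansion of Proposition \ref{a canonical form in hyp(G)} (with the $v_i$ of this section playing the role of the $\gamma_i$) is $g=\sum_{i_1<\cdots<i_s}\pm f\,a_{i_1}\cdots a_{i_s}\,\gamma_{i_1}\cdots\gamma_{i_s}$, so the top $\gamma$-degree coefficient is $\pm f\,a_1\cdots a_t$; it does not ``read off $a_t$'', and no descending induction can recover the $a_i$ from the higher coefficients, since the $a_j$ are odd (hence nilpotent zero-divisors) and cannot be cancelled. The correct extraction goes in the opposite direction and needs only degrees $0$ and $1$: the coefficient of the empty word is $f$ and the coefficient of $\gamma_i$ is $\pm f a_i$; since $f$ is group-like, hence a unit of $\mathrm{hyp}(\mathbb{G})\otimes R$, and $r\mapsto 1\otimes r$ is injective, two normal forms that agree force first $f=f'$ and then $a_i=a_i'$. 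With this repair your argument for (1) also closes (for $g\in\mathsf{S}(R)$ all nonempty coefficients $\pm f a_I$ vanish, so $a_i=0$). Finally, note that your ``collect $\mathsf{F}$ first, then sort the $e$'s'' rewriting is not genuinely two-phase: sorting and merging the $e$'s spawns new $f$'s that must again be pushed left; termination holds because every spawned generator carries the newly introduced odd parameter as a factor and odd elements of $R$ square to zero, so second-generation commutators are trivial --- this bookkeeping is exactly the content of the proof of Proposition 4.3 in \cite{masshib} that the paper cites rather than reproves.
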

\begin{proof}
To prove the first two statements, use Proposition \ref{a canonical form in hyp(G)}, and copy the proof of \cite[Proposition 4.3]{masshib}. The third statement follows from the relation $(5)$ above.
\end{proof}
\begin{rem}
The symmetric version of Lemma \ref{decomposition of Sigma} (2), where $f$ appears on the right-hand side, is also valid.  
\end{rem}
Now we can define a group subfunctor $\mathbb{G}'$ of $\mathbb{G}$, such that
$\mathbb{G}'(R)=\mathsf{G}(R){\bf \Sigma}(R)$ for $R\in\mathsf{SAlg}_{\Bbbk}$. 
\begin{cor}\label{decomposition of G'}
Each element of $\mathbb{G}'(R)$ is uniquely expressed in the form
\[ge(a_1, v_1)\ldots e(a_t, v_t),\]
where $g\in\mathsf{G}(R)$, and $a_i\in R_1$ for $1\leq i\leq t$. 
\end{cor}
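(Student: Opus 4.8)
The plan is to derive Corollary~\ref{decomposition of G'} directly from Lemma~\ref{decomposition of Sigma}, treating existence and uniqueness separately. For existence: by definition $\mathbb{G}'(R)=\mathsf{G}(R){\bf\Sigma}(R)$, so any element of $\mathbb{G}'(R)$ has the form $g\sigma$ with $g\in\mathsf{G}(R)$ and $\sigma\in{\bf\Sigma}(R)$; applying Lemma~\ref{decomposition of Sigma}(2) to $\sigma$ writes it as $f\,e(a_1,v_1)\cdots e(a_t,v_t)$ with $f\in\mathsf{F}(R)\subseteq\mathsf{S}(R)\subseteq\mathsf{G}(R)$ (the latter since $\mathsf{S}=\mathbb{S}_{ev}$ is a subfunctor of $\mathbb{G}_{ev}=\mathsf{G}$), hence $g\sigma=(gf)\,e(a_1,v_1)\cdots e(a_t,v_t)$ with $gf\in\mathsf{G}(R)$, giving the asserted form.

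For uniqueness, suppose $g\,e(a_1,v_1)\cdots e(a_t,v_t)=g'\,e(a'_1,v_1)\cdots e(a'_t,v_t)$ in $\mathbb{G}(R)$. I would push everything to one side to get $(g')^{-1}g = e(a'_1,v_1)\cdots e(a'_t,v_t)\,\bigl(e(a_1,v_1)\cdots e(a_t,v_t)\bigr)^{-1}$. The left-hand side lies in $\mathsf{G}(R)=\mathbb{G}_{ev}(R)$, while the right-hand side lies in ${\bf\Sigma}(R)$; hence it lies in ${\bf\Sigma}(R)\cap\mathsf{S}(R)$, since $\mathsf{G}(R)\cap\mathbb{S}(R)=\mathbb{S}_{ev}(R)=\mathsf{S}(R)$. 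By Lemma~\ref{decomposition of Sigma}(1) this intersection is $\mathsf{F}(R)$. Now I rewrite the equation as $e(a_1,v_1)\cdots e(a_t,v_t)\cdot\bigl(\text{element of }\mathsf{F}(R)\bigr)=e(a'_1,v_1)\cdots e(a'_t,v_t)$, i.e. two expressions of an element of ${\bf\Sigma}(R)$ in the normal form of Lemma~\ref{decomposition of Sigma}(2) but with the $\mathsf{F}$-part appearing on the right — which is legitimate by the Remark after Lemma~\ref{decomposition of Sigma}. The uniqueness clause of Lemma~\ref{decomposition of Sigma}(2) (in its right-handed form) then forces the $\mathsf{F}$-factor to be trivial and $a_i=a'_i$ for all $i$, whence also $g=g'$.

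Alternatively, and perhaps more cleanly, I would phrase the whole argument as identifying the "coordinates" functorially. The map $\mathsf{G}\times (R\mapsto R_1^t)\to\mathbb{G}'$, $(g,a_1,\dots,a_t)\mapsto g\,e(a_1,v_1)\cdots e(a_t,v_t)$, is surjective on $R$-points by the existence part; injectivity follows from the previous paragraph. One only needs to be slightly careful that the $e(a_i,v_i)$ are being multiplied in a fixed order and that the map is well defined (each $e(a_i,v_i)$ is a genuine element of $\mathbb{S}(R)\subseteq\mathbb{G}(R)$ because $a_i\in R_1$ satisfies $a_i^2=0$ automatically in a supercommutative superalgebra, so no extra square-zero hypothesis is needed here, unlike for $f(b,x)$).

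The only real obstacle is bookkeeping: one must make sure that the conversion between the left-handed normal form of Lemma~\ref{decomposition of Sigma}(2) and its right-handed counterpart (the Remark) is invoked correctly, and that the identification $\mathsf{G}(R)\cap\mathbb{S}(R)=\mathsf{S}(R)$ is justified — this last point is immediate from $\mathsf{S}=\mathbb{S}_{ev}=\mathbb{S}\cap\mathbb{G}_{ev}$, which was noted just before Lemma~\ref{standard properties of another filtration of hyp}. Everything else is a direct, essentially formal, consequence of Lemma~\ref{decomposition of Sigma} and the definition $\mathbb{G}'(R)=\mathsf{G}(R){\bf\Sigma}(R)$, so the proof should be short.
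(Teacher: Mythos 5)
Your argument is correct and is exactly the (unwritten) derivation the paper intends: the corollary is stated as an immediate consequence of Lemma \ref{decomposition of Sigma}, using $\mathbb{G}'(R)=\mathsf{G}(R){\bf\Sigma}(R)$, $\mathsf{F}\subseteq\mathsf{S}\subseteq\mathsf{G}$ for existence, and ${\bf\Sigma}(R)\cap\mathsf{G}(R)={\bf\Sigma}(R)\cap\mathsf{S}(R)=\mathsf{F}(R)$ for uniqueness. One small correction to your bookkeeping: from $g\,e(a_1,v_1)\cdots e(a_t,v_t)=g'\,e(a'_1,v_1)\cdots e(a'_t,v_t)$ the element $f=(g')^{-1}g$ of $\mathsf{F}(R)$ enters on the \emph{left}, giving $f\,e(a_1,v_1)\cdots e(a_t,v_t)=e(a'_1,v_1)\cdots e(a'_t,v_t)$, so the left-handed uniqueness of Lemma \ref{decomposition of Sigma}(2) applies verbatim and the Remark's right-handed form is not needed; indeed, moving $f$ to the right by conjugating with the $e(a_i,v_i)$ would not obviously keep it in $\mathsf{F}(R)$ (by relation (2), conjugation of an $f$ by an $e$ produces extra $e$-factors), so it is good that this step is unnecessary.
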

Let $\bf E$ denote a subfunctor
of $\bf\Sigma$ such that ${\bf E}(R)$ consists of all products
\[e(a_1, v_1)\ldots e(a_t, v_t), \text{ where } a_i\in R_1 \text{ for } 1\leq i\leq t.\]
It is clear that ${\bf E}\simeq\mathrm{SSp}(\Lambda(\mathsf{V^*}))$ is a purely-odd affine superscheme.
Then $\mathbb{G}'$ is isomorphic to $\mathsf{G}\times {\bf E}$ as a superscheme. In particular, $\mathbb{G}'$ is a (locally algebraic) group superscheme.
\begin{theorem}\label{coincidence of G and G'}
We have $\mathbb{G}=\mathbb{G}'$.
\end{theorem}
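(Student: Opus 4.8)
The plan is to prove the a priori stronger statement that the canonical inclusion $\iota\colon\mathbb{G}'\hookrightarrow\mathbb{G}$, which is a morphism in $\mathcal{SFG}$ by the very construction of $\mathbb{G}'$, is an isomorphism; since $\mathbb{G}'$ is a subfunctor of $\mathbb{G}$, this is precisely the assertion $\mathbb{G}=\mathbb{G}'$. Both $\mathbb{G}$ and $\mathbb{G}'\simeq\mathsf{G}\times{\bf E}$ are superschemes of locally finite type, so by Proposition~\ref{graded superscheme}(3) (transported through the equivalence of Theorem~\ref{comparison}) it suffices to show that $\mathsf{gr}(\iota)\colon\mathsf{gr}(\mathbb{G}')\to\mathsf{gr}(\mathbb{G})$ is an isomorphism.

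Next I would exploit the splitting of $\mathsf{gr}$ on group superschemes. By Corollary~\ref{i and q for group superschemes} and Lemma~\ref{embedding in and projection onto even super-subscheme} one has semidirect product decompositions
\[
\mathsf{gr}(\mathbb{G})\simeq\mathbb{G}_{ev}\ltimes\mathbb{G}_{odd},\qquad
\mathsf{gr}(\mathbb{G}')\simeq\mathbb{G}'_{ev}\ltimes\mathbb{G}'_{odd},
\]
and the naturality of $\mathbf{i}_{\bullet}$ and $\mathbf{q}_{\bullet}$ shows that $\mathsf{gr}(\iota)$ respects them: it restricts to $\iota_{ev}\colon\mathbb{G}'_{ev}\to\mathbb{G}_{ev}$ on the acting factors and to the induced morphism $\mathbb{G}'_{odd}\to\mathbb{G}_{odd}$ on the normal factors. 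A morphism of semidirect products which is an isomorphism on both the acting factor and the normal factor is an isomorphism (its inverse is assembled factorwise), so it is enough to check these two restrictions separately.

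The even part is immediate: $\mathbb{G}'$ contains $\mathsf{G}=\mathbb{G}_{ev}$ as a subfunctor, whence $\mathbb{G}_{ev}=(\mathbb{G}_{ev})_{ev}\subseteq\mathbb{G}'_{ev}\subseteq\mathbb{G}_{ev}$ and $\iota_{ev}$ is an isomorphism. (Equivalently: over a purely even $R$ one has ${\bf E}(R)=\{e_R\}$, and by Lemma~\ref{decomposition of Sigma} ${\bf\Sigma}(R)=\mathsf{F}(R)\subseteq\mathsf{S}(R)\subseteq\mathsf{G}(R)$, so $\mathbb{G}'(R)=\mathsf{G}(R)$.) For the odd part, Proposition~\ref{structure of odd complement} identifies $\mathbb{G}_{odd}$ and $\mathbb{G}'_{odd}$ with the affine purely-odd group superschemes represented by $\Lambda(\mathfrak{g}_1^*)$ and $\Lambda((\mathfrak{g}')_1^*)$, and says that $\mathbb{G}'_{odd}\to\mathbb{G}_{odd}$ is induced by the transpose of $\mathrm{d}_e\iota\colon(\mathfrak{g}')_1\to\mathfrak{g}_1$; so it remains to see that this last map is an isomorphism. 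Since $\mathbb{G}'\simeq\mathsf{G}\times{\bf E}$ with $\mathsf{G}$ purely even and ${\bf E}\simeq\mathrm{SSp}(\Lambda(\mathsf{V}))$, we get $\mathcal{O}_{\mathbb{G}',e}\simeq\mathcal{O}_{\mathsf{G},e}\otimes\Lambda(\mathsf{V})$, hence $(\mathfrak{g}')_1\simeq\mathsf{V}^{*}$ and $\dim(\mathfrak{g}')_1=\dim\mathsf{V}=\dim\mathfrak{g}_1$. On the other hand, for a basis $v_1,\dots,v_t$ of $\mathsf{V}$ the elements $e(\epsilon_1,v_i)\in\mathbb{G}'(K[\epsilon_0,\epsilon_1])$ are carried by $\iota$ to $e^{-\epsilon_1(v_i\otimes 1)}\in\mathbb{S}(K[\epsilon_0,\epsilon_1])\subseteq\mathbb{G}(K[\epsilon_0,\epsilon_1])$, which span $\mathfrak{g}_1\subseteq\mathrm{Lie}(\mathbb{G})(K)$; thus $\mathrm{d}_e\iota$ maps $(\mathfrak{g}')_1$ onto $\mathfrak{g}_1$, and by equality of dimensions it is an isomorphism. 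Consequently $\mathsf{gr}(\iota)$ is an isomorphism, and therefore so is $\iota$, i.e.\ $\mathbb{G}=\mathbb{G}'$.

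I expect the main obstacle to be the bookkeeping in the second step: tracking the $\mathbf{i}$–$\mathbf{q}$ splitting of $\mathsf{gr}$ along $\iota$ so as to realise $\mathsf{gr}(\iota)$ as $\mathrm{id}_{\mathbb{G}_{ev}}\ltimes(\mathbb{G}'_{odd}\to\mathbb{G}_{odd})$, and then confirming via Proposition~\ref{structure of odd complement} that the odd component is exactly the transpose of $\mathrm{d}_e\iota$, after which the dimension count above forces it to be an isomorphism. The remaining ingredients — the reduction to $\mathsf{gr}$ via Proposition~\ref{graded superscheme}(3) and the even-part sandwich — are formal.
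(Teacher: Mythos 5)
Your proposal is correct and follows essentially the same route as the paper: reduce to showing $\mathsf{gr}(\mathbb{G}')\to\mathsf{gr}(\mathbb{G})$ is an isomorphism via Proposition~\ref{graded superscheme}(3), note $\mathbb{G}'_{ev}=\mathbb{G}_{ev}$ for the even factor, and use Proposition~\ref{structure of odd complement} to reduce the odd factor to the identification of the odd Lie components through the elements $e(\pm\epsilon_1,v_i)=e^{\mp\epsilon_1(v_i\otimes 1)}$. Your extra bookkeeping (the explicit semidirect-product splitting of $\mathsf{gr}(\iota)$ and the dimension count for $\mathrm{d}_e\iota$ on odd parts) just spells out what the paper's terser argument leaves implicit.
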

\begin{proof}
By Proposition \ref{graded superscheme}, one has to prove that the induced morphism
$\mathsf{gr}(\mathbb{G}')\to \mathsf{gr}(\mathbb{G})$ of group superschemes is an isomorphism. 
  
Since $\mathbb{G}_{ev}=\mathbb{G}'_{ev}$, Lemma \ref{embedding in and projection onto even super-subscheme} implies that $\mathsf{gr}(\mathbb{G}')_{ev}$ is mapped isomorphically onto $\mathsf{gr}(\mathbb{G}')_{ev}$. By Proposition \ref{structure of odd complement}, it is sufficient to prove that the embedding
$\mathbb{G}'\to\mathbb{G}$ induces an isomorphism of odd components of their Lie superalgebras. But as it has been already observed, in both $\mathbb{G}$ and $\mathbb{G}'$, the elements
$e^{\epsilon_1 (v_i\otimes 1)}$ coincide with $e(-\epsilon_1, v_i)$. The theorem is proven.  
\end{proof}
\begin{cor}\label{decomposition of S}
Let $\mathsf{N}$ denote $\mathbb{N}_{ev}$. Then $\mathbb{N}=\mathsf{N}{\bf E}={\bf E}\mathsf{N}$.
\end{cor}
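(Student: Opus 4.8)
The plan is to read off both factorizations from the explicit normal form for $\mathbb{G}'(R)=\mathbb{G}(R)$ supplied by Corollary~\ref{decomposition of G'} and Theorem~\ref{coincidence of G and G'}, using only two bookkeeping facts: (i) ${\bf E}$ is a subfunctor of ${\bf\Sigma}$, hence of $\mathbb{S}$, while $\mathsf{F}$ is a subfunctor of $\mathsf{S}=\mathbb{S}_{ev}\subseteq\mathbb{G}_{ev}=\mathsf{G}$; and (ii) $\mathbb{S}_{ev}=\mathbb{S}\cap\mathbb{G}_{ev}$, recorded in the remark after Corollary~\ref{formal neighborhood of identity in gr(G)}. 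Since every object in sight is a group subfunctor of $\mathbb{G}$, it suffices to check the two claimed equalities of subgroups of $\mathbb{G}(R)$ for each $R\in\mathsf{SAlg}_K$.

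For $\mathbb{S}=\mathsf{S}{\bf E}$, fix $R$ and $h\in\mathbb{S}(R)$. By Theorem~\ref{coincidence of G and G'} we have $h\in\mathbb{G}(R)=\mathbb{G}'(R)=\mathsf{G}(R){\bf E}(R)$, so Corollary~\ref{decomposition of G'} writes $h=gz$ with $g\in\mathsf{G}(R)=\mathbb{G}_{ev}(R)$ and $z\in{\bf E}(R)$. Since ${\bf E}(R)\subseteq\mathbb{S}(R)$ and $\mathbb{S}(R)$ is a subgroup, $g=hz^{-1}\in\mathbb{S}(R)$, whence $g\in\mathbb{S}(R)\cap\mathbb{G}_{ev}(R)=\mathsf{S}(R)$ by (ii). Thus $h=gz\in\mathsf{S}(R){\bf E}(R)$, and the opposite inclusion $\mathsf{S}(R){\bf E}(R)\subseteq\mathbb{S}(R)$ is immediate from $\mathsf{S},{\bf E}\subseteq\mathbb{S}$.

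For $\mathbb{S}={\bf E}\mathsf{S}$ I would first note that $\mathbb{G}'(R)={\bf E}(R)\mathsf{G}(R)$: indeed $\mathbb{G}'(R)=\mathsf{G}(R){\bf\Sigma}(R)={\bf\Sigma}(R)\mathsf{G}(R)$ because $\mathsf{G}$ normalizes ${\bf\Sigma}$ (Lemma~\ref{decomposition of Sigma}(3)), and then the symmetric form of Lemma~\ref{decomposition of Sigma}(2) noted in the Remark following it gives ${\bf\Sigma}(R)={\bf E}(R)\mathsf{F}(R)$, so $\mathbb{G}'(R)={\bf E}(R)\mathsf{F}(R)\mathsf{G}(R)={\bf E}(R)\mathsf{G}(R)$ using $\mathsf{F}(R)\subseteq\mathsf{G}(R)$. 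Now for $h\in\mathbb{S}(R)\subseteq\mathbb{G}'(R)$ write $h=zg$ with $z\in{\bf E}(R)$, $g\in\mathsf{G}(R)$; then $g=z^{-1}h\in\mathbb{S}(R)\cap\mathbb{G}_{ev}(R)=\mathsf{S}(R)$, so $h\in{\bf E}(R)\mathsf{S}(R)$, and again the reverse inclusion is trivial. Passing from these $R$-by-$R$ equalities to the equalities of subfunctors is automatic.

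There is no real obstacle here once Theorem~\ref{coincidence of G and G'} is available — that theorem is where all the weight sits. The only point demanding care is that ${\bf E}$ is \emph{not} a group subfunctor, so one must not invert an element inside ${\bf E}(R)$; the argument above respects this by forming the inverse $z^{-1}$ only inside the genuine group $\mathbb{S}(R)$ (equivalently $\mathbb{G}(R)$) and then merely observing that the surviving even factor automatically lands in $\mathbb{S}(R)\cap\mathbb{G}_{ev}(R)=\mathsf{S}(R)$.
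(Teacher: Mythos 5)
Your proof is correct and takes essentially the paper's own route: the first equality is argued exactly as in the paper (write $h=gz$ using Corollary \ref{decomposition of G'} and Theorem \ref{coincidence of G and G'}, use ${\bf E}\subseteq\mathbb{S}$ to see $g=hz^{-1}\in\mathbb{S}(R)\cap\mathbb{G}_{ev}(R)=\mathsf{S}(R)$). For the second equality the paper conjugates $x\in{\bf E}(R)$ by $s\in\mathsf{S}(R)$ and invokes the symmetric version of Lemma \ref{decomposition of Sigma}(2) to get $\mathsf{S}{\bf E}\subseteq{\bf E}\mathsf{S}$, while you use the same two facts (parts (2) and (3) of Lemma \ref{decomposition of Sigma}) to first obtain $\mathbb{G}(R)={\bf E}(R)\mathsf{G}(R)$ and then repeat the intersection argument; this is the same substance, merely repackaged, and is sound.
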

\begin{proof}
If $s\in\mathbb{N}(R)$ is expressed as $gx$, where $g\in\mathsf{G}(R)$ and $x\in{\bf E}(R)$, then
$g=sx^{-1}\in\mathbb{G}_{ev}(R)\cap\mathbb{N}=\mathbb{N}_{ev}(R)$, since ${\bf E}\subseteq\mathbb{N}$.
It is clear that ${\bf E}\mathsf{N}\subseteq\mathbb{N}$. Moreover, for any $s\in\mathsf{N}(R)$ and $x\in {\bf E}(R),$ we have $x^{s}=sxs^{-1}=x' f$ for $f\in\mathsf{F}(R)$ and $x'\in {\bf E}(R)$, by the symmetric version of Lemma \ref{decomposition of Sigma} (2). Thus,  $sx=x^{s}s=x' (f s)$.
\end{proof}

\subsection{A quasi-inverse of the Harish-Chandra functor}

Let $(\mathsf{G}, \mathsf{V})$ be a Harish-Chandra pair. Note that $\mathsf{g}\oplus\mathsf{V}$ has a natural Lie superalgebra structure, induced by the bilinear map $\mathsf{V}\times\mathsf{V}\to\mathsf{g}$, and by the differential of the action of $\mathsf{G}$ on $\mathsf{V}$. Let $\mathfrak{g}$ denote this Lie superalgebra. Fix  a basis $x_1, \ldots, x_l$ of $\mathfrak{g}_0=\mathsf{g}$. 

Following \cite{masshib}, one can define a group functor $\bf\Sigma'$ that associates with each superalgebra $R$ a group generated by symbols
$e'(a, v), f'(b, x)$ for $a\in R_1, b\in R_0$ such that $b^2=0, v\in\mathsf{V}$ and $x\in\mathsf{g}$, subject to the relations
$(1)-(4)$. Note that if either one of the parameters $a, b, v, x$ is zero, then the corresponding generator is the unit. Let $\mathsf{F}'$ be a group subfunctor of $\bf\Sigma'$, such that $\mathsf{F}'(R)$ is generated only by the symbols $f'(b, x)$. 

Since the proof of \cite[Lemma 4.4]{masshib} uses only the above relations, one can conclude again that any element of ${\bf\Sigma}'(R)$ has a form $f'e(a_1, v_1)\ldots e'(a_t, v_t)$, where $f'\in\mathsf{F}'(R)$. On the other hand, there is a natural morphism of the group functor ${\bf\Sigma}'$ to the group functor
$R\mapsto\mathsf{Gpl}(\mathsf{U}(\mathfrak{g})\otimes R)$ that sends $e'(a, v)$ to $1\otimes 1+v\otimes a$ and $f'(b, x)$ to $1\otimes 1+x\otimes b$, respectively. Then \cite[Proposition 4.3]{masshib} implies that the above form is unique.

Further, the action of $\mathsf{G}$ on $\mathsf{V}$ induces the action on the generators by
\[g\cdot e'(a, v)=e'(r_1a, v_1)\ldots e'(r_t a, v_t) \text{ and } g\cdot f'(b, x)=f'(s_1b, x_1)\ldots f'(s_l b, x_l),\]
where $g\cdot v=\sum_{1\leq i\leq t} v_i\otimes r_i$ and $\mathrm{Ad}(g)(x)=\sum_{1\leq j\leq l} x_j\otimes s_j$. 
\begin{lm}\label{morphism alpha}
The above action preserves the relations $(1)-(4)$. Hence, it defines a homomorphism
$\alpha(R) : \mathsf{G}(R)\to\mathrm{Aut}({\bf\Sigma}'(R))$. 
\end{lm}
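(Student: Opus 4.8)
The plan is to derive the lemma from the natural morphism of group functors $\pi=\pi_R:{\bf\Sigma}'(R)\to\mathsf{Gpl}(\mathsf{U}(\mathfrak{g})\otimes R)$, $e'(a,v)\mapsto 1+v\otimes a$, $f'(b,x)\mapsto 1+x\otimes b$, by exhibiting a genuine $\mathsf{G}(R)$-action on $\mathsf{Gpl}(\mathsf{U}(\mathfrak{g})\otimes R)$ that is $\pi$-compatible with the proposed action on generators. First I would observe that $\mathsf{G}$ acts on the Lie superalgebra $\mathfrak{g}=\mathsf{g}\oplus\mathsf{V}$ by grading-preserving Lie superalgebra automorphisms: on $\mathsf{g}$ this is $\mathrm{Ad}$, which preserves the bracket of $\mathsf{g}$; the equality $g\cdot(x\cdot v)=(\mathrm{Ad}(g)x)\cdot(g\cdot v)$ expresses that the $\mathsf{g}$-module $\mathsf{V}$ is $\mathsf{G}$-equivariant; and $g\cdot[v,v']=[g\cdot v,g\cdot v']$ is precisely axiom (b) of a Harish-Chandra pair. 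By the universal property of $\mathsf{U}(\mathfrak{g})$ this extends to an action by Hopf superalgebra automorphisms, hence, for each $R\in\mathsf{SAlg}_K$, to a group homomorphism $\widetilde{\alpha}(R):\mathsf{G}(R)\to\mathrm{Aut}(\mathsf{U}(\mathfrak{g})\otimes R)$ by $R$-linear Hopf superalgebra automorphisms; restricting to group-likes gives $\widetilde{\alpha}(R):\mathsf{G}(R)\to\mathrm{Aut}(\mathsf{Gpl}(\mathsf{U}(\mathfrak{g})\otimes R))$.

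The key computation is that $\pi$ intertwines the two actions on the generators. With $g\cdot v=\sum_i v_i\otimes r_i$ over a basis $v_1,\dots,v_t$ of $\mathsf{V}$ and $\mathrm{Ad}(g)(x)=\sum_j x_j\otimes s_j$ over a basis $x_1,\dots,x_l$ of $\mathsf{g}$ (so $r_i,s_j\in R_0$), one has $\pi(g\cdot e'(a,v))=\prod_i(1+v_i\otimes r_i a)=1+\sum_i v_i\otimes r_i a=\widetilde{\alpha}(g)(1+v\otimes a)=\widetilde{\alpha}(g)(\pi(e'(a,v)))$, where the middle equality holds because every cross term in the product carries a factor $a^2=0$; the analogous identity for $f'(b,x)$ uses $b^2=0$. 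Since $\pi$ is a homomorphism and $\widetilde{\alpha}(g)$ is one, it follows that $\pi(g\cdot w)=\widetilde{\alpha}(g)(\pi(w))$ for every word $w$ in the generators, where $g\cdot w$ denotes the corresponding product of $g$-translated generators.

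Next I would invoke that $\pi$ is injective; this is exactly the uniqueness of the normal form of elements of ${\bf\Sigma}'(R)$, i.e.\ Proposition 4.3 of \cite{masshib}. Each defining relation among the generators is an identity $w_1=w_2$; applying $\pi$ and then $\widetilde{\alpha}(g)$ gives $\pi(g\cdot w_1)=\widetilde{\alpha}(g)(\pi(w_1))=\widetilde{\alpha}(g)(\pi(w_2))=\pi(g\cdot w_2)$, so $g\cdot w_1=g\cdot w_2$ by injectivity of $\pi$. Hence the assignment on generators respects the relations $(1)-(4)$ and extends to an endomorphism $\alpha(R)(g)$ of ${\bf\Sigma}'(R)$. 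Because $\widetilde{\alpha}(R)$ is a homomorphism with $\widetilde{\alpha}(R)(1)=\mathrm{id}$, comparing $\pi$-images in the same way yields $\alpha(R)(g)\circ\alpha(R)(g')=\alpha(R)(gg')$ and $\alpha(R)(1)=\mathrm{id}$; in particular each $\alpha(R)(g)$ is invertible and $\alpha(R):\mathsf{G}(R)\to\mathrm{Aut}({\bf\Sigma}'(R))$ is a group homomorphism. (Functoriality of the $\alpha(R)$ in $R$, if wanted, is immediate from naturality of $\pi$ and of $\widetilde{\alpha}$.)

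The only ingredient that is not purely formal is the injectivity of $\pi$ — equivalently, the uniqueness of the normal form — which I take from Proposition 4.3 of \cite{masshib}; granting it, everything reduces to the short telescoping identity in $\mathsf{U}(\mathfrak{g})\otimes R$ and routine diagram chasing. I expect the one place requiring a little care to be that telescoping step, where the definition of $g\cdot e'(a,v)$ as a product over a basis (rather than as $e'(a,\,\text{something})$) must be reconciled with the honest $\mathsf{G}$-action on $\mathsf{U}(\mathfrak{g})\otimes R$; this is exactly the computation above, and it works precisely because $a$ and $b$ square to zero. A direct verification of $(1)-(4)$ from $R$-bilinearity and axiom (b) is also possible, but handling commutators of products of generators makes it noticeably more cumbersome, so the route through $\mathsf{U}(\mathfrak{g})$ seems preferable.
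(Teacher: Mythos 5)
Your construction of the $\mathsf{G}(R)$-action $\widetilde{\alpha}$ on $\mathsf{U}(\mathfrak{g})\otimes R$ by Hopf superalgebra automorphisms and the telescoping computation showing $\pi(g\cdot e'(a,v))=\widetilde{\alpha}(g)(\pi(e'(a,v)))$ (and likewise for $f'$) are correct. The proof breaks, however, at the step where you invoke injectivity of $\pi:{\bf\Sigma}'(R)\to\mathsf{Gpl}(\mathsf{U}(\mathfrak{g})\otimes R)$. That injectivity is \emph{not} what Proposition 4.3 of \cite{masshib} gives, and it is false in general. The group ${\bf\Sigma}'(R)$ is defined by the generators subject to the relations $(1)$--$(4)$ only; in particular no additivity relations among the symbols $f'(b,x)$ are imposed. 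For instance, if $\mathsf{V}=0$ and $\mathsf{g}$ is abelian, then ${\bf\Sigma}'(R)=\mathsf{F}'(R)$ is the free abelian group on the nonzero symbols $f'(b,x)$, while $\pi$ sends both $f'(b,x)f'(b,y)$ and $f'(b,x+y)$ to $1+(x+y)\otimes b$ (the cross term dies because $b^2=0$); so $\pi|_{\mathsf{F}'(R)}$, hence $\pi$, is far from injective. Uniqueness of the normal form $f'e'(a_1,v_1)\cdots e'(a_t,v_t)$ is a strictly weaker statement: Proposition 4.3 of \cite{masshib} is applied to the \emph{images} to pin down the odd coordinates $a_i$, after which the $\mathsf{F}'$-factor is recovered by cancellation inside ${\bf\Sigma}'(R)$; it says nothing about $\pi$ separating elements of $\mathsf{F}'(R)$. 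Indeed, the whole quintuple construction that follows (quotienting $\mathsf{G}(R)\ltimes{\bf\Sigma}'(R)$ by ${\bf\Xi}(R)$) exists precisely because $i:\mathsf{F}'\to\mathsf{F}$, which factors through $\pi|_{\mathsf{F}'}$, is not injective.

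Consequently your argument only shows that the translated generators satisfy the relations \emph{after applying} $\pi$, which is not enough to invoke the universal property of the presentation of ${\bf\Sigma}'(R)$ and define $\alpha(R)(g)$; the relations must be verified inside ${\bf\Sigma}'(R)$ itself. That is what the paper does: since the parameters of the translated generators multiply to zero (all odd parameters square to zero, and products such as $(aa')^2$ vanish), the double commutators $[[x,y],z]$ of the relevant generators are trivial by $(1)$ and $(2)$, so the identity $[xy,z]=[x,z]^y[y,z]$ collapses to $[xy,z]=[x,z][y,z]$; one then expands $[g\cdot e'(a,v),\,g\cdot e'(a',v')]$ termwise, applies the defining relations to each factor, and uses the $\mathsf{G}$-equivariance of the brackets and of the action to recognize the result as $g\cdot f'(-aa',[v,v'])$, and similarly for $(2)$--$(4)$. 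So the ``more cumbersome'' direct verification you set aside is in fact the necessary route.
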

\begin{proof}
Consider the relation $(1)$. Suppose that $g\cdot v'=\sum_{1\leq i\leq t} v_i\otimes r'_i$. Then for every
$x, y, z\in\{e'(r_i a, v_i), e'(r'_k a', v_k)\mid 1\leq i, k\leq t\}$ we have $[[x, y], z]=1$. Thus, the well-known commutator identity $[xy, z]=[x, z]^y [y, z]$ implies $[xy, z]=[x, z][y, z]$. Therefore, 
\[g\cdot [e'(a, v), e'(a', v')]=[g\cdot e'(a, v), g\cdot e'(a', v')]=\prod_{1\le i, k\leq t}[e'(r_i a, v_i), e'(r'_k a', v_k)]=\]
\[\prod_{1\leq i, k\leq t}f'(-r_iar'_k a', [v_i, v_k])=f'(-aa', g\cdot[v, v'])=g\cdot f'(-aa', [v, v']).\] 
The identities (3) and (4) can be derived similarly. The identity $(2)$ follows from $g\cdot [x, v]=[\mathrm{Ad}(g)(x), g\cdot v]$, which is obtained by using the identity at the beginning of the proof of Lemma \ref{differential is a Lie superalgebra morphism}. The lemma is proven. 
\end{proof}
The map $f'(b, x)\mapsto f(b, x)$ induces a morphism of group functors $i : \mathsf{F}'\to\mathsf{F}$. In fact, it is the composition of the above morphism from $\mathsf{F}'$ to the group functor $R\mapsto\mathsf{Gpl}(\mathsf{U}(\mathsf{g})\otimes R_0)$ and the natural morphism from the latter to $\mathsf{F}$
, induced by the Hopf algebra morphism $\mathsf{U}(\mathsf{g})\to \mathrm{hyp}(\mathbb{G}_{ev})$. 

It is clear that $({\bf\Sigma}'(R), \mathsf{F}'(R), \mathsf{G}(R), i(R), \alpha(R))$ is a quintuple in the sense of
\cite{masshib}, with the only difference that $\alpha(R)$ is a group homomorphism. We define a group
${\bf\Gamma}(R)$ as a factor-group of $\mathsf{G}(R)\ltimes {\bf\Sigma}'(R)$ by a normal subgroup
${\bf\Xi}(R)=\{(i(R)(f), f^{-1})\mid f\in\mathsf{F}'(R)\}$. 
\begin{rem}
${\bf\Gamma}(R)$ is isomorphic to the factor-group of the amalgamated free product $\mathsf{G}(R)*_{\mathsf{F}'(R)} {\bf\Sigma}'(R)$ modulo the relations $s^g=(\alpha(R)(g))(s)$ for $g\in\mathsf{G}(R)$ and  $s\in {\bf\Sigma}'(R)$.
\end{rem}
By \cite[Lemma 2.1]{masshib} every element of ${\bf\Gamma}(R)$ can be uniquely expressed as 
\[g e'(a_1, v_1)\ldots e'(a_t, v_t),\] where $g\in\mathsf{G}(R)$ and $a_i\in R_1$ for $1\leq i\leq t$. In other words, the superscheme $\bf\Gamma$ is isomorphic to $\mathsf{G}\times {\bf E}$, hence locally algebraic.

The following lemma is now apparent.
\begin{lm}\label{Gamma}
$R\mapsto {\bf\Gamma}(R)$ is a group functor, hence a locally algebraic group superscheme. Moreover, $\Psi : (\mathsf{G}, \mathsf{V})\to {\bf\Gamma}$ is a functor from $\mathsf{HCP}$ to
$\mathcal{SFG}_{la}$. 
\end{lm}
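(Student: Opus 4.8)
The plan is to establish three things in sequence: that $R\mapsto{\bf\Gamma}(R)$ is a functor to groups, that the underlying set-valued functor is a superscheme of finite type on which the group operations are superscheme morphisms, and that $\Psi$ is compatible with morphisms of Harish-Chandra pairs, preserving identities and composition.

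First I would check functoriality in $R$. The assignment $R\mapsto{\bf\Sigma}'(R)$ is a group functor: its generators $e'(a,v)$ and $f'(b,x)$ range over the functorial sets $\{a\in R_1\}$ and $\{b\in R_0\mid b^2=0\}$ (with $\mathsf{V}$ and $\mathsf{g}$ fixed and finite-dimensional), and the defining relations $(1)$--$(4)$ involve only the ring operations of $R$, so every superalgebra map $R\to R'$ induces a group homomorphism ${\bf\Sigma}'(R)\to{\bf\Sigma}'(R')$; the same applies to $\mathsf{F}'$. Since $\mathsf{G}$ is a group functor, the map $i:\mathsf{F}'\to\mathsf{F}$ (a composite of functorial maps through $R\mapsto\mathsf{Gpl}(\mathsf{U}(\mathsf{g})\otimes R_0)$) is a morphism of group functors, and $\alpha$ from Lemma \ref{morphism alpha} is natural in $R$, since the formulas for $g\cdot e'(a,v)$ and $g\cdot f'(b,x)$ use only the structure maps of the $\mathsf{G}$-module $\mathsf{V}$ and the adjoint action. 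Hence $R\mapsto\mathsf{G}(R)\ltimes{\bf\Sigma}'(R)$ and the normal subgroup $R\mapsto{\bf\Xi}(R)$ are functorial, and so is the quotient $R\mapsto{\bf\Gamma}(R)$; naturality of multiplication, inversion and unit is then automatic.

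Next I would identify ${\bf\Gamma}$, as a functor to sets, with a superscheme. By the consequence of Lemma 2.1 in \cite{masshib} recorded above, every element of ${\bf\Gamma}(R)$ has a unique expression $g\,e'(a_1,v_1)\cdots e'(a_t,v_t)$ with $g\in\mathsf{G}(R)$, $a_i\in R_1$, where $v_1,\dots,v_t$ is the chosen basis of $\mathsf{V}$. This yields a natural bijection ${\bf\Gamma}(R)\simeq\mathsf{G}(R)\times(R_1)^t$, i.e. an isomorphism of set-valued functors ${\bf\Gamma}\simeq\mathsf{G}\times{\bf E}'$, where ${\bf E}'\simeq\mathrm{SSp}(\Lambda(\mathsf{V}))$ is a purely-odd affine superscheme. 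As $\mathsf{G}$ is an algebraic group scheme and ${\bf E}'$ is affine of finite type, $\mathsf{G}\times{\bf E}'$ is a superscheme of finite type, in particular local; thus ${\bf\Gamma}$ is a superscheme. It remains to verify the group operations are superscheme morphisms in these coordinates: one rewrites the product of two normal forms back into normal form, using $s^g=(\alpha(R)(g))(s)$ to move the middle $\mathsf{G}$-factor to the left and relations $(1)$--$(4)$ to merge the two strings of $e'$'s, each move producing only finitely many new terms whose coefficients are given by the multiplication of $\mathsf{G}$, the action morphism $\mathsf{G}\times{\bf E}'\to{\bf E}'$, and the bilinear bracket $\mathsf{V}_a\times\mathsf{V}_a\to\mathsf{g}_a$. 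This ``straightening'' computation, showing that multiplication (and inversion) is algebraic, is the one genuine point of work; it uses nothing beyond relations $(1)$--$(5)$ and the finite-dimensionality of $\mathsf{V}$ and $\mathsf{g}$, so it is a routine adaptation of the affine argument of \cite{masshib}. Granting it, ${\bf\Gamma}\in\mathcal{SFG}_{la}$.

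Finally I would treat $\Psi$ on morphisms. Given $(\mathsf{f},u):(\mathsf{G},\mathsf{V})\to(\mathsf{H},\mathsf{W})$ in $\mathsf{HCP}$, define a map ${\bf\Gamma}_{(\mathsf{G},\mathsf{V})}\to{\bf\Gamma}_{(\mathsf{H},\mathsf{W})}$ on the generating data by $g\mapsto\mathsf{f}(R)(g)$, $e'(a,v)\mapsto e'(a,u(v))$, $f'(b,x)\mapsto f'(b,\mathrm{d}_e\mathsf{f}(x))$. Because a morphism of Harish-Chandra pairs intertwines the brackets $\mathsf{V}_a\times\mathsf{V}_a\to\mathsf{g}_a$ and the $\mathsf{g}$-actions on $\mathsf{V}$, these assignments respect the relations $(1)$--$(4)$; because $\mathsf{f}$ intertwines the adjoint actions and $u$ is $\mathsf{G}$-equivariant along $\mathsf{f}$, they are compatible with $\alpha$ and with $i$, so the map descends through ${\bf\Xi}$ to a homomorphism of group functors ${\bf\Gamma}\to{\bf\Gamma}$. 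In normal-form coordinates it acts by $\mathsf{f}$ on the $\mathsf{G}$-factor and by the linear map induced by $u$ on the odd coordinates, hence is a morphism of superschemes; and since it is given on generators, identities and composition are preserved. Therefore $\Psi$ is a functor $\mathsf{HCP}\to\mathcal{SFG}_{la}$.
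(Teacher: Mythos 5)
Your proposal is correct and follows essentially the same route as the paper's (largely implicit) argument: functoriality of the generators-and-relations construction of ${\bf\Sigma}'$, $\mathsf{F}'$, $i$ and $\alpha$, the unique normal form $g\,e'(a_1,v_1)\cdots e'(a_t,v_t)$ from \cite{masshib} identifying ${\bf\Gamma}$ with the superscheme $\mathsf{G}\times{\bf E}'$, and the definition of $\Psi$ on morphisms through the generators. Two small remarks: the ``straightening'' verification that multiplication is a superscheme morphism is superfluous in this functor-of-points setting, since a natural transformation between superschemes is automatically a morphism of superschemes ($\mathcal{SF}$ being a full subcategory of $\mathcal{F}$, with products computed in $\mathcal{F}$); and, relatedly, the coordinate description of $\Psi(\mathsf{f},u)$ is not literally ``$\mathsf{f}$ on the $\mathsf{G}$-factor and $u$ on the odd coordinates'' --- rewriting $\prod_i e'(a_i,u(v_i))$ in normal form produces an even correction factor lying in $\mathsf{F}$ --- but neither point affects the conclusion.
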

\begin{theorem}\label{fundamental equivalence}
The functors $\Phi$ and $\Psi$ are quasi-inverse to each other.
\end{theorem}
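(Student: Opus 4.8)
The plan is to construct natural isomorphisms $\Psi\Phi\simeq\mathrm{id}_{\mathcal{SFG}_{la}}$ and $\Phi\Psi\simeq\mathrm{id}_{\mathsf{HCP}}$, using the structural results established above. First I would treat the composite $\Psi\Phi$. Given a locally algebraic group superscheme $\mathbb{G}$, the pair $\Phi(\mathbb{G})=(\mathsf{G},\mathsf{V})$ with $\mathsf{G}=\mathbb{G}_{ev}$, $\mathsf{V}=\mathfrak{g}_1$, and then $\Psi\Phi(\mathbb{G})={\bf\Gamma}$ is built from the abstract quintuple $({\bf\Sigma}'(R),\mathsf{F}'(R),\mathsf{G}(R),i(R),\alpha(R))$. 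There is an evident morphism ${\bf\Gamma}\to\mathbb{G}$: send $g\in\mathsf{G}(R)=\mathbb{G}_{ev}(R)$ to itself, send $e'(a,v)\mapsto e(a,v)\in{\bf E}(R)\subseteq\mathbb{S}(R)$ and $f'(b,x)\mapsto f(b,x)$; since these images satisfy relations $(1)$--$(4)$ (computed in $\mathbb{S}(R)$ via the group-commutator identities of Lemma \ref{differential is a Lie superalgebra morphism}) and are compatible with the $\mathsf{G}$-conjugation by relation $(5)$, this is a well-defined morphism of group functors ${\bf\Gamma}\to\mathbb{G}'$. By Theorem \ref{coincidence of G and G'}, $\mathbb{G}'=\mathbb{G}$. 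To see this morphism is an isomorphism, I would compare the unique normal forms: every element of ${\bf\Gamma}(R)$ is uniquely $g\,e'(a_1,v_1)\cdots e'(a_t,v_t)$ (Lemma \ref{Gamma} via Lemma 2.1 of \cite{masshib}), and every element of $\mathbb{G}(R)=\mathbb{G}'(R)$ is uniquely $g\,e(a_1,v_1)\cdots e(a_t,v_t)$ (Corollary \ref{decomposition of G'}); the morphism matches these forms term by term, hence it is bijective on each $\mathbb{G}(R)$, so an isomorphism of group superschemes. Naturality in $\mathbb{G}$ is then a routine check, using that a morphism $\mathbb{G}\to\mathbb{H}$ carries $e(a,v)$ to $e(a,\mathrm{d}_e{\bf f}(K)(v))$ and $f(b,x)$ to $f(b,\mathrm{d}_e{\bf f}(K)(x))$, which follows from the formula $\mathbf{f}(R[\epsilon_0,\epsilon_1])(e^{\epsilon_{|x|+|r|}x\otimes r})=e^{\epsilon_{|x|+|r|}\mathrm{d}_e{\bf f}(K)(x)\otimes r}$.

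Next I would treat $\Phi\Psi$. Starting from a Harish-Chandra pair $(\mathsf{G},\mathsf{V})$, form ${\bf\Gamma}=\Psi(\mathsf{G},\mathsf{V})$ and compute $\Phi({\bf\Gamma})=({\bf\Gamma}_{ev},\mathrm{Lie}({\bf\Gamma})_1)$. From the unique normal form, ${\bf\Gamma}_{ev}(R)={\bf\Gamma}(R_0)=\mathsf{G}(R_0){\bf F}'(R_0)$; but $f'(b,x)$ maps to $\mathsf{Gpl}(\mathsf{U}(\mathsf{g})\otimes R_0)$ and the morphism $i$ identifies $\mathsf{F}'$ with a subgroup already inside $\mathsf{G}$ after quotienting by ${\bf\Xi}$, so ${\bf\Gamma}_{ev}\simeq\mathsf{G}$ canonically; I would make this precise by checking that $f'(b,x)$ becomes, in ${\bf\Gamma}(R)$, the image $\exp(bx)$ of the exponential in $\mathsf{G}$, which holds because the relation identifying $(i(R)(f),f^{-1})$ with the identity forces exactly this. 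For the odd part, the dual super-numbers computation shows $\mathrm{Lie}({\bf\Gamma})_1$ is spanned by the classes of $e'(-\epsilon_1,v_i)$, giving a canonical isomorphism $\mathrm{Lie}({\bf\Gamma})_1\simeq\mathsf{V}$; moreover by relations $(1)$ and $(2)$ the adjoint action of $\mathsf{G}$ on this space is the original $\mathsf{G}$-action on $\mathsf{V}$, and the bracket $[v,v']$ recovered from relation $(1)$ (which reads $[e(a,v),e(a',v')]=f(-aa',[v,v'])$) is the given bilinear map $\mathsf{V}\times\mathsf{V}\to\mathsf{g}$. Hence $\Phi\Psi(\mathsf{G},\mathsf{V})\simeq(\mathsf{G},\mathsf{V})$ as Harish-Chandra pairs, naturally.

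I expect the main obstacle to be the verification that the comparison morphism ${\bf\Gamma}\to\mathbb{G}$ in the $\Psi\Phi$ direction is genuinely well defined as a morphism of \emph{group functors} — that is, that the abstract presentation of ${\bf\Gamma}(R)$ (amalgamated product modulo the $\alpha$-conjugation relations, as in the Remark after Lemma \ref{morphism alpha}) maps consistently for every $R$ and that the induced maps are natural in $R$. This rests on the relations $(1)$--$(5)$ holding in $\mathbb{S}(R)\subseteq\mathbb{G}(R)$, which in turn come from the group-commutator identities of Section 7 and Lemma \ref{differential is a Lie superalgebra morphism}, together with Corollary \ref{another definition of Ad} identifying the $\mathbb{G}$-action on $\mathfrak{s}_1\simeq\mathfrak{g}_1$ with $\mathrm{Ad}$. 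Once the morphism is in hand, bijectivity is immediate from the matching uniqueness statements (Corollary \ref{decomposition of G'} versus the normal form in ${\bf\Gamma}$), so the delicate point is really the bookkeeping that establishes the morphism and its naturality; everything else is formal. Finally, I would note that both functors $\Phi$ and $\Psi$ visibly send isomorphisms to isomorphisms and the natural transformations above are built from canonical maps, so the two composites are naturally isomorphic to the respective identity functors, establishing the equivalence $\mathcal{SFG}_{la}\simeq\mathsf{HCP}$.
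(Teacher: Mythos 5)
Your proposal is correct and follows essentially the same route as the paper: for $\Psi\circ\Phi$ it uses the relations $(1)$--$(5)$ holding in $\mathbb{S}(R)$ together with Theorem \ref{coincidence of G and G'} and the matching unique normal forms (Corollary \ref{decomposition of G'} versus Lemma 2.1 of \cite{masshib}), and for $\Phi\circ\Psi$ it identifies ${\bf\Gamma}_{ev}$ with $\mathsf{G}$ and $\mathrm{Lie}({\bf\Gamma})_1$ with $\mathsf{V}$ (via the elements $e'(c\epsilon_1,v_i)$), recovering the action and bracket from relations $(1)$, $(2)$ and $(5)$. You merely spell out the comparison morphism and the well-definedness over the quotient by ${\bf\Xi}$, which the paper leaves implicit.
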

\begin{proof}
Since the group structure of ${\bf\Gamma}(R)$ is defined by the relations similar to $(1)-(5)$, Theorem \ref{coincidence of G and G'} implies 
$\Psi\circ\Phi\simeq\mathrm{id}_{\mathcal{SFG}_{la}}$. Further, the odd component of the Lie superalgebra of $\bf\Gamma$ can be identified with the (commutative) subgroup of ${\bf\Gamma}(K[\epsilon_0, \epsilon_1])$ consisting of all elements $e'(c_1\epsilon_1, v_1)\ldots e'(c_t\epsilon_1, v_t)$, where $c_i \in \Bbbk$ for $1\leq i\leq t$, which can be identified with $\mathsf{V}$. Moreover, the adjoint action of $\mathsf{G}$ is  identified with the original action of $\mathsf{G}$. Thus $\Phi\circ\Psi\simeq\mathrm{id}_{\mathsf{HCP}}$. 
The theorem is proven.
\end{proof}
In what follows, let $\mathsf{E}$ denote $\mathbb{E}_{ev}$. 

Let $\mathbb{G}$ be a locally algebraic group superscheme. If $\Phi(\mathbb{G})\simeq (\mathsf{G}, \mathsf{V})$ or $\Psi((\mathsf{G}, \mathsf{V}))\simeq\mathbb{G}$, then we say that $\mathbb{G}$ is represented by $(\mathsf{G}, \mathsf{V})$.

A sequence $(\mathsf{R}, \mathsf{W})\to (\mathsf{G}, \mathsf{V})\to (\mathsf{H}, \mathsf{U})$ in $\mathsf{HCP}$ is called \emph{exact} whenever the following conditions hold:
\begin{enumerate}
\item The sequences 
$0\to \mathsf{W}\to \mathsf{V}\to \mathsf{U}\to 0$ and $\mathsf{E}\to\mathsf{R}\to \mathsf{G}\to \mathsf{H}\to\mathsf{E}$ are exact in the categories of superspaces and group schemes, respectively.
\item[(2a)] $\mathsf{W}$ is a $\mathsf{G}$-submodule of $\mathsf{V}$.
\item[(2b)] $\mathsf{R}$ acts trivially on $\mathsf{V}/\mathsf{W}$.
\item[(2c)] $[\mathsf{V}, \mathsf{W}]\subseteq\mathrm{Lie}(\mathsf{R})$. 
\end{enumerate}
\begin{theorem}\label{exact sequences of supergroups}
A sequence of group superschemes $\mathbb{R}\to\mathbb{G}\to\mathbb{H}$ is exact if and only if
the sequence $\Phi(\mathbb{R})\to\Phi(\mathbb{G})\to\Phi(\mathbb{H})$ is exact.
\end{theorem}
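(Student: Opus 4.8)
The plan is to peel exactness of $\mathbb{R}\to\mathbb{G}\to\mathbb{H}$ into its two halves — that $\iota\colon\mathbb{R}\to\mathbb{G}$ is an isomorphism onto $\ker{\bf f}$, and that ${\bf f}\colon\mathbb{G}\to\mathbb{H}$ is an fppf-epimorphism — and to analyse each using the product decomposition $\mathbb{G}\simeq\mathbb{G}_{ev}\times{\bf E}$ of Theorem \ref{coincidence of G and G'} and the equivalence $\Phi,\Psi$ of Theorem \ref{fundamental equivalence}. I read ``$\mathbb{R}\to\mathbb{G}\to\mathbb{H}$ exact'' as exactness of $\mathbb{E}\to\mathbb{R}\stackrel{\iota}{\to}\mathbb{G}\stackrel{\bf f}{\to}\mathbb{H}\to\mathbb{E}$ in the sense of Section 3. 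I would fix a basis $v_1,\dots,v_t$ of $\mathfrak{g}_1$ whose first $s$ members span $\ker(\mathrm{d}_e{\bf f}|_{\mathfrak{g}_1})$, and a basis of $\mathfrak{h}_1$ extending $\mathrm{d}_e{\bf f}(v_{s+1}),\dots,\mathrm{d}_e{\bf f}(v_t)$, and use these to present ${\bf E}={\bf E}(\mathbb{G})$ and ${\bf E}(\mathbb{H})$ as in Section 11.

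First I would record that ${\bf f}$ respects the decompositions. Being a homomorphism, ${\bf f}$ carries the formal neighbourhood $\mathbb{S}$ of $\mathbb{G}$ into that of $\mathbb{H}$, acting there through $\mathrm{hyp}({\bf f})$; the Section 7 formula for the image of $e^{\epsilon_{|x|+|r|}x\otimes r}$ then gives ${\bf f}(e(a,v))=e(a,\mathrm{d}_e{\bf f}(v))$. Together with Corollary \ref{decomposition of G'} and the uniqueness in Lemma \ref{decomposition of Sigma}(2) for $\mathbb{H}$ this shows that ${\bf f}$ maps ${\bf E}(\mathbb{G})$ into ${\bf E}(\mathbb{H})$, and that under $\mathbb{G}\simeq\mathbb{G}_{ev}\times{\bf E}(\mathbb{G})$ and $\mathbb{H}\simeq\mathbb{H}_{ev}\times{\bf E}(\mathbb{H})$ the morphism ${\bf f}$ becomes the product ${\bf f}_{ev}\times({\bf f}|_{{\bf E}})$, where in the chosen coordinates ${\bf f}|_{{\bf E}}$ is the odd-linear projection $(a_1,\dots,a_t)\mapsto(a_{s+1},\dots,a_t,0,\dots,0)$. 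Hence $\ker{\bf f}=\ker{\bf f}_{ev}\times\ker({\bf f}|_{{\bf E}})$ with $\ker({\bf f}|_{{\bf E}})\simeq\mathrm{SSp}(\Lambda(\ker(\mathrm{d}_e{\bf f}|_{\mathfrak{g}_1})))$; since $\mathrm{Lie}(\ker{\bf f})=\ker(\mathrm{d}_e{\bf f})$ and $(\ker{\bf f})_{ev}=\ker{\bf f}_{ev}$, this identifies $\Phi(\ker{\bf f})$ with the sub-Harish--Chandra pair $(\ker{\bf f}_{ev},\ \ker(\mathrm{d}_e{\bf f}|_{\mathfrak{g}_1}))$ of $\Phi(\mathbb{G})$.

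The kernel half is then formal: $\iota$ is an isomorphism onto $\ker{\bf f}$ iff $\Phi(\iota)$ is an isomorphism onto $\Phi(\ker{\bf f})$ — ``only if'' by functoriality of $\Phi$, ``if'' by applying $\Psi$ and using $\Psi\Phi\simeq\mathrm{id}$ — and by the identification above this says exactly that $\iota_{ev}$ is an isomorphism onto $\ker{\bf f}_{ev}$ and $\mathrm{d}_e\iota|_{\mathfrak{r}_1}$ an isomorphism onto $\ker(\mathrm{d}_e{\bf f}|_{\mathfrak{g}_1})$, equivalently that $\mathbb{R}_{ev}=\ker{\bf f}_{ev}$ and $0\to\mathfrak{r}_1\to\mathfrak{g}_1$ is exact with image $\ker(\mathfrak{g}_1\to\mathfrak{h}_1)$. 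For the epimorphism half: since ${\bf E}(\mathbb{G})$ and ${\bf E}(\mathbb{H})$ have $R$-points for every $R$, a product morphism having these as a factor is an fppf-epimorphism iff both factors are; and ${\bf f}|_{{\bf E}}$, being the coordinate projection onto $R_1^{\,t-s}\times\{0\}\subseteq R_1^{\,r}$, is an fppf-epimorphism iff $t-s=r$, i.e. iff $\mathrm{d}_e{\bf f}|_{\mathfrak{g}_1}$ is surjective — if it is, the map is already surjective on points, and if not, an odd coordinate nonzero over $R=K[\theta]$ survives every faithfully flat base change. Thus ${\bf f}$ is an fppf-epimorphism iff ${\bf f}_{ev}$ is one and $\mathrm{d}_e{\bf f}|_{\mathfrak{g}_1}$ is surjective. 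I expect this last equivalence — and the bookkeeping that fppf-epimorphy of ${\bf f}_{ev}$ in $\mathcal{SFG}$ agrees with exactness of $\mathbb{G}_{ev}\to\mathbb{H}_{ev}$ as group schemes, for which one observes that $A'_0$ is again faithfully flat and finitely presented over an even $A$ whenever $A'$ is — to be the only genuinely technical step; the rest is the exterior-algebra structure of ${\bf E}$, which in particular precludes the Frobenius-type failures of surjectivity that $\mathbb{G}_{ev}\to\mathbb{H}_{ev}$ may exhibit.

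Combining the halves, ``$\mathbb{R}\to\mathbb{G}\to\mathbb{H}$ exact'' is equivalent to the conjunction of ``$\mathbb{E}\to\mathbb{R}_{ev}\to\mathbb{G}_{ev}\to\mathbb{H}_{ev}\to\mathbb{E}$ exact'' and ``$0\to\mathfrak{r}_1\to\mathfrak{g}_1\to\mathfrak{h}_1\to0$ exact''. To get the full $\mathsf{HCP}$-exactness in the ``only if'' direction I would then check (2a)--(2c): $\mathbb{R}=\ker{\bf f}$ is normal in $\mathbb{G}$, so $\mathrm{Ad}$ stabilises $\mathfrak{r}$ and hence $\mathfrak{r}_1$, giving (2a); $\mathfrak{g}_1/\mathfrak{r}_1\simeq\mathfrak{h}_1$ as $\mathbb{G}_{ev}$-modules via $\mathrm{d}_e{\bf f}$, whose $\mathbb{G}_{ev}$-action factors through ${\bf f}_{ev}$ while $\mathbb{R}_{ev}=\ker{\bf f}_{ev}$ acts trivially, giving (2b); and $\mathfrak{r}=\ker(\mathrm{d}_e{\bf f})$ is a Lie superideal of $\mathfrak{g}$ by Lemma \ref{differential is a Lie superalgebra morphism}, so $[\mathfrak{g}_1,\mathfrak{r}_1]\subseteq\mathfrak{r}_0=\mathrm{Lie}(\mathbb{R}_{ev})$, giving (2c). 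Conversely (2a)--(2c) add nothing beyond the two displayed exactness statements, so those alone already force exactness of $\mathbb{R}\to\mathbb{G}\to\mathbb{H}$, which finishes the argument.
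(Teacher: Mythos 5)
Your proposal is correct and follows essentially the same route as the paper: both arguments rest on the canonical form $g\,e(a_1,v_1)\cdots e(a_t,v_t)$ from Corollary \ref{decomposition of G'} together with the equivalence of Theorem \ref{fundamental equivalence}, identify the kernel condition with $\mathsf{R}=\ker(\mathsf{G}\to\mathsf{H})$, $\mathsf{W}=\ker(\mathsf{V}\to\mathsf{U})$ plus the (automatic) conditions (2a)--(2c), and split the fppf-surjectivity into the even part (reduced to group schemes via the observation that $A'_0$ is again an fppf covering) and the odd part (equivalent to surjectivity of $\mathsf{V}\to\mathsf{U}$, since a nonzero odd coordinate survives faithfully flat base change). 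Your presentation via the product decomposition ${\bf f}\simeq{\bf f}_{ev}\times({\bf f}|_{\bf E})$ is a slightly more systematic packaging of the same computation.
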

\begin{proof}
Suppose that $\mathbb{R}, \mathbb{G}$ and $\mathbb{H}$ are represented by the pairs
$(\mathsf{R}, \mathsf{W}), (\mathsf{G}, \mathsf{V})$, and $(\mathsf{H}, \mathsf{U})$, respectively.
Without loss of a generality, one can replace $\mathbb{R}\to\mathbb{G}\to\mathbb{H}$ by
the sequence $\Psi((\mathsf{R}, \mathsf{W}))\to \Psi((\mathsf{G}, \mathsf{V}))\to \Psi((\mathsf{H}, \mathsf{U}))$. Then $\mathbb{R}$ coincides with $\ker(\mathbb{G}\to\mathbb{H})$ if and only if
$\mathsf{R}=\ker(\mathsf{G}\to\mathsf{H}), \mathsf{W}=\ker(\mathsf{V}\to\mathsf{U})$ and $\mathsf{W}$ satisfies the conditions $(2a)-(2c)$. 
For example, let $A\in \mathsf{SAlg}_{\Bbbk}$. Since $e'(a, v)he'(a, v)^{-1}\in\mathbb{R}(A)$ for every $h\in\mathbb{R}(A), a\in A_1$ and $v\in\mathsf{V}$, then
\[e'(a, v)he'(a, v)^{-1}h^{-1}=e'(a, v)e'(a, -h\cdot v)=e'(a, v-h\cdot v)\in\mathbb{R}(A)\]
implies $v-h\cdot v\in\mathsf{W}$, hence $(2b)$. The converse statement is obvious. 

Suppose that $\mathbb{G}\to\mathbb{H}$ is surjective in the Grothendieck topology. Consider a couple $h\in\mathsf{H}(A_0)=\mathbb{H}(A_0)$ and $e'(a, u)\in\mathbb{H}(A)$. There is an fppf covering $A'$ of $A$, such that $e'(\iota(a), u)=e'(a', \overline{v}),$ where $v\in\mathsf{V}$, and $\overline{v}$ is the image of $v$ in $\mathsf{U}$. That is, $\mathsf{V}\to\mathsf{U}$ is a surjective map. Finally, let $\iota : A_0\to B$ be an fppf covering of $A_0$ , such that $\mathbb{H}(\iota)(h)$ belongs to
the image of $\mathbb{G}_{ev}(B)=\mathsf{G}(B_0)$. Then $B_0$ is an fppf covering of $A_0$.
Conversely, if $h\in\mathsf{H}(A_0)$ belongs to the image of $\mathsf{G}$ up to an fppf covering $B$ of
$A_0$ (in the category of algebras), then $h$ belongs to the image of $\mathbb{G}$ up to an fppf covering
$B\otimes_{A_0} A$ of $A$. The theorem is proven. 
\end{proof}
\begin{cor}\label{faithfully flatness of certain quotient morphisms}
If $\mathbb{R}$ is a normal group super-subscheme of an algebraic group scheme $\mathbb{G}$, then the sheaf quotient morphism
$\mathbb{G}\to \mathbb{G}/\mathbb{R}$ is faithfully flat. 
\end{cor}
\begin{proof}
The superschemes $\mathbb{G}$ and $\mathbb{G}/\mathbb{R}$ can be identified with $\mathsf{G}\times\mathrm{SSp}(\Lambda(\mathsf{V}^*))$ and
$\mathsf{G}/\mathsf{R}\times \mathrm{SSp}(\Lambda((\mathsf{V}/\mathsf{W})^*)$. Then the quotient morphism is induced by the quotient morphism $\mathsf{G}\to \mathsf{G}/\mathsf{R}$, which is faithfully flat by \cite[Theorem 7.35]{milne}, and by the canonical embedding
$\Lambda((\mathsf{V}/\mathsf{W})^*)\to \Lambda(\mathsf{V}^*)$, so that $\Lambda(\mathsf{V}^*)$ is a free $\Lambda((\mathsf{V}/\mathsf{W})^*)$-supermodule. 	
\end{proof} 

\section{Applications}

\subsection{Radicals}

Let $\mathbb{G}$ be an algebraic group superscheme, represented by a Harish-Chandra pair $(\mathsf{G}, \mathsf{V})$, where $\mathsf{V}=\mathfrak{g}_1$, and $\mathsf{G}=\mathbb{G}_{ev}$ is regarded as an algebraic group scheme. In what follows $\mathsf{H}^0$ denotes the connected component of an algebraic group scheme $\mathsf{H}$.

Let $\mathsf{R}$ be a normal group subscheme of $\mathsf{G}$. The largest normal group super-subscheme $\mathbb{H}$ of $\mathbb{G}$ such that $\mathsf{H}=\mathbb{H}_{ev}\leq\mathsf{R}$, is called the $\mathsf{R}$-{\it radical} of $\mathbb{G}$. If the $\mathsf{R}$-radical of $\mathbb{G}$ is trivial, then $\mathbb{G}$ is called \emph{$\mathsf{R}$-semisimple}.

Let $\mathsf{W}$ be a $\mathsf{G}$-submodule of $\mathsf{V}$ such that $[\mathsf{W}, \mathsf{V}]\subseteq \mathrm{Lie}(\mathsf{R})$ and $[\mathsf{W}, \mathsf{V},  \mathsf{V}]=[[\mathsf{W}, \mathsf{V}],  \mathsf{V}]\subseteq\mathsf{W}$. Such a submodule is called \emph{$\mathsf{R}$-subordinated}. The sum of two $\mathsf{R}$-subordinated submodules is again $\mathsf{R}$-subordinated. Thus, there is the largest $\mathsf{R}$-subordinated submodule, denoted by $\mathsf{W}_{\mathsf{R}}$. 

Set $\mathsf{H}_{\mathsf{R}}=\ker (\mathsf{R}\to\mathrm{GL}(\mathsf{V}/\mathsf{W}_{\mathsf{R}}))$. Since $\mathsf{W}_{\mathsf{R}}$ is a $\mathsf{G}$-submodule of $\mathsf{V}$, we have $\mathsf{H}_{\mathsf{R}}\unlhd\mathsf{G}$.
\begin{lm}\label{R-radical}
The Harish-Chandra sub-pair $(\mathsf{H}_{\mathsf{R}}, \mathsf{W}_{\mathsf{R}})$ represents the $\mathsf{R}$-radical of $\mathbb{G}$.
\end{lm}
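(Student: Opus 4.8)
The plan is to translate the statement entirely into the language of Harish-Chandra pairs and reduce it to an elementary statement about $\mathsf{G}$-submodules of $\mathsf{V}$. By Theorem~\ref{fundamental equivalence} together with Theorem~\ref{exact sequences of supergroups}, a group super-subscheme $\mathbb{N}$ of $\mathbb{G}$, represented by the sub-pair $(\mathsf{N},\mathsf{W})$ with $\mathsf{N}=\mathbb{N}_{ev}$ and $\mathsf{W}=\mathrm{Lie}(\mathbb{N})_1\subseteq\mathsf{V}$, is normal in $\mathbb{G}$ precisely when $\mathsf{N}\unlhd\mathsf{G}$, $\mathsf{W}$ is a $\mathsf{G}$-submodule of $\mathsf{V}$, $\mathsf{N}$ acts trivially on $\mathsf{V}/\mathsf{W}$, and $[\mathsf{V},\mathsf{W}]\subseteq\mathrm{Lie}(\mathsf{N})$. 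Since by Theorem~\ref{coincidence of G and G'} such an $\mathbb{N}$ is, as a superscheme, $\mathsf{N}\times\mathrm{SSp}(\Lambda(\mathsf{W}))$ inside $\mathbb{G}\simeq\mathsf{G}\times\mathrm{SSp}(\Lambda(\mathsf{V}))$, the relation $\mathbb{N}\le\mathbb{N}'$ of normal super-subschemes is equivalent to $\mathsf{N}\le\mathsf{N}'$ and $\mathsf{W}\subseteq\mathsf{W}'$. Thus the $\mathsf{R}$-radical is represented by the componentwise largest admissible pair $(\mathsf{N},\mathsf{W})$ with $\mathsf{N}\le\mathsf{R}$, and the task is to identify that pair with $(\mathsf{H}_{\mathsf{R}},\mathsf{W}_{\mathsf{R}})$.

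First I would check that $(\mathsf{H}_{\mathsf{R}},\mathsf{W}_{\mathsf{R}})$ is one of the admissible pairs. Because $\mathsf{W}_{\mathsf{R}}$ is $\mathsf{R}$-subordinated, $[\mathsf{W}_{\mathsf{R}},\mathsf{V}]\subseteq\mathrm{Lie}(\mathsf{R})$ and $[[\mathsf{W}_{\mathsf{R}},\mathsf{V}],\mathsf{V}]\subseteq\mathsf{W}_{\mathsf{R}}$; in particular every element of $[\mathsf{W}_{\mathsf{R}},\mathsf{W}_{\mathsf{R}}]\subseteq[\mathsf{W}_{\mathsf{R}},\mathsf{V}]\subseteq\mathrm{Lie}(\mathsf{R})$ carries $\mathsf{V}$ into $\mathsf{W}_{\mathsf{R}}$, i.e. lies in $\ker(\mathrm{Lie}(\mathsf{R})\to\mathfrak{gl}(\mathsf{V}/\mathsf{W}_{\mathsf{R}}))=\mathrm{Lie}(\mathsf{H}_{\mathsf{R}})$, so $(\mathsf{H}_{\mathsf{R}},\mathsf{W}_{\mathsf{R}})$ is a genuine Harish-Chandra sub-pair. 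The normality conditions then follow: $\mathsf{H}_{\mathsf{R}}\unlhd\mathsf{G}$ as already observed (since $\mathsf{W}_{\mathsf{R}}$ is a $\mathsf{G}$-submodule), $\mathsf{H}_{\mathsf{R}}\le\mathsf{R}$ and $\mathsf{H}_{\mathsf{R}}$ acts trivially on $\mathsf{V}/\mathsf{W}_{\mathsf{R}}$ by definition, and $[\mathsf{V},\mathsf{W}_{\mathsf{R}}]\subseteq\mathrm{Lie}(\mathsf{R})$ lands in $\mathrm{Lie}(\mathsf{H}_{\mathsf{R}})$ exactly because $[[\mathsf{V},\mathsf{W}_{\mathsf{R}}],\mathsf{V}]\subseteq\mathsf{W}_{\mathsf{R}}$. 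Hence $\Psi((\mathsf{H}_{\mathsf{R}},\mathsf{W}_{\mathsf{R}}))$ is a normal group super-subscheme of $\mathbb{G}$ whose even part is contained in $\mathsf{R}$.

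Next I would prove maximality. Let $\mathbb{N}$ be any normal group super-subscheme of $\mathbb{G}$ with $\mathsf{N}=\mathbb{N}_{ev}\le\mathsf{R}$ and odd part $\mathsf{W}$. Differentiating the trivial action of $\mathsf{N}$ on $\mathsf{V}/\mathsf{W}$ gives $[\mathrm{Lie}(\mathsf{N}),\mathsf{V}]\subseteq\mathsf{W}$; together with $[\mathsf{W},\mathsf{V}]\subseteq\mathrm{Lie}(\mathsf{N})\subseteq\mathrm{Lie}(\mathsf{R})$ and the $\mathsf{G}$-submodule property of $\mathsf{W}$, this says precisely that $\mathsf{W}$ is $\mathsf{R}$-subordinated, so $\mathsf{W}\subseteq\mathsf{W}_{\mathsf{R}}$. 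Consequently $\mathsf{V}/\mathsf{W}_{\mathsf{R}}$ is a quotient of the $\mathsf{N}$-module $\mathsf{V}/\mathsf{W}$, on which $\mathsf{N}$ therefore also acts trivially; since $\mathsf{N}\le\mathsf{R}$, we get $\mathsf{N}\le\ker(\mathsf{R}\to\mathrm{GL}(\mathsf{V}/\mathsf{W}_{\mathsf{R}}))=\mathsf{H}_{\mathsf{R}}$. By the componentwise criterion above, $\mathbb{N}\le\Psi((\mathsf{H}_{\mathsf{R}},\mathsf{W}_{\mathsf{R}}))$, which completes the argument.

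The step I expect to need the most care is the dictionary set up in the first paragraph: making sure that the group-scheme condition ``$\mathsf{N}$ acts trivially on $\mathsf{V}/\mathsf{W}$'' is exactly what differentiates to the Lie-theoretic inclusion $[\mathrm{Lie}(\mathsf{N}),\mathsf{V}]\subseteq\mathsf{W}$ appearing in the definition of $\mathsf{R}$-subordination, and that an inclusion of Harish-Chandra sub-pairs really does correspond under $\Psi$ to a closed immersion of group superschemes, so that ``the largest'' has its intended meaning. Both points are furnished by Theorem~\ref{coincidence of G and G'} and the exactness criterion of Theorem~\ref{exact sequences of supergroups}, so no genuinely new input should be required.
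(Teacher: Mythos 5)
Your argument is correct and follows essentially the same route as the paper's proof: show that any normal group super-subscheme with even part in $\mathsf{R}$ gives an $\mathsf{R}$-subordinated submodule (hence lands inside $(\mathsf{H}_{\mathsf{R}},\mathsf{W}_{\mathsf{R}})$), and check that $(\mathsf{H}_{\mathsf{R}},\mathsf{W}_{\mathsf{R}})$ itself satisfies the normality conditions, the key point being $[\mathsf{W}_{\mathsf{R}},\mathsf{V}]\subseteq\mathrm{Lie}(\mathsf{H}_{\mathsf{R}})$ via $\mathrm{Lie}(\mathsf{H}_{\mathsf{R}})=\{x\in\mathrm{Lie}(\mathsf{R})\mid [x,\mathsf{V}]\subseteq\mathsf{W}_{\mathsf{R}}\}$. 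You merely make explicit what the paper leaves implicit, namely the dictionary between normal super-subschemes and the conditions (2a)--(2c) of Theorem \ref{exact sequences of supergroups} and the verification that $(\mathsf{H}_{\mathsf{R}},\mathsf{W}_{\mathsf{R}})$ is a genuine Harish-Chandra sub-pair.
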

\begin{proof}
Let $(\mathsf{H}, \mathsf{W})$ be a Harish-Chandra sub-pair of $(\mathsf{G}, \mathsf{V})$. Then  $(\mathsf{H}, \mathsf{W})$ represents a normal group super-subscheme of $\mathbb{G}$ if and only if it satisfies the conditions $(2a)-(2c)$ from Theorem \ref{exact sequences of supergroups}.
Assume that $\mathsf{H}\leq\mathsf{R}$. Since $\mathsf{H}$ acts trivially on $\mathsf{V}/\mathsf{W}$, we have 
\[ [\mathsf{W}, \mathsf{V}, \mathsf{V}]\subseteq [\mathrm{Lie}(\mathsf{H}), \mathsf{V}]\subseteq\mathsf{W},\]
that is, $\mathsf{W}$ is $\mathsf{R}$-subordinated. Thus $\mathsf{W}\subseteq\mathsf{W}_{\mathsf{R}}$ and $\mathsf{H}\leq\mathsf{H}_{\mathsf{R}}$. To complete the proof, it remains to show that $[\mathsf{W}_{\mathsf{R}}, \mathsf{V}]\subseteq\mathrm{Lie}(\mathsf{H_R})$. Recall that 
\[\mathrm{Lie}(\mathsf{H}_{\mathsf{R}})=\{x\in\mathrm{Lie}(\mathsf{R})\mid [x, \mathsf{V}]\subseteq\mathsf{W}_{\mathsf{R}}\},\]
hence $[\mathsf{W}_{\mathsf{R}}, \mathsf{V}]\subseteq\mathrm{Lie}(\mathsf{H_R})$ if and only if $[\mathsf{W}_{\mathsf{R}}, \mathsf{V}, \mathsf{V}]\subseteq\mathsf{W}_{\mathsf{R}}$. The lemma is proven.
\end{proof}
\begin{rem}\label{another definition}
The submodule $\mathsf{W}_{\mathsf{R}}$ can be also defined as
\[ \mathsf{W}_{\mathsf{R}}=\{w\in\mathsf{V}\mid \mbox{for any odd positive integer} \ n, \mbox{ there is } [\ldots [w, \underbrace{\mathsf{V}], \ldots , \mathsf{V}}_{n-\mbox{times}}]\subseteq \mathrm{Lie}(\mathsf{R})\}.\]	
\end{rem}
\begin{cor}\label{affine radical}
Every algebraic group superscheme $\mathbb{G}$ contains the largest connected normal affine group super-subscheme, denoted by $\mathbb{G}_{aff}$. The group super-subschemes $\mathbb{G}_{aff}$ can be characterized by the property that $\mathbb{G}/\mathbb{G}_{aff}$ does not contain non-trivial normal connected affine group super-subschemes.
\end{cor}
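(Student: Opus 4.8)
The plan is to reduce the existence of $\mathbb{G}_{aff}$ to the purely-even situation via the Harish-Chandra correspondence and then apply Lemma \ref{R-radical}. First I would recall that, on the even side, the algebraic group scheme $\mathsf{G}=\mathbb{G}_{ev}$ possesses a largest connected normal affine group subscheme $\mathsf{G}_{\mathrm{aff}}$: this is the classical fact (see \cite{brion} or \cite{milne}, Chapter~8) that the connected component $\mathsf{G}^{\circ}$ has a smallest normal subgroup with proper (= anti-affine) quotient, equivalently a largest normal connected affine subgroup, coming from the Barsotti--Chevalley structure theory of group schemes. Take $\mathsf{R}=\mathsf{G}_{\mathrm{aff}}$; it is a normal (in fact normal connected) affine subgroup scheme of $\mathsf{G}$, so Lemma \ref{R-radical} applies.

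Next I would form the $\mathsf{R}$-radical $\mathbb{H}$ of $\mathbb{G}$, which by Lemma \ref{R-radical} is represented by the Harish-Chandra sub-pair $(\mathsf{H}_{\mathsf{R}},\mathsf{W}_{\mathsf{R}})$, where $\mathsf{W}_{\mathsf{R}}$ is the largest $\mathsf{R}$-subordinated submodule of $\mathsf{V}$ and $\mathsf{H}_{\mathsf{R}}=\ker(\mathsf{R}\to\mathrm{GL}(\mathsf{V}/\mathsf{W}_{\mathsf{R}}))$. Since $\mathsf{H}_{\mathsf{R}}\leq\mathsf{R}=\mathsf{G}_{\mathrm{aff}}$ is affine, and since by Theorem \ref{coincidence of G and G'} (equivalently Corollary \ref{decomposition of G'}) the superscheme $\mathbb{H}$ is isomorphic to $\mathsf{H}_{\mathsf{R}}\times{\bf E}_{\mathsf{W}_{\mathsf{R}}}$ with ${\bf E}_{\mathsf{W}_{\mathsf{R}}}\simeq\mathrm{SSp}(\Lambda(\mathsf{W}_{\mathsf{R}}))$ an affine purely-odd superscheme, $\mathbb{H}$ is affine. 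Passing to the identity component — which is again even reducible, so $\mathbb{H}^{\circ}$ is represented by $(\mathsf{H}_{\mathsf{R}}^{\circ},\mathsf{W}_{\mathsf{R}})$ and is still normal in $\mathbb{G}$ — I obtain a connected normal affine group super-subscheme; call it $\mathbb{G}_{aff}$. It is the largest such, because any connected normal affine group super-subscheme $\mathbb{H}'$ of $\mathbb{G}$ has $\mathsf{H}'_{ev}$ connected normal affine in $\mathsf{G}$, hence $\mathsf{H}'_{ev}\leq\mathsf{G}_{\mathrm{aff}}=\mathsf{R}$, so $\mathbb{H}'$ is contained in the $\mathsf{R}$-radical $\mathbb{H}$, hence (being connected) in $\mathbb{H}^{\circ}=\mathbb{G}_{aff}$.

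For the characterization, suppose $\mathbb{N}$ were a non-trivial connected normal affine group super-subscheme of $\mathbb{G}/\mathbb{G}_{aff}$. Using Theorem \ref{exact sequences of supergroups} to lift, its preimage $\widetilde{\mathbb{N}}$ in $\mathbb{G}$ is a normal group super-subscheme which is an extension of $\mathbb{N}$ by $\mathbb{G}_{aff}$, hence affine (an extension of affine by affine is affine), and one can take its identity component $\widetilde{\mathbb{N}}^{\circ}$, which is connected normal affine in $\mathbb{G}$ and strictly larger than $\mathbb{G}_{aff}$ — contradicting maximality. Conversely, the quotient $\mathbb{G}/\mathbb{G}_{aff}$ must be non-affine in general (otherwise $\mathbb{G}$ itself would be affine, which is the degenerate case), and the absence of non-trivial connected normal affine super-subschemes in it pins down $\mathbb{G}_{aff}$ uniquely: if $\mathbb{G}_{aff}'$ is another candidate, then $\mathbb{G}_{aff}\mathbb{G}_{aff}'$ (which is again connected normal affine, as the image of $\mathbb{G}_{aff}\times\mathbb{G}_{aff}'$) maps to a connected normal affine super-subscheme of $\mathbb{G}/\mathbb{G}_{aff}'$, forcing $\mathbb{G}_{aff}\leq\mathbb{G}_{aff}'$, and symmetrically.

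The main obstacle I anticipate is making precise that "an extension of an affine group superscheme by an affine group superscheme is affine" in the non-affine-ambient setting, and that the formation of the identity component and the product of two normal connected affine super-subschemes stay within the affine world — these require the quotient machinery for group superschemes (the sheaf quotient being a superscheme, established later in the paper for the algebraic case) together with the even-reducibility of affineness for Noetherian superschemes quoted in Section~2. Once the even-reducible reductions are invoked, everything descends to the known purely-even statements, so the substantive content is really Lemma \ref{R-radical} plus the classical Barsotti--Chevalley input.
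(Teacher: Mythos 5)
Your proposal is correct and follows essentially the same route as the paper: the paper also takes $\mathsf{R}=\mathsf{G}_{aff}$, the largest connected normal affine subgroup scheme of $\mathsf{G}=\mathbb{G}_{ev}$, and defines $\mathbb{G}_{aff}$ as the connected component of the $\mathsf{G}_{aff}$-radical from Lemma \ref{R-radical}, represented by $(\mathsf{H}^0_{\mathsf{G}_{aff}},\mathsf{W}_{\mathsf{G}_{aff}})$. Your extra details on affineness of the radical and on the characterization of $\mathbb{G}/\mathbb{G}_{aff}$ merely spell out what the paper dismisses as apparent.
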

\begin{proof}
Let $\mathsf{G}_{aff}$ denote the largest connected normal affine group subscheme of $\mathsf{G}$ (cf. \cite[Proposition 8.1]{milne}). Then the sub-pair $(\mathsf{H}^0_{\mathsf{G}_{aff}}, \mathsf{W}_{\mathsf{G}_{aff}})$ represents $\mathbb{G}_{aff}$, i.e., $\mathbb{G}_{aff}$ is nothing else but the connected component of the $\mathsf{G}_{aff}$-radical of $\mathbb{G}$. 
The second statement is now apparent.
\end{proof}
Let $\mathbb{G}$ be an algebraic group superscheme, represented by a Harish-Chandra pair $(\mathsf{G}, \mathsf{V})$. For every group subscheme $\mathsf{S}$ of $\mathsf{G}$, let $\mathsf{V}_{\mathsf{S}}$ denote the smallest $\mathsf{G}$-submodule of $\mathsf{V}$ such that $\mathsf{S}\leq\ker(\mathsf{G}\to\mathrm{GL}(\mathsf{V}/\mathsf{V}_{\mathsf{S}}))$. 
\begin{lm}\label{R-semisimple}
Let $\mathsf{R}$ be a normal group subscheme of $\mathsf{G}=\mathbb{G}_{ev}$. Then $\mathbb{G}$ is $\mathsf{R}$-semisimple if and only if the following conditions hold:
\begin{enumerate}
\item For any non-trivial normal group subscheme $\mathsf{S}$ of $\mathsf{G}$ such that $\mathsf{S}\leq\mathsf{R}$, there is $[\mathsf{V}, \mathsf{V}_{\mathsf{S}}]\not\subseteq\mathrm{Lie}(\mathsf{S})$;
\item  There is no non-zero $\mathsf{G}$-submodule $\mathsf{W}$ of $\mathsf{V}$ such that $[\mathsf{V}, \mathsf{W}]=0$.
\end{enumerate}
\end{lm}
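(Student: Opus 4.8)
The strategy is to translate $\mathsf{R}$-semisimplicity, via Lemma~\ref{R-radical}, into the single assertion that the Harish-Chandra pair $(\mathsf{H}_{\mathsf{R}},\mathsf{W}_{\mathsf{R}})$ representing the $\mathsf{R}$-radical is trivial, i.e. $\mathsf{H}_{\mathsf{R}}=\mathsf{E}$ and $\mathsf{W}_{\mathsf{R}}=0$. Throughout I use two facts from the proof of Lemma~\ref{R-radical}: that $\mathrm{Lie}(\mathsf{H}_{\mathsf{R}})=\{x\in\mathrm{Lie}(\mathsf{R})\mid[x,\mathsf{V}]\subseteq\mathsf{W}_{\mathsf{R}}\}$, and that $[\mathsf{V},\mathsf{W}_{\mathsf{R}}]\subseteq\mathrm{Lie}(\mathsf{H}_{\mathsf{R}})$ (the latter holding because $\mathsf{W}_{\mathsf{R}}$ is $\mathsf{R}$-subordinated, so $[\mathsf{W}_{\mathsf{R}},\mathsf{V},\mathsf{V}]\subseteq\mathsf{W}_{\mathsf{R}}$). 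I also use that for $\mathsf{S}\leq\mathsf{G}$ the submodule $\mathsf{V}_{\mathsf{S}}$ is, by construction, a $\mathsf{G}$-submodule on which $\mathsf{S}$ --- hence $\mathrm{Lie}(\mathsf{S})$ --- acts trivially modulo $\mathsf{V}_{\mathsf{S}}$, so $[\mathrm{Lie}(\mathsf{S}),\mathsf{V}]\subseteq\mathsf{V}_{\mathsf{S}}$. In condition (1) the group $\mathsf{S}$ is always taken nontrivial (for $\mathsf{S}=\mathsf{E}$ one has $\mathsf{V}_{\mathsf{E}}=0$ and $[\mathsf{V},\mathsf{V}_{\mathsf{E}}]\not\subseteq\mathrm{Lie}(\mathsf{E})$ automatically fails).

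To prove that (1) and (2) imply $\mathsf{R}$-semisimplicity, I first show $\mathsf{H}_{\mathsf{R}}=\mathsf{E}$. If $\mathsf{H}_{\mathsf{R}}\neq\mathsf{E}$, apply (1) with $\mathsf{S}=\mathsf{H}_{\mathsf{R}}$, which is a nontrivial normal subgroup scheme of $\mathsf{G}=\mathbb{G}_{ev}$ contained in $\mathsf{R}$; since $\mathsf{H}_{\mathsf{R}}$ acts trivially on $\mathsf{V}/\mathsf{W}_{\mathsf{R}}$, minimality of $\mathsf{V}_{\mathsf{H}_{\mathsf{R}}}$ gives $\mathsf{V}_{\mathsf{H}_{\mathsf{R}}}\subseteq\mathsf{W}_{\mathsf{R}}$, whence $[\mathsf{V},\mathsf{V}_{\mathsf{H}_{\mathsf{R}}}]\subseteq[\mathsf{V},\mathsf{W}_{\mathsf{R}}]\subseteq\mathrm{Lie}(\mathsf{H}_{\mathsf{R}})$, contradicting (1). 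Now with $\mathsf{H}_{\mathsf{R}}=\mathsf{E}$, the inclusion $[\mathsf{V},\mathsf{W}_{\mathsf{R}}]\subseteq\mathrm{Lie}(\mathsf{H}_{\mathsf{R}})=0$ exhibits $\mathsf{W}_{\mathsf{R}}$ as a $\mathsf{G}$-submodule annihilated by $[\mathsf{V},-]$, so (2) forces $\mathsf{W}_{\mathsf{R}}=0$. Thus $(\mathsf{H}_{\mathsf{R}},\mathsf{W}_{\mathsf{R}})=(\mathsf{E},0)$ and the $\mathsf{R}$-radical is trivial.

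For the converse, suppose $\mathbb{G}$ is $\mathsf{R}$-semisimple, so $\mathsf{W}_{\mathsf{R}}=0$ and therefore $\mathsf{H}_{\mathsf{R}}=\ker(\mathsf{R}\to\mathrm{GL}(\mathsf{V}))=\mathsf{E}$. Condition (2) follows at once: any nonzero $\mathsf{G}$-submodule $\mathsf{W}$ with $[\mathsf{V},\mathsf{W}]=0$ satisfies $[\mathsf{W},\mathsf{V}]=0\subseteq\mathrm{Lie}(\mathsf{R})$ and $[\mathsf{W},\mathsf{V},\mathsf{V}]=0\subseteq\mathsf{W}$, so $\mathsf{W}$ is $\mathsf{R}$-subordinated and hence $\mathsf{W}\subseteq\mathsf{W}_{\mathsf{R}}=0$, a contradiction. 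For (1), let $\mathsf{S}\leq\mathsf{R}$ be a nontrivial normal subgroup scheme and suppose $[\mathsf{V},\mathsf{V}_{\mathsf{S}}]\subseteq\mathrm{Lie}(\mathsf{S})$; then $[\mathsf{V}_{\mathsf{S}},\mathsf{V}]\subseteq\mathrm{Lie}(\mathsf{S})\subseteq\mathrm{Lie}(\mathsf{R})$ and $[\mathsf{V}_{\mathsf{S}},\mathsf{V},\mathsf{V}]\subseteq[\mathrm{Lie}(\mathsf{S}),\mathsf{V}]\subseteq\mathsf{V}_{\mathsf{S}}$, so $\mathsf{V}_{\mathsf{S}}$ is $\mathsf{R}$-subordinated and $\mathsf{V}_{\mathsf{S}}\subseteq\mathsf{W}_{\mathsf{R}}=0$. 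Then $\mathsf{S}$ acts trivially on $\mathsf{V}=\mathsf{V}/\mathsf{V}_{\mathsf{S}}$, i.e. $\mathsf{S}\leq\ker(\mathsf{R}\to\mathrm{GL}(\mathsf{V}))=\mathsf{H}_{\mathsf{R}}=\mathsf{E}$, contradicting nontriviality. Hence $[\mathsf{V},\mathsf{V}_{\mathsf{S}}]\not\subseteq\mathrm{Lie}(\mathsf{S})$, which is (1).

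The proof is thus a short chain of applications of the $\mathsf{R}$-subordination calculus already set up around Lemma~\ref{R-radical}; no new idea or construction is required. The only genuinely delicate point is the bookkeeping at the boundary: the tacit exclusion of the trivial $\mathsf{S}$ in condition (1), and the verification that $\mathsf{S}=\mathsf{H}_{\mathsf{R}}$ is an admissible test subgroup there (it is normal in $\mathbb{G}_{ev}$ and contained in $\mathsf{R}$, and it may well act nontrivially on $\mathsf{V}$, so it is a nonvacuous instance of (1)). I anticipate no harder obstacle.
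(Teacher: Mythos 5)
The paper states this lemma without proof (it is presented as a direct consequence of Lemma \ref{R-radical}), so there is no authorial argument to compare against. Your proof is correct and is evidently the intended one: it uses only what is recorded in (the proof of) Lemma \ref{R-radical} — the description $\mathrm{Lie}(\mathsf{H}_{\mathsf{R}})=\{x\in\mathrm{Lie}(\mathsf{R})\mid [x,\mathsf{V}]\subseteq\mathsf{W}_{\mathsf{R}}\}$, the inclusion $[\mathsf{V},\mathsf{W}_{\mathsf{R}}]\subseteq\mathrm{Lie}(\mathsf{H}_{\mathsf{R}})$, and the $\mathsf{R}$-subordination calculus — and both directions check out. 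Your two boundary remarks are also the right reading of the statement: $\mathsf{S}$ must be nontrivial in condition (1), and ``normal group subscheme of $\mathbb{G}$'' must be understood so that $\mathsf{S}=\mathsf{H}_{\mathsf{R}}$ is an admissible test subgroup (it is normal in $\mathsf{G}=\mathbb{G}_{ev}$ and contained in $\mathsf{R}$, but in general acts nontrivially on $\mathsf{V}$, hence is not normal in $\mathbb{G}$ in the strict supergroup sense); under that reading your use of it in the converse direction, which is essential, is legitimate.
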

\begin{proof}
If on the contrary, $[\mathsf{V}, \mathsf{V}_{\mathsf{S}}]\subseteq\mathrm{Lie}(\mathsf{S})$ or there is non-zero $\mathsf{G}$-submodule $\mathsf{W}$ of $\mathsf{V}$ such that $[\mathsf{V}, \mathsf{W}]=0$, then $(\mathsf{S}, \mathsf{V}_{\mathsf{S}})$ or $(\mathsf{E}, W)$ represent a non-trivial normal group super-subscheme of $\mathbb{G}$. Conversely, suppose that $(\mathsf{S}, \mathsf{W})$ represents a normal group super-subscheme of $\mathbb{G}$. Since $[\mathsf{V}, \mathsf{V}_{\mathsf{S}}]\subseteq [\mathsf{V}, \mathsf{W}]\subseteq\mathrm{Lie}(\mathsf{S})$, $(1)$ implies $\mathsf{S}=\mathsf{E}$, and $(2)$ concludes the proof.
\end{proof}
A connected algebraic group superscheme $\mathbb{G}$ is called \emph{pseudoabelian} if $\mathbb{G}_{aff}=\mathbb{E}$. 

Let $\mathbb{G}$ be a pseudoabelian group superscheme and $\mathbb{G}\neq\mathbb{E}$. Then $\mathbb{G}_{ev}=\mathsf{G}$ is not affine. Recall that $\mathsf{G}^{aff}$ denotes the largest affine quotient of $\mathsf{G}$. 

Set $\mathsf{A}(\mathsf{G})=\ker (\mathsf{G}\to \mathsf{G}^{aff})$. Then $\mathsf{A}(\mathsf{G})$ is a nontrivial anti-affine algebraic group, hence smooth and connected normal group subscheme of positive dimension (cf. \cite[Corollary 8.14 and Proposition 8.37]{milne}). Since the natural morphism 
$\mathsf{G}\to \mathrm{GL}(\mathsf{V})$ factors through $\mathsf{G}\to\mathsf{G}^{aff}$, $\mathsf{A}(\mathsf{G})$ acts trivially on $\mathsf{V}$. Besides, by \cite[Corollary 8.13]{milne}, $\mathsf{A}(\mathsf{G})$ is central in $\mathsf{G}$. 
\begin{lm}\label{Even part of peseudoabelian group superscheme}
The group scheme $\mathsf{A}(\mathsf{G})$ is an abelian group variety such that $\mathsf{G}=\mathsf{G}_{aff}\mathsf{A}(\mathsf{G})$.
\end{lm}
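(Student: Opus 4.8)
The plan is to prove the two assertions separately, the pseudoabelian hypothesis being needed only for the first. For the claim that $\mathsf{N}$ is an abelian variety, I would apply the Barsotti--Chevalley structure theorem to the smooth connected group $\mathsf{N}$ (\cite{milne}, Chapter~8): it possesses a largest connected affine normal subgroup $\mathsf{N}_{aff}$, its \emph{linear part}, and $\mathsf{N}/\mathsf{N}_{aff}$ is an abelian variety. So it is enough to show $\mathsf{N}_{aff}=\mathsf{E}$, and the idea is to realize $\mathsf{N}_{aff}$ inside the even part of $\mathbb{G}_{aff}$, which is trivial by hypothesis.

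In detail: since $\mathsf{N}$ is central in $\mathsf{G}$, every subgroup of $\mathsf{N}$ is central in $\mathsf{G}$, hence normal in $\mathsf{G}$; so $\mathsf{N}_{aff}$, being connected, affine, and normal in $\mathsf{G}$, satisfies $\mathsf{N}_{aff}\leq\mathsf{G}_{aff}$. As $\mathsf{N}$ acts trivially on $\mathsf{V}=\mathrm{Lie}(\mathbb{G})_1$, so does $\mathsf{N}_{aff}$, and therefore also on the quotient $\mathsf{V}/\mathsf{W}_{\mathsf{G}_{aff}}$; consequently $\mathsf{N}_{aff}\leq\ker(\mathsf{G}_{aff}\to\mathrm{GL}(\mathsf{V}/\mathsf{W}_{\mathsf{G}_{aff}}))=\mathsf{H}_{\mathsf{G}_{aff}}$, and by connectedness $\mathsf{N}_{aff}\leq\mathsf{H}_{\mathsf{G}_{aff}}^0$. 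By Corollary~\ref{affine radical} together with Lemma~\ref{R-radical}, $\mathbb{G}_{aff}$ is the locally algebraic group superscheme represented by the Harish--Chandra pair $(\mathsf{H}_{\mathsf{G}_{aff}}^0,\mathsf{W}_{\mathsf{G}_{aff}})$; the pseudoabelian hypothesis gives $\mathbb{G}_{aff}=\mathbb{E}$, so its even part $\mathsf{H}_{\mathsf{G}_{aff}}^0$ is trivial. Hence $\mathsf{N}_{aff}=\mathsf{E}$ and $\mathsf{N}\simeq\mathsf{N}/\mathsf{N}_{aff}$ is an abelian variety.

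For the decomposition $\mathsf{G}=\mathsf{G}_{aff}\mathsf{N}$ no super input is needed: this is the Rosenlicht decomposition of the connected algebraic group $\mathsf{G}$. The kernel of the canonical morphism $\mathsf{G}\to\mathsf{G}^{aff}$ onto the largest affine quotient is the largest anti-affine subgroup $\mathsf{G}_{ant}$ of $\mathsf{G}$ (\cite{milne}, Corollary~8.14), so $\mathsf{N}=\mathsf{G}_{ant}$, and the structure theorem $\mathsf{G}=\mathsf{G}_{aff}\cdot\mathsf{G}_{ant}$ (\cite{milne}, Chapter~8) reads $\mathsf{G}=\mathsf{G}_{aff}\mathsf{N}$. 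It is worth noting that, whereas $\mathbb{G}_{aff}=\mathbb{E}$, the classical group $\mathsf{G}_{aff}$ need not be trivial, so this is in general a genuine product.

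The step I expect to require the most care is the very first one over an imperfect ground field, where Chevalley's theorem in its naive form does not apply: there I would instead use that $\mathsf{N}$, being anti-affine, is smooth, connected, and commutative, so that its decomposition into a linear part and an abelian variety is available over any field (\cite{milne}, Chapter~8, and \cite{brion}). Everything else is a routine application of the Harish--Chandra dictionary; the only genuinely new ingredient is the description of $(\mathbb{G}_{aff})_{ev}$ supplied by Corollary~\ref{affine radical}, which is exactly what turns the hypothesis $\mathbb{G}_{aff}=\mathbb{E}$ into the vanishing of $\mathsf{N}_{aff}$.
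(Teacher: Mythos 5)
Your argument is correct and, for the first assertion, essentially the paper's: the paper simply observes that $(\mathsf{N}_{aff},0)$ is itself a Harish--Chandra sub-pair representing a connected normal affine group super-subscheme of $\mathbb{G}$ (normality and condition (2b) come for free because $\mathsf{N}$ is central and acts trivially on $\mathsf{V}$), so pseudoabelianness kills $\mathsf{N}_{aff}$ in one stroke; your detour through $\mathsf{N}_{aff}\leq\mathsf{H}^0_{\mathsf{G}_{aff}}=(\mathbb{G}_{aff})_{ev}$ via Corollary \ref{affine radical} and Lemma \ref{R-radical} proves the same thing with one extra step. For the second assertion you genuinely diverge: you quote the Rosenlicht decomposition $\mathsf{G}=\mathsf{G}_{aff}\mathsf{G}_{ant}$ as a black box, whereas the paper gives the short direct argument that underlies it: $\mathsf{G}/\mathsf{G}_{aff}\mathsf{N}$ is a quotient both of $\mathsf{G}/\mathsf{N}=\mathsf{G}^{aff}$ (hence affine) and of $\mathsf{G}/\mathsf{G}_{aff}$ (an abelian variety by the theorem the paper cites from Milne, hence complete), and a connected scheme that is simultaneously affine and complete is trivial. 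The two routes consume the same classical input, namely that $\mathsf{G}/\mathsf{G}_{aff}$ is an abelian variety. In this connection, the imperfect-field caution you raise is harmless exactly where you raise it --- for $\mathsf{N}$, anti-affineness provides the Chevalley-type decomposition over any field --- but the Barsotti--Chevalley-type statement for $\mathsf{G}$ itself, which is hidden inside your appeal to Rosenlicht and explicit in the paper's quotient argument, is the real pressure point over an imperfect ground field; the paper's version of the proof at least isolates where that input enters, while your phrasing (``no super input is needed: this is the Rosenlicht decomposition'') slightly obscures that the decomposition being invoked carries the same sensitivity as Chevalley's theorem.
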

\begin{proof}
The sub-pair $(\mathsf{A}(\mathsf{G})_{aff}, 0)$ represents a connected normal affine group super-subscheme of $\mathbb{G}$. Thus, $\mathsf{A}(\mathsf{G})_{aff}=\mathsf{E}$, and by \cite[Theorem 8.28]{milne}, $\mathsf{A}(\mathsf{G})$ is an abelian group variety. The group scheme $\mathsf{G}/\mathsf{G}_{aff}\mathsf{A}(\mathsf{G})$ is a quotient of both $\mathsf{G}/\mathsf{G}_{aff}$ and $\mathsf{G}/\mathsf{A}(\mathsf{G})$. Recall that $\mathsf{G}/\mathsf{G}_{aff}$ is an abelian variety by \cite[Theorem 8.28]{milne}. Therefore,  $\mathsf{G}/\mathsf{G}_{aff}\mathsf{A}(\mathsf{G})$ is a complete and affine connected scheme simultaneously, hence trivial (cf. \cite[A.75(g)]{milne}). 
\end{proof}
The following example shows that the class of pseudoabelian group superschemes is extensive.
\begin{example}\label{examples of pseudoabelian supergroups}
For every connected algebraic group scheme $\mathsf{G}$ such that $\mathsf{A}(\mathsf{G})$ is an abelian variety, and $\mathsf{G}=\mathsf{G}_{aff}\mathsf{A}(\mathsf{G})$, there exists a pseudoabelian group superscheme $\mathbb{G}$ with $\mathbb{G}_{ev}=\mathsf{G}$. 
Let $\mathsf{W}$ be a faithful $\mathsf{G}_{aff}/(\mathsf{G}_{aff}\cap\mathsf{A}(\mathsf{G}))$-module and $\mathsf{V}=\mathsf{W}\oplus\mathsf{W}^*$. Then $\mathsf{V}$ has the natural structure of $\mathsf{G}$-module for the diagonal action of $\mathsf{G}_{aff}$ and the trivial action of $\mathsf{A}(\mathsf{G})$.  Furthermore, we have a bilinear symmetric $\mathsf{G}$-equivariant map 
$\mathsf{V}\times\mathsf{V}\to\mathrm{Lie}(\mathsf{G})$ defined by the rule 
\[[v, w]=[\phi, \psi]=0, [\phi, v]=\phi(v)x \text{ for } v, w\in\mathsf{W} \text{ and } \phi, \psi\in\mathsf{W}^*,\]
where $x\in\mathrm{Lie}(\mathsf{A})\setminus 0$.

Then $(\mathsf{G}, \mathsf{V})$ is a Harish-Chandra pair representing a group superscheme $\mathbb{G}$. By Lemma \ref{R-radical}, the group superscheme $\mathbb{G}$ is pseudoabelian.
\end{example}
The following theorem is a super-version of well-known Barsotti-Chevalley theorem (cf. \cite[Theorem 8.27]{milne}).
\begin{theorem}\label{super Barsotti-Chevalley}
Let $\mathbb{G}$ be a connected algebraic group superscheme. Then there are normal group super-subschemes
$\mathbb{G}_1$ and $\mathbb{G}_2$ of $\mathbb{G}$ such that $\mathbb{G}_1$ is affine and connected,
$\mathbb{G}_2/\mathbb{G}_1$ is a (purely-even) abelian group variety, and $\mathbb{G}/\mathbb{G}_2$ is affine.
\end{theorem}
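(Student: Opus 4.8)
The plan is to reduce the statement to the purely-even Barsotti--Chevalley/Totaro theory that is already packaged in Corollary~\ref{affine radical} and Lemma~\ref{Even part of peseudoabelian group superscheme}, working throughout via the Harish--Chandra equivalence of Theorem~\ref{fundamental equivalence}. Represent $\mathbb{G}$ by the Harish--Chandra pair $(\mathsf{G},\mathsf{V})$ with $\mathsf{G}=\mathbb{G}_{ev}$ and $\mathsf{V}=\mathfrak{g}_1$, and set $\mathbb{G}_1:=\mathbb{G}_{aff}$, the largest connected normal affine group super-subscheme of $\mathbb{G}$ furnished by Corollary~\ref{affine radical}; thus $\mathbb{G}_1$ is affine and connected by construction. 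If $\mathbb{G}_1=\mathbb{G}$, that is, if $\mathbb{G}$ is already affine, take $\mathbb{G}_2=\mathbb{G}_1=\mathbb{G}$ and there is nothing more to prove, so assume from now on that $\mathbb{G}\neq\mathbb{G}_{aff}$.

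First I would pass to $\mathbb{P}:=\mathbb{G}/\mathbb{G}_1$, a connected algebraic group superscheme which, by Corollary~\ref{affine radical}, contains no non-trivial connected normal affine group super-subscheme, i.e.\ is pseudoabelian, and which is non-trivial by the standing assumption. Writing $\mathsf{P}=\mathbb{P}_{ev}$ and $\mathsf{V}'$ for the odd part of $\mathrm{Lie}(\mathbb{P})$, so that $(\mathsf{P},\mathsf{V}')$ represents $\mathbb{P}$, the discussion preceding Lemma~\ref{Even part of peseudoabelian group superscheme} together with that lemma shows that $\mathsf{N}:=\ker(\mathsf{P}\to\mathsf{P}^{aff})$ is a non-trivial abelian group variety, central in $\mathsf{P}$, acting trivially on $\mathsf{V}'$, with $\mathsf{P}=\mathsf{P}_{aff}\mathsf{N}$.

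Next I would lift $\mathsf{N}$ to a normal group super-subscheme of $\mathbb{P}$. Because $\mathsf{N}$ acts trivially on $\mathsf{V}'$, the sub-pair $(\mathsf{N},0)$ of $(\mathsf{P},\mathsf{V}')$ satisfies conditions $(2a)$--$(2c)$ in the definition of an exact sequence of Harish--Chandra pairs trivially (the odd submodule here being $0$), so by Theorem~\ref{exact sequences of supergroups} it represents a purely-even normal group super-subscheme $\mathbb{N}\unlhd\mathbb{P}$ whose underlying group scheme is $\mathsf{N}$; hence $\mathbb{N}$ is a purely-even abelian group variety. Its quotient $\mathbb{P}/\mathbb{N}$ is represented by $(\mathsf{P}/\mathsf{N},\mathsf{V}')$, the module structure descending precisely because $\mathsf{N}$ acts trivially on $\mathsf{V}'$, and since $\mathsf{P}=\mathsf{P}_{aff}\mathsf{N}$ we have $\mathsf{P}/\mathsf{N}\simeq\mathsf{P}_{aff}/(\mathsf{P}_{aff}\cap\mathsf{N})$, a quotient of an affine group scheme, hence affine; by even reducibility of affineness (equivalently, by the superscheme decomposition of Theorem~\ref{coincidence of G and G'}) the group superscheme $\mathbb{P}/\mathbb{N}$ is therefore affine.

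Finally, let $\pi:\mathbb{G}\to\mathbb{P}$ be the projection and put $\mathbb{G}_2:=\pi^{-1}(\mathbb{N})$, so that $\mathbb{G}_1=\ker\pi\leq\mathbb{G}_2\unlhd\mathbb{G}$. The ordinary isomorphism theorems, which through Theorem~\ref{fundamental equivalence} and the exactness criterion of Theorem~\ref{exact sequences of supergroups} amount to the evident statements about the associated Harish--Chandra pairs, yield $\mathbb{G}_2/\mathbb{G}_1\simeq\mathbb{N}$ and $\mathbb{G}/\mathbb{G}_2\simeq\mathbb{P}/\mathbb{N}$; combining this with the previous paragraphs gives all three assertions. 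I expect the main part of the actual write-up to be exactly this last bookkeeping — checking that $\pi^{-1}(\mathbb{N})$ is a normal group super-subscheme and that the two isomorphism theorems hold in $\mathcal{SFG}_{la}$ — since the genuine geometric content is entirely inherited from Corollary~\ref{affine radical} and Lemma~\ref{Even part of peseudoabelian group superscheme}, and there is no single deep step beyond the purely-even Barsotti--Chevalley/Totaro theory those results encapsulate.
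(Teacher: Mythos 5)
Your proposal is correct and follows essentially the same route as the paper's proof: take $\mathbb{G}_1=\mathbb{G}_{aff}$ from Corollary~\ref{affine radical}, pass to the pseudoabelian quotient, lift $\mathsf{N}=\ker(\mathsf{P}\to\mathsf{P}^{aff})$ via the sub-pair $(\mathsf{N},0)$, use Lemma~\ref{Even part of peseudoabelian group superscheme} to see the quotient by $\mathbb{N}$ is affine, and set $\mathbb{G}_2=\pi^{-1}(\mathbb{N})$. Your write-up merely spells out a few steps (the trivial case $\mathbb{G}=\mathbb{G}_{aff}$ and the affineness of $\mathbb{P}/\mathbb{N}$ via $\mathsf{P}=\mathsf{P}_{aff}\mathsf{N}$) that the paper leaves implicit.
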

\begin{proof}
Set $\mathbb{G}_1=\mathbb{G}_{aff}$. Then $\mathbb{H}=\mathbb{G}/\mathbb{G}_1$ is a pseudoabelian group superscheme represented by a Harish-Chandra pair $(\mathsf{H}, \mathsf{V})$. As above,  $(\mathsf{A}(\mathsf{H}), 0)$ represents a normal (purely-even) group subvariety $\mathbb{A}$ of $\mathbb{H}$. Moreover, Lemma \ref{Even part of peseudoabelian group superscheme} implies that $\mathbb{H}/\mathbb{A}$ is an affine group superscheme. 
We define $\mathbb{G}_2$ as $\pi^{-1}(\mathbb{A})$, where $\pi : \mathbb{G}\to\mathbb{G}/\mathbb{G}_1$ is the quotient morphism.  
\end{proof}
\begin{cor}\label{abelian group superscheme}
Let $\mathbb{G}$ be an abelian group supervariety. Then 
\begin{enumerate}
\item $\mathbb{G}_{ev}\unlhd\mathbb{G}$, and $\mathbb{G}_{ev}$ is an abelian group supervariety;
\item $\mathbb{G}_{aff}\cap\mathbb{G}_{ev}=\mathbb{E}$, hence $\mathbb{G}_{aff}$ is a (purely-odd) affine unipotent group superscheme;
\item $\mathbb{G}/(\mathbb{G}_{aff}\times\mathbb{G}_{ev})$ is a (purely-odd) affine unipotent group superscheme.
\end{enumerate}
\end{cor}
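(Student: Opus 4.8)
The plan is to work entirely through the Harish-Chandra description: write $\mathbb{G}$ as the group superscheme associated to a pair $(\mathsf{G},\mathsf{V})$ with $\mathsf{G}=\mathbb{G}_{ev}$ and $\mathsf{V}=\mathrm{Lie}(\mathbb{G})_1$, and translate each of the three assertions into a statement about this pair. Since $\mathbb{G}$ is an abelian group supervariety, by the remark following the definition of abelian supervariety we know $\mathsf{G}=\mathbb{G}_{ev}$ is an abelian variety; in particular $\mathsf{G}$ is commutative, so the adjoint action of $\mathsf{G}$ on $\mathfrak{g}$ is trivial, the $\mathsf{G}$-module structure on $\mathsf{V}$ is trivial, and the bracket $[\ ,\ ]:\mathsf{V}\times\mathsf{V}\to\mathsf{g}=\mathrm{Lie}(\mathsf{G})$ is an arbitrary symmetric bilinear map into the (abelian) Lie algebra of an abelian variety. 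First I would record these reductions, and observe that condition (2b) in the definition of exact sequence, together with the triviality of the $\mathsf{G}$-action, makes every $\mathsf{G}$-submodule of $\mathsf{V}$ automatically $\mathsf{G}$-stable, so the combinatorics reduces to linear algebra on $(\mathsf{V},[\ ,\ ])$.

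For part (1): the pair $(\mathsf{G},0)$ is a Harish-Chandra sub-pair of $(\mathsf{G},\mathsf{V})$ (conditions (a)–(c) are vacuous with $\mathsf{V}$ replaced by $0$), and it is normal because $\mathsf{G}$ is central in itself and $0$ is trivially a submodule on which everything acts trivially; by Theorem \ref{exact sequences of supergroups} it represents a normal group super-subscheme, which is exactly $\mathbb{G}_{ev}$ (one checks $\Psi((\mathsf{G},0))\simeq\mathbb{G}_{ev}$ directly from the construction of $\Psi$, since $\mathbf{E}$ collapses when $\mathsf{V}=0$). That $\mathbb{G}_{ev}$ is itself an abelian group supervariety is immediate: it is purely even, of finite type, separated, geometrically reduced, and complete since $\mathsf{G}$ is an abelian variety.

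For parts (2) and (3): I would identify $\mathbb{G}_{aff}$ using Corollary \ref{affine radical} and Lemma \ref{R-radical} applied to $\mathsf{R}=\mathsf{G}_{aff}$. But $\mathsf{G}=\mathbb{G}_{ev}$ is an abelian variety, so $\mathsf{G}_{aff}=\mathsf{E}$, hence $\mathrm{Lie}(\mathsf{R})=0$; the largest $\mathsf{R}$-subordinated submodule $\mathsf{W}_{\mathsf{R}}$ is then the largest $\mathsf{G}$-submodule $\mathsf{W}\subseteq\mathsf{V}$ with $[\mathsf{W},\mathsf{V}]=0$ and $[\mathsf{W},\mathsf{V},\mathsf{V}]\subseteq\mathsf{W}$ — the first condition alone forces $[\mathsf{W},\mathsf{V}]=0$, and then the second is automatic. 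So $\mathsf{W}_{\mathsf{R}}=\mathsf{V}^{\perp}:=\{w\in\mathsf{V}\mid [w,\mathsf{V}]=0\}$ is the radical of the bilinear form, and $\mathsf{H}_{\mathsf{R}}=\ker(\mathsf{E}\to\mathrm{GL}(\mathsf{V}/\mathsf{V}^\perp))=\mathsf{E}$. Thus $\mathbb{G}_{aff}$ is represented by $(\mathsf{E},\mathsf{V}^\perp)$; this is a purely-odd affine group superscheme $\simeq\mathrm{SSp}(\Lambda((\mathsf{V}^\perp)^*))$ with trivial bracket, hence unipotent, and $\mathbb{G}_{aff}\cap\mathbb{G}_{ev}$ is represented by $(\mathsf{E},0)=\mathbb{E}$, giving (2). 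For (3), the product $\mathbb{G}_{aff}\times\mathbb{G}_{ev}$ is the normal super-subscheme represented by $(\mathsf{G},\mathsf{V}^\perp)$ (normality again from centrality plus the fact that $\mathsf{V}^\perp$ is a submodule and $[\mathsf{V},\mathsf{V}^\perp]=0\subseteq\mathrm{Lie}(\mathsf{G})$, so conditions (2a)–(2c) of the exact-sequence definition hold), and the quotient $\mathbb{G}/(\mathbb{G}_{aff}\times\mathbb{G}_{ev})$ is then represented by the Harish-Chandra pair $(\mathsf{E},\mathsf{V}/\mathsf{V}^\perp)$, where the induced bracket $\mathsf{V}/\mathsf{V}^\perp\times\mathsf{V}/\mathsf{V}^\perp\to\mathrm{Lie}(\mathsf{E})=0$ is zero; hence this quotient is purely odd, affine, and unipotent.

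I expect the main obstacle to be the bookkeeping of \emph{normality}: one must verify in each case that the candidate sub-pair genuinely represents a \emph{normal} group super-subscheme, i.e. check the conditions in the definition of an exact sequence of Harish-Chandra pairs (in particular that $\mathsf{R}$ acts trivially on $\mathsf{V}/\mathsf{W}$ and $[\mathsf{V},\mathsf{W}]\subseteq\mathrm{Lie}(\mathsf{R})$), and to correctly match $\Psi$ applied to the quotient pair with the honest sheaf quotient $\mathbb{G}/\mathbb{H}$. The second obstacle is to confirm that a purely-odd group superscheme with zero bracket — i.e. one of the form $\mathrm{SSp}(\Lambda(U^*))$ with $U$ primitive — is unipotent in whatever sense the paper uses; this should follow from the fact that $\Lambda(U^*)$ is a local finite-dimensional connected Hopf superalgebra, but I would want to cite the relevant characterization. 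Everything else is a routine translation of the group-theoretic statements into statements about the bilinear form $[\ ,\ ]$ and its radical $\mathsf{V}^\perp$.
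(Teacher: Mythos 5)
Your overall route is the same as the paper's (reduce everything to the Harish-Chandra pair $(\mathsf{G},\mathsf{V})$, use the triviality of the $\mathsf{G}$-action on $\mathsf{V}$ for the normality conditions, and identify $\mathbb{G}_{aff}$ with the pair $(\mathsf{E},\mathsf{V}^{\perp})$, which is exactly the remark following the corollary), but there is one genuinely flawed step, and it sits at the foundation of parts (1) and (3). You justify the triviality of the $\mathsf{G}$-module structure on $\mathsf{V}$ by saying that $\mathsf{G}$ is commutative, "so the adjoint action of $\mathsf{G}$ on $\mathfrak{g}$ is trivial." Commutativity of $\mathsf{G}=\mathbb{G}_{ev}$ only kills the adjoint action of $\mathsf{G}$ on its own Lie algebra $\mathfrak{g}_0$; it says nothing about the action on $\mathfrak{g}_1=\mathsf{V}$, which comes from conjugation inside the (non-commutative) supergroup $\mathbb{G}$. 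For instance, the even part of $\mathrm{GL}(1|1)$ is a commutative torus acting non-trivially on $\mathfrak{g}_1$. Since condition (2b) of the exactness definition for the sub-pair $(\mathsf{G},0)$ (normality of $\mathbb{G}_{ev}$) and for $(\mathsf{G},\mathsf{V}^{\perp})$ (part (3)) is precisely "$\mathsf{G}$ acts trivially on $\mathsf{V}/\mathsf{W}$", your argument as written does not establish these.

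The fix is what the paper actually uses: $\mathsf{G}$ is an abelian variety, hence complete (equivalently anti-affine), while $\mathrm{GL}(\mathsf{V})$ is affine; so the morphism $\mathsf{G}\to\mathrm{GL}(\mathsf{V})$ giving the action factors through the affinization $\mathsf{G}\to\mathsf{G}^{aff}$ (Corollary \ref{affinization}), and $\mathsf{G}^{aff}$ is trivial for an abelian variety (cf.\ \cite{milne}), so the action is trivial --- this is the same mechanism used in the paper's discussion of pseudoabelian supergroups. With that replacement your computation of $\mathsf{W}_{\mathsf{R}}=\mathsf{V}^{\perp}$ for $\mathsf{R}=\mathsf{G}_{aff}=\mathsf{E}$ via Lemma \ref{R-radical} and Corollary \ref{affine radical}, the identification of $\mathbb{G}_{aff}$ with $(\mathsf{E},\mathsf{V}^{\perp})\simeq\mathrm{SSp}(\Lambda((\mathsf{V}^{\perp})^{*}))$, and the treatment of parts (2) and (3) are all correct and in fact supply the details the paper leaves implicit ("$(\mathbb{G}_1)_{ev}$ is trivial" and "the third statement is clear").
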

\begin{proof}
We have seen that $\mathbb{G}$ is an abelian group supervariety if and only if $\mathsf{G}=\mathbb{G}_{res}$ is an abelian group variety. Thus $\mathsf{G}$ acts trivially on $\mathsf{V}=\mathrm{Lie}(\mathbb{G})_1$ and $(\mathbb{G}_1)_{ev}$ is trivial, which implies the first and second statements. The third statement is  clear.
\end{proof}
\begin{rem}
If $\mathbb{G}$ is an abelian group supervariety, then the group super-subscheme $\mathbb{G}_{aff}$ is represented by the pair $(\mathsf{E}, \mathsf{W})$, where $\mathsf{W}=\{w\in\mathsf{V}\mid [w, \mathsf{V}]=0\}$. Respectively, $\mathbb{G}_{ev}\times\mathbb{G}_{aff}$ is represented by the pair $(\mathsf{G}, \mathsf{W})$.
\end{rem}

\subsection{Anti-affine group superschemes}

An algebraic group superscheme $\mathbb{G}$ is called \emph{anti-affine} whenever $\mathcal{O}(G)$ is a Grassman-like superalgebra. Set $\Phi(\mathbb{G})=(\mathsf{G}, \mathsf{V})$.
Theorem \ref{coincidence of G and G'} implies that $\mathbb{G}$ is anti-affine if and only if
the algebraic group scheme $\mathsf{G}$ is (cf. \cite{milne}, page 39).  Moreover, $(\mathsf{A}(\mathsf{G}), 0)$ represents a central anti-affine group super-subscheme $\mathbb{A}$ of $\mathbb{G}$ such that $\mathbb{G}/\mathbb{A}\simeq\mathbb{G}^{aff}$. But an anti-affine group super-subscheme of $\mathbb{G}$ is not necessarily central in $\mathbb{G}$, contrary to the purely-even case (cf. \cite[Corollary 8.13]{milne}). For example, if $\mathbb{G}$ is a psedoabelian group superscheme from Example \ref{examples of pseudoabelian supergroups}, then its group super-subscheme, represented by the pair $(\mathsf{A}(\mathsf{G}), \mathsf{V})$, is anti-affine but not central. 

Similarly, an anti-affine group superscheme is no longer commutative, but it is always nilpotent.
If $\mathbb{G}$ is anti-affine, then $\mathbb{G}_{ev}$ is central, and $\mathbb{G}/\mathbb{G}_{ev}$ is a purely-odd unipotent group superscheme.

\section{Sheaf quotients are superschemes}

Let $\mathbb{G}$ be an algebraic group superscheme, and $\mathbb{H}$ its group super-subscheme.
They are represented by Harish-Chandra pairs $(\mathsf{G}, \mathsf{V})$ and $(\mathsf{H}, \mathsf{W})$, respectively.

For more details about sheaves, sheaf quotient  and sheafifications (completions) in the Grothendieck topology of fppf coverings, we refer to
\cite{jan, zub2, zub3}. In this section, we prove the following theorem.
\begin{theorem}\label{sheaf quotient}
The sheaf quotient $\mathbb{G}/\mathbb{H}$ is a superscheme of finite type and the quotient morphism $\mathbb{G}\to \mathbb{G}/\mathbb{H}$ is faithfully flat.
\end{theorem} 
The proof will be given in the series of lemmas.

\subsection{A reduction}

As above, let $\mathsf{A}(\mathsf{G})$ denote $\ker (\mathsf{G}\to\mathsf{G}^{aff})$. 

Let us start with two elementary observations. 
First, suppose $\mathsf{L}$ is a group subscheme of $\mathsf{A}(\mathsf{G})$. In that case, the pair $(\mathsf{L}, 0)$ represents
a central group super-subscheme $\mathbb{L}$ of $\mathbb{G}$. Besides, the group superscheme $\mathbb{G}/\mathbb{L}$ is represented by the pair $(\mathsf{G}/\mathsf{L}, \mathsf{V})$ such that $\ker(\mathsf{G}/\mathsf{L}\to (\mathsf{G}/\mathsf{L})^{aff})=\mathsf{A}(\mathsf{G})/\mathsf{L}$, i.e. $(\mathsf{G}/\mathsf{L})^{aff}$ is naturally isomorphic to $\mathsf{G}^{aff}$.

Second, if $\mathsf{R}$ is an affine normal group subscheme of $\mathsf{G}$, then $(\mathsf{G}/\mathsf{R})_{aff}=\mathsf{G}_{aff}\mathsf{R}/\mathsf{R}$. Every normal affine connected group subscheme of $\mathsf{G}/\mathsf{R}$ has the form $\mathsf{L}/\mathsf{R}$, where $\mathsf{L}$ is a normal affine group subscheme of $\mathsf{G}$ (use \cite[Proposition 8.1]{milne}). By \cite[Proposition 1.52]{milne}, $\mathsf{L}^0$ is normal, hence $\mathsf{L}^0\leq\mathsf{G}_{aff}$. Since 
$(\mathsf{L}/\mathsf{R})/(\mathsf{L}^0\mathsf{R}/\mathsf{R})\simeq \mathsf{L}/\mathsf{L}^0\mathsf{R}$
is etale and $\mathsf{L}^0\mathsf{R}/\mathsf{R}$ is connected, we have $\mathsf{L}/\mathsf{R}=\mathsf{L}^0\mathsf{R}/\mathsf{R}\leq\mathsf{G}_{aff}\mathsf{R}/\mathsf{R}$.
\begin{lm}\label{reduction to affine 1}
If Theorem \ref{sheaf quotient} holds for affine group super-subschemes $\mathbb{H}$, then it holds for arbitrary $\mathbb{H}$.	
\end{lm}
\begin{proof}
By the first observation, $(\mathsf{H}\cap\mathsf{A}(\mathsf{G}), 0)$ represents a central group super-subscheme $\mathbb{L}$ of $\mathbb{G}$ such that $\mathbb{L}\leq\mathbb{H}$.  By Proposition 4.2 from \cite{zub3}, we have a commutative diagram
\[\begin{array}{ccc}
\mathbb{G} & \to & \mathbb{G}/\mathbb{L} \\
\downarrow & & \downarrow \\
\mathbb{G}/\mathbb{H} & \simeq & (\mathbb{G}/\mathbb{L})/(\mathbb{H}/\mathbb{L})
\end{array},\]
where $\mathsf{H}/\mathsf{L} \cap \mathsf{A}(\mathsf{G})/\mathsf{L}=\mathsf{E}$, hence $\mathsf{H}/\mathsf{L}$ is affine and, in its turn, 
$\mathbb{H}/\mathbb{L}$ is affine as well. It remains to note that $\mathbb{G} \to \mathbb{G}/\mathbb{L}$ and $\mathbb{G}/\mathbb{L}\to (\mathbb{G}/\mathbb{L})/(\mathbb{H}/\mathbb{L})$ are  faithfully flat morphisms of superschemes of finite type by Corollary \ref{faithfully flatness of certain quotient morphisms} and 
by the assumption of lemma, respectively. 
\end{proof}
From now on we assume that $\mathbb{H}$ is affine. If it is the case, then $\mathbb{G}\to \mathbb{G}/\mathbb{H}$ is always affine and faithfully flat, provided $\mathbb{G}/\mathbb{H}$ is a superscheme (use \cite[Proposition 9.8]{zub2} and mimic the proof of  \cite[I.5.7(1)]{jan}).
\begin{lm}\label{reduction to affine 2}
Let $\mathbb{H}_1$ and $\mathbb{H}_2$ are (affine) group super-subschemes of $\mathbb{G}$ such that $\mathbb{H}_1\leq \mathbb{H}_2$.
If Theorem \ref{sheaf quotient} holds for the couple $\mathbb{H}_2\leq \mathbb{G}$ and $\mathbb{H}_2/\mathbb{H}_1$ is an affine superscheme of finite type, then it holds for the couple $\mathbb{H}_1\leq \mathbb{G}$. 	
\end{lm} 
\begin{proof}
We have a cartesian square (see \cite[Remark 9.11]{zub2}) :
\[\begin{array}{ccc}
\mathbb{G}\times \mathbb{H}_2/\mathbb{H}_1 & \to & \mathbb{G}/\mathbb{H}_1 \\
\downarrow & & \downarrow \\
\mathbb{G} & \to & \mathbb{G}/\mathbb{H}_2 
\end{array},  \]
which satisfies the conditions of Lemma \ref{finite type and flatness} (use \cite[Proposition 9.8 and Remark 9.11]{zub2}). Therefore, $\mathbb{G}/\mathbb{H}_1$ is a superscheme of finite type. 	
\end{proof}
\begin{lm}\label{reduction to affine}
Theorem \ref{sheaf quotient} holds for any couple $\mathbb{H}\leq\mathbb{G}$, whenever it holds for each couple that satisfies :
\begin{enumerate} 
\item $\mathsf{G}=\mathsf{M}\times\mathsf{A}(\mathsf{G})$; 
\item $\mathsf{M}$ is an affine group subscheme of $\mathsf{G}$ with $\mathsf{H}\leq\mathsf{M}$.
\end{enumerate}
\end{lm}
\begin{proof}
Let $\mathsf{R}$ denote the group scheme $\mathsf{G}_{aff}\mathsf{H}\cap\mathsf{A}(\mathsf{G})$. Set $\mathbb{R}=\Psi(\mathsf{R}, 0)$. The second observation implies that $(\mathsf{G}/\mathsf{R})_{aff}=\mathsf{G}_{aff}\mathsf{R}/\mathsf{R}$ and $(\mathsf{G}_{aff}\mathsf{H})/\mathsf{R}\cap\mathsf{A}(\mathsf{G})/\mathsf{R}=\mathsf{E}$. 
Since $\mathbb{H}\mathbb{R}/\mathbb{H}\simeq\mathbb{R}$ is affine and of finite type, $\mathbb{G}/\mathbb{H}$ is a superscheme of finite type provided $\mathbb{G}/\mathbb{H}\mathbb{R}\simeq (\mathbb{G}/\mathbb{R})/(\mathbb{H}\mathbb{R}/\mathbb{R})$ is. 
Therefore, one can assume that $\mathsf{G}_{aff}\mathsf{H}\cap\mathsf{A}(\mathsf{G})=\mathsf{E}$. 
Arguing as in Lemma \ref{Even part of peseudoabelian group superscheme}, one can show that $\mathsf{A}(\mathsf{G})$ is an abelian variety and $\mathsf{G}^0=\mathsf{G}_{aff}\mathsf{A}(\mathsf{G})$. In particular, $\mathsf{A}(\mathsf{G})$ can be identified with $(\mathsf{G}/\mathsf{G}_{aff}\mathsf{H})^0$. By \cite[Theorem 1.1]{brion}, there is a finite, hence affine, group subscheme $\mathsf{F}$ of $\mathsf{G}/\mathsf{G}_{aff}\mathsf{H}$ such that
$\mathsf{F}\mathsf{A}(\mathsf{G})=\mathsf{G}/\mathsf{G}_{aff}\mathsf{H}$. The inverse image of $\mathsf{F}$ in $\mathsf{G}$ is an affine group subscheme $\mathsf{M}$ such that $\mathsf{G}=\mathsf{M}\mathsf{A}(\mathsf{G})$. Repeating the above arguments with $\mathsf{R}=\mathsf{M}\cap\mathsf{A}(\mathsf{G})$, one can reduce the general case to $\mathsf{M}\cap\mathsf{A}(\mathsf{G})=\mathsf{E}$. The proposition is proven.
\end{proof}

\subsection{Sheaf quotient}

Let $\mathbb{G}$ be an algebraic group superscheme, represented by a Harish-Chandra pair $(\mathsf{G}, \mathsf{V})$. Let $\mathbb{H}$ be its closed (affine) group super-subscheme, represented by a sub-pair
$(\mathsf{H}, \mathsf{W})$. 
Without loss of generality, one can assume that $\mathbb{G}$ and $\mathbb{H}$ satisfy the conditions of Lemma \ref{reduction to affine}. 

Since $\mathsf{X}=\mathsf{G}/\mathsf{H}$ is a scheme (of finite type), any open affine subscheme $\mathsf{U}'$ of $\mathsf{G}/\mathsf{H}$ has a form $\mathsf{U}/\mathsf{H}$, where $\mathsf{U}$ is an open affine $\mathsf{H}$-saturated subscheme of $\mathsf{G}$
(cf. \cite[I.5.7.1(1)]{jan}). 

Set $\mathrm{Sp}(\mathsf{A})=\mathsf{U}$ and $\mathrm{Sp}(\mathsf{D})=\mathsf{H}$. Then $\mathsf{A}'=\mathcal{O}(\mathsf{U}')\simeq
\mathsf{A}^{\mathsf{D}}=\mathsf{A}\square_{\mathsf{D}} \Bbbk$.

Recall that the superschemes $\mathbb{G}$ and $\mathbb{H}$ can be represented as  $\mathsf{G}{\bf E}$ and $\mathsf{H}{\bf E}'$, respectively.
\begin{lm}\label{a certain open super-subscheme}
The subfunctor $\mathbb{U}=\mathsf{U}{\bf E}={\bf E}\mathsf{U}$ is an open affine $\mathbb{H}$-saturated super-subscheme of $\mathbb{G}$.   
\end{lm}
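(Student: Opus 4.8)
The plan is to verify the three asserted properties of $\mathbb{U}=\mathsf{U}{\bf E}={\bf E}\mathsf{U}$ separately: that it is a well-defined super-subfunctor (in particular that the two products agree), that it is affine, and that it is open and $\mathbb{H}$-saturated in $\mathbb{G}$. First I would recall that, under the fundamental equivalence (Theorem \ref{coincidence of G and G'}), $\mathbb{G}\simeq\mathsf{G}\times{\bf E}$ as a superscheme, with multiplication governed by the relations $(1)$--$(5)$ and Corollary \ref{decomposition of G'}, which says every element of $\mathbb{G}(R)=\mathbb{G}'(R)$ is uniquely $g\,e(a_1,v_1)\cdots e(a_t,v_t)$ with $g\in\mathsf{G}(R)$. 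Restricting the first factor to the open subscheme $\mathsf{U}$ of $\mathsf{G}$ gives a subfunctor whose $R$-points are exactly $\{g\,x : g\in\mathsf{U}(R),\ x\in{\bf E}(R)\}=\mathsf{U}(R){\bf E}(R)$; under the superscheme isomorphism $\mathbb{G}\simeq\mathsf{G}\times{\bf E}$ this subfunctor corresponds to the open super-subscheme $\mathsf{U}\times{\bf E}$, which is affine since $\mathsf{U}=\mathrm{Sp}(A)$ and ${\bf E}\simeq\mathrm{SSp}(\Lambda(\mathsf{V}))$ are, namely $\mathbb{U}\simeq\mathrm{SSp}(A\otimes\Lambda(\mathsf{V}))$. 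That it is open follows because $\mathsf{U}$ is open in $\mathsf{G}$ and the underlying topological space of $\mathbb{G}$ coincides with that of $\mathsf{G}=\mathbb{G}_{ev}$ (the immersion ${\bf E}\to\mathbb{G}$ being a one-point-fiber phenomenon), so openness can be checked after applying the $(-)_{ev}$ functor, where it reduces to $\mathsf{U}$ open in $\mathsf{G}$.

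Next I would prove ${\bf E}\mathsf{U}=\mathsf{U}{\bf E}$. The point is Lemma \ref{decomposition of Sigma}(2) together with Lemma \ref{crucial property of formal neighborhood}: for $x\in{\bf E}(R)$ and $g\in\mathsf{U}(R)$ we have $x g = g\, (g^{-1} x g) = g\, (x^{g})$, and by relation $(5)$ the conjugate $x^{g}$ again lies in ${\bf E}(R)$ (since $e(a,v)^g=e(a,\mathrm{Ad}(g)v)$, and ${\bf E}$ is built from the $e(a,v)$ with $v$ ranging over all of $\mathsf{V}$). Hence ${\bf E}(R)\mathsf{U}(R)\subseteq\mathsf{U}(R){\bf E}(R)$, and the reverse inclusion is symmetric. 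This also shows $\mathbb{U}(R)$ is closed under the relevant left- and right-translation bookkeeping, which is exactly what ${\bf E}$-saturation will require below.

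Finally, for $\mathbb{H}$-saturation — the property that for every $R$ the set $\mathbb{U}(R)$ is a union of $\mathbb{H}(R)$-cosets, equivalently $\mathbb{H}\mathbb{U}\subseteq\mathbb{U}$ — I would argue as follows. Write $\mathbb{H}=\mathsf{H}{\bf E}'$ with $\mathsf{H}\le\mathsf{M}$ and ${\bf E}'\subseteq{\bf E}$. For $h=h_0 y$ with $h_0\in\mathsf{H}(R)$, $y\in{\bf E}'(R)$, and $u=g x\in\mathbb{U}(R)$ with $g\in\mathsf{U}(R)$, $x\in{\bf E}(R)$, we have $h u = h_0 y g x = h_0\, (g\, y^{g})\, x = (h_0 g)\,(y^{g} x)$. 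Now $h_0 g\in\mathsf{H}(R)\mathsf{U}(R)$, and $\mathsf{U}$ being $\mathsf{H}$-saturated as a subscheme of $\mathsf{G}$ gives $h_0 g\in\mathsf{U}(R)$; and $y^{g} x\in{\bf E}(R){\bf E}(R)$, which by Corollary \ref{decomposition of G'} (applied inside $\bf\Sigma$, i.e.\ via relations $(1)$ and $(4)$) can be rewritten as $f\,x'$ with $f\in\mathsf{F}(R)=\mathbb{S}_{ev}(R)\cap\bf\Sigma(R)$ and $x'\in{\bf E}(R)$. Then $h u = (h_0 g)\,f\, x' = ((h_0 g) f^{g^{-1}\cdots})\,x'$ after moving $f$ to the left past nothing — more carefully, $(h_0 g) f\in\mathsf{U}(R)\mathsf{F}(R)\subseteq\mathsf{U}(R)$ by Lemma \ref{crucial property of formal neighborhood} (since $\mathsf{F}\subseteq\mathbb{S}_{ev}$, which stabilizes every open super-subscheme), so $hu\in\mathsf{U}(R){\bf E}(R)=\mathbb{U}(R)$.

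The main obstacle I anticipate is the last bookkeeping step: keeping track of how conjugation by elements of $\mathsf{G}$ acts on ${\bf E}$ and how a product of two elements of ${\bf E}$ decomposes as $\mathsf{F}\cdot{\bf E}$, and then verifying that the stray $\mathsf{F}$-factor can be absorbed into $\mathsf{U}$ using that $\mathsf{F}\le\mathbb{S}_{ev}$ and Lemma \ref{crucial property of formal neighborhood} guarantees $\mathsf{U}\,\mathbb{S}_{ev}\subseteq\mathsf{U}$ and $\mathbb{S}_{ev}\,\mathsf{U}\subseteq\mathsf{U}$. One has to be slightly careful that "$\mathsf{U}$ is $\mathsf{H}$-saturated in $\mathsf{G}$" is exactly the hypothesis coming from the chosen affine covering of $\mathsf{G}/\mathsf{H}$, and that all these identities are being checked functorially in $R$; none of this is deep, but it is where the relations $(1)$--$(5)$ and the two auxiliary lemmas all get used at once.
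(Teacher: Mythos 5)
Your overall strategy is sound and rests on the same two pillars as the paper (the normal form $f\cdot e(a_1,v_1)\cdots e(a_t,v_t)$ and the absorption property of Lemma \ref{crucial property of formal neighborhood}), but the execution differs: you argue element-by-element with the relations $(1)$--$(5)$ and the $\mathsf{F}\cdot{\bf E}$ decomposition, whereas the paper first replaces ${\bf E}$ by the \emph{normal} group subfunctor $\mathbb{S}$, writing $\mathbb{U}=\mathsf{U}{\bf E}=\mathsf{U}\mathsf{S}{\bf E}=\mathsf{U}\mathbb{S}$ via Corollary \ref{decomposition of S} and $\mathsf{U}\mathsf{S}\subseteq\mathsf{U}$; after that, the equality ${\bf E}\mathsf{U}=\mathsf{U}{\bf E}$ and the saturation $\mathbb{U}(hx)=(\mathsf{U}h)(\mathbb{S}^{h^{-1}}x)\subseteq\mathsf{U}\mathbb{S}=\mathbb{U}$ follow in one line each from normality of $\mathbb{S}$ (Lemma \ref{formal neighborhood is normal}), with no bookkeeping of $\mathsf{F}$-corrections at all. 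Your route can be made to work, but the paper's packaging through $\mathbb{S}$ is exactly what spares you the computations you list as the main obstacle.

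Two concrete slips need repair. First, your claim that the conjugate $x^{g}$ of $x\in{\bf E}(R)$ again lies in ${\bf E}(R)$ is false in general: relation $(5)$ only shows each conjugated generator stays in ${\bf\Sigma}(R)$ (recall ${\bf E}$ is defined by ordered products over a \emph{fixed} basis, and already $e(a_1,v_2)e(a_2,v_1)=f(-a_1a_2,[v_2,v_1])\,e(a_2,v_1)e(a_1,v_2)$ picks up a nontrivial $\mathsf{F}$-factor), so one only gets $x^{g}=f\,x'$ with $f\in\mathsf{F}(R)$, $x'\in{\bf E}(R)$ by Lemma \ref{decomposition of Sigma}(2); your argument for ${\bf E}\mathsf{U}\subseteq\mathsf{U}{\bf E}$ must absorb this $f$ into $\mathsf{U}$ via $\mathsf{U}\mathsf{F}\subseteq\mathsf{U}$ -- the very mechanism you correctly deploy in the saturation step, so the gap is local and easily closed (the reverse inclusion then needs the symmetric normal form $x'f$ from the Remark after Lemma \ref{decomposition of Sigma}). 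Second, you verify saturation on the wrong side: with the convention of Section 12 (quotients by the right action of $\mathbb{H}$, as in Jantzen I.5.7, and as in the paper's own check of $\mathbb{U}(hx)\subseteq\mathbb{U}$), the hypothesis on $\mathsf{U}$ is $\mathsf{U}\mathsf{H}\subseteq\mathsf{U}$, so your step ``$h_0g\in\mathsf{H}(R)\mathsf{U}(R)\subseteq\mathsf{U}(R)$'' is not what saturation of $\mathsf{U}$ gives; you must run the mirror-image computation on products $u\cdot h$, which goes through verbatim.
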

\begin{proof}
It is clear that $\mathsf{U}{\bf E}$ is open in $\mathbb{G}$. Further, we have $\mathbb{U}=\mathsf{U}{\bf E}=\mathsf{U}\mathsf{N}{\bf E}=\mathsf{U}\mathbb{N}$, where $\mathbb{N}=\mathbb{N}(\mathbb{G})$ and $\mathsf{N}=\mathbb{N}_{ev}$. Since $\mathbb{N}$ is a normal group subfunctor, we also have $\mathbb{U}=\mathbb{N}\mathsf{U}={\bf E}\mathsf{N}\mathsf{U}=
{\bf E}\mathsf{U}$. Finally, for every $hx\in\mathbb{H}$, where $h\in\mathsf{H}$ and $x\in {\bf E}'$, there is  
\[\mathbb{U}(hx)=\mathsf{U}\mathbb{N}(hx)=(\mathsf{U}h) (\mathbb{N}^{h^{-1}} x)\subseteq\mathsf{U}\mathbb{N}=\mathbb{U}.\]
The lemma is proven.
\end{proof}
Set $\mathrm{SSp}(A)=\mathbb{U}$ and $\mathrm{SSp}(D)=\mathbb{H}$.  
Let $\tau : A\to A\otimes D$ be the corresponding $D$-coideal superalgebra map.

Recall that
$\mathsf{G}=\mathsf{M}\times\mathsf{A}(\mathsf{G})$, where $\mathsf{H}\leq\mathsf{M}$ and $\mathsf{M}$ is affine. Thus $\mathsf{U}$ can be chosen so that $\mathsf{U}=\mathsf{U}_{aff}\times\mathsf{U}_{ab}$, where $\mathsf{U}_{aff}$ is an open affine subscheme of $\mathsf{M}$ and $\mathsf{U}_{ab}$ is an open affine subscheme of $\mathsf{A}(\mathsf{G})$.
Moreover, the algebra $\mathsf{A}$ is isomorphic to $\mathsf{A}_{aff}\otimes \mathsf{A}_{ab}$ as a $\mathsf{D}$-comodule, where $\mathsf{A}_{aff}\simeq \Bbbk [\mathsf{U}_{aff}], \mathsf{A}_{ab}\simeq \Bbbk [\mathsf{U}_{ab}]$, and $\mathsf{A}_{ab}$ is a trivial $\mathsf{D}$-comodule.

Let $V$ be a right (super)comodule over a (super)coalgebra $C$. Let $C\to B$ be a morphism of (super)coalgebras. Then $V$ and $C$ has the natural structures of right and left (super)comodules over $B$. Moreover, the (super)comodule map $\sigma : V\to V\otimes C$ factors through $
V\square_B C\subseteq V\otimes C$ and we denote $V\to V\square_B C$ by $\sigma_B$. 
 
Following notations from \cite{mastak}, one can define a morphism of $D$-coideal superalgebras
\[\omega_{\theta} : A\stackrel{\tau_{\mathsf{D}}}{\to} (\wedge(\mathsf{V^*})\otimes \mathsf{A})\square_{\mathsf{D}} D\stackrel{\wedge(\theta')\square_{\mathsf{D}}\mathrm{id}_{D}}{\to} (\mathsf{A}\otimes\wedge(\mathsf{Z}))\square_{\mathsf{D}}D,\]
where $\mathsf{Z}=\ker(\mathsf{V^*}\to\mathsf{W^*})$. The proofs of Lemma 4.1, Proposition 4.2, and Lemma 4.4 in \cite{mastak} can be repeated verbatim. Only the definition of $\theta'$ and  the proof of Lemma 4.6 in \cite{mastak} needs a commentary. The structure of the right $\mathsf{G}$-module of
$\mathsf{V}^*$ is completely defined by the structure of $\mathsf{M}$-module because $\mathsf{A}(\mathsf{G})$ acts trivially on $\mathsf{V}^*$. Therefore, one can define the map 
\[\kappa=\kappa_{\mathsf{A}} : \mathsf{A}\otimes\mathsf{V}^*\to \mathsf{V}^*\otimes \mathsf{A}\]
by
\[a_1\otimes a_2\otimes v^*\mapsto v^*_{(0)}\otimes a_1 j(v^*_{(1)})\otimes a_2,\]
where $a_1\in \mathsf{A}_{aff}, a_2\in \mathsf{A}_{ab}$, $v^*\mapsto v_{(0)}\otimes v^*_{(1)}$ is the corresponding comodule map $\mathsf{V}^*\to \mathsf{V}^*\otimes \Bbbk [\mathsf{M}]$, and $j : \Bbbk [\mathsf{M}]\to \mathsf{A}_{aff}$ is dual to the open immersion $\mathsf{U}_{aff}\to \mathsf{M}$.

We will show that $\omega_{\theta}$ is an isomorphism. 
\begin{rem}\label{base change and iso} 
The definition of $\omega_{\theta}$ is consistent with a base change, and once needed, $\Bbbk$ can be replaced by a suitable field extension $L\supseteq \Bbbk$.
\end{rem}
It has been proven that $\mathsf{gr}(\mathbb{G})\simeq \mathsf{G}\ltimes \mathbb{G}_{odd}$ and $\mathsf{gr}(\mathbb{H})\simeq \mathsf{H}\ltimes \mathbb{H}_{odd}$, where $\mathbb{G}_{odd}\simeq\mathrm{SSp}(\Lambda(\mathsf{V}^*))$, $\mathbb{H}_{odd}\simeq\mathrm{SSp}(\Lambda(\mathsf{W}^*))$
and $\mathsf{V}^*$ and $\mathsf{W}^*$ consist of primitive elements.

Using Proposition \ref{graded superscheme} and arguing as in Proposition \ref{gr and group objects}, one sees that $\mathsf{gr}(\mathbb{U})\simeq\mathrm{SSp}(\mathsf{gr}(A))$ is an open $\mathsf{gr}(\mathbb{H})$-saturated super-subscheme of $\mathsf{gr}(\mathbb{G})$. 
\begin{pr}\label{grading of the first map} The following statements hold:
\begin{enumerate}
\item $\mathsf{gr}(\mathbb{U})\simeq\mathsf{U}\mathbb{G}_{odd}=\mathbb{G}_{odd}\mathsf{U}$.
\item As the right and left $\mathsf{D}$-coideal superalgebras, $\mathsf{gr}(A)$ and $\mathsf{gr}(D)$ are naturally isomorphic to $A$ and $D$, respectively. 
\item $\mathsf{gr}(\tau_{\mathsf{D}})=\mathsf{gr}(\tau)_{\mathsf{D}}$. 
\end{enumerate}
\end{pr}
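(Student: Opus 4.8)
The plan is to reduce all three assertions to two elementary facts about the functor $\mathsf{gr}$: it is the identity on purely even superschemes (there $\mathcal{I}_X=0$), and for an exterior (Grassmann) superalgebra $C\otimes\Lambda$ over a purely even superalgebra $C$ one has $I_{C\otimes\Lambda}=C\otimes\Lambda^{+}$, so $I_{C\otimes\Lambda}^{\,n}=C\otimes\bigoplus_{k\ge n}\Lambda^{k}$ and $\mathsf{gr}_{I}(C\otimes\Lambda)=\bigoplus_{n}C\otimes\Lambda^{n}$ is, as a superalgebra, canonically $C\otimes\Lambda$ again, now equipped with the Grassmann grading. By Theorem~\ref{coincidence of G and G'} and Lemma~\ref{a certain open super-subscheme}, together with the uniqueness in Corollary~\ref{decomposition of G'}, we may write $\mathbb{U}\simeq\mathsf{U}\times\mathbf{E}$ and $\mathbb{H}\simeq\mathsf{H}\times\mathbf{E}'$ as superschemes, whence, by Lemma~\ref{if one factor is n-th neighborhood}, $\mathbb{A}\simeq A\otimes\Lambda(\mathsf{V}^{*})$ and $\mathbb{D}\simeq D\otimes\Lambda(\mathsf{W}^{*})$ with $A,D$ purely even; these are the Grassmann-graded superalgebras to which the above applies.

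For $(1)$, apply $\mathsf{gr}$, which commutes with direct products (Proposition~\ref{direct product and gr}), to get $\mathsf{gr}(\mathbb{U})\simeq\mathsf{U}\times\mathsf{gr}(\mathbf{E})$ and likewise $\mathsf{gr}(\mathbb{G})\simeq\mathsf{G}\times\mathsf{gr}(\mathbf{E})$; by Lemma~\ref{direct product and i and p} the retraction $\mathbf{q}_{\mathbb{G}}$ becomes $q_{\mathsf{G}}\times q_{\mathbf{E}}$, i.e.\ the first projection (here $\mathsf{gr}(\mathbf{E})_{ev}=e$ because $\mathbf{E}$ is purely odd), so $\mathbb{G}_{odd}=\ker\mathbf{q}_{\mathbb{G}}=\mathsf{gr}(\mathbf{E})$ and $\mathsf{gr}(\mathbb{U})\simeq\mathsf{U}\times\mathbb{G}_{odd}$. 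Inside $\mathsf{gr}(\mathbb{G})$ this product is the subfunctor $\mathsf{U}\mathbb{G}_{odd}$ — by the semidirect structure $\mathsf{gr}(\mathbb{G})\simeq\mathsf{G}\ltimes\mathbb{G}_{odd}$ every point decomposes uniquely as $g z$ with $g\in\mathsf{G},\ z\in\mathbb{G}_{odd}$ — and it coincides with $\mathbb{G}_{odd}\mathsf{U}$ since $\mathbb{G}_{odd}\unlhd\mathsf{gr}(\mathbb{G})$. (Equivalently: $\mathsf{gr}(\mathbb{U})$ and $\mathsf{U}\mathbb{G}_{odd}$ are both the open super-subscheme of $\mathsf{gr}(\mathbb{G})$ supported on $\mathsf{U}^{e}$.)

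For $(2)$, the exterior-algebra computation gives canonical isomorphisms of graded superalgebras $\mathsf{gr}(\mathbb{A})\simeq\mathbb{A}$, $\mathsf{gr}(\mathbb{D})\simeq\mathbb{D}$, compatible with the augmentations onto $D$. Applying $\mathsf{gr}$ to the action morphism $\mathbb{U}\times\mathbb{H}\to\mathbb{U}$, using Proposition~\ref{direct product and gr} on the source and dually Proposition~\ref{gr commutes with tensor product} with the identity $I_{\mathbb{A}\otimes\mathbb{D}}=I_{\mathbb{A}}\otimes\mathbb{D}+\mathbb{A}\otimes I_{\mathbb{D}}$ (so the $I$-adic filtration of $\mathbb{A}\otimes\mathbb{D}$ is the tensor product of those of $\mathbb{A}$ and $\mathbb{D}$), one identifies the $D$-coaction on $\mathsf{gr}(\mathbb{A})$ with $\mathsf{gr}$ of the $D$-coaction $\rho_{\mathbb{A}}$ on $\mathbb{A}$. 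What must still be checked is that these isomorphisms are $D$-colinear, and this is the crux. Under $\mathsf{gr}(\mathbb{A})\simeq\mathbb{A}$ the coaction $\mathsf{gr}(\rho_{\mathbb{A}})$ is the "untwisted" one — translation on the $A$-factor and $\mathrm{Ad}$ on $\Lambda(\mathsf{V}^{*})$ — coming from the genuine semidirect structure $\mathsf{gr}(\mathbb{H})\simeq\mathsf{H}\ltimes\mathbb{H}_{odd}$ acting on $\mathsf{gr}(\mathbb{U})\simeq\mathsf{U}\mathbb{G}_{odd}$, whereas $\rho_{\mathbb{A}}$ itself carries the twist produced by the relation $e(a,v)e(a',v')=f(-aa',[v,v'])\,e(a',v')e(a,v)$ (conjugating $\mathbf{E}$ by $\mathsf{H}$ yields an $\mathsf{F}$-valued $1$-cocycle). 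That this cocycle is a coboundary — so $\rho_{\mathbb{A}}$ and $\mathsf{gr}(\rho_{\mathbb{A}})$ are conjugate by a $D$-comodule automorphism of $\mathbb{A}$ — is exactly what the verbatim repetition of Lemma~4.1 and Proposition~4.2 of \cite{mastak} (the maps $\tau'$ and $\kappa_{A}$, built on $A\simeq A_{aff}\otimes A_{ab}$ with $\mathsf{N}$ acting trivially) supplies; it yields the asserted natural isomorphisms of right, resp.\ left, $D$-coideal superalgebras. I expect this colinearity — equivalently, the triviality of the twist — to be the main obstacle; everything else is bookkeeping with the already established compatibilities of $\mathsf{gr}$ with products and tensor products.

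For $(3)$, by definition $\tau'$ is $\tau$ with codomain corestricted to $\mathbb{A}\,\square_{D}\,\mathbb{D}\subseteq\mathbb{A}\otimes\mathbb{D}$ and reread through $\mathbb{A}\simeq\wedge(\mathsf{V}^{*})\otimes A$, and $\mathsf{gr}(\tau)'$ is $\mathsf{gr}(\tau)$ corestricted to $\mathsf{gr}(\mathbb{A})\,\square_{D}\,\mathsf{gr}(\mathbb{D})$; so the assertion is that $\mathsf{gr}$ commutes with this cotensor product. Now $\mathbb{A}\,\square_{D}\,\mathbb{D}$ is the equalizer of $\mathbb{A}\otimes\mathbb{D}\rightrightarrows\mathbb{A}\otimes D\otimes\mathbb{D}$ formed from the $D$-coactions of $(2)$ and the (even, hence degree-preserving) coproduct of $D$; all objects occurring carry $I$-adic filtrations induced by Grassmann gradings, on which $\mathsf{gr}$ is exact, so it commutes with the equalizer and with the identifications of $(2)$, giving $\mathsf{gr}(\mathbb{A}\,\square_{D}\,\mathbb{D})\simeq\mathsf{gr}(\mathbb{A})\,\square_{D}\,\mathsf{gr}(\mathbb{D})$ and, tracing through, $\mathsf{gr}(\tau')=\mathsf{gr}(\tau)'$, which by $(2)$ is just $\tau'$.
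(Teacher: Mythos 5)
Your part (1) is correct, and your closing parenthesis is in fact the paper's own argument: both $\mathsf{gr}(\mathbb{U})$ and $\mathsf{U}\mathbb{G}_{odd}=q_G^{-1}(U_{ev})$ are the open super-subscheme of $\mathsf{gr}(\mathbb{G})$ supported on $U^e=U_{ev}^e$. The genuine gap is in part (2), exactly at the point you yourself flag as ``the crux'' and ``the main obstacle'': the $D$-colinearity of the isomorphism $\mathsf{gr}(\mathbb{A})\simeq\mathbb{A}$ is not proved but deferred to Lemma~4.1 and Proposition~4.2 of \cite{mastak}, which do not address it --- those results construct $\tau'$, $\kappa$ and the cotensor decomposition of $\mathbb{A}\square_D\mathbb{D}$, and say nothing about comparing the coaction $\rho_{\mathbb{A}}$ with its associated graded. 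Your diagnosis of the difficulty is also off target: the relation $e(a,v)e(a',v')=f(-aa',[v,v'])\,e(a',v')e(a,v)$ governs the odd part of the full $\mathbb{D}$-coaction $\tau$ (conjugation of ${\bf E}$ by odd elements of $\mathbb{H}$), whereas statement (2) concerns only the $D$-coaction, i.e.\ right translation by the purely even subgroup $\mathsf{H}$; there is no $\mathsf{F}$-valued cocycle to trivialize there.

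What closes the gap --- and is what the paper actually does --- is the right choice of the identification $\mathbb{A}\simeq\Lambda(\mathsf{V}^*)\otimes A$: take it from the decomposition $\mathbb{U}={\bf E}\mathsf{U}$ with the odd factor on the \emph{left} (Lemma \ref{a certain open super-subscheme}, together with the symmetric version of Lemma \ref{decomposition of Sigma}(2) for uniqueness). Then $(xu)h=x(uh)$ for $x\in{\bf E}$, $u\in\mathsf{U}$, $h\in\mathsf{H}$, so the right $D$-coaction is literally $x\otimes a\mapsto x\otimes a_{(0)}\otimes a_{(1)}$ --- trivial on $\Lambda(\mathsf{V}^*)$ and the original coaction on $A$ --- and since it preserves $I_{\mathbb{A}}^k=(\oplus_{s\ge k}\Lambda^s(\mathsf{V}^*))\otimes A$, the induced coaction on $\mathsf{gr}(\mathbb{A})$ has the identical formula; the paper verifies this by expanding $\tau(a)=\sum_k a_{(0),k}\otimes a_{(1),n-k}$ adapted to the filtration and reading off the $(n,0)$-component. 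The twist you worry about only appears if you insist on the decomposition $\mathsf{U}{\bf E}$ of Corollary \ref{decomposition of G'}; reversing the order of the factors is the whole trick, and no appeal to \cite{mastak} is needed. Your part (3) is fine in outline but inherits the gap: the equalizer argument requires the two maps $\mathbb{A}\otimes\mathbb{D}\rightrightarrows\mathbb{A}\otimes D\otimes\mathbb{D}$ to be graded for the Grassmann gradings, which is precisely the content of (2), together with the identity $I^k_{\mathbb{A}\square_D\mathbb{D}}=\mathbb{A}\square_D\mathbb{D}\cap I^k_{\mathbb{A}\otimes\mathbb{D}}$ that you use tacitly and the paper states explicitly.
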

\begin{proof}
The geometric counterpart of $\mathsf{U}\mathbb{G}_{odd}$ is the open super-subscheme $q_G^{-1}(U_{ev})\simeq U_{ev}\times G_{odd}$, where $U$ is the geometric counterpart of $\mathbb{U}$ in $G$.  Since $\mathsf{gr}(U)^e=U^e=U_{ev}^e=(q_G^{-1}(U_{ev}))^e$, the condition $(1)$ follows.

Further, the right $\mathsf{gr}(D)$-coideal superalgebra structure on $\mathsf{gr}(A)$ is defined by a superalgebra morphism
$\mathsf{gr}(A)\stackrel{\mathsf{gr}(\tau)}{\to} \mathsf{gr}(A\otimes D)\simeq\mathsf{gr}(A)\otimes \mathsf{gr}(D)$. 
More precisely, for an element $a\in I_{A}^n$, let $\tau(a)=\sum_{0\leq k\leq n} a_{(0), k}\otimes a_{(1), n-k}$ be adapted to the $I_{A\otimes D}$-adic filtration. That is,
$a_{(0), k}\in I^k_{A}$ and $a_{(1), n-k}\in I^{n-k}_{D}$ for every
$0\leq k\leq n$.
Then $\mathsf{gr}(\tau)$ is defined by
\[a+I_{A}^{n+1}\mapsto\sum_{0\leq k\leq n} (a_{(0), k}+I_{A}^{k+1})\otimes (a_{(1), n-k}+I_{D}^{n-k+1}).\] 
It follows that the right $\mathsf{D}$-coideal superalgebra structure of $\mathsf{gr}(A)$ is defined as
\[a+I_{A}^{n+1}\mapsto (a_{(0), n}+I_{A}^{n+1})\otimes (a_{(1), 0}+I_{D}),\]
where $a\mapsto a_{(0), n}\otimes (a_{(1), 0}+I_{D})$
is the right $\mathsf{D}$-coideal superalgebra map of $A$. 

On the other hand, if we identify
$A$ with $\Lambda(\mathsf{V}^*)\otimes \mathsf{A}$, then the latter map is $x\otimes a\mapsto x\otimes a_{(0)}\otimes a_{(1)}$, where $x\in\Lambda(\mathsf{V}^*), a\in \mathsf{A}$ and $a\mapsto a_{(0)}\otimes a_{(1)}$ is the $\mathsf{D}$-coideal algebra map of $\mathsf{A}$. Moreover, we have $I_{A}^k=(\oplus_{s\geq k}\Lambda^s(\mathsf{V}^*))\otimes \mathsf{A}$ for  $0\leq k\leq\dim\mathsf{V}$. Then $A$ and $\mathsf{gr}(A)$ are isomorphic as $\mathsf{D}$-coideal superalgebras. The case of $D$ is similar.

Furthermore, $A\square_{\mathsf{D}} D$ is isomorphic to $\Lambda(\mathsf{V}^*)\otimes \mathsf{B}\otimes \Lambda(\mathsf{W}^*)$, where 
\[\mathsf{B}=\{a_{(0)}\otimes a_{(1)}\mid a\in \mathsf{A}\}\subseteq \mathsf{A}\otimes \mathsf{D} .\]
In particular, $I^k_{A\square_{\mathsf{D}} D}=A\square_{\mathsf{D}} D\cap I^k_{A\otimes D}$ for $0\leq k\leq \dim\mathsf{V}+\dim\mathsf{W}$, and we obtain a natural isomorphism
$\mathsf{gr}(A\square_{\mathsf{D}} D)\simeq \mathsf{gr}(A)\square_{\mathsf{D}}\mathsf{gr}(D)$ that makes the diagram
\[\begin{array}{ccccc}
\mathsf{gr}(A) & \stackrel{\mathsf{gr}(\tau_{\mathsf{D}})}{\to} & \mathsf{gr}(A\square_{\mathsf{D}} D) & \to & \mathsf{gr}(A\otimes D)  \\
\parallel & & \downarrow & & \downarrow  \\
\mathsf{gr}(A) & \stackrel{\mathsf{gr}(\tau)_{\mathsf{D}}}{\to} & \mathsf{gr}(A)\square_{\mathsf{D}}\mathsf{gr}(D) & \to & \mathsf{gr}(A)\otimes\mathsf{gr}(D)
\end{array}\]
commutative. The proposition is proven. 
\end{proof}
\begin{lm}\label{grading the last morphism}
The morphism $\mathsf{gr}(\wedge(\theta')\square_{\mathsf{D}}\mathrm{id}_{D})$ is identified with
$\wedge(\theta')\square_{\mathsf{D}}\mathrm{id}_{\mathsf{gr}(D)}$.
\end{lm}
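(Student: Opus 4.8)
The plan is to analyze the morphism $\wedge(\theta')\square_D\mathrm{id}_{\mathbb{D}}$ componentwise and show that it is already a graded morphism with respect to the $I$-adic filtrations, so that applying $\mathsf{gr}$ returns it essentially unchanged. First I would recall, using Proposition \ref{grading of the first map}(2), that under the identifications $\mathbb{A}\simeq\wedge(\mathsf{V}^*)\otimes A$ and $\mathbb{D}\simeq\wedge(\mathsf{W}^*)\otimes D$, the filtration $I_{\mathbb{A}}^k$ is exactly $(\oplus_{s\geq k}\wedge^s(\mathsf{V}^*))\otimes A$, and similarly for $\mathbb{D}$; moreover $I^k_{(A\otimes\wedge(\mathsf{Z}))\square_D\mathbb{D}}$ is the intersection of the $\wedge(\mathsf{Z})$-degree-$\geq k$-part with the cotensor product. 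Since $\mathsf{Z}=\ker(\mathsf{V}^*\to\mathsf{W}^*)$ and $\theta' : \wedge(\mathsf{V}^*)\to A\otimes\wedge(\mathsf{Z})$ is built so that it is compatible with the exterior degree (it is induced by a section of $\mathsf{V}^*\to\mathsf{W}^*$ together with a coaction into $A$), the map $\wedge(\theta')$ sends $\wedge^s(\mathsf{V}^*)$ into $A\otimes(\oplus_{s'\leq s}\wedge^{s'}(\mathsf{Z}))$; but the key point is that its \emph{top} component (the part landing in exterior degree exactly $s$) is precisely what survives modulo the next filtration step, and that top component is $\wedge(\theta')$ itself.

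The main steps in order: (i) spell out the three superalgebras appearing in $\omega_\theta$ and their $I$-adic filtrations in terms of exterior degree, using Proposition \ref{grading of the first map}; (ii) observe that $\wedge(\theta')\square_D\mathrm{id}_{\mathbb{D}}$ is a morphism of filtered superalgebras, hence induces $\mathsf{gr}(\wedge(\theta')\square_D\mathrm{id}_{\mathbb{D}})$ on associated graded objects; (iii) compute this associated graded map on each homogeneous component $I^k/I^{k+1}$, and check that it coincides with $\wedge(\theta')\square_D\mathrm{id}_{\mathsf{gr}(\mathbb{D})}$ under the canonical identifications $\mathsf{gr}(\mathbb{A})\simeq\mathbb{A}$, $\mathsf{gr}(\mathbb{D})\simeq\mathbb{D}$, and $\mathsf{gr}((A\otimes\wedge(\mathsf{Z}))\square_D\mathbb{D})\simeq(A\otimes\wedge(\mathsf{Z}))\square_D\mathsf{gr}(\mathbb{D})$ from Proposition \ref{grading of the first map}; (iv) conclude. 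The identification of target filtrations in step (iii) reuses exactly the argument from the proof of Proposition \ref{grading of the first map} showing $I^k_{\mathbb{A}\square_D\mathbb{D}}=\mathbb{A}\square_D\mathbb{D}\cap I^k_{\mathbb{A}\otimes\mathbb{D}}$, applied now to $(A\otimes\wedge(\mathsf{Z}))\square_D\mathbb{D}$, whose $\wedge$-part has degree $\leq\dim\mathsf{Z}+\dim\mathsf{W}$.

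The main obstacle I expect is bookkeeping: making sure that the $A$-factor (which sits in filtration degree $0$ since $A$ is purely even and $I_A=0$ in the relevant sense — or rather $A$ contributes nothing to the $\wedge$-grading) genuinely plays no role, and that the subtlety in $\theta'$ — namely that it is not strictly graded but only filtered, because it involves the coaction $\mathsf{V}^*\to\mathsf{V}^*\otimes K[\mathsf{M}]$ composed with $j$ and a projection $\wedge(\mathsf{V}^*)\to\wedge(\mathsf{Z})$ that can lower exterior degree — does not obstruct passage to $\mathsf{gr}$. The resolution is that lowering-degree terms land in strictly deeper filtration steps and hence vanish in the associated graded, so the induced map is the "leading term" of $\wedge(\theta')$, which by construction is again of the form $\wedge(\theta')$; once this is made precise the identification with $\wedge(\theta')\square_D\mathrm{id}_{\mathsf{gr}(\mathbb{D})}$ is immediate. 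I would present steps (i)–(iv) compactly, referring back to Proposition \ref{grading of the first map} for the filtration computations rather than redoing them, since the genuinely new content here is only the verification that $\wedge(\theta')$ is filtration-preserving with the expected leading term.
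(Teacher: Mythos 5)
Your overall strategy is the same as the paper's: write out $\wedge(\theta')\square_D\mathrm{id}_{\mathbb{D}}$ explicitly on elements of the cotensor product, use the exterior-degree description of the $I$-adic filtrations from Proposition \ref{grading of the first map}, and observe that the map is compatible with these filtrations, so that its associated graded is identified with $\wedge(\theta')\square_D\mathrm{id}_{\mathsf{gr}(\mathbb{D})}$ under the identifications $\mathsf{gr}(\mathbb{A})\simeq\mathbb{A}$, $\mathsf{gr}(\mathbb{D})\simeq\mathbb{D}$, and $\mathsf{gr}((A\otimes\wedge(\mathsf{Z}))\square_D\mathbb{D})\simeq (A\otimes\wedge(\mathsf{Z}))\square_D\mathsf{gr}(\mathbb{D})$. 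The paper's proof is exactly this computation.

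However, the way you resolve your anticipated obstacle is wrong in direction, and as stated it would not save the argument. You claim that $\wedge(\theta')$ may only be filtered rather than graded because it could lower exterior degree, and that such degree-lowering terms ``land in strictly deeper filtration steps and hence vanish in the associated graded.'' This is backwards: in the downward filtration $I^0\supseteq I^1\supseteq\ldots$ indexed by exterior degree, a component of \emph{lower} exterior degree lies in a \emph{shallower} step, so it would not vanish in $\mathsf{gr}$ --- it would prevent $\wedge(\theta')\square_D\mathrm{id}_{\mathbb{D}}$ from being a morphism of filtered superalgebras in the first place, and the identification claimed in the lemma would break down. (It is components of strictly \emph{higher} degree that die in the associated graded.) The reason the lemma holds is that this issue never arises: $\theta'$ takes each $v^*\in\mathsf{V}^*$ to an element $\sum_i a_i\otimes z_i$ of $A\otimes\mathsf{Z}$, with no component in $A\otimes\Lambda^0(\mathsf{Z})=A$, so $\wedge(\theta')$ sends $\wedge^k(\mathsf{V}^*)\otimes A$ into $A\otimes\wedge^k(\mathsf{Z})$ and $\wedge(\theta')\square_D\mathrm{id}_{\mathbb{D}}$ preserves the total exterior degree (in $\mathsf{V}^*$ plus $\mathsf{W}^*$ on the source, in $\mathsf{Z}$ plus $\mathsf{W}^*$ on the target) exactly. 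With that correction --- i.e., replacing your ``leading term'' argument by the observation that $\wedge(\theta')$ is strictly degree-preserving because $\theta'(\mathsf{V}^*)\subseteq A\otimes\mathsf{Z}$ --- your steps (i)--(iv) reproduce the paper's proof.
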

\begin{proof}
Recall that the morphism $\wedge(\theta')$ is defined as
\[v^*_1\wedge\ldots\wedge v^*_k\otimes a\mapsto \sum_{i_1, \ldots, i_k} a_{i_1}\ldots a_{i_k}a\otimes z_{i_1}\wedge\ldots\wedge z_{i_k},\]
where $\theta'(v^*_s)=\sum_{i_s} a_{i_s}\otimes z_{i_s}$ for $1\leq s\leq k$.
Moreover, the morphism $\wedge(\theta')\square_{\mathsf{D}}\mathrm{id}_D$ is defined as
\[v^*_1\wedge\ldots\wedge v^*_k\otimes a_{(0)}\otimes a_{(1)}\otimes\wedge w^*_1\wedge\ldots\wedge w^*_l\mapsto\]
\[\sum_{i_1, \ldots, i_k} a_{i_1}\ldots a_{i_k}a_{(0)}\otimes z_{i_1}\wedge\ldots\wedge z_{i_k}\otimes a_{(1)}\otimes\wedge w^*_1\wedge\ldots\wedge w^*_l,\]
and is  compatible with the corresponding filtrations of the superalgebras $(\wedge(\mathsf{V}^*)\otimes \mathsf{A})\square_{\mathsf{D}} D$ and $(\mathsf{A}\otimes\wedge(\mathsf{Z}))\square_{\mathsf{D}} D$. 
\end{proof}
The following lemma is now obvious.
\begin{lm}\label{gr of omega}
The morphism $\mathsf{gr}(\omega_{\theta})$ can be naturally identified with
\[\mathsf{gr}(A)\stackrel{\mathsf{gr}(\tau)_{\mathsf{D}}}{\to} (\wedge(\mathsf{V}^*)\otimes \mathsf{A})\square_{\mathsf{D}}\mathsf{gr}(D)\stackrel{\wedge(\theta')\square_{\mathsf{D}}\mathrm{id}_{\mathsf{gr}(D)}}{\to} (\mathsf{A}\otimes\wedge(\mathsf{Z}))\square_{\mathsf{D}}\mathsf{gr}(D).\]
\end{lm}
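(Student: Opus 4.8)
The plan is to exploit that $\mathsf{gr}$ is a functor on the category of filtered superalgebras, as set up in Section 9.1, so in particular it commutes with composition of filtered-algebra morphisms. First I would record that, by its very definition, $\omega_{\theta}$ is the composite of the filtered morphism $\tau' : \mathbb{A}\to (\wedge(\mathsf{V}^*)\otimes A)\square_D\mathbb{D}$ with the filtered morphism $\wedge(\theta')\square_D\mathrm{id}_{\mathbb{D}} : (\wedge(\mathsf{V}^*)\otimes A)\square_D\mathbb{D}\to (A\otimes\wedge(\mathsf{Z}))\square_D\mathbb{D}$, all filtrations being the ones induced by the odd radicals. Hence $\mathsf{gr}(\omega_{\theta})=\mathsf{gr}(\wedge(\theta')\square_D\mathrm{id}_{\mathbb{D}})\circ\mathsf{gr}(\tau')$, and it remains only to rewrite each factor using the identifications already in hand.

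For this I would invoke the three facts proved just above. Proposition \ref{grading of the first map}(2) supplies natural isomorphisms $\mathsf{gr}(\mathbb{A})\simeq\mathbb{A}$ and $\mathsf{gr}(\mathbb{D})\simeq\mathbb{D}$ of one-sided $D$-coideal superalgebras, together with the explicit identification $\mathsf{gr}(\mathbb{A})\simeq\wedge(\mathsf{V}^*)\otimes A$; combined with the isomorphism $\mathsf{gr}(\mathbb{A}\square_D\mathbb{D})\simeq\mathsf{gr}(\mathbb{A})\square_D\mathsf{gr}(\mathbb{D})$ established inside the proof of that proposition, this identifies the middle term $\mathsf{gr}\bigl((\wedge(\mathsf{V}^*)\otimes A)\square_D\mathbb{D}\bigr)$ with $(\wedge(\mathsf{V}^*)\otimes A)\square_D\mathsf{gr}(\mathbb{D})$ and the target term with $(A\otimes\wedge(\mathsf{Z}))\square_D\mathsf{gr}(\mathbb{D})$. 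Proposition \ref{grading of the first map}(3) then gives $\mathsf{gr}(\tau')=\mathsf{gr}(\tau)'$, so the first arrow becomes $\mathsf{gr}(\tau)'$ as stated, while Lemma \ref{grading the last morphism} identifies $\mathsf{gr}(\wedge(\theta')\square_D\mathrm{id}_{\mathbb{D}})$ with $\wedge(\theta')\square_D\mathrm{id}_{\mathsf{gr}(\mathbb{D})}$. Composing the two identified arrows yields precisely the description of $\mathsf{gr}(\omega_{\theta})$ asserted by the lemma.

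The one point I would actually verify — and the only one of any substance — is the mutual compatibility of these natural isomorphisms: that the identification of $\mathsf{gr}\bigl((\wedge(\mathsf{V}^*)\otimes A)\square_D\mathbb{D}\bigr)$ used to compute $\mathsf{gr}(\tau')$ is the same as the one used to compute $\mathsf{gr}(\wedge(\theta')\square_D\mathrm{id}_{\mathbb{D}})$, i.e. that both refer to the single filtration obtained by restricting the tensor-product filtration $I_{\mathbb{A}\otimes\mathbb{D}}$ to the cotensor product. This is immediate from the proof of Proposition \ref{grading of the first map}, where the equality $I^k_{\mathbb{A}\square_D\mathbb{D}}=\mathbb{A}\square_D\mathbb{D}\cap I^k_{\mathbb{A}\otimes\mathbb{D}}$ is exactly what makes both descriptions literal restrictions of the same data; Lemma \ref{grading the last morphism} likewise records that $\wedge(\theta')\square_D\mathrm{id}$ is compatible with these filtrations. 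Given this, there is no real obstacle — which is why the lemma is stated as obvious — and I would conclude by assembling the commuting square
\[
\begin{array}{ccc}
\mathsf{gr}(\mathbb{A}) & \stackrel{\mathsf{gr}(\tau')}{\to} & \mathsf{gr}\bigl((\wedge(\mathsf{V}^*)\otimes A)\square_D\mathbb{D}\bigr) \\
\parallel & & \downarrow \\
\mathsf{gr}(\mathbb{A}) & \stackrel{\mathsf{gr}(\tau)'}{\to} & (\wedge(\mathsf{V}^*)\otimes A)\square_D\mathsf{gr}(\mathbb{D})
\end{array}
\]
together with its evident counterpart for the second arrow.
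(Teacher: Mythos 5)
Your argument is correct and is exactly the route the paper intends: the paper declares the lemma "now obvious" precisely because, as you say, $\omega_{\theta}$ is a composite of filtered morphisms, so functoriality of $\mathsf{gr}$ plus Proposition \ref{grading of the first map}(2),(3) and Lemma \ref{grading the last morphism} give the stated identification, the only point needing care being the single filtration $I^k_{\mathbb{A}\square_D\mathbb{D}}=\mathbb{A}\square_D\mathbb{D}\cap I^k_{\mathbb{A}\otimes\mathbb{D}}$ underlying both identifications, which you address. Nothing further is needed.
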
 
Let $\theta$ be a retract of the inclusion $A\otimes\mathsf{Z}\to A\otimes\mathsf{V}^*$ from \cite[Lemma 4.4]{mastak}. Let $\mathsf{V}^*=\mathsf{Z}\oplus\mathsf{T}$, and elements $t_j$ form a basis of
$\mathsf{T}$ for $1\leq j\leq s=\dim\mathsf{W}^*$. The free $A$-module $\ker\theta$ has a basis consisting of the elements $t'_j$ such that $t'_j = t_j\pmod{A\otimes\mathsf{Z}}$. 

Choose a closed point $x$ in $\mathsf{U}$. By Remark \ref{base change and iso}, one can assume that
$x\in\mathsf{U}(\Bbbk)$. That is,  $x$ is an algebra morphism $\mathsf{A}\to \Bbbk$. 

The natural embedding $\mathbb{H}_{odd}\to \mathsf{U}\times\mathbb{G}_{odd}\simeq\mathsf{gr}(\mathbb{U})$, defined as $h\mapsto (x, h)$, is dual to the superalgebra morphism induced by $x$ and $\mathsf{V}^*\to\mathsf{W}^*$. The natural (right-hand side) action of group $\mathbb{H}_{odd}$ on $\mathsf{gr}(\mathbb{U})$ is defined by a coideal superalgebra map
\[z\mapsto z\otimes 1 \text{ and } t'_j\mapsto t'_j\otimes 1+1\otimes \overline{t_j} \text{ for } z\in\mathsf{Z},\]
where $\overline{t_j}$ is the image of $t_j$ in $\mathsf{W}^*$.
Finally, the $\mathsf{A}$-superalgebra morphism $\wedge(\theta): \mathsf{A}\otimes\Lambda(\mathsf{V}^*)\to \mathsf{A}\otimes\Lambda(\mathsf{Z})$ induces the embedding ${\bf s} :\mathsf{U}\times\mathrm{SSp}(\Lambda(\mathsf{Z}))\to\mathsf{gr}(\mathbb{U})$.
\begin{lm}\label{product of two embeddings}
There is a natural isomorphism
\[(\mathsf{U}\times\mathrm{SSp}(\Lambda(\mathsf{Z})))\times\mathbb{H}_{odd}\simeq \mathsf{gr}(\mathbb{U}),\]
induced by the above two embeddings and the multiplication map.
\end{lm}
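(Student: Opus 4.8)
The plan is to prove the asserted isomorphism by working with the coordinate superalgebras and then invoking the functor $\mathsf{gr}$ to upgrade an obvious surjection into an isomorphism. Write $\mathbb{A}_{\mathsf{gr}}=\mathsf{gr}(\mathbb{A})$, which by Proposition \ref{grading of the first map}(2) is isomorphic, as a $D$-coideal superalgebra, to $A\otimes\Lambda(\mathsf{V}^*)$ with $\mathsf{V}^*$ primitive; the right $\mathbb{H}_{odd}$-coideal structure is the one spelled out just before the statement. The superalgebra $A\otimes\Lambda(\mathsf{Z})$ (the coordinate ring of $\mathsf{U}\times\mathrm{SSp}(\Lambda(\mathsf{Z}))$) sits inside $A\otimes\Lambda(\mathsf{V}^*)$ via $\wedge(\theta)$, i.e. via the section $\mathsf{Z}\hookrightarrow\mathsf{V}^*$ coming from the splitting $\mathsf{V}^*=\mathsf{Z}\oplus\mathsf{T}$ and the $A$-basis $t'_j$ of $\ker\theta$; and $\Lambda(\mathsf{W}^*)=\mathcal{O}(\mathbb{H}_{odd})$ maps into $A\otimes\Lambda(\mathsf{V}^*)$ via the point $x:A\to K$ together with $\mathsf{V}^*\to\mathsf{W}^*$. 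The multiplication map $(\mathsf{U}\times\mathrm{SSp}(\Lambda(\mathsf{Z})))\times\mathbb{H}_{odd}\to\mathsf{gr}(\mathbb{U})$ is dual to the superalgebra map
\[
\mu^*:A\otimes\Lambda(\mathsf{V}^*)\longrightarrow \bigl(A\otimes\Lambda(\mathsf{Z})\bigr)\otimes\Lambda(\mathsf{W}^*),
\qquad \mu^*=\bigl(\wedge(\theta)\otimes\mathrm{id}\bigr)\circ\tau_{\mathsf{gr}},
\]
where $\tau_{\mathsf{gr}}$ is the right $\mathbb{H}_{odd}$-coideal map of $\mathsf{gr}(\mathbb{A})$; concretely, using $\mathsf{V}^*=\mathsf{Z}\oplus\mathsf{T}$, one has $\mu^*(z)=z\otimes 1$ for $z\in\mathsf{Z}$ and $\mu^*(t'_j)=t'_j\otimes 1+1\otimes\overline{t_j}$ plus (possibly) lower-degree correction terms coming from $\wedge(\theta)$ applied to $t_j$. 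The first thing I would do is record these formulas precisely and check that $\mu^*$ is a morphism of $D$-coideal superalgebras.

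Next I would prove that $\mu^*$ is an isomorphism of superalgebras by a triangularity/filtration argument. Filter $A\otimes\Lambda(\mathsf{V}^*)$ by the total $\Lambda$-degree and filter the target by the total degree in $\Lambda(\mathsf{Z})\otimes\Lambda(\mathsf{W}^*)$; both are free $A$-modules (resp.\ free modules over the appropriate base) of the same finite rank $2^{\dim\mathsf V}$. The associated-graded of $\mu^*$ sends $z\mapsto z$ and $t'_j\mapsto 1\otimes\overline{t_j}$, i.e.\ it is precisely the tautological identification $A\otimes\Lambda(\mathsf{Z}\oplus\mathsf{T})\simeq (A\otimes\Lambda(\mathsf{Z}))\otimes\Lambda(\mathsf{W}^*)$ coming from $\mathsf{T}\xrightarrow{\sim}\mathsf{W}^*$ and the basis $\{t'_j\}$. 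Since the associated-graded map is an isomorphism and the filtrations are finite and exhaustive, $\mu^*$ is an isomorphism; here I use that $\mathsf{V}^*\to\mathsf{W}^*$ restricts to an isomorphism on $\mathsf{T}$ and that $x$ is an honest $K$-point so that $1\otimes\overline{t_j}$ really is the generator of $\Lambda(\mathsf{W}^*)$. Equivalently, and perhaps cleaner, one observes that the composite
$(\mathsf{U}\times\mathrm{SSp}(\Lambda(\mathsf{Z})))\times\mathbb{H}_{odd}\to\mathsf{gr}(\mathbb{U})\xrightarrow{\ \text{(proj, act)}\ }(\mathsf{U}\times\mathrm{SSp}(\Lambda(\mathsf{Z})))\times\mathbb{H}_{odd}$
built from the $\mathbb{H}_{odd}$-torsor-like structure is the identity, using that $\mathsf{gr}(\mathbb{U})\simeq\mathsf{U}\times\mathbb{G}_{odd}$ and $\mathbb{G}_{odd}\simeq\mathrm{SSp}(\Lambda(\mathsf{V}^*))$ with $\mathsf{V}^*=\mathsf{Z}\oplus\mathsf{T}$, $\mathsf{T}\simeq\mathsf{W}^*$.

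Finally I would package this back into the claimed statement: the morphism in question is by construction the one induced by the two embeddings $\mathbf{s}:\mathsf{U}\times\mathrm{SSp}(\Lambda(\mathsf{Z}))\to\mathsf{gr}(\mathbb{U})$ (dual to $\wedge(\theta)$) and $\mathbb{H}_{odd}\to\mathsf{gr}(\mathbb{U})$ (dual to $x$ and $\mathsf{V}^*\to\mathsf{W}^*$) composed with the multiplication of $\mathsf{gr}(\mathbb{U})$, and this is exactly the map dual to $\mu^*$; since $\mu^*$ is a $D$-coideal superalgebra isomorphism, dualizing gives the asserted $\mathsf{gr}(\mathbb{H})$-equivariant (in particular $\mathbb{H}_{odd}$-equivariant) isomorphism. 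I expect the main obstacle to be the bookkeeping in the triangularity argument: one must check that the ``lower-order'' terms produced by $\wedge(\theta)(t_j)$ — which live in $A\otimes\Lambda^{<k}(\mathsf{Z})$ — genuinely do not interfere with invertibility, i.e.\ that the chosen basis $\{t'_j\}$ of $\ker\theta$ together with $\mathsf{Z}$ gives a filtered basis whose symbol is the standard one; this is where Lemma 4.4 of \cite{mastak} (the existence of the retract $\theta$ with $\ker\theta$ free) does the real work, and the rest is the routine verification that everything is compatible with the $D$-coactions so that the isomorphism is $\mathsf{H}$-equivariant.
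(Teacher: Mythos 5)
Your proposal is correct and follows essentially the same route as the paper: the paper computes the map on functor points, showing $(g,h)(az_It'_J)=g(az_I)h(\overline{t}_J)$ because $\wedge(\theta)$ kills $t'_j\in\ker\theta$, and concludes from $A\otimes\Lambda(\mathsf{V}^*)\simeq(A\otimes\Lambda(\mathsf{Z}))\otimes\Lambda(\mathsf{T}')$, which is exactly your dual statement that $\mu^*=(\wedge(\theta)\otimes\mathrm{id})\circ\tau_{\mathsf{gr}}$ sends $z\mapsto z\otimes1$, $t'_j\mapsto1\otimes\overline{t_j}$ and hence matches the tensor factorization. (Note only that there are no correction terms at all, since $\wedge(\theta)(t'_j)=\theta(t'_j)=0$ exactly, so your filtration argument is not even needed.)
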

\begin{proof}
Choose a basis $z_1, \ldots, z_{t-s}$ of $\mathsf{Z}$. We use notations from Proposition \ref{a canonical form in hyp(G)}. Any couple $(g, h)\in (\mathsf{U}\times\mathrm{SSp}(\Lambda(\mathsf{Z})))\times\mathbb{H}_{odd}$ is taken to a superalgebra morphism
\[(g, h)(a z_I t'_J)=\sum_{S\subseteq J} g(\wedge(\theta)(az_I t'_S))h(\overline{t}_{J\setminus S})=
g(az_I)h(\overline{t}_J),\]
where $I\subseteq \{ 1, \ldots, t-s \} , J\subseteq \{1, \ldots , s  \}$.  
The proof concludes by the observation that $A\otimes\Lambda(\mathsf{V}^*)\simeq (A\otimes\Lambda(\mathsf{Z}))\otimes\Lambda(\mathsf{T}')$, where $\mathsf{T}'$ is the $\Bbbk$-span of $t'_j$ for $1\leq j\leq s$.
\end{proof}
\begin{lm}\label{this is an isomorphism}
The superalgebra morphism $\mathsf{gr}(\omega_{\theta})$ is an isomorphism, and therefore $\omega_{\theta}$ is as well.
\end{lm}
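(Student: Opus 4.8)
The plan is to combine Lemma \ref{gr of omega}, which identifies $\mathsf{gr}(\omega_{\theta})$ with the ``honest'' version of the map $\omega_{\theta}$ over the purely-even base (with $\mathsf{gr}(\mathbb{D})$ in place of $\mathbb{D}$, and with $\mathsf{V}^*$, $\mathsf{W}^*$ consisting of primitive elements), with Lemma \ref{product of two embeddings}, which gives an explicit product decomposition of $\mathsf{gr}(\mathbb{U})$. Concretely, the first step is to unwind what $\mathsf{gr}(\omega_{\theta})$ does on a general basis element $a z_I t'_J$ of $\mathsf{gr}(\mathbb{A}) \simeq A \otimes \Lambda(\mathsf{V}^*)$: using Lemma \ref{grading the last morphism} and the description of $\mathsf{gr}(\tau)'$ from Proposition \ref{grading of the first map}(3), the composite sends $a z_I t'_J$ into $(A \otimes \Lambda(\mathsf{Z})) \square_D \mathsf{gr}(\mathbb{D})$, and the component in $\Lambda(\mathsf{W}^*)$ is governed exactly by the coideal map $t'_j \mapsto t'_j \otimes 1 + 1 \otimes \overline{t_j}$ appearing in Lemma \ref{product of two embeddings}.

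Second, I would use the decomposition $A \otimes \Lambda(\mathsf{V}^*) \simeq (A \otimes \Lambda(\mathsf{Z})) \otimes \Lambda(\mathsf{T}')$ (the $K$-span $\mathsf{T}'$ of the lifted basis $t'_1, \dots, t'_s$) noted at the end of the proof of Lemma \ref{product of two embeddings}. Under this, the source of $\mathsf{gr}(\omega_{\theta})$ is the tensor product of the coordinate ring $A \otimes \Lambda(\mathsf{Z})$ of $\mathsf{U} \times \mathrm{SSp}(\Lambda(\mathsf{Z}))$ with $\Lambda(\mathsf{T}')$, and the target $(A \otimes \Lambda(\mathsf{Z})) \square_D \mathsf{gr}(\mathbb{D})$ is the coordinate ring of $(\mathsf{U} \times \mathrm{SSp}(\Lambda(\mathsf{Z}))) \times^{\mathsf{H}} \mathsf{gr}(\mathbb{H})$, which by Lemma \ref{product of two embeddings} and the construction preceding it is nothing but $\mathsf{gr}(\mathbb{U})$ itself. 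So the claim ``$\mathsf{gr}(\omega_{\theta})$ is an isomorphism'' is the algebraic shadow of ``the multiplication map in Lemma \ref{product of two embeddings} is an isomorphism,'' which has already been established there. The remaining work is to check that the \emph{specific} morphism $\mathsf{gr}(\omega_{\theta})$ built out of $\tau'$ and $\wedge(\theta')$ really coincides with the isomorphism from Lemma \ref{product of two embeddings}; this is a direct comparison of the two coideal-superalgebra maps on generators $a z_I t'_J$, both of which produce $g(a z_I) h(\overline{t}_J)$ on points $(g,h)$.

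Once $\mathsf{gr}(\omega_{\theta})$ is known to be an isomorphism, the conclusion that $\omega_{\theta}$ is an isomorphism follows from Proposition \ref{graded superscheme}(3): $\mathbb{U}$ and the superscheme $\mathbb{X} \times^{\mathsf{H}} \mathbb{H}$ on the right (which equals $(\mathsf{U} \times \mathrm{SSp}(\Lambda(\mathsf{Z}))) \times^{\mathsf{H}} \mathbb{H}$, an affine superscheme of finite type) are of locally finite type, and $\mathsf{gr}$ of the corresponding morphism is an isomorphism, hence so is the morphism. Equivalently, in purely Hopf-algebraic terms, $\omega_{\theta}$ is a morphism of filtered $\mathbb{D}$-coideal superalgebras whose associated graded is an isomorphism and both filtrations are finite (length at most $\dim\mathsf{V} + \dim\mathsf{W}$) and exhaustive/Hausdorff, so $\omega_{\theta}$ is bijective.

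The main obstacle I anticipate is \textbf{not} the isomorphism check itself but the bookkeeping needed to verify that passing to associated graded is compatible with all the cotensor products and exterior-algebra functors simultaneously --- i.e. making rigorous the identifications $\mathsf{gr}(\mathbb{A} \square_D \mathbb{D}) \simeq \mathsf{gr}(\mathbb{A}) \square_D \mathsf{gr}(\mathbb{D})$ and $\mathsf{gr}((\wedge(\mathsf{V}^*)\otimes A)\square_D \mathbb{D}) \simeq (\wedge(\mathsf{V}^*)\otimes A)\square_D \mathsf{gr}(\mathbb{D})$ used implicitly in Lemma \ref{gr of omega}. These rest on the equality $I^k_{\mathbb{A}\square_D\mathbb{D}} = \mathbb{A}\square_D\mathbb{D} \cap I^k_{\mathbb{A}\otimes\mathbb{D}}$ already recorded in the proof of Proposition \ref{grading of the first map}, together with exactness of $-\square_D\mathbb{D}$ on the relevant injective comodules (Proposition 4.1 and 4.2 of \cite{mastak}); one must be a little careful that the $D$-comodule structure here is the one where $A_{ab}$, and hence the $\mathsf{N}$-direction, is trivial, which is exactly what makes the retract $\theta$ (and the map $\kappa_A$) well-defined $A$-linearly. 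Modulo that, the argument is a routine ``graded isomorphism plus finite exhaustive filtration implies isomorphism'' and the final sentence ``therefore $\omega_\theta$ is as well'' is immediate from Proposition \ref{graded superscheme}(3).
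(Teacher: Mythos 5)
Your proposal is correct and takes essentially the same route as the paper: it identifies $\mathsf{gr}(\omega_{\theta})$ (through Lemma \ref{gr of omega}) with the dual of the isomorphism of Lemma \ref{product of two embeddings}, and then deduces that $\omega_{\theta}$ itself is an isomorphism from the finite-filtration argument, equivalently Proposition \ref{graded superscheme}(3). The only difference is cosmetic: the paper carries out the identification by dualizing the chain of homogeneous fiber quotient morphisms ${\bf a}, {\bf b}, {\bf c}$ via Lemma \ref{if it is affine}, while you propose checking it directly on the generators $a z_I t'_J$, which amounts to the same computation.
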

\begin{proof}
Consider the following sequence of superscheme morphisms
\[(\star) \ \mathsf{gr}(\mathbb{U})\stackrel{\bf a}{\leftarrow} \mathsf{gr}(\mathbb{U})\times^{\mathsf{H}}\mathsf{gr}(\mathbb{H})\stackrel{\bf b}{\leftarrow} (\mathsf{U}\times\mathbb{G}_{odd})\times^{\mathsf{H}}\mathsf{gr}(\mathbb{H})\stackrel{\bf d}{\leftarrow} (\mathsf{U}\times\mathrm{SSp}(\Lambda(\mathsf{Z})))\times^{\mathsf{H}}\mathsf{gr}(\mathbb{H})\simeq\]\[\mathrm{SSp}((\mathsf{A}\otimes\wedge(\mathsf{Z}))\square_{\mathsf{D}}\mathsf{gr}(D)).\]
The morphism  $\bf a$ is induced by $(u, h)\mapsto uh$ for $u\in\mathsf{gr}(\mathbb{U})$ and $h\in\mathsf{gr}(\mathbb{H})$.  
Since $\mathsf{gr}(\mathbb{H})\simeq\mathsf{H}\times\mathbb{H}_{odd}$, the superscheme $\mathsf{gr}(\mathbb{U})\times^{\mathsf{H}}\mathsf{gr}(\mathbb{H})$ is canonically isomorphic to
$\mathsf{gr}(\mathbb{U})\times\mathbb{H}_{odd}$, hence it is affine. Lemma \ref{if it is affine} implies
that $\mathsf{gr}(\mathbb{U})\times^{\mathsf{H}}\mathsf{gr}(\mathbb{H})\simeq\mathrm{SSp}(\mathsf{gr}(A)\square_{\mathsf{D}}\mathsf{gr}(D))$ and the morphism $\bf a$ is dual to $\mathsf{gr}(\tau)_{\mathsf{D}}$.

Similarly, we have 
\[(\mathsf{U}\times\mathbb{G}_{odd})\times^{\mathsf{H}}\mathsf{gr}(\mathbb{H})\simeq (\mathsf{U}\times\mathbb{G}_{odd})\times \mathbb{H}_{odd}\simeq\mathrm{SSp}((\Lambda(\mathsf{V}^*)\otimes \mathsf{A})\square_{\mathsf{D}}\mathsf{gr}(D)),\]
and the isomorphism $\bf b$ is dual to the superalgebra isomorphism induced by $\kappa^{-1}$, 
\[(\mathsf{U}\times\mathrm{SSp}(\Lambda(\mathsf{Z})))\times^{\mathsf{H}}\mathsf{gr}(\mathbb{H})\simeq(\mathbb{G}_{odd}\times\mathsf{U})\times\mathbb{H}_{odd}\simeq 
\mathrm{SSp}((\mathsf{A}\otimes\Lambda(\mathsf{Z}))\square_{\mathsf{D}}\mathsf{gr}(D))\]
and the morphism ${\bf d}={\bf s}\times^{\mathsf{H}}\mathrm{id}_{\mathsf{gr}(\mathbb{H})}$ is just $\mathrm{SSp}(\wedge(\theta)\square\mathrm{id}_{\mathsf{gr}(D)})$. Thus the composition
${\bf a}{\bf b}{\bf d}$ coincides with $\mathrm{SSp}(\mathsf{gr}(\omega_{\theta}))$.
The above isomorphisms  allow us to identify ${\bf a}{\bf b}{\bf d}$ with the isomorphism from Lemma \ref{product of two embeddings}. Thus, the lemma follows.
\end{proof}
\begin{lm}\label{U/H is affine}
The sheaf quotient $\mathbb{U}/\mathbb{H}$ is an affine superscheme of finite type.
\end{lm}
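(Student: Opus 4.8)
The plan is to read off $\mathbb{U}/\mathbb{H}$ from the isomorphism $\omega_{\theta}$ just constructed. By Lemma~\ref{this is an isomorphism}, $\omega_{\theta}$ is an isomorphism of right $\mathbb{D}$-coideal superalgebras
\[
\mathbb{A}\ \simeq\ (A\otimes\Lambda(\mathsf{Z}))\,\square_{D}\,\mathbb{D},
\]
where the right-hand side carries the $\mathbb{D}$-coaction $\mathrm{id}\otimes\Delta_{\mathbb{D}}$ restricted to the cotensor product, and $A\otimes\Lambda(\mathsf{Z})$ is a right $D$-comodule superalgebra through the $D$-coaction on $A$ (the restriction to $\mathsf{U}$ of the right translation action of $\mathsf{H}\leq\mathsf{M}$ on $\mathsf{G}$), with $\Lambda(\mathsf{Z})$ a trivial $D$-comodule. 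In geometric terms this identifies $\mathbb{U}$, as a right $\mathbb{H}$-superscheme, with the homogeneous fiber quotient $\mathbb{X}\times^{\mathsf{H}}\mathbb{H}$, where $\mathbb{X}=\mathrm{SSp}(A\otimes\Lambda(\mathsf{Z}))\simeq\mathsf{U}\times\mathrm{SSp}(\Lambda(\mathsf{Z}))$ and $\mathsf{H}=\mathbb{H}_{ev}$ acts on $\mathbb{X}$ only through its action on $\mathsf{U}$.

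First I would compute the $\mathbb{D}$-coinvariants $\mathbb{A}^{\mathbb{D}}=\{a\in\mathbb{A}\mid\tau(a)=a\otimes 1\}$. Since the regular coaction of the Hopf superalgebra $\mathbb{D}$ on itself has coinvariants $K$, a short Sweedler-notation computation gives
\[
\mathbb{A}^{\mathbb{D}}\ \simeq\ \bigl((A\otimes\Lambda(\mathsf{Z}))\square_{D}\mathbb{D}\bigr)^{\mathbb{D}}\ \simeq\ (A\otimes\Lambda(\mathsf{Z}))^{D}\ =\ A^{D}\otimes\Lambda(\mathsf{Z})\ \simeq\ A'\otimes\Lambda(\mathsf{Z}),
\]
using that $\Lambda(\mathsf{Z})$ is a trivial $D$-comodule and $A^{D}=A'=\mathcal{O}(\mathsf{U}')$. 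Next I would verify that $\mathbb{A}$ is faithfully flat over $\mathbb{A}^{\mathbb{D}}$. Since $\mathbb{H}\simeq\mathsf{H}{\bf E}'$, the left $D$-comodule $\mathbb{D}$ is faithfully coflat (indeed cofree, with cofiber $\mathcal{O}({\bf E}')$), so the cotensor functor $(A\otimes\Lambda(\mathsf{Z}))\square_{D}(-)$ is exact and one reduces to the faithful flatness of $A\otimes\Lambda(\mathsf{Z})$ over $A'\otimes\Lambda(\mathsf{Z})$; this holds because $A$ is faithfully flat over $A'$, the scheme quotient $\mathsf{U}\to\mathsf{U}'=\mathsf{U}/\mathsf{H}$ being an fppf $\mathsf{H}$-torsor (cf.\ \cite{milne}, Theorem~7.35).

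Finally, $\mathbb{U}$ is $\mathbb{H}$-saturated in $\mathbb{G}$ (Lemma~\ref{a certain open super-subscheme}), so the right $\mathbb{H}$-action on $\mathbb{U}$ is free; combined with the faithful flatness just obtained, the morphism $\mathbb{U}=\mathrm{SSp}(\mathbb{A})\to\mathrm{SSp}(\mathbb{A}^{\mathbb{D}})$ is an fppf $\mathbb{H}$-torsor. Hence its base $\mathrm{SSp}(\mathbb{A}^{\mathbb{D}})\simeq\mathrm{SSp}(A'\otimes\Lambda(\mathsf{Z}))=\mathsf{U}'\times\mathrm{SSp}(\Lambda(\mathsf{Z}))$ represents the sheaf quotient $\mathbb{U}/\mathbb{H}$, which is therefore an affine superscheme; equivalently, $\mathbb{U}/\mathbb{H}\simeq(\mathbb{X}\times^{\mathsf{H}}\mathbb{H})/\mathbb{H}\simeq\mathbb{X}/\mathsf{H}$. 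The cotensor-and-coinvariant bookkeeping is routine; the point that requires genuine care — and which I expect to be the main obstacle — is the passage from the invariant subalgebra $\mathbb{A}^{\mathbb{D}}$ to the fppf sheaf quotient $\mathbb{U}/\mathbb{H}$, i.e.\ the verification of the $\mathbb{H}$-torsor property, for which one must combine freeness of the action (from $\mathbb{H}$-saturatedness) with the faithful flatness extracted from the explicit shape of $\omega_{\theta}$.
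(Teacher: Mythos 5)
Your use of $\omega_{\theta}$ to identify the coinvariants, $\mathbb{A}^{\mathbb{D}}\simeq (A\otimes\Lambda(\mathsf{Z}))^{D}\simeq A'\otimes\Lambda(\mathsf{Z})$, and your faithful-flatness argument (cofreeness of $\mathbb{D}$ over $D$ plus faithful flatness of $A$ over $A'=A^{D}$) are both sound, and they agree with what the paper itself extracts later: Lemma \ref{open affine in U/H} invokes Proposition 4.2(1) of \cite{mastak} for exactly the identification $\mathbb{U}/\mathbb{H}\simeq\mathrm{SSp}((A\otimes\Lambda(\mathsf{Z}))^{D})$. The genuine gap is the step you yourself flag as ``the main obstacle'' and then do not carry out: the torsor property. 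Faithful flatness of $\mathbb{A}$ over $\mathbb{A}^{\mathbb{D}}$ together with freeness of the action on $R$-points does \emph{not} identify $\mathrm{SSp}(\mathbb{A}^{\mathbb{D}})$ with the fppf sheaf quotient; the indispensable input is that the canonical (Hopf--Galois) map is bijective, equivalently that
\[
\alpha:\mathbb{A}\otimes\mathbb{A}\to\mathbb{A}\otimes\mathbb{D},\qquad a\otimes a'\mapsto \sum aa'_{(0)}\otimes a'_{(1)},
\]
is surjective, i.e.\ that the graph morphism $(u,h)\mapsto(u,uh)$ is a \emph{closed immersion} of $\mathbb{U}\times\mathbb{H}$ into $\mathbb{U}\times\mathbb{U}$ and not merely injective on points. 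Pointwise freeness is in any case automatic for right translation by a subgroup, so attributing it to $\mathbb{H}$-saturatedness misplaces where saturatedness actually enters.

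The paper closes precisely this gap, in one line: with ${\bf p}:\mathbb{G}\times\mathbb{G}\to\mathbb{G}$, $(g,g')\mapsto g^{-1}g'$, the image of $\mathrm{SSp}(\alpha)$ equals $(\mathbb{U}\times\mathbb{U})\cap{\bf p}^{-1}(\mathbb{H})$ --- here $\mathbb{H}$-saturatedness of $\mathbb{U}$ guarantees both that the graph lands in $\mathbb{U}\times\mathbb{U}$ and that it exhausts this intersection --- and this subfunctor is closed because $\mathbb{H}$ is closed in $\mathbb{G}$; hence $\mathrm{SSp}(\alpha)$ is an isomorphism onto a closed super-subscheme, $\alpha$ is surjective, and Theorem 3.1(1) of \cite{mastak} (whose content is essentially the descent argument you sketch, with surjectivity of $\alpha$ as the hypothesis) gives affineness. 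If you insert this verification, your proposal becomes a correct, somewhat more self-contained version of the paper's proof; without it, the passage from the invariant superalgebra $\mathbb{A}^{\mathbb{D}}$ to the sheaf quotient $\mathbb{U}/\mathbb{H}$ is unjustified.
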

\begin{proof}
By \cite[Theorem 3.1 (1) and Corollary 4.14(2)]{mastak}, we need to show that the superalgebra morphism
$\alpha: A\otimes A\to A\otimes D$ given by $a\otimes a'\mapsto \sum aa'_{(0)}\otimes a'_{(1)}$, is surjective. Equivalently, the functor morphism $\mathrm{SSp}(\alpha): \mathbb{U}\times\mathbb{H}\to\mathbb{U}\times\mathbb{U}$ given by $(u, h)\mapsto (u, uh),$ maps $\mathbb{U}\times\mathbb{H}$ isomorphically onto a closed super-subscheme of $\mathbb{U}\times\mathbb{U}$. Define a functor morphism ${\bf p} : \mathbb{G}\times\mathbb{G}\to \mathbb{G}$ by $(g, g')\mapsto g^{-1}g'$. Then $\mathrm{SSp}(\alpha)(\mathbb{U}\times\mathbb{H})=(\mathbb{U}\times\mathbb{U})\cap {\bf p}^{-1}(\mathbb{H})$, hence it is closed in $\mathbb{U}\times\mathbb{U}$. The lemma follows. 
\end{proof}
Let $\mathsf{V}'$ be an open affine subscheme of $\mathsf{U}'$. As above, $\mathsf{V}'=\mathsf{V}/\mathsf{H}$, where $\mathsf{V}\simeq\mathrm{Sp}(\mathsf{B})$ is an affine subscheme of $\mathsf{U}$ and $\mathsf{B}'=\mathcal{O}(\mathsf{V}')\simeq \mathsf{B}^{\mathsf{D}}$. Set $\mathbb{V}=\mathsf{V}{\bf E}$. Then $\mathbb{V}$ is an open affine $\mathbb{H}$-saturated super-subscheme of $\mathbb{U}$.
\begin{lm}\label{open affine in U/H}
$\mathbb{V}/\mathbb{H}$ is an open affine super-subscheme of $\mathbb{U}/\mathbb{H}$.
\end{lm}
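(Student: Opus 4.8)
The plan is to derive this from the affineness result already obtained (Lemma \ref{U/H is affine}) together with faithfully flat descent along the quotient morphism $\pi_{\mathbb{U}}\colon\mathbb{U}\to\mathbb{U}/\mathbb{H}$. First I would note that $\mathbb{V}$ satisfies exactly the same hypotheses as $\mathbb{U}$: it is an open affine $\mathbb{H}$-saturated super-subscheme of $\mathbb{G}$, so $\mathbb{B}=\mathcal{O}(\mathbb{V})$ is a $\mathbb{D}$-coideal superalgebra, and the argument of Lemma \ref{U/H is affine} applies verbatim to it. Concretely, the morphism $\mathbb{V}\times\mathbb{H}\to\mathbb{V}\times\mathbb{V}$, $(v,h)\mapsto(v,vh)$, is the restriction of the closed immersion $\mathbb{U}\times\mathbb{H}\to\mathbb{U}\times\mathbb{U}$ to the open super-subscheme $\mathbb{V}\times\mathbb{H}$, and (using $\mathbb{V}\mathbb{H}\subseteq\mathbb{V}$) its image equals $(\mathbb{V}\times\mathbb{V})\cap{\bf p}^{-1}(\mathbb{H})$, which is closed in $\mathbb{V}\times\mathbb{V}$; hence it is a closed immersion and the dual map $\mathbb{B}\otimes\mathbb{B}\to\mathbb{B}\otimes\mathbb{D}$ is surjective, so $\mathbb{V}/\mathbb{H}$ is affine by Theorem 3.1(1), \cite{mastak}. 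Applying the same reasoning to $\mathbb{U}$ itself shows that $\pi_{\mathbb{U}}$ is an fppf $\mathbb{H}$-torsor, in particular faithfully flat and quasi-compact since $\mathbb{H}$ is affine of finite type.

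Next I would show that the canonical morphism ${\bf j}\colon\mathbb{V}/\mathbb{H}\to\mathbb{U}/\mathbb{H}$ induced by the inclusion $\mathbb{V}\hookrightarrow\mathbb{U}$ is an open immersion. The inclusion is $\mathbb{H}$-equivariant, so ${\bf j}$ exists by the universal property of sheaf quotients, and I claim that the square with vertical arrows $\pi_{\mathbb{V}}$ and $\pi_{\mathbb{U}}$ is Cartesian. Indeed, both $\mathbb{V}$ and $\mathbb{U}\times_{\mathbb{U}/\mathbb{H}}(\mathbb{V}/\mathbb{H})$ are right $\mathbb{H}$-torsors over $\mathbb{V}/\mathbb{H}$ and the natural $\mathbb{H}$-equivariant morphism between them is therefore an isomorphism; equivalently, $\mathbb{V}$ being $\mathbb{H}$-saturated in $\mathbb{U}$ means precisely that $\pi_{\mathbb{U}}^{-1}({\bf j}(\mathbb{V}/\mathbb{H}))=\mathbb{V}$. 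Since $\mathbb{V}\hookrightarrow\mathbb{U}$ is an open immersion and $\pi_{\mathbb{U}}$ is faithfully flat and quasi-compact, faithfully flat descent of open immersions (the superization routinely used here, cf. \cite{maszub1}) yields that ${\bf j}$ is an open immersion. Combined with the affineness of $\mathbb{V}/\mathbb{H}$ from the first step, this proves the lemma.

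The main obstacle is making the descent step precise in the super setting: checking that $\pi_{\mathbb{U}}$ is a genuine fppf covering (equivalently, that the restriction of $\mathbb{G}\to\mathbb{G}/\mathbb{H}$ to the saturated open $\mathbb{U}$ is an $\mathbb{H}$-torsor), that the square above is Cartesian, and hence that the correspondence $\mathbb{W}\mapsto\pi_{\mathbb{U}}(\mathbb{W})$ is a well-defined bijection between $\mathbb{H}$-saturated open subfunctors of $\mathbb{U}$ and open subfunctors of $\mathbb{U}/\mathbb{H}$. All of this is standard for schemes and its superization is available; once it is in place the statement is immediate. Alternatively, one could avoid descent by identifying $\mathbb{U}/\mathbb{H}$ and $\mathbb{V}/\mathbb{H}$ with $\mathrm{SSp}(\mathbb{A}^{\mathbb{D}})$ and $\mathrm{SSp}(\mathbb{B}^{\mathbb{D}})$ and verifying directly that the restriction $\mathbb{A}^{\mathbb{D}}\to\mathbb{B}^{\mathbb{D}}$ realizes the target as an open affine super-subscheme, but that route appears to require more bookkeeping with the open covering of $\mathbb{U}$ defining $\mathbb{V}$.
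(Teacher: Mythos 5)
Your argument is correct in outline, but it takes a genuinely different route from the paper, and the route you set aside at the end as an ``alternative'' is in fact exactly the paper's proof. The paper never invokes torsors or descent: having identified $\mathbb{U}/\mathbb{H}\simeq\mathrm{SSp}((A\otimes\Lambda(\mathsf{Z}))^D)$ and $\mathbb{V}/\mathbb{H}\simeq\mathrm{SSp}((B\otimes\Lambda(\mathsf{Z}))^D)$ via $\omega_\theta$ (Lemma \ref{this is an isomorphism}) and Proposition 4.2(1) of \cite{mastak} -- which gives affineness of $\mathbb{V}/\mathbb{H}$ for free, with no need to re-run Lemma \ref{U/H is affine} -- it notes that the inclusion is dual to $\phi\otimes\mathrm{id}_{\Lambda(\mathsf{Z})}$, where $\phi:A\to B$ is dual to $\mathsf{V}\subseteq\mathsf{U}$, and then simply transfers the localization criterion of Lemma 3.5, \cite{maszub1}: the elements $a_1,\ldots,a_k\in A'=A^D$ witnessing that $\mathsf{V}'\subseteq\mathsf{U}'$ is open also witness, in the form $a_i\otimes 1$, that $\mathcal{O}(\mathbb{U}/\mathbb{H})\to\mathcal{O}(\mathbb{V}/\mathbb{H})$ is an open immersion; so the super statement reduces to the already-known even one by pure bookkeeping, and the ``more bookkeeping'' you feared is actually about three lines. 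Your descent route is conceptually attractive (saturated opens upstairs correspond to opens downstairs, and it would work beyond this affine setting), and your Cartesian-square and first-step arguments are sound: $\mathbb{V}$ is open, affine and $\mathbb{H}$-saturated with $\mathsf{V}/\mathsf{H}=\mathsf{V}'$ affine, so whatever Theorem 3.1(1) of \cite{mastak} needs is available exactly as for $\mathbb{U}$, and membership of a point of $\mathbb{U}$ in the open subfunctor $\mathbb{V}$ can indeed be tested after a faithfully flat cover. The cost is that you lean on two pieces of infrastructure the paper neither develops nor cites: that $\pi_{\mathbb{U}}:\mathbb{U}\to\mathbb{U}/\mathbb{H}$ is faithfully flat and quasi-compact (Theorem 3.1(1), as used in Lemma \ref{U/H is affine}, is invoked only for affineness of the quotient, so the torsor property of $\mathbb{A}$ over $\mathcal{O}(\mathbb{U}/\mathbb{H})$ needs its own reference or a proof, e.g.\ from $\mathbb{A}\simeq(A\otimes\Lambda(\mathsf{Z}))\square_D\mathbb{D}$ together with faithful flatness of $A$ over $A^D$ in the even case), and fppf descent of open immersions for superschemes. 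Both are true and superize routinely, so this is deferred work rather than an error, but once supplied your proof is appreciably longer than the paper's direct verification.
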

\begin{proof}
The inclusion $\mathsf{V}'\subseteq\mathsf{U}'$ is defined by a morphism $\phi : \mathsf{A}\to \mathsf{B}$ of $\mathsf{D}$-coideal algebras. By \cite[Lemma 3.5]{maszub1}, there are (even) elements $a_1, \ldots, a_k\in \mathsf{A}'$ such that 
$\sum_{1\leq i\leq k}\mathsf{B}'\phi(a_i)=\mathsf{B}'$, and for each $i$, the induced morphism $\mathsf{A}'_{a_i}\to \mathsf{B}'_{\phi(a_i)}$ is an isomorphism. Thus, $\mathsf{B}=\sum_{1\leq i\leq k}\mathsf{B}\phi(a_i)$ and $\mathsf{A}_{a_i}\simeq \mathsf{B}_{\phi(a_i)}$ for $1\leq i\leq k$. There is $B=\mathcal{O}(\mathbb{V})\simeq (\mathsf{B}\otimes\Lambda(\mathsf{Z}))\square_{\mathsf{D}} D$, and the morphism of $D$-coideal superalgebras $A\to B$, dual to the inclusion $\mathbb{V}\subseteq\mathbb{U}$, is $(\phi\otimes\mathrm{id}_{\Lambda(\mathsf{Z})})\square_{\mathsf{D}}\mathrm{id}_{D}$. Moreover, by \cite[Proposition 4.2(1)]{mastak}, $\mathbb{V}/\mathbb{H}\simeq\mathrm{SSp}((\mathsf{B}\otimes\Lambda(\mathsf{Z}))^{\mathsf{D}})$ and $\mathbb{U}/\mathbb{H}\simeq\mathrm{SSp}((\mathsf{A}\otimes\Lambda(\mathsf{Z}))^{\mathsf{D}})$. Therefore, the inclusion $\mathbb{V}/\mathbb{H}\subseteq \mathbb{U}/\mathbb{H}$
is dual to $\phi\otimes\mathrm{id}_{\Lambda(\mathsf{Z})}$, and the superalgebras $\mathcal{O}(\mathbb{U}/\mathbb{H})$ and $\mathcal{O}(\mathbb{V}/\mathbb{H})$ satisfy the conditions of \cite[Lemma 3.5]{maszub1}, for the elements $a_i\otimes 1$, where $1\leq i\leq k$. The lemma is proven.
\end{proof}
\begin{cor}
The statement of Lemma \ref{open affine in U/H} remains valid for an arbitrary open subscheme $\mathsf{V}'$ of $\mathsf{U}'$.
\end{cor}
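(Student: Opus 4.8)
The plan is to reduce the general case to the affine case settled in Lemma~\ref{open affine in U/H} by a gluing argument. Let $\pi\colon\mathsf{U}\to\mathsf{U}'$ be the quotient morphism and put $\mathsf{V}=\pi^{-1}(\mathsf{V}')$; this is an $\mathsf{H}$-saturated open subscheme of $\mathsf{U}$ (hence of $\mathsf{G}$) with $\mathsf{V}/\mathsf{H}=\mathsf{V}'$. Set $\mathbb{V}=\mathsf{V}{\bf E}$. Exactly as in Lemma~\ref{a certain open super-subscheme} one checks $\mathbb{V}=\mathsf{V}\mathbb{S}=\mathbb{S}\mathsf{V}={\bf E}\mathsf{V}$, so that $\mathbb{V}$ is an open $\mathbb{H}$-saturated super-subscheme of $\mathbb{U}$; in particular the sheafification $\mathbb{V}/\mathbb{H}$ of $R\mapsto\mathbb{V}(R)/\mathbb{H}(R)$ is naturally a sub-sheaf of $\mathbb{U}/\mathbb{H}$, and it suffices to prove that it is open there. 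Since $\mathsf{U}'\simeq\mathrm{Sp}(A')$ is affine, cover $\mathsf{V}'$ by principal (hence affine) open subschemes $\mathsf{V}'_i\subseteq\mathsf{V}'$. By \cite{jan}, I.5.7.1(1), each $\mathsf{V}'_i$ has the form $\mathsf{V}_i/\mathsf{H}$ with $\mathsf{V}_i$ an affine $\mathsf{H}$-saturated open of $\mathsf{G}$, necessarily $\mathsf{V}_i=\pi^{-1}(\mathsf{V}'_i)\subseteq\mathsf{V}$; put $\mathbb{V}_i=\mathsf{V}_i{\bf E}$. By the argument of Lemma~\ref{a certain open super-subscheme} each $\mathbb{V}_i$ is an open affine $\mathbb{H}$-saturated super-subscheme of $\mathbb{U}$ contained in $\mathbb{V}$, and $\{\mathbb{V}_i\}$ is an open covering of $\mathbb{V}$, the underlying topological spaces being those of $\mathsf{V}$ and $\mathsf{V}_i$.

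By Lemma~\ref{open affine in U/H}, each $\mathbb{V}_i/\mathbb{H}$ is an open affine super-subscheme of $\mathbb{U}/\mathbb{H}$, and for $\mathbb{V}_i\subseteq\mathbb{V}_j$ the induced map $\mathbb{V}_i/\mathbb{H}\to\mathbb{V}_j/\mathbb{H}$ is the corresponding open immersion (the same $D$-coideal superalgebra computation as in that proof, now with the localisations $A\to A_{a_i}$ in place of the maps $\phi\colon A\to B$). Hence $\bigcup_i(\mathbb{V}_i/\mathbb{H})$ is an open super-subscheme of $\mathbb{U}/\mathbb{H}$, being a union of open super-subschemes. It remains to show $\mathbb{V}/\mathbb{H}=\bigcup_i(\mathbb{V}_i/\mathbb{H})$ as sub-sheaves of $\mathbb{U}/\mathbb{H}$; the inclusion $\supseteq$ is clear. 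For $\subseteq$, take $R\in\mathsf{SAlg}_K$ and a section $s$ of $\mathbb{V}/\mathbb{H}$ over $R$; by definition of the fppf-sheafification there is an fppf covering $R\to R'$ and $v\in\mathbb{V}(R')$ mapping to the restriction of $s$. Since $\{\mathbb{V}_i\}$ is an open covering of $\mathbb{V}$, the open subfunctors $v^{-1}(\mathbb{V}_i)$ cover $\mathrm{SSp}(R')$, so after a further Zariski (hence fppf) refinement $v$ factors through some $\mathbb{V}_i$; therefore $s$ lies fppf-locally in $\mathbb{V}_i(R')/\mathbb{H}(R')$, whence $s$ is a section of $\bigcup_i(\mathbb{V}_i/\mathbb{H})$. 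Thus $\mathbb{V}/\mathbb{H}$ is an open super-subscheme of $\mathbb{U}/\mathbb{H}$, proving the corollary.

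The only genuinely non-formal point is the last identification $\mathbb{V}/\mathbb{H}=\bigcup_i(\mathbb{V}_i/\mathbb{H})$, i.e. that fppf-sheafification of the naive quotient is compatible with the open covering $\{\mathbb{V}_i\}$ of $\mathbb{V}$; this is the superanalogue of the standard fact that forming a sheaf quotient commutes with Zariski-localisation, and is handled by the refinement argument above together with the superizations of \cite{jan}, I.1.7, already used elsewhere in the paper. Everything else—the identification of $\mathsf{V}_i$ and $\mathbb{V}_i$ and the open-immersion statements—repeats verbatim the bookkeeping of Lemmas~\ref{a certain open super-subscheme} and \ref{open affine in U/H}.
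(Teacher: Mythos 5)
Your route is the paper's (the paper's entire proof is the remark that $\mathsf{V}'$ is a finite union of open affine subschemes), and most of your bookkeeping is fine: $\mathsf{V}_i=\pi^{-1}(\mathsf{V}'_i)$ is affine and $\mathsf{H}$-saturated, $\mathbb{V}_i=\mathsf{V}_i{\bf E}$ is open, affine and $\mathbb{H}$-saturated as in Lemma \ref{a certain open super-subscheme}, and Lemma \ref{open affine in U/H} applies to each $\mathbb{V}_i$. The gap is in the final step, which you yourself flag as the only non-formal point. The naive union of subfunctors $\bigcup_i(\mathbb{V}_i/\mathbb{H})$ is neither an open subfunctor nor a sheaf: already in $\mathrm{SSp}(K[x,y])$ one has $\mathbb{D}(x)\cup\mathbb{D}(y)\subsetneq\mathbb{D}((x,y))$, as the $K\times K$-point $x\mapsto(1,0)$, $y\mapsto(0,1)$ shows, and this same point witnesses failure of the Zariski sheaf condition for the union; so "a union of open super-subschemes is open" is false in the functor-of-points formalism. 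Consequently the asserted identity $\mathbb{V}/\mathbb{H}=\bigcup_i(\mathbb{V}_i/\mathbb{H})$ is false in general (for $\mathbb{H}=\mathbb{E}$ it would say that the superscheme $\mathbb{V}$ equals the naive union of an affine open cover), and the inference "$s$ lies fppf-locally in some $\mathbb{V}_i(R')/\mathbb{H}(R')$, whence $s$ is a section of the union" is a non sequitur, precisely because the union is not a sheaf while $\mathbb{V}/\mathbb{H}$ is one by construction.

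What the corollary must produce (and what the main theorem uses, e.g.\ for $(\mathbb{U}_i\cap\mathbb{U}_j)/\mathbb{H}=\mathbb{U}'_i\cap\mathbb{U}'_j$) is the identification of $\mathbb{V}/\mathbb{H}$ with the open super-subscheme $\mathbb{W}$ of $\mathbb{U}/\mathbb{H}$ determined by the open subset $\bigcup_i(\mathsf{V}'_i)^e=(\mathsf{V}')^e$, i.e.\ the subfunctor obtained by gluing the $\mathbb{V}_i/\mathbb{H}$ inside the geometric superscheme corresponding to $\mathbb{U}/\mathbb{H}$, not the naive union. Your refinement argument is exactly the needed ingredient once it is aimed at $\mathbb{W}$: the quotient morphism $\mathbb{V}\to\mathbb{U}/\mathbb{H}$ factors through $\mathbb{W}$ (its image lies in $(\mathsf{V}')^e$ topologically), and since $\mathbb{W}$ is a sheaf this yields $\mathbb{V}/\mathbb{H}\subseteq\mathbb{W}$; conversely, any $R$-point of $\mathbb{W}$ factors, after a Zariski covering of $\mathrm{SSp}(R)$, through some $\mathbb{V}_i/\mathbb{H}\subseteq\mathbb{V}/\mathbb{H}$, and these local sections glue inside the sheaf $\mathbb{V}/\mathbb{H}$, all three functors being subsheaves of $\mathbb{U}/\mathbb{H}$. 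With that correction your argument is complete and coincides with the (implicit) argument behind the paper's one-line proof.
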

\begin{proof}
Indeed, $\mathsf{V}'$ is a finite union of open affine subschemes.
\end{proof}
Fix a finite open covering of $\mathsf{G}/\mathsf{H}$ by affine subschemes $\mathsf{U}_i'$ for $1\leq i\leq l$. Then there is a collection of affine superschemes $\mathbb{U}_i'=\mathbb{U}_i/\mathbb{H}$ and quotient morphisms $\mathbb{U}_i\to\mathbb{U}_i'$ such that for every pair  of indices $1\leq i\neq j\leq l$ we have $(\mathbb{U}_i\cap\mathbb{U}_j)/\mathbb{H}=\mathbb{U}_i'\cap\mathbb{U}_j'$. 

Consider the collection of corresponding geometric superschemes $U_i'$. 
Since the underlying topological space of each $U'_i$ coincides with $(\mathsf{U}_i')^e$ (see \cite[Proposition 4.2]{mastak}), one can construct a geometric superscheme $Z$ with
$Z^e=\mathsf{X}^e$ as follows. For any open subset $W\subseteq Z^e$, we define 
\[\mathcal{O}_Z(W)=\ker(\prod_{1\leq i\leq l}\mathcal{O}_{U'_i}(W\cap (U_i')^e)\rightrightarrows\prod_{1\leq i\neq j\leq l}\mathcal{O}_{U_i'\cap U_j'}(W\cap(U'_i)^e\cap (U'_j)^e)),\]
where $U'_i\cap U'_j$ denotes the geometric counterpart of $\mathbb{U}_i'\cap\mathbb{U}_j'$.
It is clear that $\mathcal{O}_Z$ is a superalgebra sheaf such that $\mathcal{O}_Z|_{(U'_i)^e}\simeq \mathcal{O}_{U'_i}$ 
for $1\leq i\leq l$. 

The following lemma concludes the proof of the main theorem.
\begin{lm}\label{gluing}
There is the unique morphism $\mathbb{G}\to\mathbb{Z}$, the restriction of which on each $\mathbb{U}_i$ coincides with $\mathbb{U}_i\to\mathbb{U}_i'\subseteq\mathbb{Z}$. In particular, $\mathbb{Z}\simeq\mathbb{G}/\mathbb{H}$.
\end{lm}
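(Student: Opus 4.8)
The plan is to glue the quotient morphisms $\pi_i : \mathbb{U}_i\to\mathbb{U}_i'$ into a single morphism $\pi : \mathbb{G}\to\mathbb{Z}$, and then to check that $(\mathbb{Z},\pi)$ has the universal property of the fppf sheaf quotient $\mathbb{G}/\mathbb{H}$. The first thing I would do is record that $\{\mathbb{U}_i\}_{1\le i\le t}$ is a finite open covering of $\mathbb{G}$: indeed $\{\mathsf{U}_i\}$ covers $\mathsf{G}$ (it is the preimage of the covering $\{\mathsf{U}_i'\}$ of $\mathsf{G}/\mathsf{H}$ under the faithfully flat quotient $\mathsf{G}\to\mathsf{G}/\mathsf{H}$), hence the open super-subschemes $\mathbb{U}_i=\mathsf{U}_i{\bf E}$ cover $\mathbb{G}=\mathsf{G}{\bf E}$ by Lemma~\ref{a certain open super-subscheme} and the fact that $\mathsf{G}\subseteq\mathbb{G}_{ev}$ is surjective on field points. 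By construction $\mathcal{O}_Z|_{(U_i')^e}\simeq\mathcal{O}_{U_i'}$, so each $U_i'$ is an open super-subscheme of $Z$ and $\{U_i'\}$ covers $Z$; passing to the functorial side via Theorem~\ref{comparison}, $\{\mathbb{U}_i'\}$ is a finite open covering of $\mathbb{Z}$. The morphisms $\pi_i$ agree on overlaps because $(\mathbb{U}_i\cap\mathbb{U}_j)/\mathbb{H}=\mathbb{U}_i'\cap\mathbb{U}_j'$ (Lemma~\ref{open affine in U/H} and its corollary give that this overlap is an open affine super-subscheme sitting correctly inside both $\mathbb{U}_i'$ and $\mathbb{U}_j'$), and the restriction of $\pi_i$ to $\mathbb{U}_i\cap\mathbb{U}_j$ is the quotient morphism onto $(\mathbb{U}_i\cap\mathbb{U}_j)/\mathbb{H}$ by functoriality of sheaf quotients under open immersions. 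Since $\mathbb{Z}$ is a superscheme, hence a local $K$-functor, the gluing axiom produces a unique $\pi:\mathbb{G}\to\mathbb{Z}$ with $\pi|_{\mathbb{U}_i}=\pi_i$ for all $i$.

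Next I would verify that $\pi$ is $\mathbb{H}$-invariant and that it realizes the naive quotient $\mathbb{G}/\mathbb{H}$ fppf-locally. Invariance is local on $\mathbb{G}$: each $\mathbb{U}_i$ is $\mathbb{H}$-saturated (Lemma~\ref{a certain open super-subscheme}), and $\pi_i$ is $\mathbb{H}$-invariant since it is the sheaf-quotient morphism $\mathbb{U}_i\to\mathbb{U}_i/\mathbb{H}=\mathbb{U}_i'$ by Lemma~\ref{U/H is affine}; hence $\pi$ is $\mathbb{H}$-invariant. For the quotient property: for $R\in\mathsf{SAlg}_K$ and $g\in\mathbb{G}(R)$, one checks that two points $g_1,g_2\in\mathbb{G}(R)$ have the same image under $\pi(R)$ if and only if they are $\mathbb{H}$-related fppf-locally. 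The "if" direction is invariance. For the "only if" direction, one reduces, after an fppf base change on $R$, to the situation where both $g_1,g_2$ land in a common $\mathbb{U}_i(R)$ — this uses that the $\mathbb{U}_i$ cover $\mathbb{G}$ in the fppf topology (they cover it Zariski-locally, which suffices) and that $\pi(g_1)=\pi(g_2)$ forces the image point of $\pi_i$ to lie in a single $\mathbb{U}_i'$ — and then applies Lemma~\ref{U/H is affine}, which says precisely that $\mathbb{U}_i\to\mathbb{U}_i'$ is an fppf quotient by $\mathbb{H}$, so $g_1,g_2$ differ by an element of $\mathbb{H}$ after a further fppf cover. Surjectivity of $\pi$ in the fppf topology is similar: a point of $\mathbb{Z}(R)$ lands, fppf-locally on $R$, in some $\mathbb{U}_i'(R)$, and then lifts to $\mathbb{U}_i(R)$ after an fppf cover because $\mathbb{U}_i\to\mathbb{U}_i'$ is an fppf quotient.

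Putting these together: $\pi:\mathbb{G}\to\mathbb{Z}$ is an $\mathbb{H}$-invariant morphism whose fppf sheafification of the naive quotient is an isomorphism onto $\mathbb{Z}$, so by the definition of sheaf quotient (Section~3) and uniqueness of sheafification we get $\mathbb{Z}\simeq\mathbb{G}/\mathbb{H}$. Finally, $\mathbb{Z}$ is of finite type: it has the finite affine open covering $\{\mathbb{U}_i'\}$, and each $\mathbb{U}_i'=\mathbb{U}_i/\mathbb{H}\simeq\mathrm{SSp}((A_i\otimes\Lambda(\mathsf{Z}))^D)$ is the spectrum of a finitely generated $K$-superalgebra (it is an invariant subalgebra of the finitely generated $(A_i\otimes\Lambda(\mathsf{Z}))$ under the algebraic group scheme $\mathsf{H}$, hence finitely generated), so $\mathbb{Z}\to\mathrm{SSp}(K)$ is of finite type by Lemma~\ref{finite type}. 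This proves Theorem~\ref{sheaf quotient}.

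I expect the main obstacle to be the fppf-descent bookkeeping in the second paragraph: one must be careful that "$\pi(g_1)=\pi(g_2)$" genuinely forces $g_1,g_2$ into a common chart $\mathbb{U}_i$ after base change — this needs that the image point in $\mathbb{Z}$ determines which $U_i'$ one may work in, together with the fact that $\pi^{-1}(\mathbb{U}_i')=\mathbb{U}_i$, which in turn rests on $\mathbb{U}_i$ being exactly the preimage (an $\mathbb{H}$-saturation statement) rather than merely contained in it. Once that identification $\pi^{-1}(\mathbb{U}_i')=\mathbb{U}_i$ is nailed down, the rest is a routine assembly of Lemmas~\ref{a certain open super-subscheme}, \ref{U/H is affine}, \ref{open affine in U/H}, and the locality of superschemes.
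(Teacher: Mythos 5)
Your first paragraph is essentially the paper's own argument: $\{\mathbb{U}_i'\}$ is an open covering of $\mathbb{Z}$, the quotient morphisms $\mathbb{U}_i\to\mathbb{U}_i'$ are compatible on overlaps because $(\mathbb{U}_i\cap\mathbb{U}_j)/\mathbb{H}=\mathbb{U}_i'\cap\mathbb{U}_j'$, and since $\mathbb{Z}$ is a superscheme, hence a local functor, they glue to a unique $\mathbb{H}$-invariant morphism $\mathbb{G}\to\mathbb{Z}$. Where you genuinely diverge is in the identification $\mathbb{Z}\simeq\mathbb{G}/\mathbb{H}$. The paper does this by a universal-property argument: any $\mathbb{H}$-invariant morphism $\mathbb{G}\to\mathbb{Y}$ into a faisceau is determined by its restrictions to the $\mathbb{U}_i$, each restriction factors uniquely through $\mathbb{U}_i'=\mathbb{U}_i/\mathbb{H}$ (Lemma \ref{U/H is affine}), and these factorizations glue to a unique morphism $\mathbb{Z}\to\mathbb{Y}$; since the fppf sheafification of the naive quotient is characterized by exactly this universal property among faisceaux, $\mathbb{Z}\simeq\mathbb{G}/\mathbb{H}$ follows with no descent bookkeeping at all. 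You instead verify the sheafification conditions directly (fibers of $\pi$ are fppf-locally single $\mathbb{H}$-orbits, and $\pi$ is fppf-locally surjective). That is a legitimate alternative and gives a more explicit description of the $R$-points of $\mathbb{Z}$, but it routes you through the step you flag and do not close: $\pi^{-1}(\mathbb{U}_i')=\mathbb{U}_i$, not merely $\supseteq\mathbb{U}_i$. You should close it rather than leave it as an ``obstacle''; it does hold, because preimages of open subfunctors are determined by the underlying topological spaces, on spaces $\pi|_{\mathbb{U}_j}$ is the classical quotient map $\mathsf{U}_j^e\to(\mathsf{U}_j/\mathsf{H})^e$, and the $\mathsf{U}_i$ were chosen $\mathsf{H}$-saturated, i.e.\ they are exactly the preimages of the $\mathsf{U}_i'$ under $\mathsf{G}\to\mathsf{G}/\mathsf{H}$; hence $\pi_j^{-1}(\mathbb{U}_i'\cap\mathbb{U}_j')=\mathbb{U}_i\cap\mathbb{U}_j$ and therefore $\pi^{-1}(\mathbb{U}_i')=\mathbb{U}_i$. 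In short: the paper's route buys economy (no saturation or fppf arguments beyond Lemma \ref{U/H is affine}), yours buys concreteness at the price of exactly those verifications.

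One peripheral caution: your justification of the finite-type claim, that $(A_i\otimes\Lambda(\mathsf{Z}))^D$ is finitely generated ``because it is an invariant subalgebra of a finitely generated superalgebra under an algebraic group scheme,'' is not a valid general principle (invariants of non-reductive group actions need not be finitely generated). This is outside the statement of Lemma \ref{gluing} itself, but if you want it for Theorem \ref{sheaf quotient} you should argue differently, e.g.\ by faithfully flat descent of finite generation along the quotient morphism $\mathbb{U}_i\to\mathbb{U}_i/\mathbb{H}$, rather than by a Hilbert-type finiteness claim.
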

\begin{proof}
The open immersions $U'_i\to Z$ induce the open embeddings $\mathbb{U}'_i\to\mathbb{Z}$. Thus, $\mathbb{U}'_i$ form an open covering of $\mathbb{Z}$.  Since $\mathbb{Z}$ is a local functor, 
the collection of morphisms $\mathbb{U}_i\to\mathbb{U}_i'$ uniquely extends to a morphism $\mathbb{G}\to\mathbb{Z}$, which is constant on $\mathbb{H}$-orbits. Moreover, any morphism from $\mathbb{G}$ to a faisceau $\mathbb{Y}$, which is constant on $\mathbb{H}$-orbits , is uniquely defined by its restrictions on $\mathbb{U}_i$ for $1\leq i\leq l$.
Since such morphisms $\mathbb{U}_i\to\mathbb{Y}$ factor through the morphisms $\mathbb{U}_i'\to\mathbb{Y}$, $\mathbb{G}\to\mathbb{Y}$ factors through the unique morphism $\mathbb{Z}\to\mathbb{Y}$. Lemma is proven.
\end{proof}
\begin{center}
\bf Acknowledgements	
\end{center}
The work was supported by UAEU grant G00003324. The first author was also supported by grant  JSPS KAKENHI Grant Number 20K03552.
We thank for both supports.

\end{document}